\newtheorem{theorem}{Theorem}[section]
\newtheorem{lemma}[theorem]{Lemma}
\newtheorem{proposition}[theorem]{Proposition}
\theoremstyle{definition}
\newtheorem{definition}[theorem]{Definition}
\newtheorem{cor}[theorem]{Corollary}
\newtheorem{conj}[theorem]{Conjecture}
\theoremstyle{remark}
\newtheorem{remark}[theorem]{Remark}
\newcommand{\spf}{\mathrm{Spf}}
\newcommand{\inj}{\hookrightarrow}
\newcommand{\Of}{\mathcal{O}}
\newcommand{\N}{\mathbb{N}}
\newcommand{\dR}{\mathrm{dR}}
\newcommand{\Q}{\mathbb{Q}}
\newcommand{\Z}{\mathbb{Z}}
\newcommand{\gr}{\mathrm{gr}}
\newcommand{\maxi}{\mathrm{Max}}
\newcommand{\Fil}{\mathrm{Fil}}
\newcommand{\ad}{\mathrm{ad}}
\newcommand{\homo}{\mathrm{Hom}}
\newcommand{\rep}{\mathrm{Rep}}
\newcommand{\wt}{\mathrm{wt}}
\newcommand{\pdr}{\mathrm{pdR}}
\newcommand{\rig}{\mathrm{rig}}
\newcommand{\map}{\rightarrow}
\newcommand{\tri}{\mathrm{tri}}
\newcommand{\ind}{\mathrm{Ind}}
\newcommand{\liealg}{\mathrm{Lie}}
\newcommand{\rob}{\mathcal{R}}
\newcommand{\liet}{\mathfrak{t}}
\newcommand{\rescale}{\mathrm{Res}^K_{\mathbb{Q}_p}}
\newcommand{\phigamma}{(\varphi, \Gamma_K)}
\newcommand{\hphigamma}{\varphi, \gamma_K}
\newcommand{\invertt}{[\frac{1}{t}]}
\newcommand{\rlk}{\rob_{L, K}}
\newcommand{\regu}{\mathrm{reg}}
\title{The Local Companion Points Conjecture}
\author{Lie Qian}
\begin{document}

\begin{abstract}
We describe the set of points of the trianguline variety over a given local Galois representation. Global analogues describing companion points in eigenvariety by \cite{versla} and  \cite{Hansen},  can be thought of as a rational analogue to the weight part of Serre's conjecture. Along the same line, local companion points conjecture can be thought of as a rational analogue of attaching Serre weights to residual Galois representations. \cite{BHS} proves the  conjecture assuming the given Galois representation is cristalline regular. We prove the conjecture in general cases only assuming some regularity conditions.
\end{abstract}

\maketitle

\section{Introduction}
The study of $p$-adic automorphic forms has been central to the global Langlands program. For example, fixing an imaginary CM field $F$ over its totally real subfield $F^+$, one can attach $n$-dimensional global Galois representaions to $p$-adic Hecke-eigenforms of a unitary group $G$ over  $F^+$  that is compact at infinity places and isomorphic to $GL_n$ at $p$-adic places. On the other hand, given an $n$-dimensional global Galois representaions $\rho$ coming from a $p$-adic Hecke-eigenform, one  can naturally ask to give a complete description of all the $p$-adic Hecke-eigenform giving rise to $\rho$. It turns out that although  $\rho$  determines the Hecke eigenvalue away from $p$-adic places, the weight of the $p$-adic Hecke-eigenform might be different, among those giving rise to $\rho$. This question can be thought of as a rational analogue to the weight part of the Serre's conjecture. While the question is hard in nature, we prove in this paper, a local analogue of it, in almost full generality.

Let us be more precise. Fix $L$ be a finite extension of $\Q_p$ with residue field $k$.  Let $\mathcal{E}$ be the eigenvariety asscoiated to $G$ and a prime-to-$p$ level $U^p$. It parametrises $p$-adic Hecke-eigenforms in the continuous function space over $G(F^+)\backslash G(\mathbb{A}_{F^+}^\infty)/U^p$. Each points in $\mathcal{E}(L)$ gives an $n$-dimensional pseudo-representation $\rho$ of $G_F$. Fix a residual representation $\overline{\rho}: G_F\map GL_n(k)$ corresponding to a maximal ideal of the Hecke algebra. There is a component $\mathcal{E}_{\overline{\rho}}$ of $\mathcal{E}$ labelled by $\overline{\rho}$ and it admits a map $p_1: \mathcal{E}_{\overline{\rho}} \map \spf(R_{\overline{\rho}})^{\ad}_\eta$, where $R_{\overline{\rho}}$ is the pseudo-deformation ring of $\overline{\rho}$. Moreover, there is a weight map $p_2: \mathcal{E}_{\overline{\rho}} \map \mathcal{T}^n$, where $\mathcal{T}$ denotes the rigid analytic variety parametrising characters $(F^+\otimes_{\Q}\Q_p)^\times \map  L^\times$. The points of $\mathcal{E}_{\overline{\rho}}$ giving rise to the same $\rho$ are said to be companion points to each other. Thus we are asking for a description of the set $p_2(p_1^{-1}(\rho))$, where $\rho$ is viewed as a point  of $\spf(R_{\overline{\rho}})^{\ad}_\eta$. There is conjectural descriptions of the set \cite[6.5]{versla} (potentially cristalline case) and \cite[Conjecture 1.2.5]{Hansen} (trianguline case). While the precise description is a bit complicated, the point is that the possible weights can be indexed by a subgroup of the Weyl group of $(\mathrm{Res}^{F^+}_{\Q} G)\times_\Q \Q_p$. And the subgroup is in turn determined by $p$-adic Hodge theoretic information of $\rho_{\widetilde{v}}:=\rho_{|_{G_{F_{\widetilde{v}}}}}$ for each $p$-adic places $v$ of $F^+$ with a chosen lift $\widetilde{v}$ of $F$. In other words, the answer is of purely  local (at $p$-adic places) nature.

In \cite{modular}, the authors patchs the eigenvarieties above, and obtain a reduced rigid analytic space $X_p(\overline{\rho})$ that is a union of irreducible components of $\prod_{v|p}X_\tri(\overline{\rho}_{\widetilde{v}})\times \mathbb{U}^g$, where $X_\tri(\overline{\rho}_{\widetilde{v}})$ is the trianguline deformation ring of the local representation $\overline{\rho}_{\widetilde{v}}$ (defined later) and $\mathbb{U}$ is an open unit disc. In fact, it is conjectured that $X_p(\overline{\rho})=\prod_{v|p}X_\tri^\square(\overline{\rho}_{\widetilde{v}})\times \mathbb{U}^g$. This suggests that vaguely, one can view the trianguline deformation ring as a limit of the eigenvarieties, up to a product of open unit disc. Thus one can expect a similar question of companion points for the trianguline variety to be more accessible. Just like the question of finding companion points on eigenvarieties can be thought of as a rational analogue of weight part of Serre's conjecture, their local versions: the question of finding companion points on the trianguline variety, and the question of determining the possible weights of cristalline lifts of a residual local Galois representation (or in the language of Emerton-Gee stack, the Serre weights associated to it \cite[Chapter 8]{EG}), are analogue to each other.

\begin{definition}(\cite[2.2]{modular}, \cite[3.7]{BHS})
Fix a continuous representation $\overline{r}: G_K\map GL_n(k)$, for some finite extension $K/\Q_p$. The trianguline variety $X_\tri(\overline{r})$ is defined as the Zariski closure of $U^\regu_\tri$ in  $\spf(R_{\overline{r}}^\square)^\ad_\eta\times \mathcal{T}^n$. Here, $R_{\overline{r}}^\square$ is the framed local deformation ring of $\overline{r}$, $\mathcal{T}$ is the character variety parametrizing characters $K^\times\map L^\times$, and $U^\regu_\tri$  is defined to be the set of  points $(r, \delta_1, \ldots, \delta_n)$ such that the $\phigamma$-module   $D_{\rig}(r)$ over the Robba ring $\rob_{L, K}$  has a filtration of  $\phigamma$-modules over $\rob_{L, K}$, whose graded pieces are given by rank $1$ $\phigamma$-modules $\rob_{L, K}(\delta_1), \ldots, \rob_{L, K}(\delta_n)$ respectively, and it is also required that all $\delta_i\delta_j^{-1}$ and $\epsilon\delta_i\delta_j^{-1}$ are in $\mathcal{T}_\regu$ (See Definition \ref{regular}, roughly means nonalgebraic), for $i\neq j$. 
\end{definition}

It is the operation of Zariski closure that makes the points in $X_\tri(\overline{r})\backslash U^\regu_{\tri}$ hard to describe. In analogy with the global situation, it is natural to try to describe companion points in the local situation, where we say two points $(r, \delta_1, \ldots, \delta_n)$ and $(r', \delta_1', \ldots, \delta_n')$ of $X_\tri(\overline{r})$ are companion points to each other if $r=r'$. We have two maps $p_1: X_\tri(\overline{r})\map \spf(R_{\overline{r}})^\ad_\eta$ and $p_2: X_\tri(\overline{r})\map \mathcal{T}^n$. Thus we seek a description of $p_2(p_1^{-1}(r))$ for any representation $r$.

To ease the notation, we will only work with the representations $r$ with regular integer Hodge-Tate weights in the introduction. The description of companions points of a representation $r$ will be in terms of data related to Grothendieck-Springer resolution, so we recall some of their construction before stating the main theorem.  Let  $\Sigma$ be the set of embeddings $K\inj L$. Define $\tilde{\mathfrak{g}}_n$ as the closed subscheme of $\mathfrak{gl}_n\times GL_n/B$
given by the points $\{(\psi, gB): \psi\in \mathrm{Ad}(g)(\mathfrak{b})\}$, here we let $\mathfrak{b}$ be the Lie algebra of $B$, the usual upper triangular Borel of $GL_n$, and all the groups and Lie algebra are over $L$.  We let $\tilde{\mathfrak{g}}:=\prod_{\tau\in \Sigma}\tilde{\mathfrak{g}}_n$. $\tilde{\mathfrak{g}}$ can also be viewed as the result of the above construction applied to the group $(\mathrm{Res}^K_{\Q_p}GL_n)_L$. Define $X:=\tilde{\mathfrak{g}}\times_{\prod_{\tau\in \Sigma}\mathfrak{gl}_n}\tilde{\mathfrak{g}}$ and $\mathcal{S}$ to be the absolute Weyl group of $(\mathrm{Res}^K_{\Q_p}GL_n)_L$. $\mathcal{S}$ is isormorphic to $\prod_{\tau\in \Sigma}\mathcal{S}_n$, where $\mathcal{S}_n$ is the usual permutation group acting on $\{1, \ldots, n\}$ and we shall write elements $w\in \mathcal{S}$ as $(w_\tau)_{\tau\in \Sigma}$ under this product. The irreducible components of $X$ are of the same dimension and are labelled by elements $w\in \mathcal{S}$. We write them as $X=\bigcup_{w\in \mathcal{S}}X_w$.

Given any continuous representation $r: G_K\map GL_n(L)$ with integer Hodge-Tate weights. If there exists a filtration $\mathcal{M}_\bullet$ on $\mathcal{M}:=D_\rig(r)\invertt$ by $\rob_{L, K}\invertt$ submodules stable under $\phigamma$-action, whose graded pieces are rank-$1$ $\phigamma$-modules over $\rob_{L, K}\invertt$, then we can define an associated point $x\in X(L)$ as the following: We consider the finite free $K\otimes_{\Q_p}L$-module $D_{\pdr}(r)$ of rank $n$ \cite{Fon}. By the construction of the functor $D_\pdr$, we have Fontaine's operator $N$ acting on $D_{\pdr}(r)$ and there are two filtrations on it: one is given by the usual Hodge filtration, the other is by applying $D_{\pdr}$ (it can actually be defined on the category of  $B_{\dR}$ representation of $G_K$) to the base change of $\mathcal{M}_\bullet$ along $\rob_{L, K}\invertt\map B_{\dR}$. Fontaine's operator preserves both filtration, and thus a choice of trivialization $(K\otimes_{\Q_p}L)^n\cong D_{\pdr}(r)$ pulls Fontaine's operator and the two filtrations back to define a point $x\in X(L)$. In particular, if $D_\rig(r)$ has a filtration $\Fil_\bullet$ of $\phigamma$-submodules over $\rob_{L, K}$, with rank $1$ graded pieces, we can set $\mathcal{M}_\bullet:=\Fil_\bullet\invertt$ and produce a point $x\in X(L)$ as above. This is what will happen in the conjecture below. In this case, we say $x$ is associated with $r$  and we define $\mathcal{S}(x)$ to be the subset of $\mathcal{S}$ consisting of $w$ such that $X_w$ pass through $x$.

The following conjecture is implicit in \cite[4.2]{BHS} and can be thought of as a local analogue of \cite[6.5]{versla} and \cite[Conjecture 1.2.5]{Hansen} . Although we state it here under the condition that the Hodge-Tate weights of $r$ are regular integral, there is a version  that works with arbitrary $r$ with regular non-integral weights, where the meaning of $X$ and the definition of the point $x\in X(L)$ associated with $r$ needs to be modified slightly, see the paragraph preceding Theorem \ref{maintheorem}. However, it is recommended that the reader could assume the weights are integral and skip Section \ref{formdefprob} to simplify the notation.

\begin{conj}\label{conjec}
Given a point $z=(r, \delta_1, \ldots, \delta_n)\in \spf (R^{\square}_{\overline{r}})^{\ad}_\eta(L) \times \mathcal{T}^n(L)$. Assume $r$ is regular  with integral $\tau$-Hodge-Tate-Sen weights $\{h_{\tau, 1}<\cdots <h_{\tau, n}\}$ for each $\tau\in \Sigma$.  The following conditions are equivalent:
\begin{enumerate}
    \item $z\in X_{\tri}(\overline{r})(L)$.
    \item $r$ is trianguline, having a triangulation with parameters $\delta_1', \ldots, \delta_n'$, such that  there exists a permutation $w\in \mathcal{S}(x)$, where $x$ is the associated point in $X(L)$ of $r$, such that for any $i\in \{1, \ldots, n\}$,
    \[
    \delta_i=\delta_i'\prod_{\tau\in \Sigma}x_\tau^{h_{\tau, w_{\tau}(i)}-\wt_\tau(\delta_i')}
    \]
     
\end{enumerate}
\end{conj}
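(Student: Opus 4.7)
The plan is to handle the two implications separately. The direction $(2)\Rightarrow(1)$ is constructive. Given a triangulation of $r$ with parameters $\delta_1',\ldots,\delta_n'$ and a Weyl element $w\in\mathcal{S}(x)$, I would deform $(r,\delta_1',\ldots,\delta_n')$ along the irreducible component $X_w$ of $X$ passing through the associated point $x$. Because $X_w$ is reachable from the regular stratum of $X$ by construction, one can produce a formal arc of points in $U^{\regu}_{\tri}$ with generic parameters that, in the limit, pick up exactly the twist $\prod_{\tau}x_\tau^{h_{\tau,w_\tau(i)}-\wt_\tau(\delta_i')}$ coming from the action of $w$ on Hodge--Tate weights. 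Taking Zariski closure places $z$ inside $X_{\tri}(\overline{r})$.

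The direction $(1)\Rightarrow(2)$ is the main content. The overall strategy is to build a local model for the formal completion $X_{\tri}(\overline{r})^\wedge_z$ in terms of the formal completion of $X$, and read off the formula from the labeling of components $X_w$. Concretely, I would first use the definition of $X_{\tri}(\overline{r})$ as a Zariski closure together with a limiting argument for $(\varphi,\Gamma)$-modules over $\rob_{L,K}$ (in the spirit of Kedlaya--Pottharst--Xiao, using finiteness of analytic cohomology under the regularity hypothesis that $\delta_i\delta_j^{-1},\epsilon\delta_i\delta_j^{-1}\in\mathcal{T}_{\regu}$) to extract from any arc of points in $U^{\regu}_{\tri}$ converging to $z$ a filtration $\mathcal{M}_\bullet$ on $D_{\rig}(r)\invertt$ with parameters $\delta_1',\ldots,\delta_n'$. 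Applying $D_{\pdr}$ to $\mathcal{M}_\bullet$ and pairing with the Hodge filtration then produces the point $x\in X(L)$ associated with $r$.

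Second, establish the local model: I would construct a smooth morphism from the product of the formal trianguline deformation space of $(D_{\rig}(r),\mathcal{M}_\bullet)$ with the formal completion $X^\wedge_x$ to $X_{\tri}(\overline{r})^\wedge_z$. The irreducible components of $X^\wedge_x$ are indexed by $w\in\mathcal{S}(x)$, and a direct computation on the $\mathcal{T}^n$ side, tracking how Hodge--Tate--Sen weights are permuted by $w$ along each $X_w$, identifies the component-$w$ contribution to the parameter of the $i$-th graded piece as $\delta_i'\prod_\tau x_\tau^{h_{\tau,w_\tau(i)}-\wt_\tau(\delta_i')}$. Matching components under the local model yields both the triangulation $\delta_\bullet'$ and the existence of some $w\in\mathcal{S}(x)$ satisfying the stated formula.

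The main obstacle, and the reason this refines \cite{BHS}, will be establishing this local model in the non-crystalline generality. In \cite{BHS} the local model uses weakly admissible filtered $(\varphi,N)$-modules, which reduces the geometry to classical algebraic geometry of Springer-type varieties. Here one must instead work directly with $D_{\pdr}$: Fontaine's $N$ preserves both the Hodge filtration and the filtration coming from $\mathcal{M}_\bullet$, and it is this double-filtration data that forces the fiber-product definition $X=\tilde{\mathfrak{g}}\times_{\prod_\tau\mathfrak{gl}_n}\tilde{\mathfrak{g}}$. Checking smoothness of the comparison morphism, and verifying that the higher-cohomology obstructions from deforming trianguline $(\varphi,\Gamma)$-modules over $\rob_{L,K}\invertt$ are killed precisely by the regularity assumption, is the technical core of the argument and the step I expect to occupy the bulk of the paper (presumably including the formal deformation problem referenced in Section~\ref{formdefprob}).
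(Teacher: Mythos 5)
Your sketch of $(1)\Rightarrow(2)$ is broadly in line with what the paper does: extract the triangulation of $D_{\rig}(r)[\tfrac{1}{t}]$ at $z$ via the global triangulation theorem, map $\widehat{X_{\tri}(\overline{r})}_z$ into the trianguline deformation groupoid, compare with the completion $\widehat{X}_x$ through the almost de Rham local model, and read off $w\in\mathcal{S}(x)$ and the weight formula from the component through which the weight map $\Theta$ factors (this is Proposition \ref{nece}, generalizing \cite[3.7.8]{BHS} with the twisted $D_{\pdr}$ formalism of Section \ref{formdefprob}). However, you have inverted where the difficulty lies, and your treatment of $(2)\Rightarrow(1)$ has a genuine gap. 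Deforming $(r,\mathcal{M}_\bullet)$ "along $X_w$" only produces points of formal deformation spaces, i.e.\ $A$-valued points for Artinian $A$; it does not produce honest rigid-analytic points of $U^{\regu}_{\tri}$ over finite extensions of $L$, and $X_{\tri}(\overline{r})$ is by definition the Zariski closure of such points in $\spf(R^{\square}_{\overline{r}})^{\ad}_\eta\times\mathcal{T}^n$. A "formal arc" through $z$ cannot be fed into that closure without an algebraization/spreading-out step, and even granting an analytic family through $z$ whose completion at $z$ sees the component $X_w$, you must still rule out that all its nearby classical points land in the degenerate locus (non-regular parameters, or triangulations that only exist after inverting $t$ with nontrivial algebraic twists), in which case the family would lie outside $U^{\regu}_{\tri}$ and its closure need not meet $X_{\tri}(\overline{r})$ at all. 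Nothing in your proposal addresses this, and it is exactly the point where the crystalline hypothesis was used in \cite{BHS} (via the auxiliary space of refined crystalline representations).

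The paper breaks this circularity differently: it constructs an ambient Zariski-locally-closed moduli space $M_c\subset \spf(R^{\square}_{\overline{r}})^{\ad}_\eta\times\mathcal{T}^n$ (Definition \ref{moduliprob}) containing $U^{\regu}_{\tri}$ and containing $z$ whenever condition (2) holds, identifies $\widehat{M}_{c,z}$ with $X_{r,\mathcal{M}_\bullet}\cap\Theta^{-1}(\widehat{T}_{w,(0,0)})$ (Proposition \ref{descpmc}), uses $w\in\mathcal{S}(x)$ to exhibit an irreducible component $Y$ of $M_c$ through $z$ of the maximal dimension $[K:\Q_p]\frac{n(n+1)}{2}+n^2$ (Proposition \ref{existmaxdim}), and then shows the complement $M_c^{\regu}\setminus U^{\regu}_{\tri}$ is covered by countably many lower-dimensional Zariski-locally-closed subspaces (Proposition \ref{covering}, Lemmas \ref{smallerd}, \ref{closedembed}); the no-countable-covering Lemma \ref{nocover} then forces $U^{\regu}_{\tri}\cap Y$ to be Zariski dense in $Y$, whence $Y\subset X_{\tri}(\overline{r})$ and $z\in X_{\tri}(\overline{r})(L)$. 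This dimension-plus-countability mechanism for producing enough genuine points of $U^{\regu}_{\tri}$ is the paper's central new idea and is absent from your proposal; without it, or a substitute argument that actually manufactures classical trianguline points with regular parameters specializing to $z$, the implication $(2)\Rightarrow(1)$ remains unproved.
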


\cite[Theorem 1.7]{BHS} proved this conjecture in the case where $r$ is cristalline regular. Let $w_x$ be the relative position of the two flags given by $x$, a necessary condition for $w\in \mathcal{S}(x)$ is that $w\succ w_x$ \cite[Lemma 2.2.4]{BHS}, in the Bruhat order of $\mathcal{S}$. Note that when $r$ is cristalline, Fontaine's operator is $0$ on $D_\pdr(r)$, and we have  that $w\succ w_x$ is indeed a sufficient condition for $w\in \mathcal{S}(x)$ by simply looking at the Bruhat cells in the slice $\{0\}\times \mathrm{Res}^K_{\Q_p}GL_n/B\times \mathrm{Res}^K_{\Q_p}GL_n/B$. This is how the above conjecture specializes to the formulation in  \cite{BHS} in the cristalline case. We shall also mention the work \cite{Wu} that deals with the case of non-regular cristalline representations. \cite[Corollary 3.7.8]{BHS} also proved $(1)\Rightarrow (2)$, in the case where $r$ has regular integral Hodge-Tate-Sen weights. They constructed a smooth model of the local deformation rings at a point $z\in X_{\tri}(\overline{r})(L)$, in terms of Grothendieck-Springer resolution. We will use this idea heavily in this paper.

The main result of this paper is the following.

\begin{theorem}
Conjecture \ref{conjec} is true. And there is a version for general $r$ which is also true (See Theorem \ref{maintheorem}).
\end{theorem}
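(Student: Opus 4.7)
The plan is to prove the two implications of Conjecture \ref{conjec} separately, with $(2) \Rightarrow (1)$ constituting the bulk of the work. For $(1) \Rightarrow (2)$ in the case of regular integral Hodge-Tate weights this is exactly \cite[Corollary 3.7.8]{BHS}, so the only substance in this direction is extending the same argument to arbitrary regular (possibly non-integral) Hodge-Tate-Sen weights, which requires reformulating $\tilde{\mathfrak{g}}$ and the associated point $x$ in the style of Section \ref{formdefprob}; the key compatibility is that the same smooth local model of $X_{\tri}(\overline{r})$ built there still surjects onto a Grothendieck-Springer type scheme, so the necessary condition $w\in \mathcal{S}(x)$ is forced component-by-component.

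For the direction $(2) \Rightarrow (1)$, fix a triangulation of $r$ with parameters $(\delta_1', \ldots, \delta_n')$, its associated point $x \in X(L)$, and a permutation $w \in \mathcal{S}(x)$; write $z_w = (r, \delta_1, \ldots, \delta_n)$ for the companion point produced by the displayed formula. Since $X_{\tri}(\overline{r})$ is by definition the Zariski closure of $U_{\tri}^{\regu}$, it suffices to produce a formal family $\{z_t\}$ with generic fiber in $U_{\tri}^{\regu}$ and special fiber $z_w$. First I would construct such a family on the algebro-geometric side: because $w \in \mathcal{S}(x)$, there is a smooth curve $C \subset X_w$ through $x$ whose generic point corresponds to data $(\psi, \mathrm{Fil}^1, \mathrm{Fil}^2)$ (Fontaine operator plus a pair of flags) in relative position $w$ that is as generic as $X_w$ permits. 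The next step is to lift $C$ to a genuine formal family of $\phigamma$-modules over $\rlk$, which is achieved by a generalization of BHS's smooth local model (\cite[\S 3]{BHS}): one constructs a deformation functor simultaneously recording the framed Galois deformation of $r$, the deformation of the triangulation, and the deformation of Fontaine's operator together with the Hodge filtration on $D_{\pdr}(r)$; the resulting formal scheme admits a smooth morphism to the completion $\hat{X}_{x}$ whose image meets every component $X_w$ with $w \in \mathcal{S}(x)$. Pulling $C$ back along this smooth morphism produces the desired family, and the non-integral case (Theorem \ref{maintheorem}) follows by running the same argument with the modified $X$ and $x$ of Section \ref{formdefprob}.

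The principal obstacle is the construction of the smooth local model in sufficient generality. In \cite{BHS} the model is set up at the original $z$ and the relation to the Grothendieck-Springer resolution is used essentially only to extract the necessary condition $w \succ w_x$. To prove sufficiency for every $w \in \mathcal{S}(x)$ one must retain Fontaine's operator $N$ in the deformation data throughout, since it is precisely the presence of $N$ that causes $\mathcal{S}(x)$ to be strictly smaller than $\{w : w \succ w_x\}$ outside the cristalline case, and any lifting of $C$ that forgets $N$ will leave the cell $X_w$ for a larger one. One therefore needs to verify that the curve $C \subset X_w$ can be lifted to a family whose Fontaine operator and Hodge filtration deform compatibly and whose generic $\phigamma$-modules are trianguline with parameters in the regular locus; arranging these compatibilities, together with the bookkeeping demanded by non-integral Hodge-Tate-Sen weights and by the passage from the unframed situation to $\spf(R^{\square}_{\overline{r}})^{\ad}_\eta$, is the main technical hurdle of the paper.
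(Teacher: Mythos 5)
Your treatment of $(1)\Rightarrow(2)$ is consistent with the paper: that direction is indeed a twisted/non-integral extension of \cite[Corollary 3.7.8]{BHS}, carried out via the almost de Rham deformation formalism (this is Proposition \ref{nece}). The problem is with your strategy for $(2)\Rightarrow(1)$, which has a genuine gap at its central step.

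You propose to choose a curve $C\subset X_w$ through $x$, lift it along the (formally) smooth morphism from a deformation groupoid to $\widehat{X}_x$, and thereby obtain a ``formal family'' with special fibre $z$ and generic fibre in $U_\tri^\regu$, concluding $z\in X_\tri(\overline{r})$. But all of the local-model machinery you invoke --- the groupoids recording the framed deformation of $r$, the triangulation, Fontaine's operator and the filtrations, together with the smooth map to $\widehat{X}_x$ --- lives over the category $\mathcal{C}_L$ of Artinian $L$-algebras (or pro-objects thereof), so the only thing you can pull back along it is the formal completion of $C$ at $x$. An Artinian or $L[[t]]$-valued arc whose ``generic fibre'' is trianguline with regular parameters does not produce classical points of $U_\tri^\regu$ in the rigid space $\spf(R^{\square}_{\overline{r}})^{\ad}_\eta\times\mathcal{T}^n$ accumulating at $z$, and membership in $X_\tri(\overline{r})$, which is by definition the Zariski closure of the set of such classical points, requires exactly that; some algebraization or approximation step would be needed, and none is indicated (nor is it clear how to ensure the generic parameters of such a lift avoid algebraic ratios). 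This is precisely the difficulty the paper's argument is built to circumvent: instead of an arc, it introduces the global Zariski-locally-closed moduli spaces $M_c$ (Definition \ref{moduliprob}) containing $U_\tri^\regu$, identifies $\widehat{M}_{c,z}$ with $X_{r,\mathcal{M}_\bullet}\cap\Theta^{-1}(\widehat{T}_{w,(0,0)})$ (Proposition \ref{descpmc}), shows this has a component of maximal dimension $[K:\Q_p]\frac{n(n+1)}{2}+n^2$ precisely when $w\in\mathcal{S}(x)$ (Proposition \ref{existmaxdim}), and then forces the resulting component $Y$ of $M_c$ through $z$ to meet $U_\tri^\regu$ densely: the locus $M_c^\regu\setminus U_\tri^\regu$ is covered by countably many locally closed subspaces of strictly smaller dimension (Proposition \ref{covering}, Lemmas \ref{smallerd} and \ref{closedembed}), and no rigid space can be covered by countably many Zariski-closed subspaces of smaller dimension (Lemma \ref{nocover}). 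Without a replacement for this globalization-plus-countable-covering argument, your outline does not establish $z\in X_\tri(\overline{r})(L)$.
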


Here is  a sketch of the proof. In Section \ref{formdefprob}, we construct deformation spaces of $B_\dR$ representations and all the relevant ones involved in defining the local model, in the broader generality of arbitrary regular weights. From there a similar argument as in \cite[Corollary 3.7.8]{BHS} shows $(1) \Rightarrow (2)$ (See Proposition \ref{nece}). The hard part is to prove $(2)\Rightarrow (1)$, without any cristalline assumption. 

In \cite[Theorem 4.2.3]{BHS}, the authors crucially used a moduli space of refined cristalline representations $\widetilde{\mathfrak{X}}_{\overline{r}}^{\mathbf{h}-\mathrm{cr}}$   which can be described by a space of filtered $\varphi$-module. They constructed  an explicit map from a strata $\widetilde{W}_{\overline{r}, w}^{\mathbf{h}-\mathrm{cr}}$ of it  to $U_\tri^\regu$, associating to a cristalline representation the triangulation induced by a refinements of its $\varphi$-eigenvalues. The map then automatically extend to Zariski closures $\overline{\widetilde{W}_{\overline{r}, w}^{\mathbf{h}-\mathrm{cr}}}\map X_{\tri}(\overline{r})$ and yields explicit points in $X_{\tri}(\overline{r})\backslash U_\tri^\regu$.

We give a completely different approach, working with the trianguline variety itself only, and thus can be made general. Fix a point $z=(r, \delta_1, \ldots, \delta_n)\in \spf (R^{\square}_{\overline{r}})^{\ad}_\eta(L) \times \mathcal{T}^n(L)$. For each $c\in \N$, we construct a moduli problem $M_c$ (Definition \ref{moduliprob}) that roughly classifies points $(r, \delta_1, \ldots, \delta_n)$ such that $D_\rig(r)\invertt$ has a triangulation with parameter $\delta_1, \ldots, \delta_n$ and that the $\tau$-weights of $r$ matches the $\tau$-Hodge-Tate weights $\{\wt_\tau(\delta_1), \ldots, \wt_\tau(\delta_n)\}$. We prove that  $M_c$ is a Zariski locally closed subspace of $\spf (R^{\square}_{\overline{r}})^{\ad}_\eta(L) \times \mathcal{T}^n(L)$ for any $c$. We have $U_\tri^\regu\subset M_c$ for each $c$, and  $M_c$ can be thought of as a first guess to what the closure of $U_\tri^\regu$ are, although they can be much larger than the actual closure $X_\tri(\overline{r})$.  If the condition (2) is satisfied for our fixed point $z$, then it is easy to see $z\in M_c(L)$ for $c$ large enough. We need to narrow it down to $X_\tri(\overline{r})$. 

The crucial idea is to use a dimension argument to produce enough points in $U_\tri^\regu$. This is done by analyzing the local deformation ring given by the completion $\widehat{M}_{c, z}$ at $z$ and hence the local geometry of $M_c$ near $z$: Proposition \ref{descpmc} gives a complete description of $\widehat{M}_{c, z}$ in terms of Grothendieck-Springer resolution. In particular, it has a smooth model given by a closed subset of $\widehat{X}_x$, the completion of $X$ at the point $x$ that is associated with our point $z$. Proposition \ref{existmaxdim} then shows that if we have the $w\in \mathcal{S}(x)$ as in condition (2), the irreducible component of maximal dimension $\widehat{X}_{w, x}$ appears in the smooth model, and $\widehat{M}_{c, z}$ has an irreducible component of maximal dimension $=\dim X_\tri(\overline{r})$. This tells us that $M_c$ has an irreducible components $Y$ of maximal dimension passing through $z$. Thus we shall conclude if we can prove $U_\tri^\regu\cap Y$ is dense in $Y$. By the construction of $M_c$, we also prove that those "junk" points  $M_c\backslash U_\tri^\regu$ are covered by a countable union of Zariski locally closed subspace of dimension $<\dim X_\tri(\overline{r})$ (Proposition \ref{covering} and Lemma \ref{closedembed}). However, if $U_\tri^\regu\cap Y$ is not dense in $Y$, its closure is of smaller dimension and $Y\subset M_c$ can  thus be covered by a countable union of Zariski closed subspace of  dimension $<\dim X_\tri^\square=\dim Y$. This is absurd by Lemma \ref{nocover}. Hence, $Y\subset X_\tri(\overline{r})$, the closure of $U_\tri^\regu$, and in particular $z\in X_\tri(\overline{r})(L)$.

\subsection*{Notations and Conventions}
We fix a $p$-adic local field $K$, and we will only consider Galois representations of $G_K$ in this paper. Also fix a $p$-adic local field $L$, that will be our coefficients field, and we assume it to be large enough.  In particular, we require all embeddings $\tau: K\map \overline{\Q}_p$ have image in $L$. Set $\Sigma:=\mathrm{Hom}(K, L)$ be the set of all embeddings. Let $k$ be the residue field of $L$. Denote by $C$ the completion of $\overline{\Q}_p$. We often use $A$ or $R$ to denote an $L$-Banach algebra. The convention of Hodget-Tate-Sen weights will be that cyclotomic character have Hodge-Tate weight $1$. A continuous representation $r:G_K\map GL_n(L)$ is called regular if for any embedding $\tau: K\map L$, the $\tau$-Hodge-Tate-Sen polynomial has distinct roots in $A$. For a continuous character $\delta: K^\times\map R^\times$, the $\tau$-weight $\wt_\tau(\delta)$ is defined such that $e_\tau$ acts on $R$  by the scalar $\wt_\tau(\delta_i)$, where $e_\tau$ is the idempotent corresponding to the $\tau$-factor in the decomposition $(\mathrm{Lie \Of_K^\times})\otimes_{\Q_p} R\cong K\otimes_{\Q_p}R\cong \bigoplus_{\tau\in \Sigma} R$, which acts on $R$ by differentiating the action of $\Of_K^\times$ via $\delta$. For any $\phigamma$-module $D$ over $X$, cohomology groups are defined as in \cite{KPX}. We denote these by $H^\ast_{\hphigamma}(D)$.

Following \cite{KPX}, a $\phigamma$-module over a Banach algebra $A$ is a finite projective module over $\rob_{A, K}$ with commuting  semilinear linear actions of $\varphi$ and $\Gamma_K$. Here $\rob_{A, K}$ is the relative Robba ring defined as in \cite[Definition 2.2.2]{KPX}. We sometimes also call it a  $\phigamma$-module over $\rob_{A, K}$. For any continuous representation $r: G_K\map GL_n(A)$, there is an associated $\phigamma$-module $D_\rig(r)$ over $A$. Let $t=\log (1+T)\in \rob_{L, K}$. It decomposes as $t=\prod_{\tau\in \Sigma}t_\tau$ in $\rob_{L, K}$, see the paragraph in \cite{Bergdall} before Section 2.5.  We define a $\phigamma$-module over $\rob_{A, K}\invertt$ (resp. $\rob_{A, K}/t$) to be a finite projective module over $\rob_{A, K}\invertt$ (resp. $\rob_{A, K}/t$) with commuting  semilinear linear actions of $\varphi$ and $\Gamma_K$. We say a $\phigamma$-module $M$ over $\rob_{A, K}$ (resp. $\rob_{A, K}\invertt$) is trianguline with parameters $\delta_1, \ldots, \delta_n$ if it admits a filtration $\Fil_\bullet$ of $\phigamma$-modules over $\rob_{A, K}$ (resp. $\rob_{A, K}\invertt$), such that the graded pieces $\gr^i_{\Fil_\bullet}\cong \rob_{A, K}(\delta_i)$ (resp. $\rob_{A, K}(\delta_i)\invertt$) for each $i$ (See \cite[Construction 6.2.4]{KPX} for the definition of the character twist $\rob_{A, K}(\delta_i)$). This filtration is also called a triangulation of $M$ (in both cases). There is a globalized version of  all the notions defined above, which works for a general rigid analytic space $X$. See \cite{KPX} for the definition of $\rob_X^{[a, n]}$ and $\rob_X^r$.

For any rigid analytic space $X$. Given a classical point $z\in X$, we let $\kappa_z$ be the residue field at $z$ and $\widehat{X}_z$ be the completion of $X$ at $z$. Let $\mathcal{C}_L$ be the category of finite dimensional local Artinian $L$-algebra with residue field $L$.  And for any coherent sheaf $C$ on $X$, let $C_z$ be its base change to $\kappa_z$. More generally, for any map $\maxi(S)\map X$ and a coherent sheaf $C$ (resp. map of coherent sheaf $f:  C\map D$) on $X$, let $C_S$ (resp. $f_S: C_S\map D_S$) denote the base change of $C$ (resp. $f$) to $\maxi(S)$.

For any formal scheme $\spf(R)$ over $\Z_p$, we let $(\spf R)^{\ad}_\eta$ be its rigid generic fiber.

Let $D_\pdr$ be Fontaine's almost de Rham functor \cite{Fon}. Let $D_{\mathrm{HT}}$ (resp. $D_{\mathrm{Sen}}$) be the Hodge-Tate functor (resp. Sen's decompletion) on a semilinear representation over $C$. We abuse notation to let $D_{\mathrm{Sen}}(V):=D_{\mathrm{Sen}}(V\otimes C)$ for a local Galois representation $V$. And we set $D_{\mathrm{Sen},n}(M):=M\otimes_{\rob_{L, K}, \theta\circ \varphi^{-n}}(L\otimes_{\Q_p}K(\zeta_{p^n}))$  for a $\phigamma$-module $M$ over $\rob_{L, K}$ and large enough $n$.

Let $\epsilon$ be the cyclotomic character. We say a character $\chi: G_K\map L^\times$ is algebraic if it is given by $x\mapsto \prod_{\tau\in \Sigma}\tau(x)^{a_\tau}$ when viewed as a character $K^\times\map L^\times$ via local class field theory, where $a_\tau\in \Z$ for any $\tau$. Let $\mathcal{T}$ be the rigid analytic space parametrizing continuous characters of $K^\times$ and $\mathcal{T}^n$ the $n$-fold product of it. 

For any ring $R$ and a free module $M$ of rank $d$ with a submodule $M_1$ such that $M/M_1$ is free of rank $i$, there is an associated map $f: \wedge^iM\map R$ and this  map uniquely determines $M_1$. We will say $M_1$ is given by $f$ if $f$ is the associated map as above.

Let $\widetilde{\mathfrak{g}}_n$ be the Grothendieck Springer resolution associated to the group $GL_{n /L}$ defined as in the Introduction.

\subsubsection*{Acknowledgement} I would like to thank Matt Emerton, Eugen Hellmann and Richard Taylor for their interest in this project and valuable discussions. Thanks also to Kevin Lin for a helpful discussion on Grothedieck-Springer resolution. 

\section{Definition of a moduli problem}\label{modu}
\begin{lemma}
Let $X$ be a rigid $L$-analytic space and $\phi: C\map D$ be a homomorphism of locally free coherent sheaves on $X$. Assume that for any classical points $z\in X$, the kernel of the map $\phi_z: C_z\map D_z$ is a $\kappa_z$-vector space of dimension $\leq 1$. Consider the (Zariski sheafification of) moduli problem $\mathcal{P}$ over $X$ that associates to any $f: \mathrm{Max}(S)\map X$ the set of sections $s\in \ker (\phi_S)$ such that $s$ generates  $\ker (\phi_S)$ as a rank-$1$ free module over $S$.

Then $\mathcal{P}$ is represented by a $\mathbb{G}_m$-torsor $\mathcal{L}$ over a Zariski-closed analytic subspace $Y$ of $X$
\end{lemma}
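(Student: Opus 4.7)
The plan is to work Zariski-locally on $X$ and reduce to the case where $C = \mathcal{O}_X^n$ and $D = \mathcal{O}_X^m$ are free, with $\phi$ represented by a matrix $A \in M_{m \times n}(\mathcal{O}_X(X))$. I would define $Y \subset X$ as the Zariski-closed analytic subspace cut out by the ideal generated by the $n \times n$ minors of $A$ (locally this is the $(n-1)$-th Fitting ideal of $\mathrm{coker}(\phi)$, hence globally well-defined). The first step is to show that any $f : \mathrm{Max}(S) \to X$ for which $\mathcal{P}(f)$ is nonempty factors through $Y$: a generator $s$ of a free rank-one kernel $\ker(\phi_S)$ has nonzero image modulo $\mathfrak{m}_{s'}$ at every classical point $s'$ of $\mathrm{Max}(S)$, producing a nontrivial element of $\ker(\phi_{f(s')})$, so $f(s') \in Y$ for every classical $s'$, and hence $f$ factors through the Zariski closure $Y$.

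Next I would prove that $\ker(\phi|_Y)$ is a line bundle $\mathcal{L}$ on $Y$, locally a direct summand of $C|_Y$. Since $\ker(\phi_y)$ is exactly one-dimensional for every classical $y \in Y$, some $(n-1) \times (n-1)$ minor of $A$ is a unit at $y$; the rigid analytic Nullstellensatz (together with passage to small enough affinoid subdomains) lets me cover $Y$ by affinoid opens $Y_\alpha$ on which a chosen such minor $M_\alpha$ is invertible. On $Y_\alpha$ I would run the standard Cramer's rule reduction: after appropriate row and column operations (invertible because $M_\alpha$ is a unit), the matrix $A|_{Y_\alpha}$ takes the form $\left(\begin{smallmatrix} I_{n-1} & v_1 \\ 0 & v_2 \end{smallmatrix}\right)$; the defining relations of $Y$, i.e.\ the vanishing of the $n \times n$ minors involving the rows of $v_2$, force $v_2 = 0$ in $\mathcal{O}(Y_\alpha)$, after which $v_1$ can be cleared by a further column operation.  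Thus $\phi|_{Y_\alpha}$ is split, with kernel the free rank-one direct summand spanned by an explicit cofactor vector $s_\alpha$.

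Because $\ker(\phi|_{Y_\alpha})$ is a direct summand, its formation commutes with arbitrary base change along $f : \mathrm{Max}(S) \to Y_\alpha$: $\ker(\phi_S) = f^* \mathcal{L}_\alpha$ is free of rank one, so $\mathcal{P}|_{Y_\alpha}$ is identified with the functor of trivializations of $\mathcal{L}_\alpha$, represented by the complement of the zero section in the line bundle $\mathcal{L}_\alpha$. Gluing the $\mathcal{L}_\alpha$ along overlaps yields a global line bundle $\mathcal{L}$ on $Y$, and its total space minus the zero section is a $\mathbb{G}_m$-torsor representing $\mathcal{P}$.

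The main obstacle, as I see it, is ensuring that the scheme-theoretic structure on $Y$ coming from the Fitting ideal is precisely what is needed for the Cramer's rule cancellation to force $v_2 = 0$ at the level of rings, not merely pointwise; this is what upgrades the pointwise one-dimensionality of $\ker(\phi_y)$ to genuine local freeness of $\ker(\phi|_Y)$. Once this local splitting is verified, the remainder is a routine gluing argument and standard representability of the torsor of trivializations of a line bundle.
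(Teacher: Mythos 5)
Your construction of $Y$ (the ideal of $n\times n$ minors), the Cramer-style normal form, and the torsor-of-trivializations endgame are the same as the paper's, but the crucial universality step has a genuine gap. You deduce that $f$ factors through $Y$ from the fact that every classical point of $\mathrm{Max}(S)$ lands in $Y$; this inference fails because $Y$ is in general non-reduced (the example in the remark following Corollary \ref{locclosed}: $\phi$ multiplication by $T^2$ on $\mathrm{Max}(L\langle T\rangle)$ gives $Y=\mathrm{Max}(L[T]/T^2)$), and $S$ may be non-reduced as well, so containment of classical points in the support of $Y$ does not imply that the ideal of minors pulls back to zero in $S$. Since representability by a $\mathbb{G}_m$-torsor over $Y$ requires precisely this scheme-theoretic factorization whenever $\mathcal{P}(f)\neq\emptyset$, this is the heart of the lemma rather than a formality. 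Moreover, the preceding assertion that a generator $s$ of a free rank-one kernel is nonzero modulo every $\mathfrak{m}_{s'}$ is not a formal consequence of freeness: over $S=L\langle T_1,T_2\rangle$ the map $S^2\to S$, $(a,b)\mapsto T_2a-T_1b$, has kernel freely generated by $(T_1,T_2)$, which vanishes at the origin. Ruling this out uses the fiberwise rank bound, i.e.\ essentially the normal-form analysis that you only perform later and only over $Y$.

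The paper closes exactly this gap algebraically: the rank hypothesis on $X$ makes the $(n-1)\times(n-1)$ minors generate the unit ideal, so after shrinking $X$ and row reduction the matrix becomes $\Phi'$ with identity block $I_{n-1}$, upper right column $B'$ and lower right column $D'=[d_1,\ldots,d_{m-n+1}]^T$; pulling back along $f$, projection to the last coordinate identifies $\ker(\phi_S)$ with $\mathrm{Ann}_S\bigl((d_1,\ldots,d_{m-n+1})\bigr)$. If this annihilator is free of rank one, the isomorphism $h\colon S\to\mathrm{Ann}_S$ satisfies $h(d_i)=d_ih(1)=0$, so injectivity of $h$ forces $d_i=0$ in $S$ itself — which is the scheme-theoretic factorization through $Y$, and which also shows the generator has unit last coordinate, repairing the pointwise claim. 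Your step 2 (forcing $v_2=0$ in $\mathcal{O}(Y_\alpha)$ and splitting off the kernel as a line bundle) and the gluing are fine and agree with the paper; note only that the ideal of $n\times n$ minors is $\mathrm{Fitt}_{m-n}$ of $\mathrm{coker}(\phi)$, not the $(n-1)$-st Fitting ideal, though all you use is its global well-definedness.
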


\begin{proof}
We may work locally. So we assume $X=\mathrm{Max}(R)$  for a $L$-Banach algebra $R$,  $C=R^n$, $D=R^m$ and $\phi$ is given by a matrix $\Phi\in M_{m\times n}(R)$. It follows from linear algebra and the dimension assumption on the kernel that at each classical points $z\in X$, the image of the Fitting ideal $\mathrm{Fitt}_1(\Phi)$ (the ideal generated by all $(n-1)\times (n-1)$ minors) is nonzero. Thus $\mathrm{Fitt}_1(\Phi)=(1)$. We can further shrink $X$ and switch basis to assume that $\Phi$ can be written in block form
\[
\left( 
\begin{matrix}
    A& B\\
    C& D
\end{matrix}
\right)
\]
where $A$ is an $(n-1)\times (n-1)$ invertible matrix. One can further left multiply $\Phi$ by the invertible $m\times m$ matrix
\[
\left( 
\begin{matrix}
    A^{-1}& 0\\
    -CA^{-1}& I_{m-n}
\end{matrix}
\right)
\]
and the product is an $m\times n$ matrix $\Phi'$ of the form 
\[
\left( 
\begin{matrix}
    I_{n-1}& B'\\
    0& D'
\end{matrix}
\right)
\]
where $B'$ (resp. $D'$) is a column vector of length $n-1$ (resp. $m-n+1$). Write $D'=[d_1, \ldots, d_{m-n+1} ]^{T}$. Then for any $f: R\map S$, we have $\ker \Phi_S=\ker \Phi'_S=\mathrm{Ann}_S((d_1, \ldots, d_{m-n+1}))$ by taking the last coordinate in $C_S=S^n$. If $s\in \ker \Phi_S$ generates it as a rank-$1$ free module, one has an induced isomorphism of $S$-modules $h: S\map \mathrm{Ann}_S((d_1, \ldots, d_{m-n+1}))$. This implies all $d_i=0$ in $S$ because otherwise for some nonzero $d_i$, $h(d_i)=d_ih(1)=0$. On the other hand, it is clear that all $d_i=0$ implies that $\ker \Phi_S$ is a rank-$1$ free $S$-module. Thus $Y=\mathrm{Max}(R/(d_1, \ldots, d_{m-n+1}))$ and  the moduli problem $\mathcal{P}$ is represented by $Y\times \mathbb{G}_m$ locally.
\end{proof}

\begin{cor}\label{locclosed}
Let $X$ be a rigid $L$-analytic space and $\phi: C\map D$ be a homomorphism of locally free coherent sheaves on $X$. There exists a unique Zariski locally closed analytic subspace $Y$ of $X$ such that for any $f:\maxi(S) \map X$, the $S$-module $\ker(\phi_S)$ is projective of rank $1$ if and only if $f$ factor through $Y$.
\end{cor}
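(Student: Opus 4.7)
The plan is to reduce to the previous lemma by first restricting $X$ to the Zariski open subspace on which the lemma's hypothesis on fiber kernel dimensions is satisfied. I would work Zariski locally on $X$, assuming $X = \maxi(R)$, $C = R^n$, $D = R^m$, and $\phi$ given by a matrix $\Phi \in M_{m \times n}(R)$. Let $X_0 \subset X$ be the complement of the vanishing locus of the Fitting ideal $\mathrm{Fitt}_1(\Phi)$ generated by all $(n-1) \times (n-1)$ minors of $\Phi$; this is a Zariski open subspace of $X$. At every classical point $z \in X_0$, the rank of $\Phi$ at $z$ is at least $n-1$, equivalently $\dim_{\kappa(z)} \ker(\phi_z) \leq 1$, so the hypothesis of the previous lemma is satisfied on $X_0$. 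Applying that lemma to $\phi|_{X_0}$ produces a Zariski closed analytic subspace $Y \subset X_0$ together with a $\mathbb{G}_m$-torsor $\mathcal{L}$ on $Y$ representing the moduli problem of generators of the kernel as a rank-$1$ free module. Since $Y$ is closed inside the open $X_0$, it is Zariski locally closed in $X$.

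I claim this $Y$ is the desired subspace. For the direction ``$f$ factors through $Y \Rightarrow \ker(\phi_S)$ projective of rank $1$'', the $\mathbb{G}_m$-torsor $\mathcal{L}$ pulled back along $f$ is Zariski locally trivial on $\maxi(S)$, furnishing local generators of $\ker(\phi_S)$ as a rank-$1$ free $S$-module. Hence $\ker(\phi_S)$ is Zariski locally free of rank $1$, therefore projective of rank $1$ over $S$. Conversely, suppose $\ker(\phi_S)$ is projective of rank $1$. I would first argue that at every classical point $w \in \maxi(S)$ the fiber map $\phi_{f(w)}$ has rank exactly $n-1$, so that $f$ factors through $X_0$; once this is done, a local generator of $\ker(\phi_S)$ as a rank-$1$ free module realizes the moduli problem of the lemma over $\maxi(S)$, forcing $f$ to factor through $Y$ by the representability established there. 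Uniqueness of $Y$ is then standard, obtained by applying the functorial characterization to the inclusions of any two candidates as test morphisms.

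The main obstacle is the converse direction, specifically the step showing that the abstract projectivity of $\ker(\phi_S)$ of rank $1$ forces the pointwise kernel dimension $\dim_{\kappa(w)} \ker(\phi_{f(w)})$ to be at most $1$ at every classical $w \in \maxi(S)$, so that $f$ factors through $X_0$. The embedding of the $1$-dimensional fiber of $\ker(\phi_S)$ into $\ker(\phi_{\kappa(w)})$ (valid by flatness of the projective module $\ker(\phi_S)$) only supplies the inequality $\dim \ker(\phi_{f(w)}) \geq 1$; the reverse inequality requires exploiting that a rank-$1$ projective submodule of $S^n$ is Zariski locally a direct summand of $S^n$, so that one can extract a complementary rank-$(n-1)$ free submodule along which $\Phi_S$ is injective after any base change, thereby producing a $(n-1) \times (n-1)$ minor of $\Phi_S$ that becomes a unit in a Zariski neighborhood of $w$.
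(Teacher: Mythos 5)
Your construction of $Y$ is the same as the paper's: pass to the Zariski open subspace $X_0$ where $\mathrm{Fitt}_1(\Phi)$ generates the unit ideal and apply the preceding lemma there, and your argument for the direction ``$f$ factors through $Y$ $\Rightarrow$ $\ker(\phi_S)$ projective of rank $1$'' is fine. The genuine gap is exactly the converse step you flag, and the repair you sketch does not work. First, a rank-$1$ projective kernel of $\phi_S$ need not be a local direct summand of $S^n$: the quotient $S^n/\ker(\phi_S)$ is the image of $\phi_S$, which is torsion-free but in general not projective. More seriously, even when the kernel is a direct summand, injectivity of $\Phi_S$ on a complementary rank-$(n-1)$ submodule is not preserved under base change and produces no unit $(n-1)\times(n-1)$ minor near $w$. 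Concretely, take $S=R=L\langle T\rangle$, $f=\mathrm{id}$, and $\Phi=(T,\ 0)\colon S^2\to S$, i.e.\ $\phi_S(x,y)=Tx$: the kernel $0\oplus S$ is free of rank $1$ and even a direct summand of $S^2$, yet at the classical point $T=0$ the fibre map is zero, $\dim_{\kappa_w}\ker(\phi_{\kappa_w})=2$, the $1\times1$ minors are $T$ and $0$ so none is a unit near that point, and $\Phi_S$ restricted to the complement $S\oplus 0$ is injective but becomes $0$ after base change to $\kappa_w$. So the implication ``$\ker(\phi_S)$ projective of rank $1$ $\Rightarrow$ $\dim_{\kappa_w}\ker(\phi_{f(w)})\le 1$ at every classical $w$'' is false, and no argument can establish it; your factorization through $X_0$ cannot be obtained this way. (Your parenthetical that the fibre of $\ker(\phi_S)$ embeds into $\ker(\phi_{\kappa_w})$ ``by flatness'' is also not quite right --- what is needed is vanishing of the relevant $\mathrm{Tor}_1$ of the image of $\phi_S$ --- but that inequality is not where the difficulty lies.)

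You should be aware that the paper's own proof disposes of this same step in a single sentence (``thus $f$ must factor through $U$''), which is only legitimate if the moduli condition is read as being imposed after arbitrary base change $\mathrm{Max}(S')\to\mathrm{Max}(S)$, equivalently if one also requires $\dim_{\kappa_w}\ker(\phi_{\kappa_w})\le 1$ at every classical point of $\mathrm{Max}(S)$; taking $S'=\kappa_w$ then gives the pointwise bound, hence $\mathrm{Fitt}_1(\Phi_S)=(1)$ and the factorization through $U$, after which your appeal to the representability statement of the lemma over $X_0$ finishes the proof exactly as in the paper. So the correct way to complete your argument is to adopt that stronger (base-change stable) reading of the condition --- which is how the corollary is used later, cf.\ the Remark following it --- rather than to try to deduce the pointwise kernel bound from projectivity of $\ker(\phi_S)$ over $S$ alone, which the example above shows is impossible.
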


\begin{proof}
We work locally and assume $X=\mathrm{Max}(R)$  for a $L$-Banach algebra $R$,  $C=R^n$, $D=R^m$ and $\phi$ is given by a matrix $\Phi\in M_{m\times n}(R)$.  We see that $\mathrm{Fitt}_1(\Phi_S)=(1)$ is equivalent to $\ker(\Phi_z)$ is of dimension $\leq 1$ for any classical points $z\in \maxi(S)$. Thus $f$ must factor through the Zariski open analytic subspace $U$ given by the complement of the vanishing locus of $\mathrm{Fitt}_1(\Phi)$. Now we argue as in the above lemma, working with $U$ instead of $X$, and find a universal Zariski closed analytic subspace $Y$ of $U$ where $\ker(\phi_S)$ is projective of rank $1$. 
\end{proof}

\begin{remark}
\begin{enumerate}
    \item In the setting of the above corollary, it is straightforward to see that the set of classical points $z\in Y$ is precisely the set of closed points $z\in X$ such that  $\ker \phi_z$ is of dimension $1$ over $\kappa_z$. However, this property does not uniquely determine $Y$ as $Y$ might be nonreduced. For example, one can let $X:=\maxi(L\langle T\rangle)$ and $\phi: \Of_X\map \Of_X$ be  the multiplication of $T^2$. Corollary \ref{locclosed} will produce $Y=\maxi(L[T]/T^2)$.
    \item To ease the notation, when we write "there is a universal Zariski-locally-closed analytic subspace $Y$ of $X$ satisfying property $\mathcal{P}$", we mean "There exists a unique Zariski locally closed analytic subspace $Y$ of $X$ such that for arbitrary (not necessarily locally closed) $f:\maxi(S) \map X$, property $\mathcal{P}$ holds over $\maxi(S)$ if and only if $f$ factor through $Y$".
\end{enumerate}
\end{remark}

\begin{cor}\label{homlocclosed}
Let $X$ be a rigid $L$-analytic space, $M$ be a $(\varphi, \Gamma_K)$-module of rank $d$ over $X$, and $\delta: K^\times \map \mathcal{O}(X)^\times$ be a continuous character. Then there exists a unique maximal Zariski locally closed analytic subspace $Y$ of $X$ such that  $H^0_{\varphi, \gamma_K}(M^\vee (\delta))$ is a rank-$1$ free module over $Y$.
\end{cor}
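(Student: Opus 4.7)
The plan is to reduce Corollary \ref{homlocclosed} to Corollary \ref{locclosed} by representing $H^0_{\varphi, \gamma_K}(M^\vee(\delta))$ locally on $X$ as the kernel of a morphism of locally free coherent sheaves, compatibly with base change.

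I would work on an affinoid open $\maxi(R) \subset X$. By the results of Kedlaya--Pottharst--Xiao \cite{KPX} on cohomology of $\phigamma$-modules in families, $R\Gamma_{\varphi, \gamma_K}(M^\vee(\delta)|_{\maxi(R)})$ is computed by a perfect complex of $R$-modules whose formation commutes with arbitrary base change $R \to S$ in the derived category. After possibly shrinking $\maxi(R)$ I may represent this perfect complex by a bounded complex $F^\bullet$ of finite projective $R$-modules concentrated in nonnegative degrees. Writing $\phi: F^0 \to F^1$ for its first differential, I obtain, for every $f: \maxi(S) \to \maxi(R)$, the identification
\[
H^0_{\varphi, \gamma_K}((M^\vee(\delta))_S) \;=\; H^0(F^\bullet \otimes_R S) \;=\; \ker(\phi_S),
\]
where the second equality is tautological ($H^0$ of any cochain complex is the kernel of its first differential) and the first is the base-change compatibility.

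Applying Corollary \ref{locclosed} to the morphism $\phi: F^0 \to F^1$ of locally free coherent sheaves on $\maxi(R)$ then produces a Zariski locally closed analytic subspace $Y_R \subset \maxi(R)$ universal for the property that $\ker(\phi_S)$ is projective of rank $1$. Via the identification above, this is exactly the universal property that $H^0_{\varphi, \gamma_K}((M^\vee(\delta))_S)$ be locally free of rank $1$ over $S$. To finish, the uniqueness clause in Corollary \ref{locclosed}, combined with the base-change compatibility of the Herr-type complex, ensures that the $Y_R$ agree on overlaps of an affinoid cover of $X$, and thus glue uniquely to a global Zariski locally closed $Y \subset X$ with the stated universal property.

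I expect the main technical point to be step one: passing from the abstract perfect-complex representation of $R\Gamma_{\varphi, \gamma_K}$ in \cite{KPX}, whose naive terms are infinite-rank modules over the Robba ring, to the required concrete two-term map $\phi: F^0 \to F^1$ of finite projective $R$-modules in degrees $0$ and $1$. Once this model is fixed, the identification $H^0 = \ker(\phi)$ holds on the nose and is compatible with arbitrary base change without any $\mathrm{Tor}$-obstruction, so the remainder of the argument is formal.
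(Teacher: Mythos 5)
Your proposal is correct and follows essentially the same route as the paper: the paper simply cites \cite[Cor 6.3.3]{KPX} for the fact that $H^0_{\varphi, \gamma_K}(M^\vee(\delta))$ is locally the kernel of a map of finite free sheaves compatibly with pullback, and then applies Corollary \ref{locclosed}. Your extra step of extracting such a two-term presentation from the perfect-complex and base-change results of \cite{KPX} (Tor-amplitude in degrees $[0,2]$) is just an unpacking of that cited corollary, so the arguments coincide in substance.
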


\begin{proof}
By \cite[Cor 6.3.3]{KPX}, $H^0_{\varphi, \gamma_K}(M^\vee (\delta))$ is locally isomorphic to the kernel of a map between free sheaves and this isomorphism is compatible under pullback. Now we conclude by Corollary \ref{locclosed}.
\end{proof}

\begin{cor}\label{locclosedmodt}
Let $X$ be a rigid $L$-analytic space, $M$ be a $(\varphi, \Gamma_K)$-module of rank $d$ over $X$, and $\delta: K^\times \map \mathcal{O}(X)^\times$ be a continuous character. Then there exists a unique maximal Zariski locally closed analytic subspace $Y$ of $X$ such that  $H^0_{\varphi, \Gamma_K}(M^\vee (\delta))/t_\tau$ is a rank-$1$ free module over $Y$.
\end{cor}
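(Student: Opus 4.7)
The plan is to closely parallel the proof of Corollary \ref{homlocclosed}, with the extra feature of the quotient by $t_\tau$. By \cite[Cor 6.3.3]{KPX}, on a sufficiently small open of $X$, $H^0_{\varphi, \Gamma_K}(M^\vee(\delta))$ is naturally identified with the kernel of a homomorphism $\phi : C \map D$ between locally free coherent $\Of_X$-modules, compatibly with arbitrary base change. My aim is to produce an analogous kernel presentation for the quotient $H^0_{\varphi, \Gamma_K}(M^\vee(\delta))/t_\tau$, at which point Corollary \ref{locclosed} can be invoked directly.

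Because $\varphi(t_\tau)$ and $\gamma(t_\tau)$ are explicit scalar multiples of $t_\tau$ (inherited from $\varphi(t)=pt$ and $\gamma(t)=\epsilon(\gamma)t$ via the factorization $t=\prod_\tau t_\tau$), multiplication by $t_\tau$ yields a short exact sequence
\[
0 \map M^\vee(\delta\chi_\tau^{-1}) \xrightarrow{\cdot t_\tau} M^\vee(\delta) \map M^\vee(\delta)/t_\tau \map 0
\]
of $\phigamma$-modules over $\rob_X$, for an explicit character $\chi_\tau$ recording how $t_\tau$ transforms. Applying \cite[Cor 6.3.3]{KPX} to the first two terms and taking the induced maps on the presenting sheaves, one obtains a commutative diagram of locally free coherent sheaves on $X$ whose columns are the kernel presentations of the two $H^0$ groups and whose rows come from multiplication by $t_\tau$. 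Standard linear algebra (essentially forming a mapping cone of this morphism of two-term complexes and truncating) then repackages this diagram into a single map $\phi' : C' \map D'$ between locally free coherent sheaves whose kernel, after any base change to $\maxi(S)$, is canonically identified with $H^0_{\varphi, \Gamma_K}(M^\vee(\delta))/t_\tau$ base-changed to $S$.

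Once such $\phi'$ is constructed, Corollary \ref{locclosed} applied to $\phi'$ produces the unique Zariski locally closed analytic subspace $Y \subset X$ characterized by the property that $\ker(\phi'_S)$ is projective of rank $1$ if and only if $\maxi(S) \map X$ factors through $Y$, which is exactly the required universal property. The main obstacle lies in the construction of $\phi'$: carefully tracking the character twist $\chi_\tau$ and verifying that the passage from a cokernel (which $H^0/t_\tau$ is a priori) to the kernel of a single map $\phi'$ remains compatible with arbitrary base change $\maxi(S) \map X$; once this bookkeeping is done, the remainder of the argument follows mechanically from Corollary \ref{locclosed}.
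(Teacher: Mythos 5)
The object this corollary needs to handle (see how it is consumed in Proposition \ref{modtsurj} and in condition (2) of Definition \ref{moduliprob}) is $H^0_{\varphi, \Gamma_K}\bigl(M^\vee(\delta)/t_\tau\bigr)$, the $H^0$ of the mod-$t_\tau$ $\phigamma$-module; the placement of $/t_\tau$ outside the parentheses in the statement is a slip of notation. This is not ``a cokernel a priori'': it differs from the cokernel of multiplication by $t_\tau$ on $H^0$'s by the image of the connecting map into $H^1_{\varphi,\Gamma_K}$ of the twisted module. The paper's proof is simply to apply \cite[Cor 6.3.3]{KPX} to the mod-$t_\tau$ module itself, obtaining a base-change-compatible presentation of its $H^0$ as the kernel of a map of locally free sheaves, and then to quote Corollary \ref{locclosed}; your reduction via the short exact sequence $0\map M^\vee(\delta x_\tau)\map M^\vee(\delta)\map M^\vee(\delta)/t_\tau\map 0$ and a mapping cone is a legitimate alternative way to supply that presentation, and its $H^0$ is indeed the correct object, but your framing of the target as the cokernel of $t_\tau$ on $H^0$'s would send the verification you defer in the wrong direction.

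The genuine gap is in the step you label ``standard linear algebra \ldots and truncating.'' The cone of the two kernel presentations has a locally free term in degree $-1$, and simply discarding it does not compute $H^0$ of the cone: the kernel of the truncated two-term map contains, in addition to the desired $H^0$, a locally free contribution of rank equal to that degree $-1$ term, so asking that this kernel be free of rank $1$ is not the condition you want, and no amount of base-change bookkeeping repairs that. What is actually needed is to show that the degree $-1$ differential of the cone is injective on every fiber --- which holds because $t_\tau$ is a nonzerodivisor on a finite projective $\rob_S$-module, so multiplication by $t_\tau$ stays injective on $H^0\subset M_S^\vee(\delta_S x_\tau)$ after every base change --- hence is a locally split subbundle; one then replaces the degree $0$ term by the quotient bundle to get a two-term complex of locally free sheaves whose kernel, compatibly with arbitrary base change, is $H^0_{\varphi,\Gamma_K}\bigl(M_S^\vee(\delta_S)/t_\tau\bigr)$, at which point Corollary \ref{locclosed} applies. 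Since your proposal both misidentifies the object and leaves exactly this step as an unresolved ``obstacle,'' it is incomplete as written, although the missing argument above fills it and the overall strategy (a base-change-compatible kernel presentation fed into Corollary \ref{locclosed}) agrees with the paper's.
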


\begin{proof}
Again follows from \cite[Cor 6.3.3]{KPX} and Corollary \ref{locclosed}.
\end{proof}

\begin{proposition}\label{onedfac}
Let $Y$ be a rigid $L$-analytic space, $M$ be a $(\varphi, \Gamma_K)$-module of rank $d$ over $Y$, $\delta: K^\times \map \mathcal{O}(Y)^\times$ be a continuous character such that $H^0_{\varphi, \Gamma_K}(M^\vee (\delta))$ is a line bundle $\mathcal{L}$ over $Y$. Then the canonical map 
\[
M[\frac{1}{t}]\map \mathcal{R}_Y(\delta)[\frac{1}{t}]\otimes_{\mathcal{O}_Y} \mathcal{L}^\vee
\]
is surjective and its kernel is a rank $d-1$ sub-$(\varphi, \Gamma_K)$-module  over $\mathcal{R}_Y[\frac{1}{t}]$ of $M[\frac{1}{t}]$ that is a direct summand.
\end{proposition}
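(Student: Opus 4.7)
The plan is to trivialize $\mathcal{L}$ locally on $Y$, prove surjectivity fiber-by-fiber using the structure theory of rank-$1$ sub-$(\varphi, \Gamma_K)$-modules of Robba-ring line bundles, globalize via a Banach-algebraic openness argument, and then deduce the direct-summand statement from projectivity of the target.

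\emph{Local reduction.} Working Zariski-locally on $Y$, a trivialization of $\mathcal{L}$ together with the identification $\mathcal{L} = \mathrm{Hom}_{\varphi, \Gamma_K}(M, \mathcal{R}_Y(\delta))$ produces a morphism $\phi : M \to \mathcal{R}_Y(\delta)$ of $(\varphi, \Gamma_K)$-modules whose specialization $\phi_y$ at every classical point $y \in Y$ is nonzero.

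\emph{Pointwise surjectivity.} At each classical $y$, $\mathrm{im}(\phi_y) \subseteq \mathcal{R}_{\kappa_y}(\delta_y)$ is a rank-$1$ sub-$(\varphi, \Gamma_K)$-module. Since $\mathcal{R}_{\kappa_y}$ is a Bezout domain, this submodule is free of rank $1$, generated by some $f \in \mathcal{R}_{\kappa_y}$; the $(\varphi, \Gamma_K)$-stability together with the standard classification of $(\varphi, \Gamma_K)$-stable sub-line-bundles of $\mathcal{R}_{\kappa_y}(\delta_y)$ forces $f = u \cdot \prod_{\tau \in \Sigma} t_\tau^{a_\tau}$ for a unit $u$ and integers $a_\tau \geq 0$. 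Since $t = \prod_\tau t_\tau$, inverting $t$ makes every $t_\tau$ a unit, and $\phi_y[1/t]$ is surjective.

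\emph{Globalization and direct summand.} Locally, the cokernel of $\phi[1/t]$ is $\mathcal{R}_Y[1/t] / J$ for a $(\varphi, \Gamma_K)$-stable ideal $J$ satisfying $J \otimes_{\mathcal{O}_Y} \kappa_y = \mathcal{R}_{\kappa_y}[1/t]$ at every $y$. For each $y$, pick $f \in J$ specializing to $1$; a compactness argument on the annulus underlying $\mathcal{R}_Y^r$ (for $r$ close to $1$) shows that $f$ is a unit in $\mathcal{R}_U[1/t]$ for some affinoid neighborhood $U$ of $y$, whence $J = \mathcal{R}_U[1/t]$ over $U$. Covering $Y$ yields global surjectivity. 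Since the target $\mathcal{R}_Y(\delta)[1/t] \otimes \mathcal{L}^\vee$ is locally free of rank $1$ over $\mathcal{R}_Y[1/t]$, the short exact sequence splits as $\mathcal{R}_Y[1/t]$-modules, so $\ker \phi[1/t]$ is a direct summand of $M[1/t]$, and by additivity of rank is a $(\varphi, \Gamma_K)$-stable submodule of rank $d - 1$.

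The main obstacle will be the globalization step: fiberwise vanishing of the cokernel at every $y \in Y$ does not immediately imply its vanishing as an $\mathcal{R}_Y[1/t]$-module, since $\mathcal{R}_Y[1/t]$ has more maximal ideals than classical points of $Y$. Making the compactness argument precise requires working with the Banach-algebraic presentations $\mathcal{R}_Y^r \cong \mathcal{O}_Y \hat\otimes \mathcal{R}_L^r$ and carefully controlling behavior of units under specialization to affinoid neighborhoods.
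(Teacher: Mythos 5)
There is a genuine gap, and it sits exactly where you flagged it: the globalization step. Your plan hinges on the claim that if $f\in \mathcal{R}_U[\frac{1}{t}]$ specializes to $1$ at a classical point $y$, then $f$ becomes a unit in $\mathcal{R}_{U'}[\frac{1}{t}]$ for some smaller affinoid neighborhood $U'$ of $y$. This is false: take $Y$ the closed unit disc with coordinate $x$, $y=\{x=0\}$, and $f=1-x\,h(T)$ with $h\in\mathcal{R}_L$ whose sup norms on the circles $|T|=s$ blow up as $s\to 1^-$ (e.g.\ $h=\sum_n p^{-n}T^{n^2}$). Then $f_y=1$, but over every affinoid neighborhood $\{|x|\le \epsilon\}$ the function $f$ has zeros at points $(x_0,T_0)$ with $|h(T_0)|\ge \epsilon^{-1}$ and $T_0$ not a zero of $t$, so $f$ cannot divide any power of $t$ and is not invertible in $\mathcal{R}_U[\frac{1}{t}]$. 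The "compactness of the annulus" you invoke is not available: the relative Robba ring is only a quasi-Stein limit of the affinoid pieces $\mathcal{R}_Y^{[s,r]}$, and the neighborhoods of $y$ on which $f$ is invertible over $Y\times[s,r]$ may shrink to nothing as $s\to 1$. Any correct argument must bring in the extra structure you never use at this point, namely $\varphi$-equivariance (which lets one propagate a statement from a single closed annulus $[r/p,r]$ to all radii) together with a coherence/support argument over the affinoid $\mathcal{R}_Y^{[r/p,r]}$; this is precisely how the paper proceeds, by working with the integral map $\lambda\colon M\to\mathcal{R}_Y(\delta)\otimes_{\mathcal{O}_Y}\mathcal{L}^\vee$ and quoting the argument of property (2) in \cite[6.3.9]{KPX} to conclude that $\mathrm{coker}(\lambda)$ is killed by $t^n$ globally, after which inverting $t$ gives surjectivity; your final splitting/direct-summand step then coincides with the paper's.

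A secondary issue: your pointwise step assumes that the specialization $\phi_y$ of a local generator of $\mathcal{L}=H^0_{\varphi,\Gamma_K}(M^\vee(\delta))$ is nonzero at every classical point $y$. This does not follow from the hypothesis that $H^0$ is a line bundle: by \cite[Cor 6.3.3]{KPX}, $\mathcal{L}$ is the kernel of a map of free sheaves compatibly with base change, and the generator of such a kernel can perfectly well lie in $\mathfrak{m}_y\cdot(\text{ambient free module})$ (think of the kernel of $(a,b)\mapsto ya-xb$, generated by $(x,y)$), in which case its image in the fiber $H^0$ vanishes. This is exactly why the paper's companion statement, Proposition \ref{modtsurj}, adds the explicit fibrewise hypothesis that $H^0_{\varphi,\Gamma_K}(M_z^\vee(\delta_z)/t_\tau)$ is $1$-dimensional at every $z$ before running a fibre-by-fibre argument, whereas Proposition \ref{onedfac} carries no such hypothesis and its proof deliberately avoids arguing at individual fibers. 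So both halves of your strategy before the splitting step need repair; the repaired argument is essentially the paper's reduction to \cite[6.3.9]{KPX}.
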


\begin{proof}
The proposition and the proof can be seen as a simplification of \cite[6.3.9]{KPX}, under the stronger condition that $H^0_{\varphi, \Gamma_K}(M^\vee (\delta))$ is a line bundle. Let $Q$ denote the cokernel of the induced map
\[
\lambda: M\map \mathcal{R}_Y(\delta)\otimes_{\mathcal{O}_Y} \mathcal{L}^\vee
\]
. Then it follows from the argument of property (2) in \cite[6.3.9]{KPX} (Page 70) that $Q$ is killed by $t^n$ for some $n\in \N$. Inverting $t$, we see the surjectivity. 

Let $P$ be the kernel of the map $\lambda$. Thus we have an exact sequence 
\[
0 \map P[\frac{1}{t}]\map M[\frac{1}{t}]\map \mathcal{R}_Y(\delta)[\frac{1}{t}]\otimes_{\mathcal{O}_Y} \mathcal{L}^\vee \map 0
\]
It splits as a $\rob_Y\invertt$-module since $\mathcal{R}_Y(\delta)[\frac{1}{t}]\otimes_{\mathcal{O}_Y} \mathcal{L}^\vee$ is a projective 
$\rob_Y\invertt$-module of rank $1$. Thus $P\invertt$ is a rank $d-1$ $\phigamma$-module over $\rob_Y\invertt$ and is a direct summand of $M\invertt$.

 %We see $P[\frac{1}{t}]$ is flat over $\Of_Y$ since the other two terms are. Also by the flatness of the last term, we have for any closed points $z\in Y$, $P_z[\frac{1}{t}]$ is the kernel of the map $M_z[\frac{1}{t}]\map \mathcal{R}_{\kappa_z}(\delta_z)[\frac{1}{t}]$. Thus $P_z$ must be  a $\phigamma$-module of rank $d-1$ over $\rob_{\kappa_z}[\frac{1}{t}]$. For each affinoid subdomain $\mathrm{Max}(A)\subset Y$, there exists some $r$ such that $M$ admits a model $M^r$ and $\lambda$ comes from a map $\lambda^r: M^r\map \rob_A^r(\delta)$, giving rise to a model $P^r=\mathrm{Ker}(\lambda^r)$ for $P$ over $\rob_A^r$. Each $P_z^r$ is a coadmissible model for $P_z$ over $\rob_A^r$, and hence $P_z^r[\frac{1}{t}]$ is the global section of a $\phigamma$-module of rank $d-1$ over $\rob_{\kappa_z}^r[\frac{1}{t}]$. Now we apply Lemma \ref{fiberwise} to conclude that $P^r[\frac{1}{t}]$ is   a $\phigamma$-module  over  $\rob_A^r[\frac{1}{t}]$ and thus $P[\frac{1}{t}]$ is a  $\phigamma$-module  over  $\rob_Y[\frac{1}{t}]$.

\end{proof}

%\begin{lemma}\label{fiberwise}
%Let $A$ and $B$ be $\mathbb{Q}_p$ affinoid algebras, $t\in A$ be a non-zero-divisor and $N$ be a coherent sheaf on $\maxi(A)\times \maxi(B)$. Assume for every $y\in \maxi (B)$, $(N/\mathfrak{m}_yN)[\frac{1}{t}]$ is a finite projective $A\invertt\otimes_{\Q_p}\kappa_y$-module of rank depending only on the connected component containing $y$ in $B$, and that $N\invertt$ is flat over $B$. Then $N\invertt$ is a finite projective $(A\widehat{\otimes}_{\Q_p}B)\invertt$-module.  

%\end{lemma}

%\begin{proof}
%Let $z$ be any point of $\maxi(A\hat{\otimes}_{\mathbb{Q}_p}B)$ such that $t\notin \mathfrak{m}_z$. And let $y$ be the image of $z$ in $\maxi(B)$, then it suffices to show that $N_{\mathfrak{m}_z}$ is flat over $(A\hat{\otimes}_{\mathbb{Q}_p}B)_{\mathfrak{m}_z}$. We apply the flatness criteria [Stacks project, 00MP] to $R:=B_{\mathfrak{m}_y}$, $S=S':=(A\hat{\otimes}_{\mathbb{Q}_p}B)_{\mathfrak{m}_z}$, and $M:=N_{\mathfrak{m}_z}$, one immediately concludes. 
%\end{proof}

\begin{proposition}\label{modtsurj}
Let $Y$ be a rigid $L$-analytic space, $M$ be a $(\varphi, \Gamma_K)$-module of rank $d$ over $Y$, $\tau \in \Sigma$, $\delta: K^\times \map \mathcal{O}(Y)^\times$ be a continuous character such that $H^0_{\varphi, \Gamma_K}(M^\vee (\delta)/t_\tau)$ is a line bundle $\mathcal{L}$ over $Y$, and that $H^0_{\varphi, \Gamma_K}(M_z^\vee (\delta_z)/t_\tau)$ is $1$-dimensional over $\kappa_z$ for any  $z\in Y$. Then the canonical map  
\[
M/t_\tau\map \rob_Y(\delta)/t_\tau\otimes_{\Of_Y} \mathcal{L}^\vee
\]
is surjective and its kernel is a rank $d-1$ sub-$(\varphi, \Gamma_K)$-module  over $\mathcal{R}_Y/t_\tau$ of $M/t_\tau$ that is a direct summand.
\end{proposition}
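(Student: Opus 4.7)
The plan is to mirror the structure of Proposition~\ref{onedfac}, substituting mod-$t_\tau$ arguments for the ``invert $t$'' step that was available before. The identification $\mathcal{L} \cong H^0_{\varphi, \Gamma_K}(M^\vee(\delta)/t_\tau)$ canonically furnishes the $(\varphi, \Gamma_K)$-equivariant map
\[
\lambda\colon M/t_\tau \to \rob_Y(\delta)/t_\tau \otimes_{\Of_Y} \mathcal{L}^\vee,
\]
and the task reduces to showing that $\lambda$ is surjective with kernel a direct summand of rank $d-1$.

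For surjectivity I would let $Q := \mathrm{coker}(\lambda)$, a coherent $(\varphi, \Gamma_K)$-sheaf over $\rob_Y/t_\tau$, and argue that $Q = 0$. The construction of $\lambda$ commutes with base change, so it suffices to check $Q_z = 0$ at every classical $z \in Y$. At such a point, the second hypothesis forces $\lambda_z\colon M_z/t_\tau \to \rob_{\kappa_z}(\delta_z)/t_\tau$ to be a scalar multiple of the unique-up-to-scalar nonzero element of $H^0_{\varphi, \Gamma_K}(M_z^\vee(\delta_z)/t_\tau)$. Suppose for contradiction that the image $I$ is a proper $(\varphi, \Gamma_K)$-stable submodule. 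Then the nonzero $(\varphi, \Gamma_K)$-quotient $\rob_{\kappa_z}(\delta_z)/(t_\tau, I)$ should, after suitable analysis of its structure, furnish a second $(\varphi, \Gamma_K)$-equivariant homomorphism $M_z/t_\tau \to \rob_{\kappa_z}(\delta_z)/t_\tau$ linearly independent of $\lambda_z$, contradicting the $1$-dimensionality. A parallel route is to adapt the $t^n$-annihilation argument of \cite[6.3.9]{KPX} to the mod-$t_\tau$ setting.

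Once surjectivity is established, the short exact sequence
\[
0 \to P \to M/t_\tau \to \rob_Y(\delta)/t_\tau \otimes_{\Of_Y} \mathcal{L}^\vee \to 0
\]
splits locally, because the quotient is locally free of rank $1$ over $\rob_Y/t_\tau$ (the $\mathcal{L}^\vee$-twist is trivialized on sufficiently small opens of $Y$, and $\rob_Y/t_\tau$ is free of rank one over itself). This realizes $P$ as a direct summand of $M/t_\tau$, and rank counting forces $\mathrm{rank}(P) = d - 1$.

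The main obstacle I anticipate is the pointwise surjectivity: one needs a clean structural argument ruling out proper nonzero $(\varphi, \Gamma_K)$-stable images of $\lambda_z$ inside a rank-$1$ object over $\rob_{\kappa_z}/t_\tau$, using exactly the $1$-dimensionality of $H^0$. A careful dévissage using the factorization $t = \prod_\sigma t_\sigma$, together with the KPX-style cokernel estimates invoked in the previous proposition, should provide the required input.
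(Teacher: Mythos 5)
Your skeleton is the same as the paper's: construct the canonical map $\lambda\colon M/t_\tau\map \rob_Y(\delta)/t_\tau\otimes_{\Of_Y}\mathcal{L}^\vee$, kill the cokernel by checking fibres, and then split off the rank-one projective quotient to get the rank $d-1$ direct summand (that last step, and the reduction of global surjectivity to fibres via the affinoid $\maxi(\rob_Y^{[r/p,r]})$ together with $\varphi$-equivariance, are fine and match the paper). But the step you yourself flag as the ``main obstacle'' --- pointwise surjectivity of $\lambda_z$ --- is exactly the content of the proposition, and neither of your two suggested routes closes it. The contradiction argument does not work as stated: from a proper nonzero $(\varphi,\Gamma_K)$-stable image $I\subset \rob_{\kappa_z}(\delta_z)/t_\tau$ there is no mechanism producing a second $(\varphi,\Gamma_K)$-equivariant map $M_z/t_\tau\map \rob_{\kappa_z}(\delta_z)/t_\tau$ independent of $\lambda_z$, so the one-dimensionality of $H^0_{\varphi,\Gamma_K}(M_z^\vee(\delta_z)/t_\tau)$ is the wrong lever here (in the paper that hypothesis only serves to guarantee that the generator of $\mathcal{L}$ does not specialize to zero, i.e.\ that $\lambda_z\neq 0$; it is not what excludes proper images). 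Your fallback of adapting the $t^n$-annihilation argument of \cite[6.3.9]{KPX} also cannot work: in $\rob_{\kappa_z,K}/t_\tau$ one has $t=\prod_\sigma t_\sigma=0$, so ``cokernel killed by a power of $t$'' carries no information mod $t_\tau$.

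The missing ingredient is Lemma \ref{surjmodt}: a morphism of $(\varphi,\Gamma_K)$-modules over $\rob_{\kappa_z,K}/t_\tau$ is surjective (resp.\ nonzero) if and only if the induced map on Sen modules $D_{\mathrm{Sen},\tau}$ is surjective (resp.\ nonzero). This is proved by inducting down to $K=\Q_p$ and using the explicit description $M\cong \mathrm{colim}_m\bigl(\prod_{n\geq m}D_{\mathrm{Sen},n}(M)\bigr)$ of \cite[Lemma 3.2.3]{KPX}. Granting it, the structural fact you were looking for is simply that the target is of rank one, so $D_{\mathrm{Sen},\tau}(\rob_{\kappa_z}(\delta_z))$ is one-dimensional over $\kappa_z(\mu_{p^\infty})$; hence a nonzero map is automatically surjective at the Sen level, and Lemma \ref{surjmodt} transports this back to $\lambda_z$ itself. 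With that input, the rest of your outline (vanishing of the cokernel over the affinoid piece, $\varphi$-equivariance, and the splitting of the surjection onto a rank-one projective $\rob_Y/t_\tau$-module) goes through as in the paper.
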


\begin{proof}
Denote by $\lambda$ the  canonical map  
\[
M/t_\tau\map \rob_Y(\delta)/t_\tau\otimes_{\Of_Y} \mathcal{L}^\vee
\]
Base change to any points $z\in Y$, we see by Lemma \ref{surjmodt} below that $\lambda_z: M_z/t_\tau\map \rob_{\kappa_z}(\delta_z)/t_\tau$ is surjective: $D_{\mathrm{Sen}, \tau}(\rob_{\kappa_z}(\delta_z))$ is $1$-dimensional over $\kappa_z(\mu_{p^\infty})$, hence the map $\lambda_{z,\mathrm{HT}}:  D_{\mathrm{Sen}, \tau}(M) \map D_{\mathrm{Sen}, \tau}(\rob_{\kappa_z}(\delta_z))$ is surjective if and only if it is non-trivial.

For the surjectivity of $\lambda$, it suffices to work locally, so we may assume $Y$ is an affinoid rigid analytic space. Now $\lambda$ is the base change of some $\lambda^r$ defined over $\rob_Y^r/t_\tau$. Denote the cokernel of $\lambda^r$ by $Q^r$. By the above result on fibers of the map over each $z\in Y$, we see that the base change $Q^{[r/p, r]}$ to $\rob_Y^{[r/p, r]}/t_\tau$ is a coherent sheaf over $\maxi(\rob_Y^{[r/p, r]})$, which is an affinoid rigid analhytic space, and that $Q_z=0$ for any $z\in Y$. Thus $Q$ has empty support over $\maxi(\rob_Y^{[r/p, r]})$ and is thus $0$. By $\varphi$-equivariance, we see immediately that $Q^r$ and hence $Q$ is $0$.

Now $\lambda$ is a surjection from a rank-$d$ projective $\rob_Y/t_\tau$-module  to a rank-$1$ projective $\rob_Y/t_\tau$-module, and the claim about the kernel thus follows.
\end{proof}

\begin{lemma}\label{surjmodt}
Let $f: M\map N$ be a morphism between two   $(\varphi, \Gamma_K)$-module over $\rob_{R, K}/t_\tau$, where $R$ is an affinoid algebra over $L$, and $\tau\in \Sigma$. Then $f$ is surjective (resp. nonzero) if and only if the induced map $f_{\mathrm{HT}}:  D_{\mathrm{Sen}, \tau}(M) \map D_{\mathrm{Sen}, \tau}(N)$ is surjective (resp. nonzero).
\end{lemma}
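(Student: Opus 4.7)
Both directions of each equivalence reduce to the single claim that, for every nonzero finitely generated $\rob_{R,K}/t_\tau$-module $P$ carrying commuting semilinear $\varphi,\Gamma_K$ actions (as inherited from a $\phigamma$-module containing $P$ as a subquotient), one has $D_{\mathrm{Sen},\tau}(P)\neq 0$. The implication $f\text{ surjective}\Rightarrow f_{\mathrm{HT}}\text{ surjective}$ and its converse follow from the right exactness of $D_{\mathrm{Sen},\tau}$ (it is by construction a tensor product) applied to $\mathrm{coker}(f)$. The implication $f_{\mathrm{HT}}\neq 0\Rightarrow f\neq 0$ is trivial; for its converse, apply the claim to $\mathrm{im}(f)$ and use that on a finite projective $\phigamma$-module $N$ the specialization $N\to D_{\mathrm{Sen},\tau}(N)$ is injective on submodules, which reduces to flatness of $\kappa\otimes_{K,\tau}K(\zeta_{p^n})$ over the local factor of $\rob_{R,K}/t_\tau$ at the relevant simple-zero point.

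To prove the claim, I first reduce to $R=\kappa$ a field: $D_{\mathrm{Sen},\tau}$ commutes with base change along $R\to\kappa_z$ for classical $z$, and a finitely generated coherent module on $\maxi(\rob_R^{[r,r]}/t_\tau)$ that is nonzero must have nonzero fiber at some classical point, by Nakayama. Over a field, pick $r>0$ small enough that $P$ descends to a finitely generated $\phigamma$-equivariant module $P^r$ on $\rob_\kappa^r/t_\tau$. The factorization $t=\prod_{\tau'\in\Sigma}t_{\tau'}$ recalled in the conventions together with the standard description of the zeros of $t$ on the annulus associated to $\rob_\kappa^{[r/p,r]}$ shows that, for $r$ sufficiently small, $t_\tau$ has a unique simple zero in $\maxi(\rob_\kappa^{[r/p,r]})$, located at the point $\theta\circ\varphi^{-n}:\rob_\kappa^{[r/p,r]}\to\kappa\otimes_{K,\tau}K(\zeta_{p^n})$ for the single $n$ determined by $r$. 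Consequently $\rob_\kappa^{[r/p,r]}/t_\tau\cong\kappa\otimes_{K,\tau}K(\zeta_{p^n})$ via this map, and the restriction $P^r\otimes_{\rob_\kappa^r}\rob_\kappa^{[r/p,r]}$ is naturally identified with $D_{\mathrm{Sen},n,\tau}(P)$ together with its semilinear $\Gamma_K$-action.

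Granting this identification, the claim finishes quickly: if $D_{\mathrm{Sen},\tau}(P)=0$ then $P^r$ restricts to zero on $\maxi(\rob_\kappa^{[r/p,r]})$; since $\varphi(t_\tau)=p\,t_\tau$, Frobenius intertwines the mod-$t_\tau$ quotients on successive annuli, and $\varphi$-equivariance of $P$ forces its vanishing on each $\maxi(\rob_\kappa^{[r/p^{m+1},r/p^m]})$ for $m\geq 0$. These sub-annuli cover $\maxi(\rob_\kappa^r)$, so $P^r=0$, contradicting $P\neq 0$. The main obstacle is pinning down the local structure of $\rob_\kappa^{[r/p,r]}/t_\tau$ --- the precise simple-zero identification with $\kappa\otimes_{K,\tau}K(\zeta_{p^n})$ --- which requires careful analysis of the factorization $t=\prod_{\tau'}t_{\tau'}$ and of the $\tau$-component of the Sen geometry; once that is in place, the remainder is a direct descent across a $\varphi$-orbit.
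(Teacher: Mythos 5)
Your proposal is correct in substance but takes a genuinely different route from the paper's. The paper first reduces to $K=\Q_p$ by inducing along $\Gamma_K\subset\Gamma_{\Q_p}$ (viewing a $\phigamma$-module over $\rob_{R,K}/t_\tau$ as a module over $\rob_{R,\Q_p}/t$), and then quotes \cite[Lemma 3.2.3]{KPX}, which exhibits any such module as $\mathrm{colim}_m\prod_{n\geq m}D_{\mathrm{Sen},n}$ with $\varphi$ acting by the shift; both the surjectivity and the nonvanishing equivalences are read off from that description. You instead stay with general $K$, reduce both statements to a single nonvanishing claim for finitely generated subquotients, and prove it by restriction to fundamental annuli and $\varphi$-propagation --- essentially the same device the paper uses in the proof of Proposition \ref{modtsurj}. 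What your route buys is uniformity in $K$ and one clean lemma; what it costs is that you must establish by hand exactly the structural input that the reduction to $\Q_p$ plus \cite[Lemma 3.2.3]{KPX} packages, namely the identification $\rob_\kappa^{[r/p,r]}/t_\tau\cong\kappa\otimes_{K,\tau}K(\zeta_{p^n})$ and the $\varphi$-compatibility of these identifications across levels (your flatness step for the nonzero direction rests on the same product decomposition of $\rob^r_{R,K}/t_\tau$, so you have not avoided this input, only relocated it). One caveat on the step you flag as the main obstacle: when $K$ has a nontrivial maximal unramified subfield, the displayed identity $\varphi(t_\tau)=p\,t_\tau$ is not literally correct as stated, since $\varphi$ permutes the idempotents of $L\otimes_{\Q_p}K_0'$; the correct (and true) statement is that the $\tau$-labelled part of the zero locus of $t$, labelled consistently through the maps $\theta\circ\varphi^{-n}$, is $\varphi$-stable, so that $(t_\tau)$ is preserved by $\varphi$ up to a unit --- this ideal-stability is all your propagation uses, and it is also what makes the notion of a $\phigamma$-module over $\rob_{R,K}/t_\tau$ meaningful in the first place, but it is precisely the bookkeeping that the paper's reduction to $K=\Q_p$ (where $t_\tau=t$) sidesteps, so it should be proved rather than asserted by that formula.
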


\begin{proof}
First we reduce to the case $K=\Q_p$: Recall that  for any $(\varphi, \Gamma_K)$-modules $M$ over $\rob_{R, K}/t_\tau$, one define the induced $(\varphi, \Gamma_{\Q_p})$-module $\ind^K_{\Q_p}M:=\ind^{\Gamma_K}_{\Gamma_{\Q_p}} M$ treated as $\rob_{R, \Q_p}/t$-module via the natural inclusion $\rob_{R, \Q_p}/t\map \rob_{R, K}/t_\tau$. It is clear that $f: M\map N$ is surjective if and only if $\ind^K_{\Q_p}f: \ind^K_{\Q_p}M\map \ind^K_{\Q_p}N$ is surjective. On the other hand, since $D_{\mathrm{Sen}}(\ind^K_{\Q_p}M)\cong \oplus_{\tau\in \Gamma_{\Q_p}/\Gamma_K}\tau^*(D_{\mathrm{Sen}, \tau}(M))$ where we view each $\tau^*(D_{\mathrm{Sen, \tau}}(M))$ as a $\Q_p^{\mathrm{cyc}}$ vector spaces via $\Q_p^{\mathrm{cyc}}\map K_\infty$, and $\tau^*(V)$ of a $\Gamma_K$-representation $V$ is defined as $V$ with its $\Gamma_{\tau K}$ action given by precomposing the $\Gamma_K$ action with $\mathrm{Ad}_{\tau^{-1}}: \Gamma_{\tau K}\map \Gamma_K$. It is thus clear that the surjectivity of $f_{\mathrm{HT}}$ is also preserved under induction. Hence we may reduce to the case $K=\Q_p$.

In this case, there is explicit description of $M$, a $(\varphi, \Gamma_{\Q_p})$-module over $\rob_{R, \Q_p}/t$.  By \cite[Lemma 3.2.3]{KPX}, taking colimit over the $r_0$ in the reference, we see that there is a functorial isomorphism 
\[M\cong \mathrm{colim}_m \left(\prod_{n\geq m} D_{\mathrm{Sen}, n}(M)\right)\]
where for $m_2\geq m_1$, the transition map $\prod_{n\geq m_1} D_{\mathrm{Sen}, n}(M)\map \prod_{n\geq m_2} D_{\mathrm{Sen}, n}(M)$ is given by forgetting the terms in the product of index $n < m_2$. One immediately see that in this case, $f: M\map N$ is surjective if and only if the induced $f_{\mathrm{HT}}:  D_{\mathrm{Sen}}(M) \map D_{\mathrm{Sen}}(N)$ is surjective, since the latter is also equivalent to the surjectivity of $D_{\mathrm{Sen}, n}(M)\map D_{\mathrm{Sen}, n}(N)$ for all $n$ large enough.

The claim of the equivalence of nontriviality of the map follows in the same way.
\end{proof}

The following lemma will not be used in this section but will be invoked in Section \ref{laproof}.

\begin{lemma}\label{htwt}
Let $M$ be a $\phigamma$-module over an affinoid algebra $R$ over $L$. And let $\chi$ be a character $K^\times \map R^\times$ whose $\tau$-Hodge-Tate-Sen weight is $\alpha\in R$. Then there exists an surjective morphism  $f: M/t_\tau\map D_{\rig}(\chi)/t_\tau$ preserving  if and only if  $D_{\mathrm{Sen}, \tau}(M)$ has a rank $1$ free $R\otimes_{\tau, K} K_\infty$-module quotient where the Sen operator acts by the scalar $\alpha$.

\end{lemma}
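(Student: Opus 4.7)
The forward direction is immediate: given a $(\varphi,\Gamma_K)$-equivariant surjection $f: M/t_\tau \surj D_{\rig}(\chi)/t_\tau$, applying the functor $D_{\mathrm{Sen},\tau}$ and invoking Lemma \ref{surjmodt} yields a surjection of $R\otimes_{\tau,K}K_\infty$-modules onto $D_{\mathrm{Sen},\tau}(D_{\rig}(\chi))$, which is a rank-$1$ free module whose Sen operator is the scalar $\alpha$ by definition of the $\tau$-Hodge-Tate-Sen weight of $\chi$.

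For the converse, my plan is to lift the given Sen quotient back to a $(\varphi,\Gamma_K)$-equivariant quotient, reusing the explicit structure theorem for $(\varphi,\Gamma_K)$-modules over $\rob_{R,K}/t_\tau$ that drove the proof of Lemma \ref{surjmodt}. First, as in that proof, I reduce to the case $K=\Q_p$ via the induction functor $\ind^K_{\Q_p}$: the $\tau$-summand of the Sen module of $M$ becomes a direct summand of the global Sen module of $\ind^K_{\Q_p}M$, and the existence of a rank-$1$ quotient with Sen operator $\alpha$ is preserved. Next I identify the abstract rank-$1$ quotient $Q$ with $D_{\mathrm{Sen}}(D_{\rig}(\chi))$ as a $\Gamma_{\Q_p}$-equivariant $R\otimes_{\Q_p}K_\infty$-module: both are rank-$1$ free with scalar Sen operator $\alpha$, and on a rank-$1$ free module the continuous $\Gamma_{\Q_p}$-action is determined up to isomorphism by its scalar infinitesimal generator via the convergent exponential $\gamma \mapsto \exp(\alpha \log \epsilon(\gamma))$ on a sufficiently small open subgroup. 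This upgrades the hypothesis to a $\Gamma_{\Q_p}$-equivariant surjection $q: D_{\mathrm{Sen}}(M) \surj D_{\mathrm{Sen}}(D_{\rig}(\chi))$.

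Finally, I base-change $q$ along $\theta \circ \varphi^{-n}$ for each sufficiently large $n$ to obtain a $\varphi$- and $\Gamma_{\Q_p}$-compatible family of surjections $q_n: D_{\mathrm{Sen},n}(M) \surj D_{\mathrm{Sen},n}(D_{\rig}(\chi))$. Through the colimit-of-products description $M/t \cong \mathrm{colim}_m \prod_{n\geq m} D_{\mathrm{Sen},n}(M)$ from \cite[Lemma 3.2.3]{KPX} invoked in the proof of Lemma \ref{surjmodt}, the family $(q_n)$ assembles into a $(\varphi,\Gamma_{\Q_p})$-equivariant morphism $f: M/t \map D_{\rig}(\chi)/t$, surjective by Lemma \ref{surjmodt} applied to the induced map on Sen modules. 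Unwinding the induction then gives the desired surjection $M/t_\tau \surj D_{\rig}(\chi)/t_\tau$.

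The main obstacle is the middle step: identifying $Q$ with $D_{\mathrm{Sen}}(D_{\rig}(\chi))$ over the non-field coefficient ring $R$. While the classification of rank-$1$ Sen modules by their Sen operator is standard over $L$, over a general affinoid $R$ one must verify that the exponential of the scalar Sen operator converges on $R$-Banach modules for $\gamma$ in a sufficiently small open subgroup of $\Gamma_K$; this is a routine but slightly delicate functional-analytic input, after which continuity extends the determination of the $\Gamma_K$-action to the entire group.
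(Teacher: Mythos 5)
Your forward direction and your final assembly step are fine, and the assembly is essentially the paper's mechanism in non-dualized form: the paper also reduces to $K=\Q_p$ by induction and exploits the description $N\cong \mathrm{colim}_m\prod_{n\geq m}D_{\mathrm{Sen},n}(N)$ from \cite[Lemma 3.2.3]{KPX}, where $\varphi$ acts by the shift, so that $(\varphi,\Gamma)$-equivariant data over $\rob_{R,\Q_p}/t$ is exactly a compatible (constant) family of finite-level Sen data. The genuine gap is your middle step, the identification of the abstract rank-$1$ quotient $Q$ with $D_{\mathrm{Sen}}(D_{\rig}(\chi))$ as a $\Gamma_{\Q_p}$-equivariant module. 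First, the hypothesis of the lemma only provides a quotient stable under the Sen operator; it does not hand you a $\Gamma_K$-stable kernel, so speaking of $Q$ ``as a $\Gamma_{\Q_p}$-equivariant module'' already assumes structure that must be argued. Second, and more seriously, the scalar Sen operator determines via the exponential only the action of a sufficiently small open subgroup $\Gamma_n$; your claim that ``continuity extends the determination of the $\Gamma_K$-action to the entire group'' is not true --- the semilinear action of the finite quotient $\Gamma_K/\Gamma_n$ is genuinely extra descent data. Over a general affinoid $R$ this is not an empty concern: twisting $D_{\mathrm{Sen},\tau}(D_{\rig}(\chi))$ by a line bundle on $\mathrm{Sp}(R)$ that becomes trivial over $R\otimes_{\tau,K}K_n$ produces a module that is free of rank $1$ over $R\otimes_{\tau,K}K_\infty$ with the same scalar Sen operator $\alpha$, yet not $\Gamma_K$-isomorphic to $D_{\mathrm{Sen},\tau}(D_{\rig}(\chi))$. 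So the convergence of the exponential is not where the difficulty lies, and the upgraded surjection $q$ is not established by your argument.

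Note how the paper's proof is structured precisely to avoid classifying $Q$: it first twists by $\chi^{-1}$ so the target is the trivial module, then dualizes, so the converse becomes the statement that a $\Gamma_K$-invariant element of $D_{\mathrm{Sen},\tau}(M^\vee)$ lifts to a $(\varphi,\Gamma_K)$-invariant element of $M^\vee/t_\tau$. This only requires producing a map into (equivalently, an invariant vector of) the standard rank-$1$ module, never an isomorphism between the given quotient and a specific Sen module, and the lifting is then exactly the colimit-of-products/shift computation that you carry out correctly in your last paragraph. If you want to salvage your route, you should either incorporate the twist-and-dualize reduction, or give an actual argument producing a $\Gamma_K$-equivariant surjection onto the standard module $R\otimes_{\tau,K}K_\infty(\chi)$ from the Sen-equivariant one, rather than an appeal to a classification of rank-$1$ Sen modules by their Sen weight, which fails over affinoid coefficients.
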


\begin{proof}
By twisting we may assume $\chi$ is trivial. If there exists such an $f$, by looking at the induced map $f_{\mathrm{HT}}$ we see the claim on $D_{\mathrm{Sen}, \tau}$ immediately. On the other hand, for any map $h: D_{\mathrm{Sen}, \tau}(M)\map R\otimes_{\tau, K} K_\infty$ equivariant with respect to the $\Gamma_K$ action, we claim there exists a map $f: M/t_\tau \map \rob_{R, K}/t_\tau$ whose induced map $f_{\mathrm{HT}}=h$. Note that the existence of a surjective $h$ is equivalent to the existence of a map $D_{\mathrm{Sen}, \tau}(M)\map R\otimes_{\tau, K} K_\infty$ that is equivariant with respect to the Sen operator. Granting the claim, the existence of a surjective $f$ follows from Lemma \ref{surjmodt}. 

To prove the claim, the strategy is again to reduce to the case $K=\Q_p$. By taking duals of both $f$ and $h$ and set $N:=M^\vee$, we need to show for any $\Gamma_K$ invariant element  $u\in D_{\mathrm{Sen}, \tau}(N)$, there exist an element $v\in N/t_\tau$ invariant under $\varphi$ and $\Gamma_K$  that specializes to it. The existence of such a $u$ is equivalent to the existence of a $\Gamma_{\Q_p}$-invariant element $u_1\in \ind_{\Gamma_{\Q_p}}^{\Gamma_K}D_{\mathrm{Sen}, \tau}(N)\cong D_{\mathrm{Sen}}(\ind_{\Gamma_{\Q_p}}^{\Gamma_K}N)$, where we view $D_{\mathrm{Sen}, \tau}(N)$ as a $\Q_p^{\mathrm{cyc}}$ vector spaces via $\Q_p^{\mathrm{cyc}}\map K_\infty$. The existence of such a $v$ is equivalent to the existence of a $\varphi$ and $\Gamma_{\Q_p}$-invariant element $v_1\in \ind_{\Gamma_{\Q_p}}^{\Gamma_K}N$. Thus the reduce to the case $K=\Q_p$ (for $(\varphi, \Gamma_{\Q_p})$-modules over $\rob_{R, \Q_p}/t$).  Now by \cite[Lemma 3.2.3]{KPX} again, we have $N\cong \mathrm{colim}_m \left(\prod_{n\geq m} D_{\mathrm{Sen}, n}(N)\right)$. Moreover, the $\varphi$-action is described by the following rule for each $m$:
\[
\prod_{n\geq m} D_{\mathrm{Sen}, n}(N)\map \prod_{n\geq m+1} D_{\mathrm{Sen}, n}(N): (x_n)_{n\geq m}\mapsto (\iota_{n-1}(x_{n-1}))_{n\geq m+1}
\]
where $\iota_j: D_{\mathrm{Sen}, j}(N)\map D_{\mathrm{Sen}, j+1}(N)$ is the natural inclusion map for each $j$. Thus a $\varphi$ and $\Gamma_{\Q_p}$-invariant element $v_1\in N$ must be a constant sequence given by some $\Gamma_{\Q_p}$-invariant $x_m\in D_{\mathrm{Sen}, m}(N)$ in $\prod_{n\geq m} D_{\mathrm{Sen}, n}(N)$, for some $m$. By varying $m$, this is in turn equivalent to a $\Gamma_{\Q_p}$-invariant element  $u_1\in D_{\mathrm{Sen}}(N)$.
\end{proof}

\begin{proposition}\label{existfil}
Let $X$ be a rigid $L$-analytic space, $M$ be a $(\varphi, \Gamma_K)$-module of rank $d$ over $X$, and $\delta_1, \ldots, \delta_d: K^\times \map \mathcal{O}(X)^\times$ be $d$ continuous characters. Then there exists a unique maximal Zariski locally closed analytic subspace $Y$ of $X$ such that the following two conditions are satisfied
\begin{enumerate}
    \item For any $i\in \{1, \ldots, d\}$
\[
\mathcal{L}_i :=\homo_{\varphi, \gamma_K}\left(\bigwedge^{d-i+1}M,\rob_{Y} (\prod_{j=i}^d \delta_j)\right)
\]
is a rank-$1$ locally free module over $Y$
\item There exists an increasing filtration $\mathcal{F}_i$ of $M[\frac{1}{t}]$, for $i\in \{0, \ldots, d\}$, such that $\mathcal{F}_i$ is a $(\varphi, \Gamma_K)$-modules of rank $i$ over $\rob_Y\invertt$ and is a local direct summand of $\mathcal{F}_{i+1}$. We require the filtration $\mathcal{F}_\bullet$ to be compatible with the line bundles $\mathcal{L}_\bullet$, in the sense that there exist local generators $f_i$ of $\mathcal{L}_i$, such that locally each $\mathcal{F}_{i-1}$ is given by $f_i[\frac{1}{t}]$ (See Notation) as a rank $i-1$ sub-$\rob_Y[\frac{1}{t}]$-module of $M\invertt$, for any $i\in \{1, \ldots, d\}$.
\end{enumerate}
\end{proposition}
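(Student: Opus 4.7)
The plan is to construct $Y$ in stages, each cutting down $X$ by a Zariski locally closed condition, using Corollary \ref{homlocclosed}, Proposition \ref{onedfac}, and a rank-$r$ generalization of Corollary \ref{locclosed} as the main inputs.

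First, for each $i\in\{1,\ldots,d\}$, apply Corollary \ref{homlocclosed} to the $\phigamma$-module $\wedge^{d-i+1}M$ and the character $\prod_{j=i}^d \delta_j$ to obtain a unique maximal Zariski locally closed $Y_i\subset X$ on which $\mathcal{L}_i$ is a line bundle. Set $Y^{(1)}:=\bigcap_{i=1}^d Y_i$; this is the universal locus where condition (1) holds. By Proposition \ref{onedfac}, on $Y^{(1)}$ the tautological evaluation map $\wedge^{d-i+1}M\invertt\surj \rob_{Y^{(1)}}(\prod_{j=i}^d\delta_j)\invertt\otimes\mathcal{L}_i^\vee$ is surjective for each $i$.

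Next, on $Y^{(1)}$, locally choose a generator $f_i$ of each $\mathcal{L}_i$ and consider the $\phigamma$-equivariant map
\[
\lambda_i\colon M\longrightarrow \homo\!\left(\wedge^{d-i}M,\ \rob_{Y^{(1)}}\bigl(\prod\nolimits_{j=i}^d\delta_j\bigr)\otimes\mathcal{L}_i^\vee\right),\ v\mapsto\bigl(w\mapsto f_i(v\wedge w)\bigr).
\]
Its kernel after inverting $t$ is independent of the scalar ambiguity in $f_i$. By a Fitting-ideal argument modelled on Corollary \ref{locclosed} but for kernels of prescribed rank $i-1$ rather than $1$, there is a unique maximal Zariski locally closed $Y^{(2)}\subset Y^{(1)}$ over which $\ker\lambda_i\invertt$ is a direct summand of $M\invertt$ of rank $i-1$ for every $i$; at classical fibers this is precisely the decomposability of the element $f_{i,z}\in \wedge^{d-i+1}M_z^\vee$ (vanishing of the relevant Pl\"ucker relations). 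Then imposing the Zariski closed nesting conditions $\ker\lambda_i\invertt\subset\ker\lambda_{i+1}\invertt$ for $i=1,\ldots,d-1$ yields the desired $Y\subset Y^{(2)}$. Setting $\mathcal{F}_d:=M\invertt$ and $\mathcal{F}_{i-1}:=\ker\lambda_i\invertt$ for $i=1,\ldots,d$, the $\phigamma$-stability is automatic from the equivariance of $\lambda_\bullet$, and by construction each $\mathcal{F}_{i-1}$ is given by $f_i\invertt$ in the sense of the ``Notation'' convention. This verifies (2); condition (1) was built in at the first stage; and the maximality and uniqueness of $Y$ follow from the universality of each of the three cut-downs.

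The main obstacle is Stage 2: extending Corollary \ref{locclosed} to the situation where the kernel of a morphism of locally free sheaves is required to be a direct summand of an arbitrary specified rank $r$, carried out over the base $\rob_{Y^{(1)}}\invertt$. The Fitting-ideal technique of Corollary \ref{locclosed} generalizes: the locus where the $r$-th Fitting ideal of $\lambda_i\invertt$ generates the unit ideal while the $(r-1)$-st Fitting ideal vanishes is Zariski locally closed with the stated universal property, and functoriality of Fitting ideals makes the extension to the $\rob\invertt$ setting routine once $\lambda_i\invertt$ is expressed by a matrix in a local trivialization. Pl\"ucker's criterion then identifies the resulting condition with the fiberwise decomposability of $f_{i,z}$, which is automatic at points where $M$ carries an honest triangulation but may genuinely fail on $Y^{(1)}$, so the cut-down is needed. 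Finally, the nested direct-summand structure on $\mathcal{F}_\bullet$ follows from the constant-rank conditions together with the fact that finitely generated submodules of locally free $\rob\invertt$-modules of constant generic rank are themselves locally free.
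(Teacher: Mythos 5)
Your Stage 1 coincides with the paper's first step (intersect the universal loci from Corollary \ref{homlocclosed}/\ref{locclosed} for each $i$), and your idea of extracting $\mathcal{F}_{i-1}$ as the kernel of the contraction map $\lambda_i$ attached to a local generator $f_i$ is the right picture of what ``given by $f_i\invertt$'' means. The gap is in Stage 2, and it is exactly the hard point of the proposition. You invoke a ``rank-$(i-1)$ generalization of Corollary \ref{locclosed}'' applied to $\lambda_i\invertt$ and call the extension to the base $\rob_{Y^{(1)}}\invertt$ routine. But Corollary \ref{locclosed} and its Fitting-ideal proof live entirely in the world of coherent sheaves on rigid spaces: one may shrink the base so that an $(n-1)\times(n-1)$ minor becomes invertible, and the residual entries lie in $\Of(Y)$, so their vanishing is visibly a Zariski-closed condition with the universal property. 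None of this transfers to a matrix with entries in $\rob_{Y^{(1)}}\invertt$: the locus where a Fitting ideal of such a matrix is the unit ideal is not an admissible open of $Y^{(1)}$ by any argument you give, the vanishing locus of a single entry of $\rob_{Y^{(1)}}\invertt$ is not known to be Zariski closed in $Y^{(1)}$, and the ``fiberwise Pl\"ucker relations'' at a classical point $z$ are conditions inside the infinite-dimensional space $\bigwedge^{d-i+1}M_z^\vee\invertt$, not finitely many equations on $Y^{(1)}$. The same objection applies to your nesting condition $\ker\lambda_i\invertt\subset\ker\lambda_{i+1}\invertt$, which you assert is Zariski closed without a mechanism, and to the closing claim that finitely generated submodules of constant generic rank in locally free $\rob\invertt$-modules are locally free (true over a point by the Bezout property of $\rob_{L,K}$, unproven over an affinoid base). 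Even over the structure sheaf, the rank-$r$ analogue of Corollary \ref{locclosed} needs an ingredient beyond the paper's rank-one annihilator trick (e.g.\ a McCoy-type determinant argument), though that part is fixable; the essential obstruction is that your conditions are imposed over the Robba ring with $t$ inverted.

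This is precisely why the paper's proof of condition (2) has the shape it does. The only representability tools available for conditions on $\phigamma$-modules are: (i) Hom-spaces being line bundles, via \cite[Cor 6.3.3]{KPX} together with Corollary \ref{locclosed}; and (ii) zero loci of $\phigamma$-equivariant maps (Lemma \ref{zerolocusclosed}, again reduced through KPX to finite $\Of_Y$-modules). The paper therefore runs a descending induction on $i$, and inside each stage a descending recursion cutting out $Z_{i,j}$ as the zero locus of the induced map $(g_{i,j})|_{\bigwedge^{j-i+1}\mathcal{F}_{j-1}}$, using the exact sequences relating $\bigwedge^{j-i+1}\mathcal{F}_{j-1}$, $\bigwedge^{j-i+1}\mathcal{F}_{j}$ and $(\bigwedge^{j-i}\mathcal{F}_{j-1})\otimes(\mathcal{F}_j/\mathcal{F}_{j-1})$ to let $f_i$ descend one graded piece at a time; the final map $g_{i,i}:\mathcal{F}_i\map\rob_Y(\delta_i)\invertt$ has rank-one projective target, so its kernel $\mathcal{F}_{i-1}$ is automatically a local direct summand, as in Proposition \ref{onedfac}. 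In other words, both the decomposability of $f_i$ and the containment $\mathcal{F}_{i-1}\subset\mathcal{F}_i$ are encoded purely as zero-locus conditions of equivariant maps into rank-one twists, which is what makes the resulting subspace demonstrably Zariski locally closed and universal. To repair your argument you would have to prove the representability statements you assume (the rank-$r$ kernel locus over $\rob_Y\invertt$ and the closedness of the nesting condition), which is essentially the content of the proposition itself.
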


\begin{proof}
By Corollary \ref{locclosed}, there exists a universal Zariski locally closed analytic subspace $Z$ satisfying (1). In fact, Corollary \ref{locclosed} gives a universal $Y_i$ where  $\mathcal{L}_i$ is locally free of rank $1$, for any $i\in \{1, \ldots, d\}$. One simply take $Z:= \cap_{i=1}^d Y_i$. Here for any finite collection of (not necessarily reduced) Zariski closed sub analytic spaces $\{\maxi(R/I_i)\}_{i=1}^d$, we take their intersection to be $\maxi(R/\left(I_1+\ldots + I_d\right))$.

Now we need to show there is a universal Zariski-Closed analytic subspace of $Z$ satisfying (2). For this we use induction on $i$ starting from $i=d$ to show that there exists a universal Zariski-Closed analytic subspace $Z_i$ of $Z$ where the  rank $j-1$ sub-$\rob_Y[\frac{1}{t}]$-module $\mathcal{F}_{j-1}$ of $M$ given by $f_j[\frac{1}{t}]$ is contained in $\mathcal{F}_{j}$ (defined by  $f_{j+1}[\frac{1}{t}]$) as a local direct summand, for any $j\geq i$. The case $i=d$ is obvious from Proposition \ref{onedfac}. Here different choices of $f_j$ give the same result as we already have the mapping spaces $\mathcal{L}_i$ are rank-$1$ locally free over $Z$. Assuming the claim for $i+1$. By induction hypothesis, we already have $\mathcal{F}_i\subset\cdots\subset \mathcal{F}_d$, where $\mathcal{F}_{j}/\mathcal{F}_{j-1}\cong \rob_{Y} ( \delta_j)\invertt $ for $j=i+1, \ldots, d$. We need to construct $Z_i$ as a universal Zariski-closed analytic subspace of $Z_{i+1}$ where $\mathcal{F}_{i-1}$ given by $f_{i}\invertt$ is a local direct summand contained in $\mathcal{F}_i$.  We recursively construct Zariski-closed $Z_{i, j}$ for $j=d, \ldots, i+1$ and set $Z_i:=Z_{i, i+1}$ in the end. The first step $Z_{i, d}$ is constructed as the zero locus of the map of locally free $\rob_{Z_{i+1}}\invertt$ modules 
\[
f_i\invertt_{|\mathcal{F}_{d-1}}: \bigwedge^{d-i+1}\mathcal{F}_{d-1} \map \rob_{Y} (\prod_{j=i}^d \delta_j)\invertt
\]
Note that $Z_{i, d}$ is Zariski-closed by Lemma \ref{zerolocusclosed}. Now since $\mathcal{F}_{d-1}$ is a rank $d-1$ local direct summand of $\mathcal{F}_d$, we have the exact sequence 
\[
0 \map \bigwedge^{d-i+1}\mathcal{F}_{d-1} \map \bigwedge^{d-i+1}\mathcal{F}_{d} \map \left(\bigwedge^{d-i}\mathcal{F}_{d-1}\right)\otimes (\mathcal{F}_{d}/\mathcal{F}_{d-1}) \map 0
\]
Thus over $Z_{i, d}$, $f_i\invertt$ factors through the quotient fo the above short exact sequence and induces a map 
\[
g_{i, d-1}: \bigwedge^{d-i}\mathcal{F}_{d-1} \map \rob_{Y} (\prod_{k=i}^{d-1} \delta_k)\invertt
\]
. Inductively, we define $Z_{i, j}$ as the zero locus of the map (provided by the induction step for $j+1$) of locally free $\rob_{Z_{i, j+1}}\invertt$ modules 
\[
(g_{i, j})_{| \mathcal{F}_{j-1}}: \bigwedge^{j-i+1}\mathcal{F}_{j-1} \map \rob_{Y} (\prod_{k=i}^{j} \delta_k)\invertt
\]
. Again, one have the short exact sequence 
\[
0 \map \bigwedge^{j-i+1}\mathcal{F}_{j-1} \map \bigwedge^{j-i+1}\mathcal{F}_{j} \map \left(\bigwedge^{j-i}\mathcal{F}_{j-1}\right)\otimes (\mathcal{F}_{j}/\mathcal{F}_{j-1}) \map 0
\]
, which induces a map
\[
g_{i, j-1}: \bigwedge^{j-i}\mathcal{F}_{j-1} \map \rob_{Y} (\prod_{k=i}^{j-1} \delta_k)\invertt
\]
The last step induces a map 
\[
g_{i, i}: \mathcal{F}_{i} \map \rob_{Y} ( \delta_i)\invertt
\]
, realizing $\mathcal{F}_{i-1}$ as a local direct summand of $\mathcal{F}_{i}$. It is straightforward to check that $Z_{i, i+1}$ is the universal Zariski-closed sub-analyitc-space where the  $\mathcal{F}_{i-1}$ satisfy the requirement.
\end{proof}

\begin{lemma}\label{zerolocusclosed}
Let $M, N$ be two $\phigamma$-module over $X$, and $f: M\invertt \map N\invertt$ (resp. $f: M/t_\tau\map N/t_\tau$) a morphism of $\phigamma$-module over $\rob_X\invertt$ (resp. $\rob_X/t_\tau$).   Then there exists a universal Zariski-closed analytic subspace $Y$ in $X$ where the map $f$ is $0$.
\end{lemma}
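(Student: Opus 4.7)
The plan is to work locally on $X$, reduce the vanishing of $f$ to the vanishing of finitely many elements in $\rob_X$ (resp.\ $\rob_X/t_\tau$), and then use the exhaustion of the Robba ring by affinoid subalgebras to cut out the locus by Zariski closed conditions. So first I would assume $X = \maxi(R)$ is affinoid and that, after possibly shrinking $X$, both $M$ and $N$ are direct summands of finite free $\rob_X$-modules.

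For the $\invertt$ case, I would begin by clearing denominators: since $M$ is finitely generated over $\rob_X$ and $N\invertt = \bigcup_k t^{-k} N$, the restriction $f|_M$ lands in $t^{-k} N$ for some $k \geq 0$, and since $t$ acts injectively on $N$ the vanishing of $f$ is equivalent to the vanishing of $t^k f \colon M \map N$, which is now a morphism of $\phigamma$-modules over $\rob_X$. Choosing a local surjection $\rob_X^d \twoheadrightarrow M$ and an embedding $N \hookrightarrow \rob_X^{d'}$ as a local direct summand, this morphism is encoded by finitely many elements $h_{i,j} \in \rob_X$, so it suffices to show that for any $h \in \rob_X$ the locus $\{h = 0\}$ cuts out a Zariski closed subspace of $\maxi(R)$. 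Using $\rob_X = \bigcup_r \rob_X^r$ and the presentation of each $\rob_X^r$ as a countable intersection $\bigcap_{[a,b] \subset [r,1)} \rob_X^{[a,b]}$ of affinoid algebras over $R$, one expands $h$ as a Laurent series in each affinoid piece; each coefficient vanishing is a principal Zariski closed condition in $\maxi(R)$, and countable intersections of such conditions stay Zariski closed because $R$ is Noetherian.

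For the $t_\tau$ case, I would use the explicit Sen description from \cite[Lemma 3.2.3]{KPX} that already appeared in the proof of Lemma \ref{surjmodt}: after reducing to $K = \Q_p$ via the induction $\ind_{\Gamma_{\Q_p}}^{\Gamma_K}$, one has $M/t \cong \mathrm{colim}_m \prod_{n \geq m} D_{\mathrm{Sen}, n}(M)$, and similarly for $N/t$, so the vanishing of $f$ becomes the simultaneous vanishing of the induced maps $f_n \colon D_{\mathrm{Sen}, n}(M) \map D_{\mathrm{Sen}, n}(N)$ for all sufficiently large $n$. Each $D_{\mathrm{Sen}, n}$ is a finite projective module over $R \otimes_{\Q_p} \Q_p(\zeta_{p^n})$, so each $f_n$ is a morphism of coherent sheaves on $\maxi(R)$ whose vanishing locus is Zariski closed, and intersecting countably many of them gives a Zariski closed subspace. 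In both cases the universal property is automatic, since the defining ideal is built from explicit vanishing conditions that commute with arbitrary base change along $\maxi(S) \map X$.

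The main technical obstacle is that neither $\rob_X\invertt$ nor $\rob_X/t_\tau$ is an affinoid algebra of finite type over $R$, so one cannot simply quote the fact that the vanishing locus of a section of a coherent sheaf is Zariski closed. The key step will be to reduce, via the affinoid exhaustion of the Robba ring (and the Sen decomposition modulo $t_\tau$), to countable conjunctions of vanishing conditions on $R$-coefficients, and then to invoke Noetherianity of the affinoid algebra $R$ in order to collapse those countable conjunctions back to a single Zariski closed subspace of $X$.
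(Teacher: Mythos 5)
Your proposal is correct in outline, but it takes a genuinely different route from the paper. The paper's proof never touches Laurent coefficients or Sen modules: it uses the fact that $f$ is a morphism of $\phigamma$-modules, so that after clearing denominators $f$ is an element of $\homo_{\varphi, \gamma_K}(M, t^{-i}N)\cong H^0_{\varphi, \gamma_K}(M^\vee\otimes t^{-i}N)$, and then invokes \cite[Corollary 6.3.3]{KPX} to identify this $H^0$, locally on $X$ and compatibly with base change, with the kernel of a map of finite free coherent sheaves $\Of_X^m\map \Of_X^n$; the universal zero locus is then simply cut out by the finitely many coordinates of $f$ in $\Of_X^m$, and the $t_\tau$ case is handled identically with $H^0_{\varphi,\gamma_K}(M^\vee/t_\tau\otimes N/t_\tau)$. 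You instead discard the equivariance entirely, encode $f$ by matrix entries in $\rob_X$ (resp.\ by the induced maps on the $D_{\mathrm{Sen},n}$ after inducting to $\Q_p$), and cut out the locus by countably many coefficient conditions, collapsed to a single Zariski-closed subspace via Noetherianity of $R$. This works, and in fact proves the stronger assertion recorded in the remark immediately after the lemma (the zero locus of an arbitrary, not necessarily $\varphi,\Gamma_K$-equivariant, map is Zariski closed), which the paper deliberately avoids because the equivariant case admits the shorter coherent-sheaf argument. The price of your route is that you must verify the base-change compatibilities you are implicitly using: that the transition map $\rob_{R,K}^{[a,b]}\map\rob_{S,K}^{[a,b]}$ acts coefficientwise on Laurent expansions (with coefficients in the finite free $R$-module $R\otimes_{\Q_p}F'$, not in $R$ itself), that an element of $\rob_{S,K}$ vanishes iff all its coefficients do, and that $D_{\mathrm{Sen},n}$ and the colimit description of $M/t$ commute with base change with a bound on $n$ uniform in $S$; none of these is a gap, but they are exactly the technical overhead the paper's appeal to the KPX finiteness theorem is designed to bypass.
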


\begin{proof}
In the case $f: M\invertt \map N\invertt$, since $M\invertt$ is finite projective over $\rob_X\invertt$, there exists some $i$ such that $f$ comes from a map $g: M\map t^{-i}N$ after inverting $t$. Moreover, for any map $Y\map X$, the base change $f_Y: M\otimes_{\rob_X}\rob_Y\invertt\map N\otimes_{\rob_X}\rob_Y\invertt$ is $0$ if and only if $g_Y: M\otimes_{\rob_X}\rob_Y\map t^{-i} N\otimes_{\rob_X}\rob_Y$ is $0$. Now by \cite[Corollary 6.3.3]{KPX}, $\homo_{\varphi, \gamma_K}(M, t^{-i}N)\cong H^0_{\varphi, \gamma_K}(M^\vee\otimes(t^{-i}N))$ is locally isomorphic to the kernel of a map between two free sheaves $\Of_X^m\map \Of_X^n$, which is compatible with base change. Now locally, the desired universal analytic space where $f$ vanishes is clearly the Zariski closed analytic subspace of $X$ defined by the ideal generated by the coordinates of $f$, viewed as an element of $\Of_X^m$.

The argument in the case of $\phigamma$-module over $\rob_X/t_\tau$ follows in the same way, using that $H^0_{\varphi, \gamma_K}(M^\vee/t_\tau\otimes N/t_\tau)$ is locally isomorphic to the kernel of a map between two free sheaves.
\end{proof}

\begin{remark}
In fact, for any map $f: M\invertt \map N\invertt$ (resp. $f: M/t_\tau\map N/t_\tau$) of $\rob_X\invertt$(resp. $\rob_X/t_\tau$)-module, not necessarily preserving $\varphi, \Gamma_K$-action, the zero locus should be Zariski-closed as well. We do not need this stronger fact, so we only prove Lemma \ref{zerolocusclosed}, which has a simpler proof.
\end{remark}

\begin{proposition}
Let $X$ be a rigid $L$-analytic space, $M$ be a $(\varphi, \Gamma_K)$-module of rank $d$ over $X$, $\tau\in \Sigma$ and $\delta_1, \ldots, \delta_d: K^\times \map \mathcal{O}(X)^\times$ be $d$ continuous characters. Then there exists a unique maximal Zariski locally closed analytic subspace $Y$ of $X$ such that the following two conditions are satisfied

\begin{enumerate}
    \item For any $i\in \{1, \ldots, d\}$
\[
\mathcal{L}_i^\tau :=\homo_{\varphi, \gamma_K}\left(\left(\bigwedge^{d-i+1}M\right)/t_\tau,\rob_{Y} (\prod_{j=i}^d \delta_j)/t_\tau\right)
\]
is a rank-$1$ locally free module over $Y$
\item There exists an increasing filtration $\mathcal{F}_i^\tau$ of $M/t_\tau$, for $i\in \{0, \ldots, d\}$, such that $\mathcal{F}_i^\tau$ is a $(\varphi, \Gamma_K)$-modules of rank $i$ over $\rob_Y/t_\tau$ and is a local direct summand of $\mathcal{F}_{i+1}^\tau$. We require the filtration $\mathcal{F}_\bullet^\tau$ to be compatible with the line bundles $\mathcal{L}^\tau_\bullet$, in the sense that there exist local generators $f_i^\tau$ of $\mathcal{L}_i^\tau$, such that locally each $\mathcal{F}_{i-1}^\tau$ is given by $f_i^\tau$  (See Notation) as a rank $i-1$ sub-$\rob_Y/t_\tau$-module of $M/t_\tau$, for any $i\in \{1, \ldots, d\}$.
\end{enumerate}

\end{proposition}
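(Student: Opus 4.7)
The plan is to repeat the argument of Proposition \ref{existfil} essentially verbatim, with every occurrence of $\invertt$ replaced by $/t_\tau$. The three key substitutions are: Corollary \ref{homlocclosed} becomes Corollary \ref{locclosedmodt}; Proposition \ref{onedfac} becomes Proposition \ref{modtsurj}; and Lemma \ref{zerolocusclosed} (whose mod-$t_\tau$ case is already covered) is used unchanged.

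First, to achieve condition (1), I would apply Corollary \ref{locclosedmodt} to each pair $\bigl(\bigwedge^{d-i+1}M,\ \prod_{j=i}^d \delta_j\bigr)$ to obtain a universal Zariski locally closed subspace $Y_i^\tau$ of $X$ on which $\mathcal{L}_i^\tau$ is a line bundle, and set $Z^\tau := \bigcap_{i=1}^d Y_i^\tau$, interpreted as in the proof of Proposition \ref{existfil}. The construction of Corollary \ref{locclosedmodt} through Corollary \ref{locclosed} already enforces fibrewise $1$-dimensionality of the relevant $H^0$, because Corollary \ref{locclosed} begins by cutting out the Fitting-ideal locus where the pointwise kernel has dimension at most one. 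Hence both hypotheses of Proposition \ref{modtsurj} will hold on $Z^\tau$.

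Next, for condition (2), I would construct the filtration $\mathcal{F}_\bullet^\tau$ by descending induction on $i$, starting with $\mathcal{F}_d^\tau := M/t_\tau$. For the base case $i=d$, applying Proposition \ref{modtsurj} to $(M, \delta_d)$ over $Z^\tau$ yields the surjection
\[
M/t_\tau \surj \rob_{Z^\tau}(\delta_d)/t_\tau \otimes (\mathcal{L}_d^\tau)^\vee,
\]
whose kernel provides $\mathcal{F}_{d-1}^\tau$ as a rank-$(d-1)$ local direct summand. For the inductive step, given $\mathcal{F}_i^\tau \subset \cdots \subset \mathcal{F}_d^\tau$ already constructed over $Z_{i+1}^\tau \subset Z^\tau$, I would build $Z_i^\tau \subset Z_{i+1}^\tau$ as a nested sequence of vanishing loci $Z_{i,d}^\tau \supset Z_{i,d-1}^\tau \supset \cdots \supset Z_{i,i+1}^\tau =: Z_i^\tau$, each produced by Lemma \ref{zerolocusclosed} applied to the restriction of $g_{i,j}$ to $\bigwedge^{j-i+1}\mathcal{F}_{j-1}^\tau$, with the maps $g_{i,j}$ descended from $g_{i,j+1}$ along the short exact sequence
\[
0 \map \bigwedge^{j-i+1}\mathcal{F}_{j-1}^\tau \map \bigwedge^{j-i+1}\mathcal{F}_j^\tau \map \left(\bigwedge^{j-i}\mathcal{F}_{j-1}^\tau\right)\otimes(\mathcal{F}_j^\tau/\mathcal{F}_{j-1}^\tau) \map 0,
\]
exactly as in the proof of Proposition \ref{existfil}. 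The terminal map $g_{i,i}: \mathcal{F}_i^\tau \map \rob(\delta_i)/t_\tau \otimes (\mathcal{L}_i^\tau)^\vee$ then exhibits $\mathcal{F}_{i-1}^\tau$ as a local direct summand of $\mathcal{F}_i^\tau$ cut out by $f_i^\tau$.

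The main obstacle is verifying that the hypotheses of Proposition \ref{modtsurj} genuinely hold on $Z^\tau$: one needs both the line-bundle property and the fibrewise $1$-dimensionality of $H^0_{\varphi, \Gamma_K}(M^\vee(\delta_d)/t_\tau)$, and one must carefully check that ``$H^0$ modulo $t_\tau$'' (the object cut out by Corollary \ref{locclosedmodt}) matches ``$H^0$ of the mod-$t_\tau$ module'' (the object appearing in Proposition \ref{modtsurj}). Beyond this bookkeeping, the argument is a direct structural transcription of the proof of Proposition \ref{existfil}, and the uniqueness of $Y := Z_1^\tau$ is automatic from the universal properties built into Corollary \ref{locclosedmodt} and Lemma \ref{zerolocusclosed}.
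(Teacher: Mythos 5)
Your proposal is correct and is essentially the paper's own proof: the paper simply says to repeat the argument of Proposition \ref{existfil} with Proposition \ref{onedfac} replaced by Proposition \ref{modtsurj} (and, implicitly, Corollary \ref{homlocclosed} by Corollary \ref{locclosedmodt} and the mod-$t_\tau$ case of Lemma \ref{zerolocusclosed}), which is exactly your transcription. Your extra remark that the Fitting-ideal locus built into Corollary \ref{locclosed} supplies the fibrewise one-dimensionality hypothesis of Proposition \ref{modtsurj} is a detail the paper leaves implicit, but it is consistent with, not a departure from, its argument.
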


\begin{proof}
The same argument as Proposition \ref{existfil}: replacing the applications of Proposition \ref{onedfac} by the application of Proposition \ref{modtsurj}.
\end{proof}

The previous proposition guarantees that the following definition makes sense.

\begin{definition}\label{moduliprob}
Fix a non-negative integer $c$. We let $M_{c}$ be the unique universal Zariski-locally-closed analytic subspace $\spf (R^{\square}_{\overline{r}})^{\ad}_\eta \times \mathcal{T}^n$  such that the following conditions are satifsied (Note that there are universal $\phigamma$ module $D_\rig(r)$ and characters $\delta_1, \ldots, \delta_n$ over $\spf (R^\square_{\overline{r}})^{\ad}_\eta \times \mathcal{T}^n$ coming from each factor) for any affinoid subdomain $Y\subset \spf (R^\square_{\overline{r}})^{\ad}_\eta \times \mathcal{T}^n$:
\begin{enumerate}
    \item For any $i\in \{1, \ldots, n\}$
\[
\mathcal{L}_i :=\homo_{\varphi, \gamma_K}\left(\bigwedge^{n-i+1}D_\rig(r), t^{-c}\rob_{Y} (\prod_{j=i}^n \delta_j)\right)
\] 
is a rank-$1$ locally free module over $Y$.
\item For any $i\in \{1, \ldots, n\}$ and $\tau\in \Sigma$,
\[
\mathcal{L}_i^\tau :=\homo_{\varphi, \gamma_K}\left(\left(\bigwedge^{n-i+1}D_\rig(r)\right)/t_\tau,\rob_{Y} (\prod_{j=i}^n \delta_j)/t_\tau\right)
\]
is a rank-$1$ locally free module over $Y$.
\item There exists an increasing filtration $\mathcal{F}_i$ of $D_\rig(r)[\frac{1}{t}]$, for $i\in \{0, \ldots, n\}$, such that $\mathcal{F}_i$ is a $(\varphi, \Gamma_K)$-modules of rank $i$ over $\rob_Y\invertt$ and is a local direct summand of $\mathcal{F}_{i+1}$. We require the filtration $\mathcal{F}_\bullet$ to be compatible with the line bundles $\mathcal{L}_\bullet$, in the sense that there exist local generators $f_i$ of $\mathcal{L}_i$, such that locally each $\mathcal{F}_{i-1}$ is given by $f_i[\frac{1}{t}]$ as a rank $i-1$ sub-$\rob_Y[\frac{1}{t}]$-module of $D_\rig(r)\invertt$, for any $i\in \{1, \ldots, n\}$.
\item For any $\tau\in \Sigma$, there exists an increasing filtration $\mathcal{F}_i^\tau$ of $D_\rig(r)/t_\tau$, for $i\in \{0, \ldots, n\}$, such that $\mathcal{F}_i^\tau$ is a $(\varphi, \Gamma_K)$-modules of rank $i$ over $\rob_Y/t_\tau$ and is a local direct summand of $\mathcal{F}_{i+1}^\tau$. We require the filtration $\mathcal{F}_\bullet^\tau$ to be compatible with the line bundles $\mathcal{L}^\tau_\bullet$, in the sense that there exist local generators $f_i^\tau$ of $\mathcal{L}_i^\tau$, such that locally each $\mathcal{F}_{i-1}^\tau$ is given by $f_i^\tau$ as a rank $i-1$ sub-$\rob_Y/t_\tau$-module of $D_\rig(r)/t_\tau$, for any $i\in \{1, \ldots, n\}$.
\end{enumerate}

\end{definition}

\begin{proposition}
For any $x=(r, \delta_1, \ldots, \delta_n)\in M_c(L)$, let $\widehat{M}_{c, x}$ be the completed local ring of $M_c$ at $x$. The moduli problem it represents is the following: For any $A\in \mathcal{C}_L$, $\widehat{M}_{c, x}(A)$ is the isomorphism classes of a continuous representation $r_A: G_K \map GL_n(A)$ and $n$-tuples of characters $\delta_{A, 1 }, \ldots, \delta_{A, n}: K^\times \map A^\times$ lifting $r$ and $\delta_1, \ldots, \delta_n$ respectively, such that all conditions in Definition \ref{moduliprob} are satisfied with $Y:=\maxi(A)$, $r:=r_A$, $\delta_i:= \delta_{A, i}$, for any $i=1, \ldots, n$ and $\tau\in \Sigma$.
\end{proposition}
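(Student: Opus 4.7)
The plan is to show that the functor of points interpretation of $\widehat{M}_{c,x}$ as a formal moduli problem on $\mathcal{C}_L$ agrees with the claimed description by unwinding the universal property with which $M_c$ was constructed in Definition \ref{moduliprob}.

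First I recall the standard identification: for a rigid analytic space $X$ and a classical point $x\in X(L)$, the completed local ring $\widehat{X}_x$ pro-represents the functor on $\mathcal{C}_L$ sending $A\mapsto \{f:\maxi(A)\to X \text{ sending the closed point to }x\}$. Applied to $X=M_c$, an $A$-point of $\widehat{M}_{c,x}$ is the same as a map $\maxi(A)\to \spf(R^{\square}_{\overline{r}})^{\ad}_\eta \times \mathcal{T}^n$ centered at $x$ which factors through $M_c$. By the universal property of the framed deformation space and of the character variety, such maps into the ambient space $\spf(R^{\square}_{\overline{r}})^{\ad}_\eta \times \mathcal{T}^n$ centered at $(r,\delta_1,\ldots,\delta_n)$ are in natural bijection with tuples $(r_A,\delta_{A,1},\ldots,\delta_{A,n})$ lifting $(r,\delta_1,\ldots,\delta_n)$.

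Next I verify that such a map factors through $M_c$ if and only if the four conditions in Definition \ref{moduliprob} hold with $Y:=\maxi(A)$, $r:=r_A$, $\delta_i:=\delta_{A,i}$. This is exactly the content of the universal property used to define $M_c$: the remark following Corollary \ref{locclosed} extends the universal property of a Zariski locally closed analytic subspace to \emph{arbitrary} (not necessarily locally closed) test maps $f:\maxi(S)\to X$, and $M_c$ was constructed by iteratively intersecting the universal subspaces produced by Corollaries \ref{locclosed}, \ref{homlocclosed}, \ref{locclosedmodt} and Propositions \ref{onedfac}, \ref{modtsurj}, \ref{existfil} (and its mod-$t_\tau$ analogue). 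So factoring through $M_c$ is equivalent to the coherent sheaves $\mathcal{L}_i$ and $\mathcal{L}_i^\tau$ being line bundles over $\maxi(A)$, together with the existence of filtrations $\mathcal{F}_\bullet$ on $D_\rig(r_A)\invertt$ and $\mathcal{F}_\bullet^\tau$ on $D_\rig(r_A)/t_\tau$ compatible with those line bundles in the sense specified. These are exactly conditions (1)--(4) of Definition \ref{moduliprob} taken over $Y=\maxi(A)$.

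The remaining verification is purely formal: since $\maxi(A)$ is already affinoid (indeed Artinian), the statement ``for any affinoid subdomain $Y$'' in Definition \ref{moduliprob} applies with $Y=\maxi(A)$ after pullback, and pullback is compatible with all of the constructions $\mathcal{L}_i$, $\mathcal{L}_i^\tau$, $\mathcal{F}_\bullet$, $\mathcal{F}_\bullet^\tau$ by the functoriality built into Corollary \ref{locclosed} and the Hom-sheaf description of $H^0_{\varphi,\gamma_K}$ coming from \cite[Cor 6.3.3]{KPX}. The only mildly subtle point---and what I view as the main thing to check rather than an obstacle---is that ``Zariski locally closed'' involves first landing in a Zariski open neighborhood before cutting out a closed subspace; but since the test map sends the unique closed point of $\maxi(A)$ to $x\in M_c$, any open neighborhood of $x$ on which $M_c$ is closed is automatically hit, so this step is automatic for formal-neighborhood test maps. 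Combining these bijections gives the claimed description of $\widehat{M}_{c,x}(A)$.
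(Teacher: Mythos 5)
Your proposal is correct and follows the same route as the paper, which simply declares the statement evident from the definition: you are unwinding the universal property of $M_c$ as a Zariski-locally-closed subspace (valid for arbitrary test maps by the remark after Corollary \ref{locclosed}) together with the standard identification of $\widehat{M}_{c,x}(A)$ with $A$-points of $M_c$ centered at $x$ and of points of the ambient space with lifts $(r_A,\delta_{A,1},\ldots,\delta_{A,n})$. The details you supply (compatibility with pullback, the open-then-closed subtlety being automatic for formal-neighborhood test maps) are exactly the routine checks the paper leaves implicit.
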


\begin{proof}
Evident from definition.    
\end{proof}

\begin{remark}
By abuse of notation, we will also use $\widehat{M}_{c, x}$ for the moduli problem it represents.
\end{remark}

\section{The properties of Various Formal Deformation Problems}\label{formdefprob}
In this section, we define several deformation problems similar to those studied in \cite[Section 3]{BHS} , that work in greater generality where the weights are not assumed to be integers.

Let $\Sigma:=\homo_{\Q_p}(K, L)$, the set of embeddings from $K$ to $L$.

\subsection{Twisted Almost de Rham Representations}

 Fix a $\tau\in \Sigma$  in this subsection to simplify notation. In later subsections we will put a $\tau$ as a subscript on the notions defined in this subsection to specify $\tau$. We first define an equivalence relation on the set of characters $\{\delta_1, \ldots, \delta_n\}$.

\begin{definition}\label{intwteq}
\begin{enumerate}
    \item We say two characters $\delta_i$ and $\delta_j: K^\times\map R^\times$, where $R$ is a $L$-Banach algebra, are of integral $\tau$-weights  difference to each other if and only if $\wt_\tau(\delta_i)-\wt_\tau(\delta_j)\in \Z$. This only depend on the isomorphism class of the $\phigamma$-module $\rob_{L, K}(\delta_i)\invertt$ and $\rob_{L, K}(\delta_j)\invertt$ as another choice $\delta'_i$ of the character $\delta_i$ satisfy $\wt_\tau(\delta'_i)-\wt_\tau(\delta_i)\in \Z$ for any $\tau\in \Sigma$.
    \item The set $\{1, \ldots, n\}$ decomposes, according to this equivalence relation, to a disjoint union of sets $S_1, \ldots, S_k$, such that two indices $i$ and $j$ are in a same set $S_l$ for some $l\in \{1, \ldots, k\}$ if and only if $\delta_i$ and $\delta_j$ are of integral $\tau$-weights  difference. Let $n_l$ denote the cardinality of the set $S_l$. So $\sum_{l=1}^k n_l=n$. 
    \item Within the class $r+\Z$ for some $r\in L$, we define a well-ordering $<$ on it induced by the one on $\Z$. 
\end{enumerate}
\end{definition}

%over the embeddings set $\Sigma$, i.e. $\mathcal{S}_l:=\mathrm{Aut}(S_l)^\Sigma$. 

We have the isomorphism 
\[
\mathrm{Lie} (\rescale\mathbb{G}_m)^{n} \otimes_{\Q_p}L\cong \bigoplus_{\tau\in \Sigma} \left(\liealg \mathbb{G}_{m, K}\otimes_{K, \tau} L\right)^n\cong \bigoplus_{\tau\in \Sigma}L^n
\]
For our fixed $\tau$, we decompose the factor labelled by $\tau$ in the above isomorphism into 
\[
\left(\liealg \mathbb{G}_{m, K}\otimes_{K, \tau} L\right)^n\cong \left(\liealg \mathbb{G}_{m, K}\otimes_{K, \tau} L\right)^{S_1}\oplus \cdots \oplus \left(\liealg \mathbb{G}_{m, K}\otimes_{K, \tau} L\right)^{S_k}
\]
and we set $\liet_\tau:=\left(\liealg \mathbb{G}_{m, K}\otimes_{K, \tau} L\right)^n$ and $\liet_{\tau, l}:=\left(\liealg \mathbb{G}_{m, K}\otimes_{K, \tau} L\right)^{S_l}$ to be the $l$-th factor for any $l\in \{1, \ldots, k\}$. $\liet_{\tau, l}$ is isomorphic to $L^{n_l}$. We let $\widehat{\mathfrak{t}}_\tau$ (resp. $\widehat{\liet}_{\tau, l})$ be the completion of $\liet_\tau$ (resp. $\liet_{\tau, l}$) at $0$. Set $\widehat{\mathfrak{t}}:=\bigoplus_{\tau\in \Sigma}\widehat{\mathfrak{t}}_\tau$.

%Let $$\liet_l:= \mathrm{Lie} (\rescale\mathbb{G}_m)^{S_l} \otimes_{\Q_p}L\cong \bigoplus_{\tau\in \Sigma}(\mathrm{Lie}     \ \mathbb{G}_m\otimes_{\Q_p}L)^{n_l}$$, 

We let $\mathcal{S}_{\tau, l}$ be  the permutation group of $S_l$. Also set $\mathcal{S}_\tau :=\prod_{l=1}^k \mathcal{S}_{\tau, l}$ and $\mathcal{S}:=\prod_{\tau\in \Sigma}\mathcal{S}_\tau$. $\mathcal{S}_{\tau, l}=\mathrm{Aut}(S_l)$ naturally acts on $\liet_{\tau, l}$ by permuting coordinates. Let $T_{\tau, l}:=\mathfrak{t}_{\tau, l}\times_{\mathfrak{t}_{\tau, l}/\mathcal{S}_{\tau, l}}\mathfrak{t}_{\tau, l}$, and for a $w_{\tau, l}\in \mathcal{S}_{\tau, l}$, we let $T_{w_{\tau, l}}=\{(z, \mathrm{Ad}(w_l^{-1})z), z\in \mathfrak{t}_{\tau, l}\}$ be the irreducible component of $T_{\tau, l}$ indexed by $w_{\tau, l}$. Also let   $\widehat{T}_{\tau, l,(0, 0)}$ (resp. 
 $\widehat{T}_{w_{\tau, l}, (0, 0)}$) be the completion of $T_{\tau, l}$ (resp. $T_{w_{\tau, l}}$) at the point $(0, 0)$. We set  $T_\tau :=\prod_{l=1}^k T_{\tau, l}$ (resp. $T=\prod_{\tau\in \Sigma}T_\tau$) and for each $w_\tau=(w_{\tau, l})\in \mathcal{S}_\tau$ (resp. $w=(w_\tau)_\tau\in \mathcal{S}$), we set $T_{w_\tau}:=\prod_{l=1}^k T_{w_{\tau, l}}$ (resp. $T_w:=\prod_{\tau\in \Sigma}T_{w_\tau}$) and we similarly define $\widehat{T}_{\tau,(0, 0)}$, $\widehat{T}_{(0, 0)}$, 
 $\widehat{T}_{w_{\tau}, (0, 0)}$ and $\widehat{T}_{w, (0, 0)}$.

For each $\l\in \{1, \ldots, k\}$, pick any $i\in S_l$, one may choose and fix a Galois character $\chi_l: G_{K}\map L^\times$ of  $\tau$-Hodge-Tate-Sen-weight $\in \wt_\tau(\delta_i)+\Z$ .

%We may make the choice so that
%$$\{\wt_\tau(\delta_i)_{i\in S_l}\}-\wt_\tau(\chi_l) \subset \N \label{wt1}
%$$ 
%and that
%$$0\in \{\wt_\tau(\delta_i)_{i\in S_l}\}-\wt_\tau(\chi_l)  \label{wt2}
%$$

%We define $D_{\pdr, \tau, l}(r):=D_{\pdr, \tau}(r(\chi_l^{-1}))=(B_\pdr\otimes_{K, \tau}V(\chi_l^{-1}))^{G_K}$.

%It is a finite free module of rank $n_l$ over $L$. 

In our setting of general weights, we will need to work with $B_\dR$-representations that is almost de Rham after taking into account character twists.

\begin{definition}
Given characters $\chi_1, \ldots, \chi_k: K^\times \map L^\times$ that are not of integral $\tau$-weights difference to one another , we define the category $\mathrm{Rep}_{\pdr,\tau,  \chi_1, \ldots, \chi_k}(G_K)$ as the full subcategory of the category of finite free $B_\dR\otimes_{K, \tau}L$-representations of $G_K$, spanned by the representations $W$ such that 
\[
\dim_{B_\dR\otimes_{K, \tau}L}(W)=\sum_{l=1}^k \dim_{L}D_{\pdr, \tau, l}(W)
\]
Here we set $D_{\pdr, \tau, l}(W):=(B_\pdr\otimes_{B_\dR}W(\chi_l^{-1}))^{G_K}$. In particular, we define the category $\mathrm{Rep}_{\pdr,\tau}(G_K):=\mathrm{Rep}_{\pdr,\tau,  \chi_{\mathrm{triv}}}(G_K)$ to be the usual category of almost de Rham representation, $D_{\pdr, \tau}(W):=(B_\pdr\otimes_{B_\dR}W)^{G_K}$, with $\chi_{\mathrm{triv}}$ being the trivial character. Also we set $\rep_L(\mathbb{G}_a)$ to be the category of $L$-vector spaces equipped with a nilpotent linear operator.
\end{definition}

\begin{remark}\label{drfinitefree}
    For any object in the category of $G_K$-representations on finite generated $B_\dR\otimes_{K, \tau}L$-modules, it is automatically finite free since $G_K$ acts transitively on the factors in the decomposition $B_\dR\otimes_{K, \tau}L\cong \bigoplus_{\iota}B_\dR$, where $\iota: L\inj \overline{K}$ ranges through those embeddings that restricts to $\tau^{-1}$ on $\tau(K)$. We denote this category by $\mathrm{Rep}_{B_\dR, \tau}(G_K)$.
\end{remark}

\begin{lemma}\label{drequiv}
The functor $\bigoplus_{l=1}^k D_{\pdr, \tau, l}$ induces an equivalence of category between $\mathrm{Rep}_{\pdr, \tau, \chi_1, \ldots, \chi_k}(G_K)$ and $(\mathrm{Rep}_{L}(\mathbb{G}_a))^k$.
\end{lemma}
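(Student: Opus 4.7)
The plan is to canonically decompose each $W$ in the source category into a direct sum of twists of classical almost de Rham representations, and then apply the (assumed known) equivalence $D_{\pdr, \tau}: \mathrm{Rep}_{\pdr, \tau}(G_K) \xrightarrow{\sim} \rep_L(\mathbb{G}_a)$ to each piece. The key algebraic input throughout is the vanishing $(B_\pdr \otimes_{K, \tau} L(\chi))^{G_K} = 0$ whenever $\chi: G_K \map L^\times$ has non-integral $\tau$-Hodge-Tate-Sen weight; equivalently, almost de Rham representations twisted by such a $\chi$ admit no nonzero $G_K$-equivariant morphisms to untwisted almost de Rham ones.

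First I would construct a candidate quasi-inverse: given $(V_1, \nu_1), \ldots, (V_k, \nu_k)$, use the classical equivalence to produce almost de Rham representations $E_l$ with $D_{\pdr, \tau}(E_l) = (V_l, \nu_l)$, and set $F((V_l, \nu_l)_l) := \bigoplus_l E_l \otimes_L L(\chi_l)$. Since $D_{\pdr, \tau, j}(E_l \otimes_L L(\chi_l)) = D_{\pdr, \tau}(E_l \otimes_L L(\chi_l \chi_j^{-1}))$, the vanishing statement above makes this zero for $j \neq l$ and equal to $V_l$ for $j = l$, placing the output in $\mathrm{Rep}_{\pdr, \tau, \chi_1, \ldots, \chi_k}(G_K)$ and identifying $(\bigoplus_l D_{\pdr, \tau, l}) \circ F$ with the identity up to natural isomorphism.

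The main step is essential surjectivity. Given $W$ in the source category, the natural $B_\pdr$-linear $G_K$-equivariant counit
\[
\alpha_l: B_\pdr \otimes_L D_{\pdr, \tau, l}(W) \map B_\pdr \otimes_{B_\dR} W(\chi_l^{-1})
\]
has image contained in a unique almost de Rham $B_\dR \otimes_{K, \tau} L$-subrepresentation $U_l \subseteq W(\chi_l^{-1})$ with $D_{\pdr, \tau}(U_l) = D_{\pdr, \tau, l}(W)$ and $\dim_{B_\dR \otimes_{K, \tau} L} U_l = \dim_L D_{\pdr, \tau, l}(W)$ (extracting the $\nu = 0$ part and invoking the classical quasi-inverse internally). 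Setting $W_l := U_l \otimes_L L(\chi_l) \subseteq W$, I would argue that the sum $\sum_l W_l$ is direct inside $W$: a nontrivial intersection between $W_i$ and $W_j$ for $i \neq j$ would yield a nonzero $G_K$-equivariant map $U_i \map U_j \otimes_L L(\chi_j \chi_i^{-1})$, contradicting the weight separation. The dimension identity defining the source category then forces $W = \bigoplus_l W_l$, and applying $D_{\pdr, \tau}$ to each $U_l$ recovers the datum $(V_l, \nu_l) = D_{\pdr, \tau, l}(W)$.

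Full faithfulness then follows from the same weight-separation principle: every morphism in the source category respects the canonical decomposition (cross-terms vanish), and on each matched component one invokes the classical full faithfulness of $D_{\pdr, \tau}$. The main obstacle I expect is the directness of the decomposition $W = \bigoplus_l W_l$, which is precisely where the non-integrality of all pairwise differences $\wt_\tau(\chi_i) - \wt_\tau(\chi_j)$ for $i \neq j$ plays a decisive role; the rest of the argument is a bookkeeping exercise reducing to the classical almost de Rham equivalence of Fontaine.
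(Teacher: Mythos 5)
Your proposal is correct and follows essentially the same route as the paper: decompose $W$ as $\bigoplus_l W_l$ with $W_l$ a $\chi_l$-twist of a classical almost de Rham representation built from the counit $B_\pdr\otimes_L D_{\pdr,\tau,l}(W)\map B_\pdr\otimes_{B_\dR}W(\chi_l^{-1})$, use the vanishing of $D_\pdr$ (via $D_{\mathrm{pHT}}$) for twists with non-integral $\tau$-weights to get directness, invoke the dimension identity defining the category to get equality, and reduce to Fontaine's/BHS's equivalence $D_{\pdr,\tau}\colon \mathrm{Rep}_{\pdr,\tau}(G_K)\simeq \rep_L(\mathbb{G}_a)$ (which the paper first isolates by applying the idempotent $e_\tau$ to the statement of BHS 3.1.1--3.1.2). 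The only slight imprecision is that for $k\geq 3$ directness requires checking $W_l\cap\sum_{l'\neq l}W_{l'}=0$ rather than just pairwise intersections, but the identical weight-separation vanishing applies to the sum of the other factors, so this is a cosmetic fix.
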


\begin{proof}
First, we show that $D_{\pdr, \tau}$ induces an equivalence of cagtegory between $\mathrm{Rep}_{\pdr,\tau}(G_K)$ and $\mathrm{Rep}_{L}(\mathbb{G}_a)$: We have the following commutative diagram
\[
\begin{tikzcd}
    \mathrm{Rep}_{\pdr,\tau}(G_K) \arrow[r, "D_{\pdr, \tau}"] \arrow[d] & \mathrm{Rep}_{L}(\mathbb{G}_a) \arrow[d] \\
    \mathrm{Rep}^L_{\pdr}(G_K) \arrow[r, "D_{\pdr}"] & \mathrm{Rep}_{L\otimes_{\Q_p} K}(\mathbb{G}_a)
\end{tikzcd}
\]
Here $\mathrm{Rep}^L_{\pdr}(G_K)$ is the category of almost de Rham $B_\dR$ of $G_K$ with an $L$-action, and $\mathrm{Rep}_{L\otimes_{\Q_p} K}(\mathbb{G}_a)$ is the category of $L\otimes_{\Q_p} K$-modules with a nilpotent linear operator $\nu$. By looking at the dimension formula in the definition of $\mathrm{Rep}_{\pdr,\tau}(G_K)$, we see that a $B_\dR\otimes_{K, \tau}L$-representations of $G_K$ is almost de Rham if and only if it is almost de Rham as a $B_\dR$-representation of $G_K$. Thus $\mathrm{Rep}_{\pdr,\tau}(G_K)$ is a direct factor of $\mathrm{Rep}^L_{\pdr}(G_K)$ by applying the idempotent $e_\tau\in L\otimes_{\Q_p} K$ giving the factor labelled $\tau$ in the decomposition $L\otimes_{\Q_p} K\cong \bigoplus_{\tau\in \Sigma}L$. Also $\mathrm{Rep}_{L}(\mathbb{G}_a)=e_\tau\mathrm{Rep}_{L\otimes_{\Q_p} K}(\mathbb{G}_a)$. Now the bottom row is an equivalence of category by \cite[3.1.1]{BHS}. Applying $e_\tau$ to it, we see that the top row is also an equivalence of category. 

By \cite[3.1.2]{BHS}, the bottom row has a quasi-inverse given by $(V, \nu)\mapsto W(V, \nu)$ (see \cite{BHS} for the notation here). Thus the same functor induces a quasi-inverse to the top row as well, still denoted by $W(V, \nu)$. Note that for any $(V, \nu)\in \mathrm{Rep}_{L}(\mathbb{G}_a)$, $\dim_{B_\dR\otimes_{K, \tau}L}(W(V, \nu))=\dim_L V$ and $\dim_{B_\dR}(W(V, \nu))=\dim_K V$.

Secondly, we claim that for any $W\in \mathrm{Rep}_{\pdr, \tau, \chi_1, \ldots, \chi_k}(G_K)$, we have $W\cong \bigoplus_l W(D_{\pdr, \tau, l}(W))(\chi_l)$. This gives a quasi-inverse to the functor $\bigoplus_{l=1}^k D_{\pdr, \tau, l}$ and thus concludes the proof. For each $l$, we first construct a canonical injection $W(D_{\pdr, \tau, l}(W))(\chi_l)\inj W$. In fact, there is an injection $D_{\pdr, \tau, l}(W)\otimes_K B_\pdr\cong (B_\pdr\otimes_{B_\dR}W(\chi_l^{-1}))^{G_K}\otimes_{K}B_\pdr\inj B_\pdr\otimes_{B_\dR}W(\chi_l^{-1})$ equivariant with respect to the nilpotent operator. And thus there is an injection $W(D_{\pdr, \tau, l}(W))\cong (D_{\pdr, \tau, l}(W)\otimes_K B_\pdr)^{\nu\otimes 1+1\otimes \nu_{B_\pdr}=0}\inj (B_\pdr\otimes_{B_\dR}W(\chi_l^{-1}))^{\nu_{B_\pdr}\otimes 1=0}\cong W(\chi_l^{-1})$. Twisting by $\chi_l$ gives the map. Furthermore, the induced map 
\[\bigoplus_l W(D_{\pdr, \tau, l}(W))(\chi_l)\map W
\]
is an injection. To see this, it suffices to show that for any $l$, there is no nontrivial intersection between the image of  $W(D_{\pdr, \tau, l}(W))(\chi_l)$ and the image of $\bigoplus_{l'\neq l} W(D_{\pdr, \tau, l'}(W))(\chi_{l'})$. If there is, by the exactness of the functor $D_{\pdr, \tau, l}$ \cite[3.17]{Fon}, we deduce that $D_{\pdr, \tau, l}(\bigoplus_{l'\neq l} W(D_{\pdr, \tau, l'}(W))(\chi_{l'}))\neq 0$, i.e. one have $D_{\pdr}(\bigoplus_{l'\neq l} W(D_{\pdr, \tau, l'}(W))(\chi_{l'}\chi_l^{-1}))\neq 0$. However, each factor 
\[D_\pdr (W(D_{\pdr, \tau, l'}(W))(\chi_{l'}\chi_l^{-1}))=0\] because \[\dim_K D_\pdr (W(D_{\pdr, \tau, l'}(W))(\chi_{l'}\chi_l^{-1}))\leq \dim_K D_{\mathrm{pHT}}(W(D_{\pdr, \tau, l'}(W))(\chi_{l'}\chi_l^{-1}))=0\]
since the latter has $\tau$-Hodge-Tate-Sen-weights not in $\Z$ by the assumption that $\chi_{l'}$ and $\chi_l$ are not of integral $\tau$-weights difference to each other.

Now that we have an injection $\bigoplus_l W(D_{\pdr, \tau, l}(W))(\chi_l)\inj W$, we check that the $B_\dR$-dimension of the left hand side is $[L: K]\cdot\sum_{l=1}^k \dim_{L}D_{\pdr, \tau, l}(W)$, which is equal to $\dim_{B_\dR}(W)$, by our assumption. Thus the injection is an isomorphism and the proof concludes.

\end{proof}

\begin{lemma}\label{stabledr}
The category $\mathrm{Rep}_{\pdr,\tau,  \chi_1, \ldots, \chi_k}(G_K)$ as a full subcategory of the category of finite free $B_\dR\otimes_{K, \tau}L$-representations of $G_K$, is stable  under taking sub, quotient and extension.
\end{lemma}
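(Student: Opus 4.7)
The plan is to combine a dimension inequality valid in the ambient category $\mathrm{Rep}_{B_\dR, \tau}(G_K)$ with vanishing of Ext groups between representations whose $\tau$-Hodge-Tate-Sen weights lie in disjoint cosets modulo $\Z$. First I will extract a preliminary inequality from the proof of Lemma \ref{drequiv}: the construction of the injection $\bigoplus_{l=1}^k W(D_{\pdr, \tau, l}(W))(\chi_l) \hookrightarrow W$ given there never uses that $W$ is in the subcategory, so the same argument produces this injection for any $W \in \mathrm{Rep}_{B_\dR, \tau}(G_K)$, yielding
\[
\sum_l \dim_L D_{\pdr, \tau, l}(W) \leq \dim_{B_\dR \otimes_{K, \tau} L} W,
\]
with equality precisely when $W$ belongs to $\mathrm{Rep}_{\pdr, \tau, \chi_1, \ldots, \chi_k}(G_K)$.

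For stability under sub and quotient, given an exact sequence $0 \to W' \to W \to W'' \to 0$ in $\mathrm{Rep}_{B_\dR, \tau}(G_K)$ with $W$ in the subcategory, each functor $D_{\pdr, \tau, l}$ is left exact (since $B_\pdr$ is flat over $B_\dR$ and $G_K$-invariants are left exact), so summing the resulting inequalities gives
\[
\sum_l \dim_L D_{\pdr, \tau, l}(W) \leq \sum_l \dim_L D_{\pdr, \tau, l}(W') + \sum_l \dim_L D_{\pdr, \tau, l}(W'').
\]
Combining this with the preliminary inequality applied to $W'$ and $W''$, and the equality case for $W$, forces all inequalities to be equalities, so both $W'$ and $W''$ lie in the subcategory.

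For stability under extension, I decompose via Lemma \ref{drequiv} as $W' \cong \bigoplus_l W(V'_l)(\chi_l)$ and $W'' \cong \bigoplus_l W(V''_l)(\chi_l)$; the extension class then splits into components in $\bigoplus_{l, l'} \mathrm{Ext}^1(W(V''_{l'})(\chi_{l'}), W(V'_l)(\chi_l))$. The diagonal entries $l = l'$ yield extensions in $\mathrm{Rep}_{\pdr, \tau}(G_K)$ twisted by $\chi_l$, which remain in the subcategory since $\mathrm{Rep}_{\pdr, \tau}(G_K) \simeq \mathrm{Rep}_L(\mathbb{G}_a)$ is closed under extension. For the off-diagonal entries $l \neq l'$, the two summands have $\tau$-Hodge-Tate-Sen weights in the disjoint cosets $\wt_\tau(\chi_l) + \Z$ and $\wt_\tau(\chi_{l'}) + \Z$, and I claim the corresponding Ext vanishes: by twisting, this reduces to showing $H^1(G_K, M) = 0$ for any $B_\dR \otimes_{K, \tau} L$-representation $M$ with all $\tau$-Hodge-Tate-Sen weights non-integral, which follows from Tate's vanishing $H^1(G_K, C(\chi)) = 0$ for characters $\chi$ of non-integer $\tau$-weight by an inductive $t$-adic filtration argument on $B_\dR \otimes_{K, \tau} L$. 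This off-diagonal Ext-vanishing is the step I expect to be the main obstacle, since it requires Tate--Sen input not explicitly recorded in the excerpt; everything else is formal manipulation of the preliminary inequality and the equivalence of Lemma \ref{drequiv}.
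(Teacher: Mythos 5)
Your argument is correct in substance, but it takes a genuinely different route from the paper's, and the comparison is instructive. For stability under sub and quotient the paper does not use a dimension count: it shows directly that a subobject $W'\subset W$ is compatible with the canonical decomposition $W\cong\bigoplus_l W(D_{\pdr,\tau,l}(W))(\chi_l)$ from Lemma \ref{drequiv} (i.e. $W'\cap i_l(W_l)=\mathrm{pr}_l(W')$), using exactness of each $D_{\pdr,\tau,l}$, the vanishing of $D_{\pdr,\tau,l}$ on the summands with $l'\neq l$, and the stability of each twisted category $\mathrm{Rep}_{\pdr,\chi_l}(G_K)$ under sub and quotient via the equivalence with $\rep_L(\mathbb{G}_a)$. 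Your version is the classical Fontaine-style admissibility count: the inequality $\sum_l\dim_L D_{\pdr,\tau,l}(W)\le\dim_{B_\dR\otimes_{K,\tau}L}W$ does hold for every object of $\mathrm{Rep}_{B_\dR,\tau}(G_K)$, since the injection $\bigoplus_l W(D_{\pdr,\tau,l}(W))(\chi_l)\inj W$ in the proof of Lemma \ref{drequiv} never uses membership in the subcategory (it uses exactness of $D_{\pdr,\tau,l}$ and the vanishing of $D_\pdr$ for non-integral twists), and equality is the definition of membership; combined with left exactness this pins down sub and quotient exactly as you say, and quotients are legitimate objects by Remark \ref{drfinitefree}. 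Your count is shorter; the paper's argument buys the extra structural fact that $W'$ itself decomposes along the $\chi_l$-isotypic pieces. Where your proposal is needlessly heavy is the extension step: the paper's proof is one line, because exactness of each $D_{\pdr,\tau,l}$ on the ambient category (cited as \cite[3.17]{Fon}) makes $\sum_l\dim_L D_{\pdr,\tau,l}$ additive in short exact sequences, so the defining equality for $W'$ and $W''$ forces it for $W$. Your route through the decomposition of the Ext group and off-diagonal Tate--Sen vanishing is workable (the paper uses a close cousin of that vanishing in the proof of Lemma \ref{drplusequiv}), but note two loose points: the justification that the diagonal extensions stay in the subcategory ``since $\mathrm{Rep}_{\pdr,\tau}(G_K)\simeq\rep_L(\mathbb{G}_a)$ is closed under extension'' is not what the equivalence gives you --- closure under extensions is computed in the ambient category and is precisely the exactness of $D_\pdr$ again; and the dévissage through $B_\dR^+/t^m$ needs the usual care with limits in continuous cohomology. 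Since you already need the exactness of $D_{\pdr,\tau,l}$ for the preliminary inequality, the Ext computation you flag as the main obstacle can be bypassed entirely, recovering the paper's one-line argument.
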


\begin{proof}
The stability under extension is immediate by using the exactness of each $D_{\pdr, \tau, l}$. For $W\in \mathrm{Rep}_{\pdr,\tau,  \chi_1, \ldots, \chi_k}(G_K)$ and $W'\subset W$ a finite free $B_\dR\otimes_{K, \tau}L$-submodule stable under $G_K$, we show that $W'\cong \bigoplus_{l=1}^k\mathrm{pr}_l(W')$, where $\mathrm{pr}_l: W\map W(D_{\pdr, \tau, l}(W))(\chi_l)$ is the $l$-th projection in the decomposition $W\cong \bigoplus_l W(D_{\pdr, \tau, l}(W))(\chi_l)$ as in the proof of Lemma \ref{drequiv}. Let $W_l:=W(D_{\pdr, \tau, l}(W))(\chi_l)$ and $i_l: W_l \inj W$ be the natural inclusion of the $l$-th factor in the above decomposition. Then it suffices to show that the inclusion $W'\cap i_l(W_l)\inj \mathrm{pr}_l(W')$ is an equality. We have the short exact sequence
\[
0 \map W'\cap i_l(W_l) \map W' \map \left(\bigoplus_{l'\neq l}p_{l'}\right)(W') \map 0
\]
. Using the exactness of $D_{\pdr, \tau, l}$ one sees $D_{\pdr, \tau, l}(W'\cap i_l(W_l))=D_{\pdr, \tau, l}(W')$ since the last term of the above short exact sequence is contained in $\bigoplus_{l'\neq l}W_{l'}$ and  $D_{\pdr, \tau, l}(\bigoplus_{l'\neq l}W_{l'})=0$ as in the proof of Lemma \ref{drequiv}. Similarly, the short exact sequence
\[
0 \map (\bigoplus_{l'\neq l}W_{l'})\cap W' \map W' \map \mathrm{pr}_l(W') \map 0
\]
gives $D_{\pdr, \tau, l}(\mathrm{pr}_l(W'))=D_{\pdr, \tau, l}(W')$. Now $\mathrm{Rep}_{\pdr,\tau,  \chi_l}(G_K)$ is isomorphic to $\mathrm{Rep}_{\pdr,  \chi_l}(G_K)$ by twisting $\chi_l^{-1}$, the latter being stable under sub and quotient implies  $W'\cap i_l(W_l)$ and   $\mathrm{pr}_l(W')$ are both in $\mathrm{Rep}_{\pdr,  \chi_l}(G_K)$.   By twisted version of \cite[3.1.1]{BHS} (or Lemma \ref{drequiv}), we see that $D_{\pdr, \tau, l}$ induces an equivalence between $\mathrm{Rep}_{\pdr,\tau,  \chi_l}(G_K)$ and $\rep_L(\mathbb{G}_a)$. Since $D_{\pdr, \tau, l}(W'\cap i_l(W_l))\cong D_{\pdr, \tau, l}(W') \cong D_{\pdr, \tau, l}(\mathrm{pr}_l(W'))$ induced by the natural map $W'\cap i_l(W_l)\inj \mathrm{pr}_l(W')$ is an isomorphism, we conlude that $W'\cap i_l(W_l)\cong \mathrm{pr}_l(W')$. And thus $W'$ can be written under in the form $\bigoplus_{l=1}^k\mathrm{pr}_l(W')$. Each  $\mathrm{pr}_l(W')\subset W_l$ is an object in $\mathrm{Rep}_{\pdr,\tau,  \chi_l}(G_K)$, so we see $W'\in \mathrm{Rep}_{\pdr,\tau,  \chi_1, \ldots, \chi_k}(G_K)$. Any quotient of $W'$ of $W$ has the form $\bigoplus_{l=1}^kW_l'$ where each $W_l'$ is a quotient of $W_l$, and thus in $\mathrm{Rep}_{\pdr,\tau,  \chi_l}(G_K)$.
\end{proof}

Let $\mathcal{C}_L$ be the category of finite dimensional local Artinian $L$-algebra with residue field $L$.

\begin{definition}
Let $A\in \mathcal{C}_L$. 
\begin{enumerate}
    \item We define $\mathrm{Rep}_{\pdr, A, \tau, \chi_1, \ldots, \chi_k}$ to be the full subcategory of the category of finite free $B_\dR\otimes_{K, \tau} A$-represenations of $G_K$, spanned by the represenations $W$ such that $\dim_{B_\dR\otimes_{K, \tau}L}(W)=\sum_{l=1}^k \dim_{L}D_{\pdr, \tau, l}(W)$. We also define $\rep_{A}(\mathbb{G}_a)$ to be the category of finite free $A$-modules equipped with a nilpotent $A$-linear operator. We will write the nilpotent opeartor on $\bigoplus_l D_{\pdr, \tau, l}(W)$ as $\nu_{W}=\bigoplus_l \nu_{W, l}$.
    \item A filtered $B_\dR\otimes_{K, \tau} A$-represenation $(W, \mathcal{F}_\bullet)$ of $G_K$ is an $B_\dR\otimes_{K, \tau} A$-represenation $W$ of $G_K$ of rank $n$ with an increasing filtration $(\mathcal{F}_i)_{i\in \{1, \ldots, n\}}$ by $B_\dR\otimes_{K, \tau} A$-represenation of $G_K$, such that all graded pieces $\mathcal{F}_i/\mathcal{F}_{i-1}$ is finite free of rank $1$ over $B_\dR\otimes_{K, \tau} A$ for any $i\in \{1, \ldots, n\}$.
    \end{enumerate}
\end{definition}

\begin{remark}
\begin{enumerate}
    \item By the exactness of the functor $D_{\pdr, \tau, l}$ and a devissage argument, we see that any finite free $B_\dR\otimes_{K, \tau} A$-represenations of $G_K$ that deforms a representation $W/\mathfrak{m}_A W\in \mathrm{Rep}_{\pdr, \tau, \chi_1, \ldots, \chi_k}$ automatically lies in $\mathrm{Rep}_{\pdr, A, \tau, \chi_1, \ldots, \chi_k}$.
    \item By Lemma \ref{stabledr}, for any filtered $B_\dR\otimes_{K, \tau} A$-represenation $(W, \mathcal{F}_\bullet)$ of $G_K$, if $W\in \mathrm{Rep}_{\pdr, A, \tau, \chi_1, \ldots, \chi_k}$, then the subquotient $\mathcal{F}_j/\mathcal{F}_i\in \mathrm{Rep}_{\pdr, A, \tau, \chi_1, \ldots, \chi_k}$, for any $j\geq i$.
\end{enumerate}
\end{remark}

\begin{lemma}\label{draeq}
The functor $\bigoplus_l D_{\pdr, \tau, l}$  induces an equivalence of category between  $\mathrm{Rep}_{\pdr, A, \tau, \chi_1, \ldots, \chi_k}$ and $(\rep_{A}(\mathbb{G}_a))^k$.
\end{lemma}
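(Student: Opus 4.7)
The plan is to lift Lemma~\ref{drequiv} from $L$-coefficients to the Artinian coefficient ring $A$ by reducing everything to the residue field via Nakayama's lemma combined with the manifest $A$-linearity of the relevant constructions.

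First I would establish the untwisted variant: the functor $D_{\pdr, \tau}$ restricts to an equivalence between $\rep_{\pdr, A, \tau, \chi_{\mathrm{triv}}}$ and $\rep_A(\mathbb{G}_a)$. The key observation is that the quasi-inverse $W(V, \nu)$ from \cite[3.1.2]{BHS} is, by its very construction as the kernel of an $L$-linear operator on $V \otimes_L B_\pdr$, functorial in any additional $A$-module structure on $V$. Hence feeding an object of $\rep_A(\mathbb{G}_a)$ into $W(\cdot, \cdot)$ yields an object $W_A(V, \nu) \in \rep_{\pdr, A, \tau, \chi_{\mathrm{triv}}}$, and the natural adjunction isomorphisms from the $L$-case automatically upgrade to $A$-linear ones. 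To verify that the natural map $W_A(D_{\pdr, \tau}(-)) \map \mathrm{id}$ remains an isomorphism over $A$, I would reduce modulo $\mathfrak{m}_A$, where it becomes an isomorphism by Lemma~\ref{drequiv}, and then apply Nakayama's lemma on finite free $B_\dR \otimes_{K, \tau} A$-modules at source and target.

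Next I would deduce the twisted statement by following the structure of Lemma~\ref{drequiv} verbatim. Given $(V_l, \nu_l)_{l=1}^k \in \rep_A(\mathbb{G}_a)^k$, I set $W := \bigoplus_l W_A(V_l, \nu_l)(\chi_l)$. The computation $D_{\pdr, \tau, l}(W) \cong (V_l, \nu_l)$ reduces to the vanishing
\[ D_\pdr(W_A(V_{l'}, \nu_{l'})(\chi_{l'} \chi_l^{-1})) = 0 \quad \text{for } l' \neq l, \]
which transfers from the $L$-case since the $\chi_j$ are $L$-valued and the non-integrality of the $\tau$-Hodge-Tate-Sen weights of $\chi_{l'} \chi_l^{-1}$ is independent of $A$. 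Conversely, for any $W \in \rep_{\pdr, A, \tau, \chi_1, \ldots, \chi_k}$, the canonical morphism
\[ \bigoplus_l W_A(D_{\pdr, \tau, l}(W))(\chi_l) \map W \]
is injective by the same vanishing argument, and it becomes an isomorphism modulo $\mathfrak{m}_A$ by Lemma~\ref{drequiv}, so one final Nakayama argument concludes.

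The main subtlety I would watch out for is verifying that the objects being compared, in particular $\bigoplus_l W_A(D_{\pdr, \tau, l}(W))(\chi_l)$, are finite free over $B_\dR \otimes_{K, \tau} A$ of the expected rank so that Nakayama applies cleanly. This hinges on knowing each $D_{\pdr, \tau, l}(W)$ is a finite free $A$-module, which itself follows from the untwisted $A$-equivalence applied to the residual $\chi_l$-isotypic component, together with the finite freeness of $W$ as a $B_\dR \otimes_{K, \tau} A$-module, the exactness of $D_{\pdr, \tau, l}$, and the remark preceding the lemma ensuring that deformations of almost de Rham representations remain almost de Rham. Once this bookkeeping is handled, no further $p$-adic Hodge-theoretic input beyond Lemma~\ref{drequiv} is required.
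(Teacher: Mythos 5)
Your strategy is correct, but it follows a genuinely different route from the paper. You rebuild the equivalence over $A$ from scratch: first the untwisted $A$-version (essentially \cite[3.1.4]{BHS}) by noting the quasi-inverse $W(V,\nu)$ carries any extra $A$-action, then the twisted case by redoing the direct-sum decomposition of Lemma~\ref{drequiv} at the level of $A$-objects, checking the unit and counit by reduction modulo $\mathfrak{m}_A$ plus Nakayama. The paper's proof is shorter and avoids Nakayama altogether: since Lemma~\ref{drequiv} is an equivalence of $L$-linear categories, objects equipped with an $A$-action correspond automatically, and the only thing to verify is that finite freeness over $B_\dR\otimes_{K,\tau}A$ on one side matches finite freeness over $A$ on the other; this is reduced to $A$-flatness (a finitely generated module with flat $A$-structure and finite free reduction is finite free, the reduction being automatically free by the remark before the lemma), and flatness is transferred in both directions through the base-change isomorphism $N\otimes_A\bigl(\bigoplus_l D_{\pdr,\tau,l}(W)\bigr)\cong \bigoplus_l D_{\pdr,\tau,l}(N\otimes_A W)$ for finite $A$-modules $N$, which follows from the exactness of $\bigoplus_l D_{\pdr,\tau,l}$. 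Your approach buys an explicit quasi-inverse over $A$; the paper's concentrates all the work into that single flatness-transfer statement. One caution on the subtlety you rightly flag: the finite $A$-freeness of each $D_{\pdr,\tau,l}(W)$ cannot be obtained by ``applying the untwisted $A$-equivalence to the $\chi_l$-isotypic component'' until you know that component is itself finite free over $B_\dR\otimes_{K,\tau}A$, which is not given a priori; you must either run the paper's flatness/base-change argument, or observe that the canonical decomposition $W\cong\bigoplus_l W_l$ of Lemma~\ref{drequiv} is functorial, hence $A$-stable, so each $W_l$ is an $A$-module direct summand of $W$ (thus $A$-flat) whose reduction is the $\chi_l$-part of $W/\mathfrak{m}_A W$ and therefore finite free, exactly as in the proof of Lemma~\ref{drplusaeq}. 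With that point made precise, your argument closes.
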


\begin{proof}
The proof is very similar to that of \cite[3.1.4]{BHS}. By Lemma \ref{drequiv}, it suffices to check that for any $W$ a $G_K$-representation on $B_\dR\otimes_{K, \tau}A$ module, it is finite free as a $B_\dR\otimes_{K, \tau}A$-module if and only if $\bigoplus_l D_{\pdr, \tau, l}(W)$ is finite free as $A$-module. Now for a finitely generated $B_\dR\otimes_{K, \tau}A$-module $M$  with $G_K$-action or an $A$-module, it is finite free if and only if it is flat as an $A$-module (as $M/\mathfrak{m}_AM$ is finite free over $B_\dR\otimes_{K, \tau} L$ by Remark \ref{drfinitefree}). Thus it suffices to show that $W$ is flat as an $A$-module if and only if $\bigoplus_l D_{\pdr, \tau, l}(W)$ is flat as an $A$-module. Let $N$ be a finite $A$-module. By writing it in the form $A^m/A^n$, and using the exactness of the functor $\bigoplus_l D_{\pdr, \tau, l}$ (on $\mathrm{Rep}_{\pdr, \tau, \chi_1, \ldots, \chi_k}(G_K)$), we see that $M\otimes_A \left(\bigoplus_l D_{\pdr, \tau, l}(W)\right)\cong \bigoplus_l D_{\pdr, \tau, l}(M\otimes_AW)$ and that $M\otimes_AW\in \mathrm{Rep}_{\pdr, \tau, \chi_1, \ldots, \chi_k}(G_K)$. One see that $\bigoplus_l D_{\pdr, \tau, l}$ preserves flatness as $A$-module and vice-versa.
\end{proof}

\begin{definition}\label{xwdef}
Fix $W\in \mathrm{Rep}_{\pdr, \tau, \chi_1, \ldots, \chi_k}$ and fix $\alpha=\bigoplus_l\alpha_l: \bigoplus_l L^{n_l} \cong  \bigoplus_l D_{\pdr, \tau, l}(W)$. Define $X_{W, \tau}^\square$ to be the groupoid over $\mathcal{C}_L$ consisting of objects $(A, W_A, \iota_A, \alpha_A)$ (and obvious morphisms) where $W_A\in \mathrm{Rep}_{\pdr, A, \tau, \chi_1, \ldots, \chi_k}$, $\iota_A: W_A\otimes_A L\cong W$ and $\alpha_A=\bigoplus_{l=1}^k \alpha_{A, l}$, where each $\alpha_{A, l}: A^{n_l}\cong D_{\pdr, \tau, l}(W_A)$ such that the following diagram commutes:
\[
\begin{tikzcd}[column sep=large]
L^{n_l}\arrow[r, "\alpha_{A, l}\  \mathrm{mod}\  \mathfrak{m}_A"] \arrow[d, "="] &L\otimes_A  D_{\pdr, \tau, l}(W_A) \arrow[d, "\cong"]\\
L^{n_l} \arrow[r, "\alpha"] & D_{\pdr, \tau, l}(W)
\end{tikzcd}
\]
for any $l\in \{1, \ldots, k\}$. Similarly we define $X_{W, \tau}$ as above but without framing.
\end{definition}

\begin{cor}\label{xwchar}
Notation as above. The groupoid $X^\square_{W, \tau}$ is pro-representable. The functor:
\[
(W_A, \iota_A, \alpha_A) \mapsto N_{W_A}
\] 
where we set $N_{W_A}:= \bigoplus_{l=1}^k N_{W_A, l}$ to be the matrix of the nilpotent operator $\alpha_A^{-1}\circ \nu_{W_A}\circ \alpha_A$ on $\bigoplus_{l=1}^k A^{n_l}$ under the canonical basis, induces an equivalence between $|X^\square_{W, \tau}|$ and $\prod_{l=1}^k \widehat{\mathfrak{g}}_{n_l} $, where the latter denotes the completion of $\prod_{l=1}^k \mathfrak{g}_{n_l}$ at $N_W$, viewed as a functor $\mathcal{C}_L\map \mathrm{Sets}$.
\end{cor}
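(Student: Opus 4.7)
The plan is to reduce to Lemma \ref{draeq} by using the framing $\alpha_A$ to rigidify, and then to observe that nilpotency of lifts is automatic in an Artinian setting. Concretely, by Lemma \ref{draeq} applied functorially in $A \in \mathcal{C}_L$, the functor $W_A \mapsto (D_{\pdr, \tau, l}(W_A), \nu_{W_A, l})_{l=1}^k$ is an equivalence between $\mathrm{Rep}_{\pdr, A, \tau, \chi_1, \ldots, \chi_k}$ and $\prod_l \rep_A(\mathbb{G}_a)$. Incorporating the deformation datum $\iota_A$, this yields an equivalence of $X_{W, \tau}$ with the product over $l$ of the deformation groupoids of the nilpotent operators $\nu_{W,l}$ on $D_{\pdr, \tau, l}(W)$.

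Next, the framing $\alpha_{A, l}: A^{n_l} \cong D_{\pdr, \tau, l}(W_A)$ lifting $\alpha_l$ trivializes each summand and kills its automorphisms, so that $X^\square_{W,\tau}$ is equivalent to a set-valued functor. Under $\alpha_{A, l}$, the operator $\nu_{W_A, l}$ is encoded by its matrix $N_{W_A, l} \in \mathfrak{gl}_{n_l}(A)$ in the standard basis, and this matrix reduces modulo $\mathfrak{m}_A$ to $N_{W, l}$. Conversely, any $N' \in \mathfrak{gl}_{n_l}(A)$ lifting $N_{W, l}$ is automatically nilpotent: since $N_{W, l}^{n_l} = 0$, one has $(N')^{n_l} \in \mathfrak{m}_A \cdot M_{n_l}(A)$, and iterating, if $\mathfrak{m}_A^s = 0$ then $(N')^{n_l s} = 0$. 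Hence $N'$ defines an object of $\rep_A(\mathbb{G}_a)$ on $A^{n_l}$, which via Lemma \ref{draeq} reproduces an object $(W_A, \iota_A, \alpha_A) \in X^\square_{W,\tau}(A)$.

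Assembling across the indices $l$, the map $(W_A, \iota_A, \alpha_A) \mapsto N_{W_A} = \bigoplus_l N_{W_A, l}$ is thus a natural bijection of functors $\mathcal{C}_L \map \mathrm{Sets}$ between $|X^\square_{W, \tau}|$ and $\prod_{l=1}^k \widehat{\mathfrak{g}}_{n_l}$, and pro-representability of $X^\square_{W,\tau}$ follows since the target is manifestly pro-represented by the completion of the coordinate ring of $\prod_l \mathfrak{gl}_{n_l}$ at the maximal ideal corresponding to $N_W$. The only substantive point in the argument is the automatic nilpotency of lifts noted above, which is precisely what makes the formal moduli coincide with the completion of the full matrix space rather than of the nilpotent cone.
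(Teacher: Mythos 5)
Your argument is correct and is essentially the paper's own proof, which simply deduces the corollary from Lemma \ref{draeq}: the framing rigidifies the groupoid obtained from that equivalence, and the automatic nilpotency of lifts over an Artinian local ring (which you spell out) is exactly why the formal moduli is the completion of all of $\prod_l\mathfrak{g}_{n_l}$ at $N_W$ rather than of the nilpotent cone. Your write-up just makes explicit the details the paper leaves implicit.
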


\begin{proof}
Immediate from Lemma \ref{draeq}.
\end{proof}

\begin{definition}
Let $W\in \mathrm{Rep}_{\pdr, \tau, \chi_1, \ldots, \chi_k}$ and $(W, \mathcal{F}_\bullet)$ be a filtered $B_\dR\otimes_{K, \tau} L$-represenation  of $G_K$.  We define $X_{W, \mathcal{F}_\bullet}^\square$ to be  a groupoid over $\mathcal{C}_L$, whose objects are $(A, W_A, \mathcal{F}_{A, \bullet}, \iota_A, \alpha_A)$ where $(W_A, \mathcal{F}_{A, \bullet})$ is a filtered $B_\dR\otimes_{K, \tau} A$-represenation of $G_K$, $\iota_A: W_A\otimes_A L\cong W$ an isomorphism inducing $\mathcal{F}_{A, i}\otimes_A L\cong \mathcal{F}_i$ for all $i$, and $(W_A, \iota_A, \alpha_A)\in X_W^\square(A)$.  Similarly we define $X_{W, \mathcal{F}_\bullet}$ as above but without framing.
\end{definition}

For each $l\in \{1, \ldots, k\}$, we set $\mathcal{D}_{A, \tau, l, \bullet}$ (resp. $\mathcal{D}_{\tau, l, \bullet}$) be the increasing filtration on $D_{\pdr, \tau, l}(W_A)$ (resp. $D_{\pdr, \tau, l}(W)$) induced by $D_{\pdr, \tau, l}(\mathcal{F}_{A, \bullet})$ (resp.  $D_{\pdr, \tau, l}(\mathcal{F}_{\bullet})$). By Lemma \ref{stabledr}, we see that $\mathcal{D}_{A, \tau, l, \bullet}$ (resp. $\mathcal{D}_{\tau, l, \bullet}$) gives a complete flag on $D_{\pdr, \tau, l}(W_A)$ (resp. $D_{\pdr, \tau, l}(W)$). The proof of Lemma \ref{stabledr} also shows that $\mathcal{D}_{A, \tau, l, i}/\mathcal{D}_{A, \tau, l, i-1}$ (resp. $\mathcal{D}_{\tau, l, i}/\mathcal{D}_{\tau, l, i-1}$) are rank $1$ over $A$ (resp. $L$) if and only if $i\in S_l$, where $S_l$ is the subset of $\{1, \ldots, n\}$ consists of $i$ such that $\mathcal{F}_i/\mathcal{F}_{i-1}$ is of Hodge-Tate-Sen weight in the same integer difference class with $\chi_l$. These filtrations are stable under $\nu_{W_A, l}$ (resp. $\nu_{W, l}$). We denote by $\widehat{\widetilde{\mathfrak{g}}}_{n_l}$ the completion of $\widetilde{\mathfrak{g}}_{n_l}$ at the $L$-point $(\alpha_{ l}^{-1}(\mathcal{D}_{\tau, l, \bullet}), N_{W, l})$. (See Definition \ref{xwdef} and Corollary \ref{xwchar} for notations.)

\begin{cor}\label{xwf}
The groupoid $X_{W, \mathcal{F}_\bullet}^\square$ over $\mathcal{C}_L$ is pro-representable. The functor:
\[
(W_A, \mathcal{F}_{A, \bullet}, \iota_A, \alpha_A)\mapsto \prod_{l=1}^k(\alpha_{A, l}^{-1}(\mathcal{D}_{A,\tau, l, \bullet}), N_{W_A, l})
\]
induces an isomorphism between $|X_{W, \mathcal{F}_\bullet}^\square|$ and $\prod_{l=1}^k\widehat{\widetilde{\mathfrak{g}}}_{n_l}$.
\end{cor}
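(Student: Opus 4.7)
The plan is to bootstrap from Corollary \ref{xwchar} by analyzing how the additional filtration datum $\mathcal{F}_{A, \bullet}$ in the definition of $X_{W, \mathcal{F}_\bullet}^\square$ translates under the equivalence induced by $\bigoplus_l D_{\pdr, \tau, l}$ of Lemma \ref{draeq}, and then to match this with the description of $\widetilde{\mathfrak{g}}_{n_l}$ as pairs (nilpotent element, compatible complete flag).

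First, I would fix $(W_A, \iota_A, \alpha_A) \in X_{W, \tau}^\square(A)$ and show that giving a filtration $\mathcal{F}_{A, \bullet}$ of $W_A$ by $G_K$-stable $B_\dR \otimes_{K, \tau} A$-submodules with rank-$1$ graded pieces, lifting the specified $\mathcal{F}_\bullet$, is equivalent to giving, for each $l \in \{1, \ldots, k\}$, a complete $\nu_{W_A, l}$-stable flag $\mathcal{D}_{A, \tau, l, \bullet}$ on $D_{\pdr, \tau, l}(W_A)$ lifting the fixed flag $\mathcal{D}_{\tau, l, \bullet}$. Stability of $\mathrm{Rep}_{\pdr, A, \tau, \chi_1, \ldots, \chi_k}$ under subobjects (Lemma \ref{stabledr}, promoted to $A$-coefficients via devissage) guarantees that each $\mathcal{F}_{A, i}$ lies in this category, so the functors $D_{\pdr, \tau, l}$ apply. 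Conversely, a tuple of $\nu_{W_A, l}$-stable flags is assembled back into a filtration of $W_A$ via the quasi-inverse $(V_l, \nu_l) \mapsto W(V_l, \nu_l)(\chi_l)$ from the proof of Lemma \ref{drequiv}, and the resulting graded pieces are of rank $1$ over $B_\dR \otimes_{K, \tau} A$ because the jump positions of each $\mathcal{D}_{\tau, l, \bullet}$ are governed by the partition $\{1, \ldots, n\} = S_1 \sqcup \cdots \sqcup S_k$ coming from Definition \ref{intwteq}, and this combinatorial pattern is rigid under deformation.

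Next, under the framing $\alpha_{A, l} \colon A^{n_l} \cong D_{\pdr, \tau, l}(W_A)$, a lift $\mathcal{D}_{A, \tau, l, \bullet}$ of $\mathcal{D}_{\tau, l, \bullet}$ is the same as an $A$-point of $GL_{n_l}/B_{n_l}$ lifting $\alpha_l^{-1}(\mathcal{D}_{\tau, l, \bullet})$, and the $\nu_{W_A, l}$-stability is precisely the condition $N_{W_A, l} \in \mathrm{Ad}(g_l)(\mathfrak{b}_{n_l})$ defining $\widetilde{\mathfrak{g}}_{n_l}$. Combined with the identification $|X_{W, \tau}^\square| \cong \prod_l \widehat{\mathfrak{g}}_{n_l}$ given by $N_{W_A} = \bigoplus_l N_{W_A, l}$ from Corollary \ref{xwchar}, the assignment in the statement becomes a bijection of $A$-valued points with $\prod_l \widehat{\widetilde{\mathfrak{g}}}_{n_l}(A)$, functorial in $A \in \mathcal{C}_L$. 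Pro-representability of $X_{W, \mathcal{F}_\bullet}^\square$ then follows from pro-representability of each $\widehat{\widetilde{\mathfrak{g}}}_{n_l}$.

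The main obstacle I expect is the combinatorial bookkeeping in the first step rather than anything conceptual: one must check precisely that a rank-$1$ sub-quotient in $\mathrm{Rep}_{\pdr, A, \tau, \chi_1, \ldots, \chi_k}$ contributes to exactly one factor $D_{\pdr, \tau, l}$ (where it increases the dimension by one) and vanishes on the other factors, so that a filtration of $W_A$ with rank-$1$ graded pieces corresponds step-by-step to a tuple of complete flags with the prescribed jump positions. This uses the decomposition $W_A \cong \bigoplus_l W(D_{\pdr, \tau, l}(W_A))(\chi_l)$ from Lemma \ref{drequiv} together with the fact that $\chi_l \chi_{l'}^{-1}$ has non-integral $\tau$-weight for $l \neq l'$, which forces $D_{\pdr, \tau, l}$ to annihilate any rank-$1$ object in the $l'$-summand.
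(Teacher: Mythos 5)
Your proposal is correct and follows essentially the same route as the paper, which deduces the corollary from Corollary \ref{xwchar} together with Lemma \ref{stabledr} (via the paragraph preceding the statement, where the flags $\mathcal{D}_{A,\tau,l,\bullet}$ with jumps exactly at $i\in S_l$ and their $\nu_{W_A,l}$-stability are established). Your extra bookkeeping about rank-$1$ graded pieces landing in exactly one factor $D_{\pdr,\tau,l}$ is precisely the content the paper delegates to the proof of Lemma \ref{stabledr}, so there is no substantive difference.
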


\begin{proof}
Immdediate from Corollary 3.11 and Lemma \ref{stabledr}.
\end{proof}

\begin{definition}\label{def313}
\begin{enumerate}
    \item  Given characters $\chi_1, \ldots, \chi_k: K^\times \map L^\times$ that are not of integral $\tau$-weights difference to one another , we define the category $\mathrm{Rep}^+_{\pdr,\tau,  \chi_1, \ldots, \chi_k}(G_K)$ as the full subcategory of the category of finite free $B^+_\dR\otimes_{K, \tau}L$-representations of $G_K$, spanned by the representations $W^+$ such that $W:=W^+\invertt\in \mathrm{Rep}_{\pdr,\tau,  \chi_1, \ldots, \chi_k}(G_K)$. Let $A\in \mathcal{C}_L$. Define $\mathrm{Rep}^+_{\pdr, A, \tau, \chi_1, \ldots, \chi_k}$ to be the full subcategory of the category of finite free $B^+_\dR\otimes_{K, \tau} A$-represenations of $G_K$, spanned by the represenations $W^+$ such that $W^+\in \mathrm{Rep}^+_{\pdr,\tau,  \chi_1, \ldots, \chi_k}(G_K)$ as a finite free $B^+_\dR\otimes_{K, \tau}L$-representations of $G_K$.
    \item We set $\Fil\rep_L(\mathbb{G}_a)$(resp. $\Fil\rep_A(\mathbb{G}_a)$) to be the category of (decreasingly) filtered $L$(resp. $A$)-vector spaces equipped with a nilpotent linear operator that preserves the filtration, such that the graded pieces are finite free.
    \item For any $W\in \mathrm{Rep}_{\pdr,\tau,  \chi_1, \ldots, \chi_k}(G_K)$(resp. $\in \mathrm{Rep}_{\pdr, A, \tau, \chi_1, \ldots, \chi_k}$), we say $W^+\in \mathrm{Rep}^+_{\pdr,\tau,  \chi_1, \ldots, \chi_k}(G_K)$ (resp. $\in \mathrm{Rep}^+_{\pdr, A, \tau, \chi_1, \ldots, \chi_k}$) is a lattice of $W$ if there is an identification $W\cong W^+\invertt$. For any $B^+_\dR$-lattice $W^+$ inside $W$, we define $\Fil^i_{W^+, \tau, l}(D_{\pdr, \tau, l}(W)):= (t^iB^+_{\pdr}\otimes_{B^+_{\dR}}W^+(\chi_l^{-1}))^{G_K}$.
\end{enumerate}
 
\end{definition}

 We have the following analogue to \cite[3.2.1]{BHS}

\begin{lemma}
\label{drplusequiv}
For any $W\in \mathrm{Rep}_{\pdr, \tau, \chi_1, \ldots, \chi_k}(G_K)$ , the map 
\[
W^+\mapsto \bigoplus_l\Fil^\bullet_{W^+,\tau, l}(D_{\pdr, \tau, l}(W))
\]
induces a bijection between the set of $G_K$-stable  $B^+_{\dR}\otimes_{K, \tau}L$-lattices of $W$ and the set of filtrations on each of the $D_{\pdr, \tau, l}(W)$ as  $\mathbb{G}_a$-representations.

\end{lemma}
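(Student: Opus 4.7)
The plan is to reduce the assertion to the case $k = 1$, which is a twisted $\tau$-localized version of \cite[3.2.1]{BHS}, and then to handle general $k$ by showing that every $G_K$-stable $B^+_\dR\otimes_{K,\tau}L$-lattice $W^+ \subset W$ is compatible with the canonical decomposition $W \cong \bigoplus_l W_l$ from Lemma \ref{drequiv}, where $W_l := W(D_{\pdr, \tau, l}(W))(\chi_l)$.

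For $k = 1$, twisting by $\chi_1^{-1}$ gives an equivalence between $\mathrm{Rep}^+_{\pdr, \tau, \chi_1}(G_K)$ and $\mathrm{Rep}^+_{\pdr, \tau}(G_K)$ that sends $G_K$-stable $B^+_\dR\otimes_{K,\tau}L$-lattices to such lattices and intertwines $W^+\mapsto\Fil^\bullet_{W^+,\tau,1}$ with the usual almost de Rham filtration on $D_{\pdr, \tau}(W(\chi_1^{-1}))$. The latter bijection is extracted from \cite[3.2.1]{BHS} by applying the idempotent $e_\tau\in L\otimes_{\Q_p}K$, exactly as in the proof of Lemma \ref{drequiv}.

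For general $k$, I claim every $G_K$-stable $B^+_\dR\otimes_{K,\tau}L$-lattice $W^+\subset W$ decomposes as $W^+ = \bigoplus_l W^+_l$ with $W^+_l := W^+\cap W_l$ a $G_K$-stable $B^+_\dR\otimes_{K,\tau}L$-lattice of $W_l$ (lattice property: for $v\in W_l$, $t^N v\in W^+\cap W_l = W^+_l$ for $N$ large). Granted this claim, the inverse to $W^+\mapsto(\Fil^\bullet_{W^+,\tau,l})_l$ sends a tuple of filtrations to $\bigoplus_l W^+_l$, with each $W^+_l$ constructed from the corresponding filtration via the $k=1$ case; the verification that $\Fil^\bullet_{\bigoplus_l W^+_l,\tau,l} = \Fil^\bullet_{W^+_l,\tau,l}$ uses the vanishing of $D_{\pdr, \tau, l}$ on $W_{l'}$ for $l'\neq l$ (since $\chi_{l'}\chi_l^{-1}$ has non-integer $\tau$-weight), exactly as in the proof of Lemma \ref{drequiv}.

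The main obstacle, and the core of the argument, is the decomposition claim. By induction on $k$ it suffices to treat $k = 2$, so let $W = W_1\oplus W_2$ with $\wt_\tau(\chi_1\chi_2^{-1})\notin\Z$. It suffices to show that the $B_\dR$-linear $G_K$-equivariant projection $e_l\colon W\twoheadrightarrow W_l$ preserves $W^+$, i.e., $e_l(W^+)\subset W^+$ for $l = 1, 2$, since then every element of $W^+$ decomposes componentwise into elements of $W^+$. Assuming otherwise, consider the strictly larger $G_K$-stable lattice $V^+ := W^+ + e_l(W^+)\subset W$ and the nonzero $t$-power torsion quotient $Q := V^+/W^+$, which is a subquotient of $W_l/W^+_l$ whose $t$-adic graded pieces are $C\otimes_{K,\tau}L$-semilinear $G_K$-representations with Sen eigenvalues lying in $\wt_\tau(\chi_l) + \Z$. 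The failure $e_l(W^+)\not\subset W^+$ is equivalent to the non-vanishing of an obstruction class in $\mathrm{Ext}^1_{G_K,\,B^+_\dR\otimes_{K,\tau}L}(e_l(W^+), W^+_{l'})$ with $l'\neq l$. However, the controlling sheaf $\mathrm{Hom}_{B^+_\dR\otimes_{K,\tau}L}(e_l(W^+), W^+_{l'})[1/t]$ is a $B_\dR\otimes_{K,\tau}L$-representation of $G_K$ whose $\tau$-Hodge-Tate-Sen weights all lie in $\wt_\tau(\chi_{l'}\chi_l^{-1}) + \Z$, hence contain no integer. A $t$-adic devissage combined with the standard vanishing $H^i(G_K, (C\otimes_{K,\tau}L)(\chi)) = 0$ for $i\in\{0,1\}$ and $\chi$ non-algebraic at $\tau$ then forces the Ext to vanish, contradicting the assumed non-decomposition and completing the proof.
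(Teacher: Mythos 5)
Your plan has the same skeleton as the paper's proof (reduce to the twisted, $\tau$-isotypic case of \cite[3.2.1]{BHS} and show that any $G_K$-stable lattice decomposes as $\bigoplus_l (W^+\cap W_l)$ along the decomposition $W\cong\bigoplus_l W_l$ of Lemma \ref{drequiv}), but your mechanism for the decomposition is genuinely different: the paper compares the two lattices $i_l(W_l)\cap W^+$ and $p_l(W^+)$ of $W_l$ by showing they induce the same filtrations $\Fil^\bullet_{W^+,\tau,l}$ (using exactness of $\Fil^i_{W^+,\tau,l}$ on the two short exact sequences and $H^1(G_K, C[\log t]\otimes_C V)=0$ for $V$ with non-integral weights), and then concludes equality from the injectivity part of \cite[3.2.1]{BHS}; you instead try to prove directly that the idempotent $e_l$ preserves $W^+$ by killing a cohomological obstruction. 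That route can be made to work and uses the same essential input (vanishing of $H^0,H^1$ for $C$-representations with non-integral Sen weights), but as written it has a gap at its pivotal step.

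The gap is the asserted \emph{equivalence} between $e_l(W^+)\not\subset W^+$ and the non-vanishing of a class in $\mathrm{Ext}^1_{G_K,\,B^+_\dR\otimes_{K,\tau}L}(e_l(W^+), W^+\cap W_{l'})$. The relevant extension is $0\map W^+\cap W_{l'}\map W^+\map e_l(W^+)\map 0$ (with $e_l$ as the surjection), and vanishing of its class only produces \emph{some} $G_K$-equivariant $B^+_\dR\otimes_{K,\tau}L$-linear section $s$; a priori $s(x)=x+\phi(x)$ with $\phi:e_l(W^+)\map W_{l'}$ equivariant, so the splitting yields an abstract decomposition $W^+\cong (W^+\cap W_{l'})\oplus s(e_l(W^+))$ without forcing $e_l(W^+)\subset W^+$. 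To close the argument you must additionally show $\phi=0$, i.e. $\homo_{G_K,\,B_\dR\otimes_{K,\tau}L}(W_l, W_{l'})=0$ for $l'\neq l$; this is true, since any such homomorphism lies in $D_{\pdr,\tau}$ of the internal Hom, whose $\tau$-Hodge--Tate--Sen weights lie in $\wt_\tau(\chi_{l'}\chi_l^{-1})+\Z$ and hence are non-integral (exactly the computation in the proof of Lemma \ref{drequiv}), but it is a separate step, not a consequence of the Ext-vanishing, and your text does not supply it. With that supplement, together with the routine identification of the Yoneda $\mathrm{Ext}^1$ with continuous $H^1\bigl(G_K,\homo_{B^+_\dR\otimes_{K,\tau}L}(e_l(W^+), W^+\cap W_{l'})\bigr)$ (valid because $e_l(W^+)$ is finite free) and the $t$-adic d\'evissage you describe (where the graded pieces are general $C\otimes_{K,\tau}L$-representations with non-integral Sen weights, not merely characters, and the auxiliary objects $V^+$, $Q$ play no role), your proof becomes correct and is an acceptable alternative to the paper's use of the injectivity statement of \cite[3.2.1]{BHS}.
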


\begin{proof}
Following the proof of Lemma \ref{drequiv}, we write $W\cong \bigoplus_l W_l$, where $W_l:=\bigoplus_l W(D_{\pdr, \tau, l}(W))(\chi_l)$. And we let $i_l$ and $p_l$ be the inclusion and projection map of the $l$-th factor.

First, we show that any  $G_K$-stable  $B^+_{\dR}\otimes_{K, \tau}L$-lattices $W^+$ of $W$ has the form $\bigoplus_l W^+_l$, where each $W^+_l$ is a $G_K$-stable  $B^+_{\dR}\otimes_{K, \tau}L$-lattices of $W_l$. We only need to show $i_l(W_l)\cap W^+=p_l(W^+)$ for any $l\in \{1, \ldots, k\}$. We have a short exact sequence of finite free $B^+_\dR$-module 
\[
0 \map i_l(W_l)\cap W^+ \map W^+ \map \left(\bigoplus_{l'\neq l}p_{l'}\right)(W^+) \map 0
\]
and thus a left exact sequence
\[
0 \map \Fil^i_{W^+, \tau, l}(i_l(W_l)\cap W^+) \map \Fil^i_{W^+, \tau, l}(W^+) \map \Fil^i_{W^+, \tau, l}\left(\left(\bigoplus_{l'\neq l}p_{l'}\right)(W^+)\right)
\]
the last term is $0$ as it is contained in $D_{\pdr, \tau, l}(W_{l'})$, which is $0$ by the proof of Lemma \ref{drequiv}. Thus $\Fil^i_{W^+, \tau, l}(i_l(W_l)\cap W^+) \cong \Fil^i_{W^+, \tau, l}(W^+)$ for any $i\in \Z$. We also have a short exact sequence of finite free $B^+_\dR$-module
\[
0 \map W^+\cap \bigoplus_{l'\neq l}W_{l'}\map W^+\map p_l(W^+) \map 0
\] 
and we deduce similarly $\Fil^i_{W^+, \tau, l}(p_l(W^+))=\Fil^i_{W^+, \tau, l}(W^+)$, noting that
\[H^1(G_K, t^iB^+_{\pdr}\otimes_{B^+_{\dR}}(W^+\cap \bigoplus_{l'\neq l}W_{l'})(\chi_l^{-1}))=0\]
since it can be filtered by a filtration whose graded pieces are $H^1(G_K, C[\log t]\otimes_C V)=0$ for $V$ a $C$-reprentation of $G_K$ having non-integral Hodge-Tate-Sen weights. Now $i_l(W_l)\cap W^+\subset p_l(W^+)$ are two $B^+_{\dR}\otimes_{K, \tau}L$-lattices of $W_l$, such that $\Fil^\bullet_{W^+,\tau, l}(i_l(W_l)\cap W^+)=\Fil^\bullet_{W^+,\tau, l}(p_l(W^+))$. Apply \cite[3.2.1]{BHS},  (twisted by $\chi_l$), we see that $i_l(W_l)\cap W^+= p_l(W^+)$.

Finally, we have seen $W^+\cong \bigoplus_l W^+_l$, where each $W^+_l$ is a $G_K$-stable  $B^+_{\dR}\otimes_{K, \tau}L$-lattices of $W_l$. Apply \cite[3.2.1]{BHS}  (twisted, with added $L$-action) to each $W_l$, we conclude the proof.
\end{proof}

\begin{lemma}\label{drplusaeq}
let $A\in \mathcal{C}_L$.   The functor defined by $W^+\mapsto \bigoplus_l\Fil^\bullet_{W^+,\tau, l}(D_{\pdr, \tau, l}(W))$ defines a bijection between $\mathrm{Rep}^+_{\pdr, A, \tau, \chi_1, \ldots, \chi_k}$ and $(\Fil\rep_A(\mathbb{G}_a))^k$.
\end{lemma}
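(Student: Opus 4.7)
The plan is to mimic the argument of Lemma \ref{drplusequiv}, adapted to the Artinian setting, by reducing to the case $A=L$ via a devissage along the powers of the maximal ideal $\mathfrak{m}_A$. I would first fix, for $W^+\in \mathrm{Rep}^+_{\pdr, A, \tau, \chi_1, \ldots, \chi_k}$, the notation $W:=W^+\invertt$ and think of the output as the tuple $(\Fil^\bullet_{W^+,\tau,l}(D_{\pdr,\tau,l}(W_A)))_{l=1}^k$, where $W_A$ denotes $W^+$ viewed as a $B_\dR\otimes_{K,\tau}A$-representation after inverting $t$.

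First I would verify the functor is well-defined, i.e.\ that each $\Fil^\bullet_{W^+,\tau,l}(D_{\pdr,\tau,l}(W_A))$ lies in $\Fil\rep_A(\mathbb{G}_a)$. The submodules $\Fil^i_{W^+,\tau,l}(D_{\pdr,\tau,l}(W_A))$ are $A$-submodules stable under the nilpotent operator $\nu_{W_A, l}$ by construction; the only content is that their graded pieces are finite free over $A$. I would prove this by induction on the length of $A$: the base case $A=L$ is Lemma \ref{drplusequiv}, and for the inductive step one tensors with $\mathfrak{m}_A^r/\mathfrak{m}_A^{r+1}$ and uses exactness of $\Fil^i_{\cdot,\tau,l}$ (from the vanishing of $H^1$ of non-integral weight pieces) together with flatness of $W^+$ over $A$ to deduce that the associated graded is $A$-flat, hence free.

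Next I would establish the Artinian decomposition of lattices. Using Lemma \ref{draeq}, write $W_A\cong \bigoplus_l W_{A,l}$ with $W_{A,l}\cong W(D_{\pdr,\tau,l}(W_A))(\chi_l)$, and let $i_l$, $p_l$ denote the inclusion and projection. I claim that any $G_K$-stable $B^+_\dR\otimes_{K,\tau}A$-lattice $W^+\subset W_A$ decomposes as $\bigoplus_l W^+_l$ with $W^+_l = W^+\cap i_l(W_{A,l}) = p_l(W^+)$. The proof mirrors that of Lemma \ref{drplusequiv} once we have the cohomological vanishing
\[
H^1\bigl(G_K,\ t^iB^+_{\pdr}\otimes_{B^+_\dR}(W^+\cap \bigoplus_{l'\neq l}W_{A,l'})(\chi_l^{-1})\bigr)=0;
\]
at $A=L$ this is what was used in Lemma \ref{drplusequiv}, and in the Artinian setting it follows by devissage along powers of $\mathfrak{m}_A$, since the successive quotients are $L$-representations for which the vanishing is known. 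From this the $\Fil^i_{W^+,\tau,l}$ of $W^+\cap i_l(W_{A,l})$ and of $p_l(W^+)$ both agree with that of $W^+$, and the twisted $A$-coefficient version of \cite[3.2.1]{BHS} applied to $W_{A,l}$ forces $W^+\cap i_l(W_{A,l})=p_l(W^+)$.

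Finally, on each individual factor $W_{A,l}$ I would invoke the $A$-linear twisted analogue of \cite[3.2.1]{BHS}, which says that $G_K$-stable $B^+_\dR\otimes_{K,\tau}A$-lattices in $W_{A,l}$ correspond bijectively to filtrations of $D_{\pdr,\tau,l}(W_A)\in \rep_A(\mathbb{G}_a)$ with finite-free graded pieces. This one-block Artinian statement is itself a devissage from the $L$-case (the $k=1$ instance of Lemma \ref{drplusequiv}). Combined with the decomposition of the previous paragraph, one gets bijectivity component-wise, hence the full statement. The main obstacle is the decomposition $W^+\cong \bigoplus_l W^+_l$ over $A$: once it is in place, both well-definedness and the construction of the inverse reduce to applying the single-block statement $k$ times in parallel. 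The subtle point inside that step is the propagation of the non-integral weight $H^1$-vanishing from $L$ to $A$, which must be handled carefully via the $\mathfrak{m}_A$-adic filtration.
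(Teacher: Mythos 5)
Your overall strategy coincides with the paper's: decompose the lattice into the $k$ blocks coming from $W\cong\bigoplus_l W_l$ and then apply, block by block, the Artinian-coefficient one-block correspondence between lattices and filtrations (this is exactly the twisted form of \cite[3.2.2]{BHS}, which the paper cites; it is not just ``a devissage from the $L$-case''). However, there is a concrete missing step. To apply that one-block statement to $W^+_l=W^+\cap i_l(W_{A,l})=p_l(W^+)$ you must know that $W^+_l$ is a \emph{finite free} $B^+_\dR\otimes_{K,\tau}A$-module, not merely a $G_K$- and $A$-stable $B^+_\dR\otimes_{K,\tau}L$-lattice; without this, neither the well-definedness (freeness over $A$ of the graded pieces of $\Fil^\bullet_{W^+,\tau,l}$) nor the blockwise bijectivity follows. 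The paper's proof spends most of its length on precisely this point: each $W^+_l$ is flat over $A$ because it is a direct summand of the flat module $W^+$, its reduction mod $\mathfrak{m}_A$ is the corresponding block of $W^+/\mathfrak{m}_AW^+$ and hence free over $B^+_\dR\otimes_{K,\tau}L$ by the proof of Lemma \ref{drplusequiv}, and the two facts together give freeness over $B^+_\dR\otimes_{K,\tau}A$. Your proposal never establishes this.

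The substitute you offer in the first paragraph does not close the gap: the induction on the length of $A$ rests on ``exactness of $\Fil^i_{\cdot,\tau,l}$ from the vanishing of $H^1$ of non-integral weight pieces,'' but that vanishing only concerns the cross-block pieces (non-integral $\tau$-weights relative to $\chi_l$). The graded pieces whose $A$-freeness you need lie inside the almost de Rham block itself, where the relevant $H^1(G_K, t^iB^+_{\pdr}\otimes(\cdot)(\chi_l^{-1}))$ does not vanish in general, so the functor $W^+\mapsto\Fil^i_{W^+,\tau,l}$ is only left exact and the inductive step fails as stated. By contrast, your re-derivation over $A$ of the decomposition $W^+\cong\bigoplus_l W^+_l$ is fine but unnecessary: since $W^+$ is by definition an object of $\mathrm{Rep}^+_{\pdr,\tau,\chi_1,\ldots,\chi_k}(G_K)$ over $L$, the decomposition from the proof of Lemma \ref{drplusequiv} is canonical and therefore $A$-equivariant, which is how the paper obtains it. Once you add the flatness-plus-reduction argument for freeness of each block over $B^+_\dR\otimes_{K,\tau}A$ and invoke the twisted \cite[3.2.2]{BHS} rather than proving it ad hoc, your argument matches the paper's.
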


\begin{proof}
For any $W^+\in \mathrm{Rep}^+_{\pdr, A, \tau, \chi_1, \ldots, \chi_k}$, we have by the proof of Lemma \ref{drplusequiv} $W^+\cong \bigoplus_l W_l^+$, where each $W_l^+$ is a $B_\dR^+\otimes_{K, \tau}L$ lattice of $W_l$ such that $W_l(\chi_l^{-1})$ is almost de Rham. By functorialty we see that each $W_l^+$ is in fact a $B_\dR^+\otimes_{K, \tau}A$-module. We claim it is finite free over $B_\dR^+\otimes_{K, \tau}A$: First $W_l^+$ is flat as an $A$-module since it is a direct summand of $W^+$, a flat $A$-module. Secondly, $W^+ /\mathfrak{m}_A\cong \bigoplus_l W^+_l/\mathfrak{m}_A$ gives the corresponding decomposition for $W^+ /\mathfrak{m}_A\in \mathrm{Rep}^+_{\pdr, \tau, \chi_1, \ldots, \chi_k}$. Thus by the proof of Lemma \ref{drplusequiv}, $W^+_l/\mathfrak{m}_A$ is finite free over module over $B_\dR^+\otimes_{K, \tau}L$. Combining the two facts we see $W_l^+$ is finite free over $B_\dR^+\otimes_{K, \tau}A$. Now apply \cite[3.2.2]{BHS}  (twisted) to each $l$ we conclude immediately.
\end{proof}

\begin{definition}
Fix $W^+\in \mathrm{Rep}^+_{\pdr, \tau, \chi_1, \ldots, \chi_k}$ and fix $\alpha=\bigoplus_l\alpha_l: \bigoplus_l L^{n_l} \cong  \bigoplus_l D_{\pdr, \tau, l}(W^+\invertt)$. Define $X_{W^+, \tau}^\square$ to be the groupoid over $\mathcal{C}_L$ consisting of objects $(A, W^+_A, \iota_A, \alpha_A)$ (and obvious morphisms) where $W^+_A\in \mathrm{Rep}^+_{\pdr, A, \tau, \chi_1, \ldots, \chi_k}$, $\iota_A: W^+_A\otimes_A L\cong W^+$ and $\alpha_A$ as in Definition \ref{xwdef} for $W_A:=W^+_A\invertt$. Similarly we define $X_{W^+, \tau}$ as above but without framing.
\end{definition}

\begin{definition}
Let $W^+\in \mathrm{Rep}^+_{\pdr, \tau, \chi_1, \ldots, \chi_k}$. We say it is $\tau$-regular if all Hodge-Tate-Sen weights of $W^+/tW^+$ are distinct from each other. This is equivalent to the condition that for any $l\in \{1, \ldots, k\}$, the graded pieces $\gr^i(D_{\pdr, \tau, l}(W))$ are all of dimension $\leq 1$ over $L$.
\end{definition}

Let $W^+\in \mathrm{Rep}^+_{\pdr, \tau, \chi_1, \ldots, \chi_k}$ be $\tau$-regular. Denote by $-h_{\tau,l, 1}> \cdots> -h_{\tau, l, n_l}$ the integers $i$ such that  $\gr^i(D_{\pdr, \tau, l}(W))\neq 0$, for any $l\in \{1, \ldots, k\}$. Let $A\in \mathcal{C}_L$ and $(W^+_A, \iota_A, \alpha_A)$ be an object of $X_{W^+}^\square(A)$, Lemma \ref{drplusaeq} gives a filtration $\Fil^\bullet_{W_A^+,\tau, l}$ on each of $D_{\pdr, \tau, l}(W_A)$. It follows from \cite[3.2.3]{BHS} that $\gr^i(D_{\pdr, \tau, l}(W_A))\otimes_A L\cong \gr^i(D_{\pdr, \tau, l}(W))$. And hence $\gr^i(D_{\pdr, \tau, l}(W_A))$ is a finite free $A$-module of rank $1$ when $i\in \{-h_{\tau,l, 1}, \cdots, -h_{\tau, l, n_l}\}$ and is $0$ otherwise. We can define a complete (increasing) flag $\Fil_{W^+_A, \tau, l, \bullet}$ on each $D_{\pdr, \tau, l}(W_A)$ by setting 
\[
\Fil_{W^+_A, \tau, l, i}(D_{\pdr, \tau, l}(W_A)):= \Fil^{-h_{\tau, l, i}}(D_{\pdr, \tau, l}(W_A))
\]
for any $i\in \{1, \ldots, n_l\}$. The filtration is stable under $\nu_{W_A, l}$. Thus the $k$ pairs $\bigoplus_{l=1}^k (\alpha_{A, l}^{-1}(\Fil_{W^+_A, \tau, l, \bullet}), N_{W_A, l})$ defines an element in  $\prod_{l=1}^k\widetilde{\mathfrak{g}}_{n_l}(A)$. Furthermore, let $\prod_{l=1}^k\widehat{\widetilde{\mathfrak{g}}}_{n_l}$ denote the completion of $\prod_{l=1}^k\widetilde{\mathfrak{g}}_{n_l}$ at the $L$ point given by $k$ pairs $\bigoplus_{l=1}^k (\alpha_{ l}^{-1}(\Fil_{W^+, \tau, l, \bullet}), N_{W, l})$, then the above construction in fact gives a point in $\prod_{l=1}^k\widehat{\widetilde{\mathfrak{g}}}_{n_l}(A)$.

\begin{cor}\label{xwplus}
Let $W^+\in \mathrm{Rep}^+_{\pdr, \tau, \chi_1, \ldots, \chi_k}$ be $\tau$-regular. The groupoid $X^\square_{W^+, \tau}$ is pro-representable. The functor:
\[
(W^+_A, \iota_A, \alpha_A)\mapsto \bigoplus_{l=1}^k (\alpha_{A, l}^{-1}(\Fil_{W^+_A, \tau, l, \bullet}), N_{W_A, l})
\]
induces an isomorphism of functors between $|X^\square_{W^+, \tau}|$ and $\prod_{l=1}^k\widehat{\widetilde{\mathfrak{g}}}_{n_l}$, treating the latter as a functor $\mathcal{C}_L\map \mathrm{Sets}$ again.
\end{cor}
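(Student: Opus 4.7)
The plan is to reduce the statement to a direct formal consequence of Lemma \ref{drplusaeq}, combined with the $\tau$-regularity hypothesis which converts filtered modules into complete flags. The structure mirrors the proof of Corollary \ref{xwf}, but with decreasing filtrations of $B^+_{\dR}\otimes_{K, \tau} A$-lattices in place of increasing filtrations of $B_{\dR}\otimes_{K, \tau} A$-representations.

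First, I would invoke Lemma \ref{drplusaeq}: giving a deformation $(W^+_A, \iota_A, \alpha_A) \in X^\square_{W^+, \tau}(A)$ of $W^+$ amounts to giving, for each $l \in \{1, \ldots, k\}$, a filtered $A$-module with nilpotent operator $\bigl(\Fil^\bullet_{W^+_A, \tau, l}(D_{\pdr, \tau, l}(W_A)),\, \nu_{W_A, l}\bigr)$ lifting the corresponding data attached to $W^+$, together with a framing $\alpha_{A, l} \colon A^{n_l} \cong D_{\pdr, \tau, l}(W_A)$ reducing to $\alpha_l$ modulo $\mathfrak{m}_A$.

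Second, I would use the $\tau$-regularity hypothesis. By the discussion immediately preceding the corollary, the decreasing filtration $\Fil^\bullet_{W^+_A, \tau, l}$ on $D_{\pdr, \tau, l}(W_A)$ has rank-$1$ free graded pieces exactly at the weights $-h_{\tau, l, 1} > \cdots > -h_{\tau, l, n_l}$, and its other graded pieces vanish. Hence this datum is equivalent to the reindexed complete increasing flag $\Fil_{W^+_A, \tau, l, \bullet}$ on $D_{\pdr, \tau, l}(W_A)$, and the filtration-preserving nilpotent operator becomes a flag-preserving nilpotent operator. Transferring along $\alpha_{A, l}$, this is exactly the datum of an $A$-point of $\widetilde{\mathfrak{g}}_{n_l}$ lifting the distinguished basepoint $(\alpha_l^{-1}(\Fil_{W^+, \tau, l, \bullet}), N_{W, l})$, i.e.\ an $A$-point of the formal completion $\widehat{\widetilde{\mathfrak{g}}}_{n_l}$.

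Third, to establish that the functor is an isomorphism (and thereby also obtain pro-representability), I would construct an explicit quasi-inverse. Given a tuple $\bigl((\mathcal{F}_{A, l, \bullet}, N_{A, l})\bigr)_l$ representing an $A$-point of $\prod_l \widehat{\widetilde{\mathfrak{g}}}_{n_l}$, I would define a decreasing filtration on $A^{n_l}$ by setting $\Fil^j(A^{n_l}) := \mathcal{F}_{A, l, i}$ whenever $-h_{\tau, l, i+1} < j \leq -h_{\tau, l, i}$ (with the obvious boundary conventions). This yields an object of $\Fil\rep_A(\mathbb{G}_a)$ for each $l$, from which Lemma \ref{drplusaeq} produces a well-defined $W^+_A \in \mathrm{Rep}^+_{\pdr, A, \tau, \chi_1, \ldots, \chi_k}$; the framings and the isomorphism $\iota_A$ are tautological from the construction. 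Verifying that the two constructions are mutually inverse is then a direct unwinding. The one point requiring care, which I expect to be the only nontrivial check, is the compatibility of the reindexing by the weights $-h_{\tau, l, i}$ with reduction modulo $\mathfrak{m}_A$, so that the reconstructed $W^+_A$ genuinely deforms $W^+$. This is guaranteed by the twisted analogue of \cite[3.2.3]{BHS}, which gives $\gr^i(D_{\pdr, \tau, l}(W_A)) \otimes_A L \cong \gr^i(D_{\pdr, \tau, l}(W))$ and thereby pins down the jump indices in any deformation.
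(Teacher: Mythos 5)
Your proposal is correct and follows essentially the same route as the paper: the paper's proof is simply ``Immediate from Lemma \ref{drplusaeq}'', with the reindexing of the decreasing filtration into a complete flag (using $\tau$-regularity and the rigidity of graded pieces from the twisted \cite[3.2.3]{BHS}) already carried out in the paragraph preceding the corollary, exactly as you spell out. Your explicit quasi-inverse and the check of compatibility with reduction modulo $\mathfrak{m}_A$ are just the unwinding the paper leaves implicit.
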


\begin{proof}
Immediate from Lemma \ref{drplusaeq}.
\end{proof}

Let  $A\in \mathcal{C}_L$ and $\mathcal{M}$ (resp. $D$) be a trianguline $\phigamma$-module over $\rob_{A, K}\invertt$ (resp. $\rob_{A, K}$) of rank $n$ of parameters $\delta_1, \ldots, \delta_n$. Fix $\tau\in \Sigma$. Decompose $\{1, \ldots, n\}$ according to the equivalence relation of integral $\tau$-weights differences of $\overline{\delta}_i$. We obtain as in the beginning of the section $k$ equivalence classes and characters $\chi_1, \ldots, \chi_k$. Recall that in \cite[3.3]{BHS}, $W_{\dR}(\mathcal{M})$ (resp. $W_{\dR}^+(D)$) is defined to be a $G_K$-representaion over $B_\dR\otimes_{\Q_p}A\cong \bigoplus_{\tau\in \Sigma}B_{\dR}\otimes_{K, \tau}A$ (resp. $B_\dR^+\otimes_{\Q_p}A\cong \bigoplus_{\tau\in \Sigma}B_{\dR}^+\otimes_{K, \tau}A$), we let $W_{\dR, \tau}(\mathcal{M})$ (resp. $W_{\dR, \tau}^+(D)$) be the factor of $W_{\dR}(\mathcal{M})$ (resp. $W_{\dR}^+(D)$) corresponding to the embedding $\tau$.

\begin{lemma}\label{imageright}
Let $\mathcal{M}$ as above. Then $W_{\dR, \tau}(\mathcal{M})\in \mathrm{Rep}_{\pdr, A, \tau, \chi_1, \ldots, \chi_k}$ and is free of rank $n$ over $B_{\dR}\otimes_{K, \tau}A$.
\end{lemma}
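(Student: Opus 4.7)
The plan is to reduce along the triangulation of $\mathcal{M}$ to the rank-one case, and in that case use a twist by $\chi_l^{-1}$ to reduce to a standard almost-de-Rham computation.

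First I would apply $W_{\dR,\tau}$ to the triangulation $\Fil_\bullet$ of $\mathcal{M}$ whose graded pieces are $\rob_{A,K}(\delta_i)\invertt$. By the exactness of $W_{\dR,\tau}$ on $\phigamma$-modules over $\rob_{A,K}\invertt$ (cf.\ \cite[Section 3.3]{BHS}), this produces a filtration of $W_{\dR,\tau}(\mathcal{M})$ by $G_K$-stable $B_\dR\otimes_{K,\tau}A$-submodules whose graded pieces are $W_{\dR,\tau}(\rob_{A,K}(\delta_i)\invertt)$, each finite free of rank one over $B_\dR\otimes_{K,\tau}A$ by the standard description of $W_{\dR,\tau}$ on rank-one modules. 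Since $B_\dR\otimes_{K,\tau}A$-extensions of free modules split at the module level (projective extensions split), $W_{\dR,\tau}(\mathcal{M})$ is finite free of rank $n$ over $B_\dR\otimes_{K,\tau}A$.

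Next, I would verify that each rank-one graded piece lies in $\mathrm{Rep}_{\pdr, A, \tau, \chi_1, \ldots, \chi_k}$. By the remark following the definition of this category, it suffices to check this modulo $\mathfrak{m}_A$: the residual $W_{\dR,\tau}(\rob_{L,K}(\overline{\delta}_i)\invertt)$ is a rank-one $B_\dR\otimes_{K,\tau}L$-representation of $G_K$. If $i \in S_l$, then $\wt_\tau(\overline{\delta}_i) - \wt_\tau(\chi_l) \in \Z$, so the twist $W_{\dR,\tau}(\rob_{L,K}(\overline{\delta}_i\chi_l^{-1})\invertt)$ has integer $\tau$-weight. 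A rank-one $B_\dR\otimes_{K,\tau}L$-representation with integer $\tau$-weight is almost de Rham at $\tau$: by further twisting by an algebraic character of the appropriate integer $\tau$-weight one reduces to the zero-weight case, where the representation is Hodge-Tate at $\tau$ and hence almost de Rham at $\tau$. Thus $\dim_L D_{\pdr, \tau, l} = 1$, while for $l' \neq l$ the $\tau$-weight after twisting by $\chi_{l'}^{-1}$ is non-integral and $D_{\pdr, \tau, l'} = 0$; the dimension formula is verified.

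Finally, I would use the $A$-coefficient analogue of Lemma \ref{stabledr}, obtained by the same argument via the exactness of each $D_{\pdr, \tau, l}$ and Lemma \ref{draeq}, giving stability of $\mathrm{Rep}_{\pdr, A, \tau, \chi_1, \ldots, \chi_k}$ under extension. Combined with the filtration above, this yields $W_{\dR,\tau}(\mathcal{M}) \in \mathrm{Rep}_{\pdr, A, \tau, \chi_1, \ldots, \chi_k}$. The main obstacle I expect is the residual almost-de-Rham verification in the rank-one case, specifically justifying that integrality of the $\tau$-weight implies almost-de-Rham-ness for a rank-one $B_\dR\otimes_{K,\tau}L$-representation; once this is in hand, the rest of the proof follows formally by reducing along the filtration together with the exactness and stability properties already established in this section.
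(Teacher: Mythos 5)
Your proposal is correct and follows essentially the same route as the paper: devissage along the triangulation (using exactness of the functors $D_{\pdr,\tau,l}$) to the rank-one case, a further reduction to $A=L$, and the observation that after twisting by $\chi_l^{-1}$ the residual rank-one piece has integral $\tau$-weight, so it contributes dimension one to $D_{\pdr,\tau,l}$ and zero to every $D_{\pdr,\tau,l'}$ with $l'\neq l$, which verifies the defining dimension formula. The only cosmetic difference is that the paper gets freeness of rank $n$ over $B_{\dR}\otimes_{K,\tau}A$ by citing \cite[3.3.5]{BHS} directly, whereas you rederive it from the filtration together with the splitting of extensions with projective quotient; both are fine.
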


\begin{proof}
By \cite[3.3.5]{BHS}, we have that $W_{\dR}(M)$ is finite free over $B_\dR\otimes_{\Q_p}A$ of rank $n$, hence $W_{\dR, \tau} (\mathcal{M})$  free of rank $n$ over $B_{\dR}\otimes_{K, \tau}A$. To see that it is in $\mathrm{Rep}_{\pdr, A, \tau, \chi_1, \ldots, \chi_k}$, by a devissage argument, using the fact $D_{\pdr, \tau, l}$ is exact, it suffices to prove the claim for $\mathcal{M}=\rob_{A, k}(\delta_i)$. Another devissage reduces to the case $A=L$. In this case, suppose the character $\delta_i$ has $\tau$-Hodge-Tate-Sen weight in the $\tau$-weight class labelled by $l$. Then one have $W_{\dR, \tau}(\mathcal{M})=B_{\dR}\otimes_{K, \tau}A(\delta_i)$, $D_{\pdr, \tau, l'}(W_{\dR, \tau}(\mathcal{M}))=0$ for any $l'\neq l$ and $D_{\pdr, \tau, l}(W_{\dR, \tau}(\mathcal{M}))=0$ is of dimension $1$ over $L$. We have $\dim_{B_\dR\otimes_{K, \tau}L}(W_{\dR, \tau}(\mathcal{M}))=\sum_{l=1}^k \dim_{L}D_{\pdr, \tau, l}(W_{\dR, \tau}(\mathcal{M}))$ in this case and we are done.
\end{proof}

Given $\mathcal{M}$ (resp. $D$) a trianguline $\phigamma$-module over $\rob_{L, K}\invertt$ (resp. $\rob_{L, K}$) of rank $n$, whose triangulation denoted by $\mathcal{M}_\bullet$,  we define the groupoid $X_{\mathcal{M}}$, $X_D$, $X_{\mathcal{M}, \mathcal{M}_\bullet}$ and the map of groupoids $\omega_{\underline{\delta}}: X_{\mathcal{M}, \mathcal{M}_\bullet}\map \widehat{\mathcal{T}^n_{\underline{\delta}}}$ as in \cite[Page 36]{BHS}. Let $W_\tau:=W_{\dR, \tau}(\mathcal{M})$, and $\mathcal{F}_{\tau, \bullet}:=W_{\dR, \tau}(\mathcal{M}_\bullet)$. Applying $W_{\dR, \tau}$ to each member of the filtration $\mathcal{M}_{A, \bullet}$, using its exactness (which follows from proof of \cite[3.3.5]{BHS}) and Lemma \ref{imageright}, we have a map of groupoids $X_{\mathcal{M}, \mathcal{M}_\bullet} \map X_{W_\tau, \mathcal{F}_{\tau, \bullet}}$. 

\begin{cor}\label{commdiag1}
The diagram of groupoids
\[
\begin{tikzcd}
X_{\mathcal{M}, \mathcal{M}_\bullet} \arrow[r] \arrow[d, "\omega_{\underline{\delta}}"]   & X_{W_\tau, \mathcal{F}_{\tau, \bullet}}\arrow[d, "\kappa_{\tau, W_\tau, \mathcal{F}_{\tau, \bullet}}"]\\
\widehat{\mathcal{T}^n_{\underline{\delta}}} \arrow[r, "\wt_\tau-\wt_\tau(\underline{\delta})"] & \bigoplus_{l=1}^k\widehat{\mathfrak{t}}_{\tau, l}
\end{tikzcd}
\]
is commutative. Here, the bottom map takes the following form: it decomposed $\bigoplus_{i=1}^n \delta_{A, i}\in \widehat{\mathcal{T}^n_{\underline{\delta}}}(A)$ into $\bigoplus_{l=1}^k(\delta_{A, i})_{i\in S_l}$ where each $(\delta_{A, i})_{i\in S_l}$ is listed in increasing order of $i\in s_l$, and then apply the map $\wt_\tau-\wt_\tau(\delta_i)_{i\in S_l}$ to each $(\delta_{A, i})_{i\in S_l}$ with image in $\widehat{\mathfrak{t}}_{\tau, l}$.
\end{cor}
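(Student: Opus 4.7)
The strategy is to unwind the four maps and reduce to a rank-one computation via the filtration $\mathcal{M}_\bullet$. By construction, the top horizontal arrow sends a trianguline deformation $(\mathcal{M}_A, \mathcal{M}_{A,\bullet})$ to $(W_{\dR,\tau}(\mathcal{M}_A), W_{\dR,\tau}(\mathcal{M}_{A,\bullet}))$, and by the exactness of $W_{\dR,\tau}$ invoked in Lemma \ref{imageright}, this carries each graded piece $\rob_{A,K}(\delta_{A,i})\invertt$ to the rank-one filtered piece $B_{\dR}\otimes_{K,\tau}A(\delta_{A,i})$. The map $\omega_{\underline{\delta}}$ reads off the parameters $\delta_{A,i}$ from these graded pieces, while $\kappa_{\tau, W_\tau, \mathcal{F}_{\tau,\bullet}}$ reads off the successive jumps of the complete flag that $\mathcal{F}_{A,\bullet}$ induces on each $D_{\pdr,\tau,l}(W_{A,\tau})$ (via Lemma \ref{draeq}). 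The bottom arrow takes a tuple of characters and records their $\tau$-weights minus the base weights, grouped according to the integral $\tau$-weight classes $S_l$.

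With these descriptions in hand, both compositions are additive along the filtration $\mathcal{M}_{A,\bullet}$, so a devissage argument reduces commutativity to the following rank-one statement: for $\mathcal{M}_A = \rob_{A,K}(\delta_{A,i})\invertt$ with $i \in S_l$, the image of $W_{\dR,\tau}(\mathcal{M}_A)$ under $\kappa_{\tau,\ldots}$ lands in the $l$-th summand $\widehat{\mathfrak{t}}_{\tau,l}$ and equals $\wt_\tau(\delta_{A,i}) - \wt_\tau(\delta_i)$. The first assertion is already contained in the proof of Lemma \ref{imageright}: for $l' \neq l$ the twist by $\chi_{l'}^{-1}$ has non-integral $\tau$-Hodge-Tate-Sen weight, so $D_{\pdr,\tau,l'}$ vanishes, while the $l$-th factor is free of rank one.

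For the second assertion, the definition of $\wt_\tau(\delta_{A,i})$ is that the Sen operator acts on $D_{\mathrm{Sen},\tau}$ of $\rob_{A,K}(\delta_{A,i})$ as multiplication by $\wt_\tau(\delta_{A,i})$, hence Fontaine's nilpotent operator on $D_{\pdr,\tau,l}(B_{\dR}\otimes_{K,\tau}A(\delta_{A,i}))$, after the twist by $\chi_l^{-1}$, is multiplication by $\wt_\tau(\delta_{A,i}) - \wt_\tau(\chi_l)$. Subtracting off the base-point contribution $\wt_\tau(\delta_i) - \wt_\tau(\chi_l)$ produces the coordinate $\wt_\tau(\delta_{A,i}) - \wt_\tau(\delta_i)$ in $\widehat{\mathfrak{t}}_{\tau,l}$, which is exactly the output of the bottom-then-left path.

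The one remaining point, and the only possible obstacle, is a bookkeeping issue: we must check that the ordering convention on the bottom arrow (listing indices in $S_l$ in increasing order) matches the ordering of the jumps of the flag on $D_{\pdr,\tau,l}(W_{A,\tau})$ produced by $W_{\dR,\tau}(\mathcal{M}_{A,\bullet})$. This follows from the compatibility of $W_{\dR,\tau}$ with subquotients together with the identification of $D_{\pdr,\tau,l}$ with the $l$-th summand in the proof of Lemma \ref{drequiv}: passing to the subquotient $\mathcal{M}_{A,i}/\mathcal{M}_{A,i-1}$ for $i \in S_l$ contributes a single one-dimensional jump to the $l$-th flag exactly at position determined by the ordering of $S_l$. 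Once this verification is made, the diagram commutes.
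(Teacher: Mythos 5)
Your overall architecture is the right one, and in fact it is essentially what the paper's one-line proof delegates: the paper simply says that the twisted analogue of \cite[3.3.6]{BHS} carries over, and that argument is exactly a devissage along $\mathcal{M}_{A,\bullet}$, using exactness of $W_{\dR,\tau}$ and of the $D_{\pdr,\tau,l}$, reducing to a rank-one computation; your identification of which summand $\widehat{\mathfrak{t}}_{\tau,l}$ receives the contribution of an index $i\in S_l$ (via vanishing of $D_{\pdr,\tau,l'}$ for $l'\neq l$, as in Lemma \ref{imageright}) is also correct.

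However, the central rank-one computation as you have written it is not correct, and the two missteps happen to cancel. You assert that Fontaine's nilpotent operator on $D_{\pdr,\tau,l}\bigl(B_{\dR}\otimes_{K,\tau}A(\delta_{A,i})\bigr)$ (after the $\chi_l^{-1}$-twist) is multiplication by $\wt_\tau(\delta_{A,i})-\wt_\tau(\chi_l)$. This cannot be: that scalar reduces modulo $\mathfrak{m}_A$ to the integer $\wt_\tau(\delta_i)-\wt_\tau(\chi_l)$, which is in general nonzero, whereas $\nu$ is nilpotent. You are conflating the Sen operator on the lattice modulo $t$ (which does see the full weight, cf. the use of \cite[3.7.5]{BHS} elsewhere in the paper) with Fontaine's operator on $D_{\pdr}$: over $B_{\dR}$ the integral part of the weight is trivialized and only shifts the $B^+_{\dR}$-lattice, i.e.\ the filtration recorded in Corollary \ref{xwplus}, which plays no role in this diagram; what $\nu$ does on the rank-one graded piece is act by exactly the nilpotent part $\wt_\tau(\delta_{A,i})-\wt_\tau(\delta_i)\in\mathfrak{m}_A$. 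Correspondingly, your compensating step --- ``subtracting off the base-point contribution $\wt_\tau(\delta_i)-\wt_\tau(\chi_l)$'' --- is not something the map $\kappa_{\tau,W_\tau,\mathcal{F}_{\tau,\bullet}}$ does: as in \cite{BHS}, it simply records the action of $\nu_{W_A}$ on the graded pieces of the flags $D_{\pdr,\tau,l}(\mathcal{F}_{A,\tau,\bullet})$, with no normalization; the only subtraction in the diagram is the one built into the bottom arrow $\wt_\tau-\wt_\tau(\underline{\delta})$. So the identity you need is the correct computation that $\nu$ acts on the rank-one piece by $\wt_\tau(\delta_{A,i})-\wt_\tau(\delta_i)$ (the twisted analogue of the rank-one computation underlying \cite[3.3.6]{BHS}), and your proof should be repaired to prove that statement rather than the incoherent one followed by an ad hoc subtraction. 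A secondary point: the ordering compatibility between the jumps of the flag on $D_{\pdr,\tau,l}$ and the increasing listing of $S_l$ is asserted rather than verified; it is true, but it deserves the short argument that the $i$-th step of $\mathcal{F}_{A,\tau,\bullet}$ raises the rank of $D_{\pdr,\tau,l}$ by one exactly when $i\in S_l$, so the induced complete flag is indexed by $S_l$ in its natural order.
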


\begin{proof}
The twisted analogue of \cite[3.3.6]{BHS} carries over.
\end{proof}

\subsection{A Formally Smooth Morphism}
Proposition \ref{formsm1} is the key property to prove the existence of a local model in the next subsection.

We need the following more general version of \cite[3.3.3]{BHS}

\begin{lemma}\label{modt}
Let $\mathbf{k}=(k_\tau)_{\tau\in \Sigma} \in \Z^{[K:\Q_p]}_{\geq 0}$, $\delta: K^\times\map L^\times$ a continuous character, $j\in \{0, 1\}$ and $S\subset \Sigma$ a a subset. Assume $\wt_{\tau}(\delta)\in \{1-k_\tau, \ldots, 0\}$ if and only if $\tau\in S$. Then   $\dim_L H^j_{\hphigamma}(\rlk(\delta)/t^{\mathbf{k}})=\mathrm{Card} (S)$.
\end{lemma}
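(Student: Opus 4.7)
\emph{Plan.} The argument proceeds by a two-step devissage. First, because $t = \prod_{\tau \in \Sigma} t_\tau$ and the elements $t_\tau$ for distinct $\tau$ generate pairwise coprime ideals in $\rlk$ (as is visible through the Sen-module decomposition $\rlk/t \cong \bigoplus_\tau L \otimes_{\tau, K} K_\infty$ used implicitly throughout the paper), the Chinese Remainder Theorem yields a $\phigamma$-equivariant decomposition
\[
\rlk(\delta)/t^{\mathbf{k}} \;\cong\; \bigoplus_{\tau \in \Sigma} \rlk(\delta)/t_\tau^{k_\tau}.
\]
Cohomology commutes with finite direct sums, so the claim reduces to showing for each individual $\tau$ that $\dim_L H^j_{\hphigamma}(\rlk(\delta)/t_\tau^{k_\tau})$ equals $1$ when $\tau \in S$ and $0$ otherwise.

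Second, for fixed $\tau$, I would filter $\rlk(\delta)/t_\tau^{k_\tau}$ by the $\phigamma$-stable submodules $t_\tau^{i}\rlk(\delta)/t_\tau^{k_\tau}$ for $i = 0, \dots, k_\tau$. Since $\gamma(t_\tau) = \chi_\tau(\gamma) t_\tau$ for the character $\chi_\tau$ of $G_K$ whose $\tau$-weight is $1$ and whose other weights are $0$ (so that $\chi = \prod_\tau \chi_\tau$ recovers the cyclotomic character), the $i$-th graded piece is identified with $\rlk(\delta \chi_\tau^{i})/t_\tau$, a rank-$1$ $\phigamma$-module over $\rlk/t_\tau$ of $\tau$-weight $\wt_\tau(\delta) + i$. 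By \cite[3.3.3]{BHS} (or by a direct computation via the Sen-module description of $\rlk/t_\tau$ as $L \otimes_{\tau, K} K_\infty$), one has $\dim_L H^j_{\hphigamma}(\rlk(\psi)/t_\tau) = 1$ for $j \in \{0,1\}$ precisely when $\wt_\tau(\psi) = 0$, and vanishes otherwise.

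Finally, the equation $\wt_\tau(\delta) + i = 0$ has a unique solution $i = -\wt_\tau(\delta)$, and this solution lies in the admissible range $\{0, \dots, k_\tau - 1\}$ if and only if $\wt_\tau(\delta) \in \{1-k_\tau, \dots, 0\}$, i.e.\ if and only if $\tau \in S$. Running the long exact sequences attached to the filtration: when $\tau \notin S$ every graded piece has vanishing $H^0$ and $H^1$, so the total cohomology vanishes in both degrees; when $\tau \in S$ exactly one graded piece contributes a one-dimensional $H^0$ and a one-dimensional $H^1$, and because the neighboring graded pieces in the filtration have vanishing $H^0$ and $H^1$ no cancellation or contribution from boundary maps can occur, so the single surviving contribution propagates to $\rlk(\delta)/t_\tau^{k_\tau}$. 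The main bookkeeping to get right is the shift $\delta \mapsto \delta\chi_\tau^i$ on the graded pieces and the precise dimension count for $H^j(\rlk(\psi)/t_\tau)$; both are handled in \cite[Section 3.3]{BHS} in essentially the generality we need, and no genuinely new obstacle arises.
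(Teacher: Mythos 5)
Your argument is correct and is in substance the paper's own proof: the paper simply quotes Bergdall's Proposition 2.14 for the single-embedding computation and assembles the general case by induction, which is exactly your CRT decomposition over $\tau\in\Sigma$ followed by the $t_\tau$-adic d\'evissage and the weight-zero criterion for rank-one pieces modulo $t_\tau$. The one step you state too loosely is the $H^1$-count in the long exact sequences: to conclude that $H^1_{\hphigamma}$ of the total module surjects onto (hence equals) the one-dimensional $H^1_{\hphigamma}$ of the unique weight-zero graded piece, you must also kill the connecting map into $H^2_{\hphigamma}$ of the adjacent subobject; this does hold, since every rank-one piece modulo $t_\tau$ (and hence every successive extension of such pieces) has vanishing $H^2_{\hphigamma}$ --- for instance by the surjectivity of $\varphi-1$ in the colimit-of-products description of modules over $\rob_{L, K}/t_\tau$ used in Lemma \ref{surjmodt}, or by an Euler-characteristic count --- but this point should be made explicit rather than dismissed as ``no contribution from boundary maps.''
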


\begin{proof}
Induction from \cite[Proposition 2.14]{Bergdall} .
\end{proof}

\begin{lemma}\label{h0prop}
Let $\delta$ be a continuous character $K^\times\map A^\times$ and $\overline{\delta}: K^\times \map L^\times$ be its reduction. Assume $\overline{\delta}$ that is not algebraic. 
\begin{enumerate}
    \item We have $H^0_{\hphigamma}(\rob_{A, K}(\delta)\invertt)=0$.
    \item  If $\wt_{\tau}(\overline{\delta})\notin\Z_{>0}$ for any $\tau\in \Sigma$, then $H^0_{\hphigamma}(\rob_{A, K}(\delta)\invertt/\rob_{A, K}(\delta))=H^1_{\hphigamma}(\rob_{A, K}(\delta)\invertt/\rob_{A, K}(\delta))=0$.
\end{enumerate}
\end{lemma}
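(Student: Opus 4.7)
The plan is to handle both parts by expressing the relevant module as a filtered colimit of $\phigamma$-modules that are, via an $\mathfrak{m}_A$-adic devissage, reducible to rank-one $\phigamma$-modules over $L$ already analyzed in Lemma \ref{modt} and by the standard analogous computation without quotienting by $t$.

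For (1), I would write $\rob_{A, K}(\delta)\invertt = \varinjlim_{n} t^{-n}\rob_{A, K}(\delta)$. The key observation is that, because $\gamma(t) = \epsilon(\gamma) t$, multiplication by $t^{-n}$ identifies $t^{-n}\rob_{A, K}(\delta)$ with $\rob_{A, K}(\delta\epsilon^{-n})$ as $\phigamma$-modules. Since $\phigamma$-cohomology is computed by the Herr complex functorially in the module and filtered colimits are exact, $H^0_{\hphigamma}$ commutes with the colimit, so it suffices to prove $H^0_{\hphigamma}(\rob_{A, K}(\delta\epsilon^{-n})) = 0$ for each $n$. Over $L$, the standard rank-one computation gives $H^0_{\hphigamma}(\rob_{L, K}(\chi)) = 0$ whenever $\chi$ is not of the form $\prod_\tau \tau^{-k_\tau}$ with $k_\tau \in \Z_{\geq 0}$, in particular whenever $\chi$ is not algebraic; since $\overline{\delta}$ (and hence $\overline{\delta}\epsilon^{-n}$) is not algebraic, this applies. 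To pass from $L$- to $A$-coefficients I would filter $A$ by $\mathfrak{m}_A^j$ and induct on $j$ using the short exact sequences
\[
0 \to \rob_{L, K}(\overline{\delta}\epsilon^{-n})\otimes_L (\mathfrak{m}_A^j/\mathfrak{m}_A^{j+1}) \to \rob_{A/\mathfrak{m}_A^{j+1}, K}(\delta\epsilon^{-n}) \to \rob_{A/\mathfrak{m}_A^j, K}(\delta\epsilon^{-n}) \to 0
\]
together with the corresponding long exact sequences in $\phigamma$-cohomology.

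For (2), the same colimit strategy applies: write $\rob_{A, K}(\delta)\invertt/\rob_{A, K}(\delta) = \varinjlim_{n}\bigl( t^{-n}\rob_{A, K}(\delta)/\rob_{A, K}(\delta)\bigr)$ and reduce to the vanishing of $H^0$ and $H^1$ on each $t^{-n}\rob_{A, K}(\delta)/\rob_{A, K}(\delta)$. I would then filter this module by the increasing chain $F_j := t^{-j}\rob_{A, K}(\delta)/\rob_{A, K}(\delta)$ for $j = 0, \ldots, n$, whose graded pieces are $F_j/F_{j-1} \cong \rob_{A, K}(\delta\epsilon^{-j})/t$ for $j \geq 1$. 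For each such $j$, Lemma \ref{modt} with $\mathbf{k} = (1,\ldots,1)$ computes $\dim_L H^0 = \dim_L H^1 = \mathrm{Card}(S)$ where $S = \{\tau : \wt_\tau(\overline{\delta}\epsilon^{-j}) = 0\} = \{\tau : \wt_\tau(\overline{\delta}) = j\}$; the hypothesis $\wt_\tau(\overline{\delta}) \notin \Z_{>0}$ forces $S = \emptyset$, so $H^0$ and $H^1$ vanish over $L$. The same $\mathfrak{m}_A$-adic devissage as in (1) then extends this vanishing to $A$-coefficients for each graded piece, and the long exact sequences induced by the filtration $F_\bullet$ on $t^{-n}\rob_{A, K}(\delta)/\rob_{A, K}(\delta)$ finish the argument.

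I do not anticipate a serious obstacle. The main bookkeeping is to identify the $\phigamma$-twist correctly when multiplying by powers of $t$, and to note that $H^0$ and $H^1$ of the Herr complex commute with filtered colimits of modules; both are formal once written down.
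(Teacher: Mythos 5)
Your overall strategy is the same as the paper's: reduce to $A=L$ by an $\mathfrak{m}_A$-adic devissage, write $\rob_{A, K}(\delta)\invertt$ (resp.\ $\rob_{A, K}(\delta)\invertt/\rob_{A, K}(\delta)$) as the colimit of the $t^{-n}\rob_{A, K}(\delta)$ (resp.\ $t^{-n}\rob_{A, K}(\delta)/\rob_{A, K}(\delta)$), commute cohomology with the colimit, and feed in the rank-one computations (Lemma \ref{modt} and the standard vanishing of $H^0$ for non-algebraic rank-one modules); this is exactly how the paper argues, citing \cite[3.3.3 and 3.4.1]{BHS}.

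There is, however, one incorrect identification, and as written it leaves part (1) unjustified. Since $\varphi(t)=pt$ and $\gamma(t)=\epsilon(\gamma)t$, multiplication by $t$ twists a rank-one module by the \emph{algebraic} character $x\mapsto\prod_{\tau\in\Sigma}\tau(x)=N_{K/\Q_p}(x)$ of $K^\times$, not by the cyclotomic character $\epsilon$ viewed through class field theory: the two differ by a nontrivial unramified character, and in particular $\epsilon$ is not algebraic in the sense of this paper. So $t^{-n}\rob_{A, K}(\delta)\cong\rob_{A, K}(\delta\cdot\prod_{\tau\in\Sigma}\tau^{-n})$, not $\rob_{A, K}(\delta\epsilon^{-n})$. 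Your inference ``$\overline{\delta}$ is not algebraic, hence $\overline{\delta}\epsilon^{-n}$ is not algebraic'' is false in general: taking $\overline{\delta}=\epsilon^{n}\prod_{\tau\in\Sigma}\tau^{-k_\tau}$ with $k_\tau\in\Z_{\geq 0}$, the character $\overline{\delta}$ is not algebraic, yet $\overline{\delta}\epsilon^{-n}=\prod_{\tau\in\Sigma}\tau^{-k_\tau}$ is exactly a character whose $H^0_{\hphigamma}$ does not vanish. Since the vanishing criterion for $H^0$ of a rank-one module is not a pure weight condition, this is precisely where the wrong twist would bite. With the correct twist the argument goes through: $\overline{\delta}\cdot\prod_{\tau\in\Sigma}\tau^{-n}$ is non-algebraic whenever $\overline{\delta}$ is, hence its $H^0$ vanishes, and your colimit plus devissage yields (1). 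Part (2) is unaffected by the slip, because Lemma \ref{modt} depends only on the $\tau$-weights and $\wt_\tau(\epsilon)=\wt_\tau\bigl(\prod_{\tau'\in\Sigma}\tau'\bigr)=1$ for every $\tau$; the filtration by the $F_j$, the identification of the graded pieces (with the corrected twist), and the long exact sequences then give the stated vanishing of $H^0$ and $H^1$.
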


\begin{proof}
A devissage argument reduce the case to $A=L$.  Observe that $H^i_{\hphigamma}(D\invertt)=\mathrm{colim}_k H^i_{\hphigamma}(t^{-k}D)$, and $H^i_{\hphigamma}(D\invertt/D)=\mathrm{colim}_k H^i_{\hphigamma}(t^{-k}D/D)$ for any $\phigamma$-module $D$ over $\rob_{L, K}$. We conclude using \cite[3.3.3]{BHS} and \cite[3.4.1]{BHS}.
\end{proof}

\begin{lemma}\label{surjforsm}
Let $\delta: K^\times\map L^\times$ be any continuous character such that $\delta$ and $\epsilon\delta^{-1}$ are not algebraic. Then the natural map
\[
H^1_{\hphigamma}(\rob_{L, K}(\delta)\invertt) \map H^1(G_K, W_{\dR}(\rlk(\delta)\invertt))
\]
is surjective.
\end{lemma}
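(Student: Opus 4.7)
The plan is to reduce the surjectivity to a comparison at each ``finite level'' of the natural exhaustive filtration by $t$-powers. Write $\rob_{L,K}(\delta)\invertt = \mathrm{colim}_k \, t^{-k}\rob_{L,K}(\delta)$ and identify $t^{-k}\rob_{L,K}(\delta) \cong \rob_{L,K}(\delta_k)$, where $\delta_k := \delta \cdot z^{-k}$ and $z$ is the algebraic character of $K^\times$ with uniform $\tau$-Hodge-Tate weight $1$ (so that multiplication by $t$ corresponds to twisting by $z$). Correspondingly, $W_{\dR}(\rob_{L,K}(\delta)\invertt) = \mathrm{colim}_k \, W_{\dR}^+(\rob_{L,K}(\delta_k))$. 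Both $\phigamma$-cohomology and continuous Galois cohomology commute with this filtered colimit of inclusions (for the latter, because any continuous cocycle with values in $B_{\dR}$ factors through $t^{-k}B_{\dR}^+$ for some $k$, by compactness of $G_K$). Hence it suffices to show, for each $k \geq 0$, the surjectivity of
\[
\beta_k : H^1_{\hphigamma}(\rob_{L,K}(\delta_k)) \map H^1\bigl(G_K, W_{\dR}^+(\rob_{L,K}(\delta_k))\bigr).
\]
Crucially, $\delta_k$ and $\epsilon\delta_k^{-1} = \epsilon\delta^{-1}z^k$ inherit non-algebraicity from $\delta$ and $\epsilon\delta^{-1}$, since $z^{\pm k}$ is algebraic.

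Next, I would compute the dimensions on both sides. By Lemma \ref{h0prop}(1), $H^0_{\hphigamma}(\rob_{L,K}(\delta_k)) = 0$; by Tate local duality for $\phigamma$-modules (\cite[Theorem 4.4.5]{KPX}), $H^2_{\hphigamma}(\rob_{L,K}(\delta_k)) \cong H^0_{\hphigamma}(\rob_{L,K}(\epsilon\delta_k^{-1}))^\vee = 0$. The Euler-Poincaré formula then yields $\dim_L H^1_{\hphigamma}(\rob_{L,K}(\delta_k)) = [K : \Q_p]$. On the Galois side, I filter $W_{\dR}^+(\rob_{L,K}(\delta_k)) \cong B_{\dR}^+\otimes_{\Q_p}L(\delta_k)$ by $t^n B_{\dR}^+$; the graded pieces are $C\otimes_{\Q_p}L(\delta_k\epsilon^{-n})$. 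Tate-Sen theory identifies each $H^i(G_K, C\otimes_{\Q_p}L(\chi))$ in terms of the embeddings $\tau$ at which the $\tau$-Hodge-Tate-Sen weight of $\chi$ vanishes; the non-algebraicity hypotheses on $\delta$ and $\epsilon\delta^{-1}$ constrain which twists $\delta_k\epsilon^{-n}$ can contribute, and a careful bookkeeping gives $\dim_L H^1(G_K, W_{\dR}^+(\rob_{L,K}(\delta_k))) = [K : \Q_p]$ as well.

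With matching dimensions, surjectivity of $\beta_k$ is equivalent to its injectivity, which I would verify using a Bloch-Kato-style comparison sequence between $\phigamma$-cohomology and $B_{\dR}^+$-cohomology: the kernel term is a ``positive'' Sen-type cohomology group that vanishes under our non-algebraicity hypotheses. Alternatively, one can construct an explicit quasi-inverse by lifting any $G_K$-equivariant $B_{\dR}^+$-extension to an extension of $\phigamma$-modules, using the $\phigamma$-compatible map $\rob_{L,K} \to B_{\dR}^+$ to promote the $B_{\dR}^+$-cocycle to one valued in $\rob_{L,K}$.

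The hard part will be the dimension count on the Galois side combined with the injectivity verification for $\beta_k$: both require tracking, through the $t$-adic filtration and the list of embeddings $\tau \in \Sigma$, precisely which character twists become ``Hodge-Tate trivial'' at each $\tau$, and checking that the non-algebraicity hypotheses on $\delta$ and $\epsilon\delta^{-1}$ forbid every unexpected Galois contribution not already coming from the $\phigamma$-side.
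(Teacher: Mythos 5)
There is a genuine gap, and it sits exactly where you flag ``the hard part''. Your dimension count on the Galois side is wrong: non-algebraicity of $\delta$ does not force the individual $\tau$-weights to be integers, and the $\tau$-factor of $W_{\dR}(\rlk(\delta)\invertt)$ contributes to $H^1(G_K,-)$ only when $\wt_\tau(\delta)\in\Z$ (otherwise every graded piece has non-integral Sen scalar and its $H^0$ and $H^1$ vanish). So $\dim_L H^1(G_K, W_{\dR}(\rlk(\delta)\invertt)) = s := \#\{\tau\in\Sigma : \wt_\tau(\delta)\in\Z\}$, and the same holds at your finite levels for $W^+_{\dR}(\rob_{L,K}(\delta_k))$ with $k$ large; it is not $[K:\Q_p]$ unless all weights are integral. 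Consequently the pivot ``matching dimensions, so surjectivity is equivalent to injectivity'' collapses: whenever $s<[K:\Q_p]$ the map in the lemma (and each $\beta_k$) has kernel of dimension $[K:\Q_p]-s>0$, so injectivity is simply false; and even when $s=[K:\Q_p]$, injectivity is the real content and is not available by the devices you propose. There is no $\phigamma$-equivariant ring map $\rob_{L,K}\map B^+_{\dR}$ (only the localization maps $\iota_n$, which intertwine $\varphi$ with a shift of $n$), and ``lifting a $G_K$-equivariant $B^+_{\dR}$-extension to an extension of $\phigamma$-modules'' is precisely the surjectivity being proven --- it genuinely uses the hypotheses on $\delta$ and $\epsilon\delta^{-1}$ and can fail without them, so no purely formal Bloch--Kato-style comparison can deliver it. (Your colimit reduction to finite levels is harmless, since the transition maps on both sides become isomorphisms for $k$ large, but it does not address any of this.)

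For comparison, the paper's proof (following the argument of \cite[3.4.2, 3.4.3]{BHS}) runs as follows: after twisting by a locally algebraic character one may assume each $\wt_\tau(\delta)$ is negative or non-integral; then $\dim_L H^1_{\hphigamma}(\rob_{L,K}(\delta)\invertt)=[K:\Q_p]$ while the target has dimension $s$, so the lemma amounts to the upper bound $\dim_L\ker\leq [K:\Q_p]-s$. This bound is obtained by passing to the $B$-pair $W(\delta)=(W_e(\rlk(\delta)),W^+_{\dR}(\rlk(\delta)))$ and invoking Nakamura's duality, which converts it into a lower bound $\geq s$ for the kernel of $H^1(G_K,W(\delta^{-1}\epsilon))\map H^1(G_K,W_e(\rlk(\delta^{-1}\epsilon)))$; that kernel is exhibited by factoring through $H^1(G_K,W(z^{-\mathbf{k}}\delta^{-1}\epsilon))\cong H^1_{\hphigamma}(t^{-\mathbf{k}}\rlk(\delta^{-1}\epsilon))$ and identifying it with $H^1_{\hphigamma}(\rlk(z^{-\mathbf{k}}\delta^{-1}\epsilon)/t^{\mathbf{k}})$, which Lemma \ref{modt} shows has dimension exactly $s$ for $\mathbf{k}$ large. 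This duality-plus-torsion-cohomology input --- where the hypothesis on $\epsilon\delta^{-1}$ enters essentially, rather than merely killing $H^2$ as in your Euler-characteristic count --- is the idea missing from your proposal.
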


\begin{proof}
Analogue of \cite[3.4.3]{BHS}. Twisting $\delta$ by a locally algebraic character, we may assume without loss of generality that $\wt_{\tau}(\delta)$ is either negative or does not belong to $\Z$, for any $\tau\in \Sigma$. By the hypothesis, we have $\dim_L H^1_{\hphigamma}(\rob_{L, K}(\delta))=\dim_L H^1_{\hphigamma}(\rob_{L, K}(\delta)\invertt)=[K:\Q_p]$. Let $s$ be the number of places $\tau\in \Sigma$ where $\wt_{\tau}(\delta)\in \Z$. Then we also have $\dim_L H^1(G_K, W_{\dR}(\rlk(\delta)\invertt))=s$. It thus suffices to show that the map
\[
H^1_{\hphigamma}(\rob_{L, K}(\delta)) \map H^1(G_K, W_{\dR}(\rlk(\delta)\invertt))
\]
has kernel of dimension at most $[K:\Q_p]-s$ over $L$. As in \cite[3.4.2]{BHS}, let $W(\delta):=(W_e(\rlk(\delta)), W^+_\dR(\rlk(\delta)))$ be the $L$-B-pair associated to $\rlk(\delta)$. The same argument as in \cite[3.4.2]{BHS}, using the duality theorem \cite[Proposition 2.11]{Nakamura} reduces the proof to showing that the map
\[\label{mapdual}
H^1(G_K, W(\delta^{-1}\epsilon)) \map H^1(G_K, W_e(\rlk(\delta^{-1}\epsilon)))
\]
has kernel of dimension at least $s$ over $L$. For this, we observe as in \cite[3.4.2]{BHS} that \ref{mapdual} factors through 
\[
H^1(G_K, W(\delta^{-1}\epsilon)) \map H^1(G_K, W(z^{-\mathbf{k}}\delta^{-1}\epsilon)) \cong H^1_{\hphigamma}(t^{-\mathbf{K}}\rlk(\delta^{-1}\epsilon))
\]
for any multi-index $\mathbf{k}\in \Z^{[K:\Q_p]}_{\geq 0}$. This map has kernel $H^1_{\hphigamma}(\rlk(z^{-\mathbf{k}}\delta^{-1}\epsilon)/t^{\mathbf{k}})$, which is of dimension precisely $s$ for any $\mathbf{k}$ large enough by Lemma \ref{modt}. Thus the map \ref{mapdual} has kernel of dimension $\geq s$ over $L$.
\end{proof}

We need the following variation of the notion $\mathcal{T}_0^n$ defined in \cite[3.4]{BHS} and a condition on the weights.
\begin{definition}\label{regular}
\begin{enumerate}
    \item We let $\mathcal{T}_1^n$ be the open analytic subspace of $\mathcal{T}^n$ consisting of character tuples $(\delta_1, \ldots, \delta_n)$ such that none of the $(\prod_{i\in S_1}\delta_i)\cdot(\prod_{j\in S_2}\delta_j)^{-1}$ or $\epsilon(\prod_{i\in S_1}\delta_i)\cdot(\prod_{j\in S_2}\delta_j)^{-1}$ are algebraic, for any subset $S_1, S_2\subset \{1, \ldots, n\}$ of the same cardinality and  $S_1\neq S_2$. 

    \item Recall from \cite[3.7]{BHS} $\mathcal{T}_\regu$ is the complement in $\mathcal{T}$ of the points $z^{-\mathbf{k}}$ and $\epsilon(z)z^{\mathbf{k}}$ for $\mathbf{k}\in \Z_{\geq 0}^\Sigma$. And let $\mathcal{T}^n_\regu$ be the Zariski open analytic subspace of $\mathcal{T}^n$ consisting of $(\delta_1, \ldots, \delta_n)$ such that $\delta_i/\delta_j\in \mathcal{T}_\regu$ for any $i\neq j$.

\item  We say an $n$-tuple of characters $(\delta_1, \ldots, \delta_n)\in \mathcal{T}^n(L)$ is regular if for any embedding $\tau: K\map L$, $\wt_\tau(\delta_1), \ldots, \wt_\tau(\delta_n)$ are all different.
 \end{enumerate}
\end{definition}

\begin{remark}
$\mathcal{T}_1\subset\mathcal{T}_0$. And for any     $(\delta_1', \ldots, \delta_n')$ such that $\delta_i'\delta_i^{-1}$ are algebraic for any $i$, $(\delta_1', \ldots, \delta_n')\in \mathcal{T}_1^n(L)$ if and only if $(\delta_1, \ldots, \delta_n)\in \mathcal{T}_1^n(L)$.
\end{remark}

\begin{lemma}\label{uniquetri}
Let $A\in \mathcal{C}_L$ and $\mathcal{M}$ be a trianguline $\phigamma$-module over $\rob_{A, K}\invertt$ with parameters $(\delta_{A, 1}, \ldots, \delta_{A, n})\in \mathcal{T}^n(A)$ such that their reductions $(\delta_1, \ldots, \delta_n)\in \mathcal{T}_1^n(L)$. Then $\mathcal{M}$ has a unique triangulation with parameter $(\delta_1, \ldots, \delta_n)$ and there is a unique  quotient of $\wedge^i \mathcal{M}$  that is isomorphic to $\rob_{A, K}(\prod_{j=n-i+1}^n\delta_j)\invertt$.
\end{lemma}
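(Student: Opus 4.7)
The plan is to prove uniqueness of the triangulation by induction on $n$, using Hom-vanishing consequences of Lemma \ref{h0prop}(1) and the $\mathcal{T}_1^n$ hypothesis, and to deduce the uniqueness of the rank-one quotient of $\wedge^i\mathcal{M}$ by a parallel $H^0$ computation.

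Suppose $\mathcal{M}_\bullet$ and $\mathcal{M}_\bullet'$ are two triangulations of $\mathcal{M}$ whose parameters $(\delta_{A,i})_i$ and $(\delta'_{A,i})_i$ both reduce to $(\delta_1,\ldots,\delta_n)$. I would first show $\mathcal{M}_{n-1}=\mathcal{M}'_{n-1}$ by proving that the composite $\mathcal{M}_{n-1}\hookrightarrow\mathcal{M}\twoheadrightarrow\mathcal{M}/\mathcal{M}'_{n-1}\cong\rob_{A,K}(\delta'_{A,n})\invertt$ vanishes. A devissage along the induced triangulation of $\mathcal{M}_{n-1}$ reduces this to the vanishing
\[
\homo_{\hphigamma}\bigl(\rob_{A,K}(\delta_{A,i})\invertt,\rob_{A,K}(\delta'_{A,n})\invertt\bigr)=H^0_{\hphigamma}\bigl(\rob_{A,K}(\delta_{A,i}^{-1}\delta'_{A,n})\invertt\bigr)=0
\]
for $i=1,\ldots,n-1$; the character reduces to $\delta_i^{-1}\delta_n$, which is non-algebraic by the $\mathcal{T}_1^n$ condition applied to $S_1=\{n\}$, $S_2=\{i\}$, so Lemma \ref{h0prop}(1) gives the vanishing. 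By symmetry $\mathcal{M}_{n-1}=\mathcal{M}'_{n-1}$; inducting on $\mathcal{M}_{n-1}$, whose reduced parameters $(\delta_1,\ldots,\delta_{n-1})$ inherit the $\mathcal{T}_1^{n-1}$ condition, yields $\mathcal{M}_j=\mathcal{M}'_j$ for all $j$, and comparing rank-one graded pieces then forces $\delta_{A,j}=\delta'_{A,j}$.

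For the quotient statement, the natural surjection $\wedge^i\mathcal{M}\twoheadrightarrow\wedge^i(\mathcal{M}/\mathcal{M}_{n-i})\cong\rob_{A,K}(\prod_{j=n-i+1}^n\delta_{A,j})\invertt$ provides existence. For uniqueness I would compute
\[
\homo_{\hphigamma}\bigl(\wedge^i\mathcal{M},\rob_{A,K}(\textstyle\prod_{j=n-i+1}^n\delta_{A,j})\invertt\bigr)=H^0_{\hphigamma}\bigl((\wedge^i\mathcal{M})^\vee\otimes\rob_{A,K}(\textstyle\prod_{j=n-i+1}^n\delta_{A,j})\invertt\bigr).
\]
The triangulation of $\mathcal{M}$ induces a filtration on this twisted dual whose graded pieces are $\rob_{A,K}\bigl(\prod_{j\in S_0\setminus S}\delta_{A,j}\cdot\prod_{j\in S\setminus S_0}\delta_{A,j}^{-1}\bigr)\invertt$, indexed by size-$i$ subsets $S\subset\{1,\ldots,n\}$, where $S_0:=\{n-i+1,\ldots,n\}$. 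The piece at $S=S_0$ is the trivial character and sits at the bottom of the filtration, since the bottom of $(\wedge^i\mathcal{M})^\vee$ is dual to the top quotient $\det(\mathcal{M}/\mathcal{M}_{n-i})$ of $\wedge^i\mathcal{M}$. For $S\neq S_0$, the $\mathcal{T}_1^n$ hypothesis applied to the disjoint pair $S_0\setminus S$, $S\setminus S_0$ of equal positive cardinality produces a non-algebraic character, so the corresponding $H^0$ vanishes by Lemma \ref{h0prop}(1). A devissage then collapses the total $H^0$ to $H^0_{\hphigamma}(\rob_{A,K}\invertt)=A$; every surjection is thus a unit multiple in $A^\times$ of the natural one, so its kernel is uniquely determined.

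The main subtlety is to verify carefully the filtration structure on $(\wedge^i\mathcal{M})^\vee\otimes\rob_{A,K}(\prod\delta_{A,j})\invertt$---in particular that the trivial graded piece is at the bottom---so that the graded-piece vanishings of Lemma \ref{h0prop}(1) collapse the $H^0$ down to the desired copy of $A$ rather than killing it. Beyond this bookkeeping, the argument is a direct unwinding of the definitions together with the two key inputs.
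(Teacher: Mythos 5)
Your argument is correct and is essentially the paper's own proof: the paper disposes of this lemma in one line by a devissage along the triangulation reducing everything to Lemma \ref{h0prop}(1), which is exactly the Hom-vanishing and filtration bookkeeping you carry out (the $\mathcal{T}_1^n$ condition supplying non-algebraicity of the reduced characters $\delta_i^{-1}\delta_n$ and $\prod_{j\in S_0\setminus S}\delta_j\cdot\prod_{j\in S\setminus S_0}\delta_j^{-1}$, and the trivial piece contributing $H^0_{\hphigamma}(\rob_{A,K}\invertt)=A$). The only remark is that your closing assertion that comparing graded pieces forces $\delta_{A,j}=\delta'_{A,j}$ is stronger than what the lemma states and is not needed; the lemma (and its later use) only requires uniqueness of the filtration and rank-one freeness of the Hom space.
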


\begin{proof}
A devissage using the triangulation on $\mathcal{M}$ reduces the proposition to \ref{h0prop}.
\end{proof}

Let $\mathcal{M}$ be a trianguline $(\varphi, \Gamma_K)$-module of rank $n$ over $\rob_{L, K}[\frac{1}{t}]$, $\mathcal{M}_\bullet$ be a triangulation of $\mathcal{M}$ and $\underline{\delta}=(\delta_1, \ldots, \delta_n)$ be a parameter of $\mathcal{M}_\bullet$. For each $\tau\in \Sigma$, decompose $\{1, \ldots, n\}$ into classes of integral weight differences $S_{\tau, 1}, \ldots, S_{\tau, k_\tau}$ of cardinality $n_{\tau, 1}, \ldots, n_{\tau, k_\tau}$ as above. And we choose $\chi_1, \ldots, \chi_k$ accordingly. We invoke the notations defined before Corollary \ref{commdiag1} and let $W:=W_{\dR}(\mathcal{M})=\bigoplus_{\tau\in \Sigma}W_\tau$,  and $\mathcal{F}_\bullet=\bigoplus_{\tau\in \Sigma}\mathcal{F}_{\tau, \bullet}$. Write $X_{W, \mathcal{F}_\bullet}:=\prod_{\tau\in \Sigma} X_{W_\tau, \mathcal{F}_{\tau, \bullet}}$, $X_W:=\prod_{\tau\in \Sigma}X_{W_\tau}$ and $\mathfrak{t}:=\bigoplus_{\tau\in \Sigma}\bigoplus_{l=1}^{k_\tau}\mathfrak{t}_{\tau, l}$. By taking the product over $\tau\in \Sigma$ of the corresponding maps, we have the map 
\[
\kappa_{W, \mathcal{F}_\bullet}: X_{W, \mathcal{F}_\bullet}\map \widehat{\mathfrak{t}}
\]
where the right hand side is the completion of $\mathfrak{t}$ at $0$, and a map 
\[
\wt-\wt(\underline{\delta}): \widehat{\mathcal{T}^n_{\underline{\delta}}}\map \widehat{\mathfrak{t}}
\]
and the map
\[
X_{\mathcal{M}, \mathcal{M}_\bullet}\map  X_{W, \mathcal{F}_\bullet}
\].

\begin{proposition}\label{formsm1}
%and $\mathcal{F}_\bullet:=\bigoplus_{\tau\in \Sigma}\bigoplus_l \mathcal{F}_{\tau, \bullet}^l$. 
Notations as above. We have a similar commutative diagram as Corollary \ref{commdiag1} involving the above maps. The induced morphism
\[
X_{\mathcal{M}, \mathcal{M}_\bullet}\map \widehat{\mathcal{T}^n_{\underline{\delta}}}\times_{\widehat{\liet}} X_{W, \mathcal{F}_\bullet}
\]
of groupoids over $\mathcal{C}_L$ is formally smooth.
\end{proposition}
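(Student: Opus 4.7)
The plan is to verify formal smoothness via the infinitesimal lifting criterion, adapting the strategy of the untwisted analogue in \cite{BHS}. Let $A \surj B$ be a small surjection in $\mathcal{C}_L$ with square-zero kernel $I$. Given an object $(\mathcal{M}_B, \mathcal{M}_{B, \bullet}) \in X_{\mathcal{M}, \mathcal{M}_\bullet}(B)$ together with a compatible lift of its image to $\widehat{\mathcal{T}^n_{\underline{\delta}}}(A) \times_{\widehat{\liet}(A)} X_{W, \mathcal{F}_\bullet}(A)$, the task is to construct a lift of $(\mathcal{M}_B, \mathcal{M}_{B, \bullet})$ over $A$. Commutativity of the square in the statement is routine: it reduces, $\tau$ by $\tau$, to Corollary \ref{commdiag1} combined with the Grothendieck--Springer description of $X_{W_\tau, \mathcal{F}_{\tau, \bullet}}$ supplied by Corollaries \ref{xwf} and \ref{xwplus}.

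I would then induct on the rank $n$. For $n=1$, $\mathcal{M} = \rob_{L, K}(\delta_1)\invertt$, and $\rob_{A, K}(\delta_{1, A})\invertt$ is a lift compatible with any prescribed $(W_A, \mathcal{F}_{A, \bullet})$ up to the unique isomorphism provided by the $H^0$-vanishing from Lemma \ref{h0prop}. For the induction step, use the short exact sequence
\[
0 \map \mathcal{M}_{B, n-1} \map \mathcal{M}_B \map \rob_{B, K}(\delta_{n, B}) \invertt \map 0
\]
coming from $\mathcal{M}_{B, \bullet}$. Because $\underline{\delta} \in \mathcal{T}_1^n(L)$, Lemma \ref{uniquetri} identifies $\mathcal{M}_{B, n-1}$ as the unique step-$(n-1)$ submodule with the prescribed parameters; simultaneously the prescribed $(W_A, \mathcal{F}_{A, \bullet})$ restricts (using the filtration piece $\mathcal{F}_{A, n-1}$) to a compatible rank-$(n-1)$ target datum. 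Applying the inductive hypothesis produces a lift $(\mathcal{M}_{A, n-1}, \mathcal{M}_{A, n-1, \bullet})$ with $W_{\dR}(\mathcal{M}_{A, n-1}, \mathcal{M}_{A, n-1, \bullet}) \cong (W_{A, n-1}, \mathcal{F}_{A, n-1, \bullet})$.

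The remaining problem is to realize the extension class: produce
\[
0 \map \mathcal{M}_{A, n-1} \map \mathcal{M}_A \map \rob_{A, K}(\delta_{n, A}) \invertt \map 0
\]
inducing, via $W_{\dR}$, the prescribed $B_\dR$-side extension. The $\phigamma$-side torsor is governed by $H^1_{\hphigamma}(\mathcal{M}_{A, n-1} \otimes \rob_{A, K}(\delta_{n, A}^{-1}) \invertt)$, which by dévissage along $\mathcal{M}_{A, n-1, \bullet}$ reduces to $H^1_{\hphigamma}(\rob_{A, K}(\delta_{i, A} \delta_{n, A}^{-1}) \invertt)$ for $i < n$; the $B_\dR$-side torsor is controlled by $H^1(G_K, -)$ of $W_{\dR}$ of the same rank-$1$ modules. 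Lemma \ref{surjforsm} over $L$ (its hypotheses being precisely what membership in $\mathcal{T}_1^n$ encodes for the ratios $\delta_i \delta_n^{-1}$), combined with a further dévissage along the square-zero ideal $I$, upgrades to surjectivity of the comparison map of $H^1$'s over $A$. The relevant $H^0$-vanishings required for both dévissages and for uniqueness of the lift come from Lemma \ref{h0prop}.

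The main obstacle is the bookkeeping imposed by the fiber product over $\widehat{\liet}$. An arbitrary lift of the extension on the $\phigamma$-side produces, through each $W_{\dR, \tau}$, a filtered almost-de-Rham deformation whose filtration must equal $\mathcal{F}_{A, \bullet}$ and whose Grothendieck--Springer-type nilpotent datum (in the sense of Corollary \ref{xwplus}) must match that of the prescribed $(W_A, \mathcal{F}_{A, \bullet})$. The $\tau$-weights of such a lift are, however, determined by the characters via $\wt_\tau - \wt_\tau(\underline{\delta})$, and the fiber-product condition $\times_{\widehat{\liet}}$ is exactly what guarantees compatibility between the prescribed $\underline{\delta}_A$ and the prescribed $(W_A, \mathcal{F}_{A, \bullet})$ at the level of these weights. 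Granting this consistency, the surjectivity of the $H^1$-comparison over $A$ produces a $\phigamma$-extension inducing the prescribed $B_\dR$-extension, which completes the induction and establishes formal smoothness.
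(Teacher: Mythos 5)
Your route is the paper's route: the paper simply imports the lifting argument of \cite[Theorem 3.4.4]{BHS} (induction along the triangulation, extension classes, and the surjectivity of $H^1_{\hphigamma}\map H^1(G_K, W_{\dR}(-))$ on rank-one graded pieces supplied by Lemma \ref{surjforsm} plus d\'evissage), and that is exactly the skeleton you reproduce. Two small quibbles do not matter: Lemma \ref{uniquetri} is not needed to locate $\mathcal{M}_{B,n-1}$, since the filtration is part of the data of an object of $X_{\mathcal{M},\mathcal{M}_\bullet}(B)$, and Lemma \ref{h0prop} is not really what settles the rank-one base case (there the fiber-product condition over $\widehat{\liet}$ forces the nilpotent datum and Corollary \ref{commdiag1} does the identification).

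The genuine gap is at your final matching step. What the induction step must produce is a class in $H^1_{\hphigamma}(\mathcal{M}_{A,i-1}(\delta_{A,i}^{-1}))$ mapping onto the given pair in
\[
H^1_{\hphigamma}(\mathcal{M}_{B,i-1}(\delta_{B,i}^{-1}))\times_{H^1(G_K,\,W_{\dR}(\mathcal{M}_{B,i-1}(\delta_{B,i}^{-1})))}H^1(G_K,\,W_{\dR}(\mathcal{M}_{A,i-1}(\delta_{A,i}^{-1}))),
\]
i.e.\ the extension over $A$ must \emph{both} induce the prescribed $B_\dR$-extension \emph{and} reduce to the given extension over $B$; your conclusion only arranges the former. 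The diagram chase that achieves both needs, besides surjectivity with coefficients in the square-zero ideal, that the two $H^1$'s are compatible with the base change $A\map B$ — the paper's ingredients (2) and (3). Compatibility of $H^1_{\hphigamma}$ is as in \cite{BHS}, but compatibility of $H^1(G_K, W_{\dR}(-))$ is not automatic in the twisted setting and is where the paper's proof does its actual work: one identifies $H^1(G_K, W_\tau)$ with the cokernel of $\nu_{W_\tau,l}$ acting on $D_{\pdr,\tau,l}(W_\tau)$ for the unique $l$ with $\wt_\tau(\chi_l)\in \Z$ (and with $0$ when no such $l$ exists), and then invokes the base-change isomorphism $D_{\pdr,\tau,l}(W_\tau)\otimes_A B\cong D_{\pdr,\tau,l}(W_\tau\otimes_A B)$ from the proof of Lemma \ref{draeq}. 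Your ``d\'evissage along the square-zero ideal'' yields surjectivity over $A$ but not these identifications, so as written the lift need not restrict to the given $B$-object and the induction does not close; adding the cokernel-of-$\nu$ computation and base change repairs it.
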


\begin{proof}
We will freely use the notations as in \cite[Theorem 3.4.4]{BHS}. The ingredients used in \cite[Theorem 3.4.4]{BHS} are
\begin{enumerate}
    \item The surjectivity of the map $H^1_{\hphigamma}(\mathcal{M}_{A, i-1}(\delta_{A, i}^{-1}))\map H^1(G_K, W_{\dR}(\mathcal{M}_{A, i-1}(\delta_{A, i}^{-1})))$.
    \item The isomorphism $H^1(G_K, W_{\dR}(\mathcal{M}_{A, i-1}(\delta_{A, i}^{-1})))\otimes_A B\cong H^1(G_K, W_{\dR}(\mathcal{M}_{B, i-1}(\delta_{B, i}^{-1})))$.
    \item The isomorphism $H^1_{\hphigamma}(\mathcal{M}_{A, i-1}(\delta_{A, i}^{-1}))\otimes_A B\cong H^1_{\hphigamma}(\mathcal{M}_{B, i-1}(\delta_{B, i}^{-1}))$.
\end{enumerate}

In our cases, (1) follows from a devissage argument using Lemma \ref{surjforsm}. For (2), we note that for any $W_{\tau}\in \mathrm{Rep}_{\pdr, A, \tau, \chi_1, \ldots, \chi_k}$, the module $H^1(G_K, W_\tau)$ can be computed as the  cokernel of $\nu_{W_\tau, l}$ on the $A$-module $D_{\pdr, \tau, l}(W_\tau)$ for $l$ the only index such that $\wt_\tau(\chi_l)\in \Z$ (the module interpretted as $0$ if no such $l$ exists). Thus the map
\[
H^1(G_K, W_\tau)\otimes_A B\map H^1(G_K, W_\tau\otimes_A B) \label{tensoreq}
\]
is either a trivial map of $0$ or the map induced by taking cokernel of $\nu_{W_\tau, l}\otimes_A B=\nu_{W_\tau\otimes_A B, l}$ on the isomorphism (by the last three lines of the proof of Lemma \ref{draeq})
\[
D_{\pdr, \tau, l}(W_\tau)\otimes_A B\cong D_{\pdr, \tau, l}(W_\tau\otimes_A B)
\]
and thus \ref{tensoreq} is an isomorphism. Let $W_\tau=W_{\dR, \tau}(\mathcal{M}_{A, i-1}(\delta_{A, i}^{-1}))$ and take the direct sum over $\tau\in \Sigma$ gives (2). (3) follows from precisely the same argument as in \cite[Theorem 3.4.4]{BHS}.

%But note that if $\overline{\delta}$ is not locally algebraic, then by a devissage 
%\[
%H^1(G_K, W_{\mathrm{dR}}(\rob_{A, K}(\delta)[\frac{1}{t}]))=0
%\]
%which trivially implies the map
%\[
%H^1_{\varphi, \gamma_K}(\rob_{A, K}(\delta))\map H^1(G_K, W_{\mathrm{dR}}(\rob_{A, K}(\delta)[\frac{1}{t}]))
%\]
%is surjective.
\end{proof}

\subsection{Local Model}

For a given $\phigamma$-module $D$ over $L$, let $\mathcal{M}:=D[\frac{1}{t}]$, $W^+:=W_{\dR}^+(D)$ and $W:= W_{\mathrm{dR}}(\mathcal{M})$. We have the commutative diagram

\[
\begin{tikzcd}
X_D \arrow[r] \arrow[d] & X_{W^+} \arrow[d]\\
X_{\mathcal{M}} \arrow[r] & X_{W}
\end{tikzcd}
\]

\begin{lemma}\label{bpair}
The morphism $X_D\map X_{\mathcal{M}} \times_{X_W}  X_{W^+}$ induced by the commutative diagram above is an equivalence.
\end{lemma}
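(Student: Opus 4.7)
The plan is to construct an explicit quasi-inverse $G$ to the natural morphism $F \colon X_D \to X_{\mathcal{M}} \times_{X_W} X_{W^+}$ by reconstructing a $\phigamma$-module from the data of its rationalization together with a compatible $B^+_\dR$-lattice. An object of the fiber product is a triple $(\mathcal{M}_A, W^+_A, \phi)$ with $\phi \colon W_\dR(\mathcal{M}_A) \xrightarrow{\sim} W^+_A\invertt$ deforming the identity over $L$. Given such data, I set
\[
D_A := \mathcal{M}_A \times_{W_\dR(\mathcal{M}_A)} W^+_A,
\]
the fiber product being taken along the natural comparison $\mathcal{M}_A \to W_\dR(\mathcal{M}_A) = \bigoplus_\tau W_{\dR,\tau}(\mathcal{M}_A)$ on one side and along $\phi$ on the other. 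Equivalently, $D_A$ is the sub-$\rob_{A,K}$-module of $\mathcal{M}_A$ consisting of those $x$ whose image in each $W_{\dR,\tau}(\mathcal{M}_A)$ lies in $\phi^{-1}(W^+_{A,\tau})$; since $\varphi$ and $\Gamma_K$ preserve both $\mathcal{M}_A$ and $W^+_A$ and commute with the comparison maps, this subspace is automatically $(\varphi, \Gamma_K)$-stable.

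The crucial step is to show that $D_A$ is a finite projective $\rob_{A,K}$-module of the correct rank, and hence a genuine object of $X_D(A)$ deforming $D$. I would argue by devissage on $\mathfrak{m}_A$: writing $A$ as an iterated extension of copies of $L$ and filtering $\mathcal{M}_A$ and $W^+_A$ correspondingly using their flatness over $A$, the formation of $D_A$ is compatible with the resulting short exact sequences. One thus reduces to the residue case $A = L$, where the identification $D \cong \mathcal{M} \times_{W_\dR(\mathcal{M})} W^+$ is a reformulation of Berger's equivalence between $\phigamma$-modules over $\rob_{L,K}$ and $B$-pairs, which produces a finite free $D$ of the correct rank.

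Granting finite projectivity of $D_A$, the identifications $D_A\invertt \cong \mathcal{M}_A$ and $W^+_\dR(D_A) \cong W^+_A$ are straightforward: the first holds because $\phi$ is an isomorphism, so every element of $\mathcal{M}_A$ has image in $W_\dR(\mathcal{M}_A) = W^+_A\invertt$ landing in $W^+_A$ after multiplication by a sufficiently large power of $t$; the second is immediate from the definition of $W^+_\dR$ applied to $D_A$. These identifications show that $F \circ G$ and $G \circ F$ are naturally isomorphic to the identity on objects. Full faithfulness is then formal: the inclusion $D_A \hookrightarrow D_A\invertt = \mathcal{M}_A$ is injective, so any morphism $D_A \to D_A'$ is determined by its rationalization, and conversely any morphism in the fiber product preserves the sub-$\rob_{A,K}$-module cut out by the $B^+_\dR$-lattice condition and so descends.

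The principal obstacle is the finite projectivity and correct rank of $D_A$ in the family setting: although the construction is manifestly natural, verifying that the fiber product yields a locally free $\rob_{A,K}$-module requires the precise interplay between $\mathcal{M}_A$ being locally free over $\rob_{A,K}\invertt$ and $W^+_A$ being a $B^+_\dR \otimes_{K,\tau} A$-lattice inside $W_\dR(\mathcal{M}_A)$. In the residue case this is exactly the content of Berger's equivalence; passing to Artinian $A$ relies on flatness over $A$ together with the compatibility of both the localization and $W^+_\dR$ functors with base change along $A \to A/\mathfrak{m}_A^n$, which in turn reduces the family statement to the already-known case $A = L$.
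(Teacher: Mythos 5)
Your overall strategy (reconstruct $D_A$ from $(\mathcal{M}_A, W^+_A)$ by a gluing along $t=0$, reduce to $A=L$, and quote Berger) is in substance the same input that the paper relies on: its proof is literally a citation of [BHS, Lemma 3.5.1], which is itself deduced from Berger's equivalence between $\phigamma$-modules over $\rob_{L,K}$ and $B$-pairs. However, two steps of your write-up are genuine gaps as stated. First, the fiber product $D_A:=\mathcal{M}_A\times_{W_{\dR}(\mathcal{M}_A)}W^+_A$ is not defined the way you use it: there is no single canonical $\rob_{A,K}$-linear comparison map $\mathcal{M}_A\map W_{\dR}(\mathcal{M}_A)$. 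The module $W_{\dR}(\mathcal{M}_A)$ is built from the localizations $\iota_n$ of (pieces of) $\mathcal{M}_A$ at the zeros of $t$, identified with one another only through $\varphi$ (equivalently, it is $B_{\dR}\otimes_{B_e}W_e(\mathcal{M}_A)$); the maps $\iota_n$ are pairwise distinct, and imposing the lattice condition through a single one of them does not cut out the right submodule. The correct reconstruction imposes $\iota_n(x)\in W^+_A$ for all sufficiently large $n$, i.e.\ one has to pass through $W_e$ and the $B$-pair formalism rather than through one map to $W_{\dR}$ — which is exactly what the cited argument of Berger/[BHS] does.

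Second, the step you yourself call the principal obstacle — finite projectivity of $D_A$ over $\rob_{A,K}$ of the right rank, together with $D_A\invertt\cong\mathcal{M}_A$ and $W^+_{\dR}(D_A)\cong W^+_A$ — is not actually reduced to $A=L$ by the devissage you sketch. The gluing construction is only left exact a priori; the assertion that the formation of $D_A$ is compatible with the short exact sequences coming from a filtration of $A$ (i.e.\ that $D_A\map D_{A'}$ is surjective for a small quotient $A\map A'$) is precisely the nontrivial content of the family statement, so invoking it begs the question. Even granting the residue case, projectivity over $\rob_{A,K}$ (rather than merely over $\rob_{L,K}$) needs a separate flatness/Nakayama-type argument. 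The clean repair is the one underlying [BHS, 3.5.1]: Berger's equivalence is compatible with $A$-coefficients, $D\mapsto (W_e(D),W^+_{\dR}(D))$ identifies deformations of $D$ with deformations of its $B$-pair, and $\mathcal{M}\mapsto W_e(\mathcal{M})$ identifies deformations of $\mathcal{M}$ with deformations of $W_e$; the fiber product groupoid is then literally the groupoid of deformations of the $B$-pair, and no hand-made quasi-inverse or devissage is required.
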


\begin{proof}
Identical to \cite[3.5.1]{BHS}.   
\end{proof}

Set $X_{D, \mathcal{M}_\bullet}:=X_D\times_{X_{\mathcal{M}}}X_{\mathcal{M}, \mathcal{M}_\bullet}$ and $X_{W^+, \mathcal{F}}:=X_{W^+}\times_{X_W}X_{W, \mathcal{F}_\bullet}$ as in \cite[3.5]{BHS}. Let $r: G_K\map GL_n(L)$ be a continuous representation, $X_r$ be the groupoid of framed deformations of $r$ over $\mathcal{C}_L$, and $X_{r, \mathcal{M}_\bullet}:=X_r\times_{X_D}X_{D, \mathcal{M}_\bullet}$.  The following corollary follows from Lemma \ref{bpair} and Lemma \ref{formsm1} the same way \cite[3.5.6]{BHS} follows from \cite[3.5.3]{BHS} and \cite[3.4.4]{BHS}. There are corresponding local deformations with a framing denoted by a superscript $\square$. Note that here the framing is always on the $D_\pdr(\mathcal{M}_A)$ instead of the representation $r_A$.

\begin{cor}\label{smoothness}
The morphism $X_{D, \mathcal{M}_\bullet}\map X_{W^+, \mathcal{F}_\bullet}$ of groupoids over $\mathcal{C}_L$ is formally smooth. 
\end{cor}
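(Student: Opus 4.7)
The plan is to imitate the structure of the proof of \cite[Corollary 3.5.6]{BHS}, breaking the statement into two pieces: a base-change identification and a formal smoothness statement at the level of $\mathcal{M}$ alone.

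First I would use Lemma \ref{bpair} to assemble a cartesian diagram
\[
\begin{tikzcd}
X_{D, \mathcal{M}_\bullet} \arrow[r] \arrow[d] & X_{W^+, \mathcal{F}_\bullet} \arrow[d]\\
X_{\mathcal{M}, \mathcal{M}_\bullet} \arrow[r] & X_{W, \mathcal{F}_\bullet}
\end{tikzcd}
\]
Indeed, unwinding the definition $X_{D, \mathcal{M}_\bullet} := X_D \times_{X_{\mathcal{M}}} X_{\mathcal{M}, \mathcal{M}_\bullet}$ and using Lemma \ref{bpair} to substitute $X_D \cong X_{\mathcal{M}} \times_{X_W} X_{W^+}$, one obtains
\[
X_{D, \mathcal{M}_\bullet} \cong X_{\mathcal{M}, \mathcal{M}_\bullet} \times_{X_W} X_{W^+} \cong X_{\mathcal{M}, \mathcal{M}_\bullet} \times_{X_{W, \mathcal{F}_\bullet}} X_{W^+, \mathcal{F}_\bullet},
\]
where in the last step I insert $X_{W, \mathcal{F}_\bullet}$ as a fibered factor using the definition $X_{W^+, \mathcal{F}_\bullet} := X_{W^+} \times_{X_W} X_{W, \mathcal{F}_\bullet}$. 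Since formal smoothness is preserved under base change of groupoids, it therefore suffices to prove that the bottom horizontal arrow $X_{\mathcal{M}, \mathcal{M}_\bullet} \to X_{W, \mathcal{F}_\bullet}$ is formally smooth.

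To obtain this, I would factor this arrow as
\[
X_{\mathcal{M}, \mathcal{M}_\bullet} \longrightarrow \widehat{\mathcal{T}^n_{\underline{\delta}}} \times_{\widehat{\liet}} X_{W, \mathcal{F}_\bullet} \longrightarrow X_{W, \mathcal{F}_\bullet}.
\]
Proposition \ref{formsm1} provides the formal smoothness of the first map, so only the formal smoothness of the second map is left. But this second map is the base change along $\kappa_{W, \mathcal{F}_\bullet} : X_{W, \mathcal{F}_\bullet} \to \widehat{\liet}$ of the weight map $\wt - \wt(\underline{\delta}) : \widehat{\mathcal{T}^n_{\underline{\delta}}} \to \widehat{\liet}$, which is formally smooth because on each factor $\mathcal{T}$ the weight map realizes $\mathcal{T}$ locally on the target as a smooth $\mathbb{G}_m$-fibration corresponding to the value of the character on a uniformizer of $K$. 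Composing, $X_{\mathcal{M}, \mathcal{M}_\bullet} \to X_{W, \mathcal{F}_\bullet}$ is formally smooth, and hence so is $X_{D, \mathcal{M}_\bullet} \to X_{W^+, \mathcal{F}_\bullet}$.

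The only real subtlety in this plan is bookkeeping with the four fiber products and checking that the identification of $X_{D, \mathcal{M}_\bullet}$ with $X_{\mathcal{M}, \mathcal{M}_\bullet} \times_{X_{W, \mathcal{F}_\bullet}} X_{W^+, \mathcal{F}_\bullet}$ really is compatible with the triangulations and Hodge-type filtrations built into both sides; the content, namely the nontrivial formal smoothness, is entirely packaged in Proposition \ref{formsm1}.
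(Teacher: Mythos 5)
Your proposal is correct and follows essentially the same route as the paper, which explicitly says the corollary follows from Lemma \ref{bpair} and Proposition \ref{formsm1} exactly as \cite[3.5.6]{BHS} follows from \cite[3.5.3]{BHS} and \cite[3.4.4]{BHS}: base change along the cartesian square to reduce to $X_{\mathcal{M}, \mathcal{M}_\bullet}\map X_{W, \mathcal{F}_\bullet}$, then factor through $\widehat{\mathcal{T}^n_{\underline{\delta}}}\times_{\widehat{\liet}} X_{W, \mathcal{F}_\bullet}$ and use formal smoothness of the weight map. No gaps to flag.
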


Recall that there are two filtrations defined on each of the $D_{\pdr, \tau, l}(r)$, for any $\tau\in \Sigma$ and $\l\in \{1, \ldots, k_\tau\}$: The first one $\mathcal{D}_{\tau, l, \bullet}$ is  induced by the triangulation, introduced before Corollary \ref{xwf}. It satisfy the property that $\gr^i(\mathcal{D}_{\tau, l, \bullet}):=\mathcal{D}_{\tau, l, i}/\mathcal{D}_{\tau, l, i-1}$ is rank-$1$ free over $A$ if and only if $i\in S_{\tau, l}$ and is $0$ otherwise. 
%Consider the filtration on $D_{\pdr, l}(r)$ given by $D_{\pdr, l}(\M_i)$ for $i\in \{0, \ldots, n\}$. Then the graded pieces $D_{\pdr, l}(\M_{i+1})/D_{\pdr, l}(\M_i)$ is nonzero if and only if $i+1\in S_l$. When this happens, the graded piece is a finite free module over $L\otimes_{\Q_p}K$ of rank $1$. Thus the $D_{\pdr, l}(\M_i)$ defines a complete flag on $D_{\pdr, l}(r)$ and we denote it by $\mathcal{D}^l_\bullet$.

The second filtration $\Fil_{W^+, \tau, l, \bullet}$ is induced by the de Rham filtration, defined before Corollary \ref{xwplus}, by reindexing the filtration defined in Definition \ref{def313}. The Hodge-Tate weights are given by the indices $\{\wt_\tau(\delta_i)_{i\in S_{\tau, l}}\}-\wt_\tau(\chi_{\tau, l})$. If $r$ is regular, the filtration gives a complete flag.

%We can consider the filtration $t^iB^+_\pdr \otimes_L V(\chi_l^{-1})$ induced by the Galois representation $r(\chi_l^{-1})$ on $B_\pdr \otimes_L V(\chi_l^{-1})$. And we set the filtration to be
%\[
%\Fil^i_{W^+, l}:= \left( t^iB^+_\pdr \otimes_L V(\chi_l^{-1}) \right)^{G_K}\subset D_{\pdr}(r(\chi_l^{-1}))
%\]

For each $\tau$ and $l$ as above, fix a trivialization $\alpha_{\tau, l}: L^{n_{\tau, l}}\cong D_{\mathrm{pdR}, \tau, l}(r)$. Then the triple $(\alpha_{\tau, l}^{-1}(\mathcal{D}_{\tau, l, \bullet}), \alpha_{\tau, l}^{-1}(\Fil_{W^+, \tau, l, \bullet}), N_{W, \tau, l})$ defines a $L$-point in $X_{\tau, l}:=\widetilde{\mathfrak{g}}_{n_{\tau, l}}\times_{\mathfrak{g}_{n_{\tau, l}}}\widetilde{\mathfrak{g}}_{n_{\tau, l}}$. Take product over $\tau$ and $l$, we get a point $x\in X(L):=\prod_{\tau\in \Sigma}\prod_{l=1}^{k_\tau}X_{\tau, l}(L)$. 

\begin{definition}\label{assoc}
For $r: G_K\map GL_n(L)$ a Galois representation and $\mathcal{M}_\bullet$ a triangulation on $\mathcal{M}=D_{\rig}(r)\invertt$, the above construction gives a point $x\in X(L)$. We say $x$ is the points in $X$ associated with the tuple $(r, \mathcal{M}_\bullet)$. And we let $w_x\in \mathcal{S}$ denote the relative position of the two flags given by $x$. $w_x$ does not depend on the trivialization.
\end{definition}

For any $w=(w_{\tau, l})_{\tau, l}\in \mathcal{S}$, we let $X_{\tau, l, w}$ be the irreducible component of $X_{\tau, l}$ labelled by $w_{\tau, l}$ and set $X_w:= \prod_{\tau, l}X_{\tau, l, w}$ be an irreducible component of $X$.  Taking product of the various maps $X_{W, \mathcal{F}_\bullet}^\square\map \widehat{\widetilde{\mathfrak{g}}}_{n_{\tau, l}}$ and $X_{W^+}^\square\map \widehat{\widetilde{\mathfrak{g}}}_{n_{\tau, l}}$ over $X_W^\square\map \mathfrak{g}_{n_{\tau, l}}$, we obtain a map $X_{W^+, \mathcal{F}_\bullet}^\square\map \widehat{X}_x$, and a natural composition map $X_{D, \mathcal{M}_\bullet}^\square\map X_{W^+, \mathcal{F}_\bullet}^\square\map \widehat{X}_x$. Furthermore, we define $\Theta$ as the composition map: 
\[X_{r, \mathcal{M}_{\bullet}}^\square \map X_{D, \mathcal{M}_\bullet}^\square\map \widehat{X}_x \map   \widehat{T}_{ (0, 0)}
\]
where the last term is defined as $\prod_{\tau\in \Sigma}\prod_{l=1}^{k_\tau} \widehat{T}_{\tau, l, (0, 0)}$, and the last map is defined by taking product over the completion of each $(\kappa_{1, \tau, l}, \kappa_{2, \tau, l}): X_{\tau, l}\map T_{\tau, l}$. The map factors through $X_{r, \mathcal{M}}$ and we denote the induced map $X_{r, \mathcal{M}_{\bullet}}\map \widehat{T}_{ (0, 0)}$ by $\Theta$ again by abuse of notation. The map $\mathrm{pr}_1\circ \Theta: X_{r, \mathcal{M}_{\bullet}}\map \widehat{\mathfrak{t}}$ factors through $X_{\mathcal{M}, \mathcal{M}_\bullet}$ and the map $\mathrm{pr}_2\circ \Theta: X_{r, \mathcal{M}_{\bullet}}\map \widehat{\mathfrak{t}}$ factors through $X_{W^+}$.

\begin{cor}\label{geometry}
\begin{enumerate}
    \item The groupoid $X_{W^+, \mathcal{F}_\bullet}^\square$ over $\mathcal{C}_L$ is pro-representable by the formal scheme  $\widehat{X}_x$ via the natural map defined above.
    \item The groupoid $ X_{D, \mathcal{M}_\bullet}^\square$ over $\mathcal{C}_L$   is pro-representable, by a formal scheme which is formally smooth over $\widehat{X}_x$. 
    \item The formal scheme representing $ X_{D, \mathcal{M}_\bullet}^\square$ has dimension $[K:\Q_p](n^2+\frac{n(n+1)}{2})$ and $\widehat{X}_x$ has dimension $\sum_{\tau\in \Sigma}\sum_{l=1}^{k_\tau}n_l^2$.
\end{enumerate}    
\end{cor}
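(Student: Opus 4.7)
The plan is to obtain (1) by identifying $X^\square_{W^+, \mathcal{F}_\bullet}$ as a fiber product of already pro-represented groupoids, to deduce (2) from the framed analogue of Corollary \ref{smoothness}, and to finish with a dimension count in (3).

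For (1), I unwind the definition $X^\square_{W^+, \mathcal{F}_\bullet} = X^\square_{W^+}\times_{X^\square_W} X^\square_{W, \mathcal{F}_\bullet}$ and take a product over pairs $(\tau, l)$. Corollary \ref{xwchar} identifies $X^\square_W$ with $\prod_{\tau, l}\widehat{\mathfrak{g}}_{n_{\tau, l}}$ (recording the nilpotent operator on each $D_{\pdr, \tau, l}$); Corollary \ref{xwplus} identifies $X^\square_{W^+}$ with $\prod_{\tau, l}\widehat{\widetilde{\mathfrak{g}}}_{n_{\tau, l}}$ (the $\tau$-regularity hypothesis there is supplied by the regularity of the Hodge-Tate-Sen weights of $r$); and Corollary \ref{xwf} identifies $X^\square_{W, \mathcal{F}_\bullet}$ with $\prod_{\tau, l}\widehat{\widetilde{\mathfrak{g}}}_{n_{\tau, l}}$ via the triangulation filtration. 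Taking the fiber product coordinate-by-coordinate yields
\[
\prod_{\tau, l}\bigl(\widehat{\widetilde{\mathfrak{g}}}_{n_{\tau, l}}\times_{\widehat{\mathfrak{g}}_{n_{\tau, l}}}\widehat{\widetilde{\mathfrak{g}}}_{n_{\tau, l}}\bigr) = \widehat{X}_x,
\]
by the definition $X_{\tau, l}:=\widetilde{\mathfrak{g}}_{n_{\tau, l}}\times_{\mathfrak{g}_{n_{\tau, l}}}\widetilde{\mathfrak{g}}_{n_{\tau, l}}$ and $X=\prod_{\tau, l}X_{\tau, l}$. One must check that the two identifications of $X^\square_{W^+}$ and $X^\square_{W, \mathcal{F}_\bullet}$ over $X^\square_W$ both come from the same forgetful map $\widetilde{\mathfrak{g}}_{n_{\tau, l}}\to\mathfrak{g}_{n_{\tau, l}}$, which is direct from the constructions.

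For (2), I invoke the framed version of Corollary \ref{smoothness}: since the framing is on $D_{\pdr}$ and not on the representation itself, it is compatible on both sides of $X^\square_{D, \mathcal{M}_\bullet}\to X^\square_{W^+, \mathcal{F}_\bullet}$ and the formal smoothness carries over from the unframed statement. Combined with the pro-representability of the target by $\widehat{X}_x$ from (1), this exhibits $X^\square_{D, \mathcal{M}_\bullet}$ as pro-represented by a formal scheme formally smooth over $\widehat{X}_x$.

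For (3), the dimension of $\widehat{X}_x$ is immediate: each Steinberg-type factor $\widetilde{\mathfrak{g}}_{n_{\tau, l}}\times_{\mathfrak{g}_{n_{\tau, l}}}\widetilde{\mathfrak{g}}_{n_{\tau, l}}$ is classically equidimensional of dimension $n_{\tau, l}^2=\dim\mathfrak{g}_{n_{\tau, l}}$. For $\dim X^\square_{D, \mathcal{M}_\bullet}$ I plan to run a chain of formally smooth morphisms: Lemma \ref{bpair} gives $X_{D, \mathcal{M}_\bullet}=X_{\mathcal{M}, \mathcal{M}_\bullet}\times_{X_W}X_{W^+}$, Proposition \ref{formsm1} gives a formally smooth map $X_{\mathcal{M}, \mathcal{M}_\bullet}\to\widehat{\mathcal{T}^n_{\underline{\delta}}}\times_{\widehat{\mathfrak{t}}}X_{W, \mathcal{F}_\bullet}$, and the framing on $D_{\pdr}$ adds $\sum_{\tau, l}n_{\tau, l}^2$. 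Using $\dim\widehat{\mathcal{T}^n_{\underline{\delta}}}=n([K:\Q_p]+1)$ and $\dim\widehat{\mathfrak{t}}=n[K:\Q_p]$, and computing the relative dimension of Proposition \ref{formsm1} via the Euler-Poincar\'e formula for the $\phigamma$-cohomology of successive graded pieces of the triangulation, the sum assembles to $[K:\Q_p](n^2+\tfrac{n(n+1)}{2})$.

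The main obstacle will be the careful dimension bookkeeping in (3): one must identify the relative dimension of Proposition \ref{formsm1} (a Galois-cohomological computation depending on how many of the $\wt_\tau(\delta_i)-\wt_\tau(\delta_j)$ are integers, interacting nontrivially with the $\widehat{\mathfrak{t}}$-base of the fiber product), verify that all fiber products in the chain are transverse enough for dimensions to add, and check that the unusual block-diagonal framing on $D_{\pdr}$ indeed contributes exactly $\sum_{\tau, l}n_{\tau, l}^2$ rather than a full $n^2[K:\Q_p]$.
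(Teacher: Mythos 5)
Your parts (1) and (2) follow the same route as the paper: (1) is the fiber-product identification $X^\square_{W^+, \mathcal{F}_\bullet}=X^\square_{W^+}\times_{X^\square_{W}}X^\square_{W, \mathcal{F}_\bullet}$ read through Corollaries \ref{xwchar}, \ref{xwplus} and \ref{xwf} (the map to $\widehat{X}_x$ being the one constructed just before the corollary), and (2) is the framed form of Corollary \ref{smoothness}, which is legitimate precisely because the framing lives on $D_\pdr$ and is pulled back along the morphism.

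Part (3) is where you diverge, and where the proposal does not yet close. The paper performs no Euler--Poincar\'e computation: it quotes \cite[3.5.7]{BHS} for $\dim X^\square_{\mathcal{M}, \mathcal{M}_\bullet}=[K:\Q_p](n^2+\frac{n(n+1)}{2})$ (with the remark that the argument there applies to a general trianguline $\mathcal{M}$ over $\rob_{L,K}\invertt$), and then transfers this number across the pullback square whose vertical maps $X^\square_{D,\mathcal{M}_\bullet}\map X^\square_{W^+,\mathcal{F}_\bullet}$ and $X^\square_{\mathcal{M},\mathcal{M}_\bullet}\map X^\square_{W,\mathcal{F}_\bullet}$ are formally smooth and whose two bases have the \emph{same} dimension $\sum_{\tau,l}n_{\tau,l}^2$; hence $\dim X^\square_{D,\mathcal{M}_\bullet}=\dim X^\square_{\mathcal{M},\mathcal{M}_\bullet}$ with no relative-dimension bookkeeping and no transversality issue. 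Your route instead uses the square over $X_W$ coming from Lemma \ref{bpair}, whose leg $X_{W^+}\map X_W$ is an analogue of $\widetilde{\mathfrak{g}}_n\map\mathfrak{g}_n$ and is \emph{not} formally smooth, so "dimensions add" there is exactly as hard as knowing the equidimensionality of the Steinberg-type space $\widehat{X}_x$ --- i.e.\ you are pushed back to the square the paper actually uses.

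More importantly, the two items you defer as "bookkeeping" are the real content, and as stated they do not assemble. With the block-diagonal framing of Definition \ref{xwdef} (framing group $\prod_{\tau,l}GL_{n_{\tau,l}}$, of dimension $\sum_{\tau,l}n_{\tau,l}^2$), the count you propose --- the relative dimension of Proposition \ref{formsm1} together with $\dim\bigl(\widehat{\mathcal{T}^n_{\underline{\delta}}}\times_{\widehat{\liet}}X^\square_{W,\mathcal{F}_\bullet}\bigr)$ --- yields $[K:\Q_p]\frac{n(n+1)}{2}+\sum_{\tau,l}n_{\tau,l}^2$, which coincides with the asserted $[K:\Q_p](n^2+\frac{n(n+1)}{2})$ only when $k_\tau=1$ for every $\tau$, i.e.\ in the integral-weight situation of \cite{BHS} where the $D_\pdr$-framing is the full $n^2[K:\Q_p]$-dimensional one. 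So the concluding claim that "the sum assembles to $[K:\Q_p](n^2+\frac{n(n+1)}{2})$" does not follow from the inputs you list; resolving the framing contribution is exactly the point you flag as uncertain, and it is what the paper's citation of \cite[3.5.7]{BHS} plus the equal-dimension transfer supplies. (For what is used downstream --- Corollary \ref{irredcomp} and Proposition \ref{existmaxdim} --- only the equidimensionality of the components and the value $[K:\Q_p]\frac{n(n+1)}{2}+n^2$ for the spaces without $D_\pdr$-framing matter, and those are insensitive to this issue; but as a proof of statement (3) as displayed, your sketch has a genuine gap at this step.)
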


\begin{proof}
Using Corollary \ref{smoothness}, Corollary \ref{xwf} and Corollary \ref{xwplus}, we see all the claim except the ones on dimension. There is a pullback diagram
\[
\begin{tikzcd}
X_{D, \mathcal{M}_\bullet}^\square \arrow[r] \arrow[d] & X_{\mathcal{M}, \mathcal{M}_\bullet}^\square \arrow[d]\\
X_{W^+, \mathcal{F}_\bullet}^\square \arrow[r] & X_{W, \mathcal{F}_\bullet}^\square
\end{tikzcd}
\]
where the column maps are formally smooth. Now \cite[3.5.7]{BHS} (it works with general trianguline $\mathcal{M}$ over $\rob_{L, K}\invertt$) gives that $X_{\mathcal{M}, \mathcal{M}_\bullet}^\square$ have dimension  $[K:\Q_p](n^2+\frac{n(n+1)}{2})$. Furthermore $X_{W^+, \mathcal{F}_\bullet}^\square$ has dimension $\sum_{\tau\in \Sigma}\sum_{l=1}^{k_\tau} \dim \widetilde{\mathfrak{g}}_{n_{\tau, l}}\times_{\mathfrak{g}_{n_{\tau, l}}}\widetilde{\mathfrak{g}}_{n_{\tau, l}}=\sum_{\tau\in \Sigma}\sum_{l=1}^{k_\tau}n_l^2$, and $X_{W, \mathcal{F}_\bullet}^\square \cong \prod_{\tau\in \Sigma}\prod_{l=1}^{k_\tau}\widetilde{\mathfrak{g}}_{n_{\tau, l}}$ also has dimension $\sum_{\tau\in \Sigma}\sum_{l=1}^{k_\tau}n_l^2$, we conclude that $\dim X_{D, \mathcal{M}_\bullet}^\square=\dim X_{\mathcal{M}, \mathcal{M}_\bullet}^\square=[K:\Q_p](n^2+\frac{n(n+1)}{2})$.
\end{proof}

\begin{definition}\label{sx}
\begin{enumerate}
    \item  For any $w\in \mathcal{S}$,  set $X_{W^+, \mathcal{F}_\bullet}^{\square, w}:=X_{W^+, \mathcal{F}_\bullet}^\square\times_{\widehat{X}_x} \widehat{X}_{w, x}$,  $ X_{D, \mathcal{M}_\bullet}^{\square, w}:=X_{D, \mathcal{M}_\bullet}^\square \times_{\widehat{X}_x} \widehat{X}_{w, x}$ and  $X_{r, \mathcal{M}_{\bullet}}^{\square, w}:= X_{r, \mathcal{M}_\bullet}^\square \times_{\widehat{X}_x} \widehat{X}_{w, x}$.
    \item Let $\mathcal{S}(x):=\{w\in \mathcal{S}: x\in X_w(L)\}=\{w\in \mathcal{S}: \widehat{X}_{w,x}\neq \emptyset \}$.
\end{enumerate}
\end{definition}

\begin{cor}\label{irredcomp}
\begin{enumerate}
    \item The irreducible components of $X_{W^+, \mathcal{F}_\bullet}^\square$ (resp. $X_{D, \mathcal{M}_\bullet}^\square$, $X_{r, \mathcal{M}_\bullet}^\square$) are given by $X_{W^+, \mathcal{F}_\bullet}^{\square, w}$ (resp. $X_{D, \mathcal{M}_\bullet}^{\square, w}$, $X_{r, \mathcal{M}_\bullet}^{\square, w}$), where $w\in \mathcal{S}(x)$. All the irreducible components are of the same dimension.
    \item The irreducible components of $X_{W^+, \mathcal{F}_\bullet}$ (resp. $X_{D, \mathcal{M}_\bullet}$, $X_{r, \mathcal{M}_\bullet}$) are given by $X_{W^+, \mathcal{F}_\bullet}^{ w}$ (resp. $X_{D, \mathcal{M}_\bullet}^{w}$, $X_{r, \mathcal{M}_\bullet}^{ w}$) characterized by the property that the pullback of each irreducible components of the unframed deformation rings labelled by $w$ is the irreducible component of the framed deformation rings of the same label.
\end{enumerate}
\end{cor}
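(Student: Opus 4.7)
The plan is to reduce everything to the known irreducible decomposition of the product of Steinberg-type varieties $X = \prod_{\tau, l}(\widetilde{\mathfrak{g}}_{n_{\tau, l}}\times_{\mathfrak{g}_{n_{\tau, l}}}\widetilde{\mathfrak{g}}_{n_{\tau, l}})$, and then to transport this picture along a chain of formally smooth maps supplied by Corollary \ref{geometry} together with the framing torsors.

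For part (1), start with $X_{W^+, \mathcal{F}_\bullet}^\square$: by Corollary \ref{geometry}(1) this groupoid is pro-represented by $\widehat{X}_x$. Each factor $\widetilde{\mathfrak{g}}_{n_{\tau, l}}\times_{\mathfrak{g}_{n_{\tau, l}}}\widetilde{\mathfrak{g}}_{n_{\tau, l}}$ is a Steinberg-type variety whose irreducible components are indexed by the relative position of the two flags and are all of common dimension $n_{\tau, l}^2$. Taking the product, the irreducible components of $X$ are the $X_w$ for $w \in \mathcal{S}$, all of dimension $\sum_{\tau, l} n_{\tau, l}^2$. Passing to the formal completion at $x$, the irreducible components of $\widehat{X}_x$ are exactly those $\widehat{X}_{w, x}$ with $x \in X_w$, i.e.\ $w \in \mathcal{S}(x)$, and they remain equidimensional. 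Next, by Corollary \ref{geometry}(2), the map $X_{D, \mathcal{M}_\bullet}^\square \to \widehat{X}_x$ is formally smooth of constant relative dimension; since formally smooth morphisms induce a bijection between irreducible components via pullback, the irreducible components of $X_{D, \mathcal{M}_\bullet}^\square$ are precisely $X_{D, \mathcal{M}_\bullet}^{\square, w}$ for $w \in \mathcal{S}(x)$, of equal dimension. Finally, for $X_{r, \mathcal{M}_\bullet}^\square$, the map $X_r \to X_D$ is formally smooth (as $D_{\rig}$ identifies unframed deformations of $r$ with deformations of $D = D_{\rig}(r)$, while the representation framing is a $\widehat{GL}_n$-torsor), so the base change $X_{r, \mathcal{M}_\bullet}^\square \to X_{D, \mathcal{M}_\bullet}^\square$ is formally smooth, and the identical conclusion follows.

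For part (2), each unframed groupoid is obtained from its framed counterpart by quotienting by the free action of the connected smooth group $\prod_{\tau, l} GL_{n_{\tau, l}}$ acting on the framing data $\alpha = \bigoplus_{\tau, l}\alpha_{\tau, l}$. The resulting forgetful map is formally smooth with geometrically connected fibers, so pullback gives a bijection on irreducible components. One then defines $X_{W^+, \mathcal{F}_\bullet}^{w}$, $X_{D, \mathcal{M}_\bullet}^{w}$, $X_{r, \mathcal{M}_\bullet}^{w}$ to be the unique irreducible component whose pullback along the framing torsor equals $X_{W^+, \mathcal{F}_\bullet}^{\square, w}$ (respectively $X_{D, \mathcal{M}_\bullet}^{\square, w}$, $X_{r, \mathcal{M}_\bullet}^{\square, w}$), yielding the characterization.

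The main obstacle is the geometric input in the very first step: that the components of $\widetilde{\mathfrak{g}}_n \times_{\mathfrak{g}_n} \widetilde{\mathfrak{g}}_n$ are equidimensional of dimension $n^2$ and indexed by $\mathcal{S}_n$, and that passing to $\widehat{X}_x$ preserves both the indexing (clear, since $\widehat{X}_{w, x} \neq \emptyset$ iff $x \in X_w$) and the equidimensionality of each branch. This is the classical Steinberg fact, but it must be applied carefully at the level of completions. Once in place, the rest of the argument is a formal transport along formally smooth maps.
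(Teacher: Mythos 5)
Your proposal is correct and follows essentially the same route as the paper: the paper's (terse) proof likewise combines Corollary \ref{geometry}(1),(2) with the fact that $\{\widehat{X}_{w,x}\}_{w\in\mathcal{S}(x)}$ are the equidimensional irreducible components of $\widehat{X}_x$, transports components along the formally smooth maps (including $X_r\map X_D$ for the $r$-version), and deduces (2) from formal smoothness of the framed-to-unframed forgetful maps. The only input you flag but do not prove --- that each $\widehat{X}_{w,x}$ is itself irreducible after completion (unibranchness of $X_w$) --- is exactly what the paper also takes as known from \cite{BHS}, so there is no real divergence.
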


\begin{proof}
(1) follows from the formally smooth property Corollary \ref{geometry} (1), (2), and that $\{\widehat{X}_{w, x}\}_{w\in \mathcal{S}(x)}$ are the irreducible components of $\widehat{X}_x$, of the same dimensions. (2) follows since the map from each framed deformation rings to unframed ones are formally smooth.
\end{proof}

\begin{proposition}\label{existmaxdim}
Fix $w\in \mathcal{S}$.  The formal scheme $ X_{r, \mathcal{M}_\bullet}^\square\cap \Theta^{-1}(\widehat{T}_{w, (0, 0)})$ contains an irreducible component of maximal dimension $[K:\Q_p](n^2+\frac{n(n+1)}{2})+n^2$ if and only if $w\in \mathcal{S}(x)$. Similar claim holds for $X_{r. \mathcal{M}_\bullet}\cap \Theta^{-1}(\widehat{T}_{w, (0, 0)})$ with the maximal dimension replaced by $[K:\Q_p]\frac{n(n+1)}{2}+n^2$.
\end{proposition}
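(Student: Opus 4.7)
The plan is to exploit the formal smoothness of $X_{r, \mathcal{M}_\bullet}^\square$ over $\widehat{X}_x$ coming from Corollary \ref{geometry}, together with Corollary \ref{irredcomp}(1): the irreducible components of $X_{r, \mathcal{M}_\bullet}^\square$ are labelled by $w' \in \mathcal{S}(x)$ as $X_{r, \mathcal{M}_\bullet}^{\square, w'}$, all of common dimension $[K:\Q_p](n^2 + \tfrac{n(n+1)}{2}) + n^2$. Since $\Theta$ factors through the combined Grothendieck-Springer projection $\pi \colon \widehat{X}_x \to \widehat{T}_{(0,0)}$, the intersection $X_{r, \mathcal{M}_\bullet}^\square \cap \Theta^{-1}(\widehat{T}_{w,(0,0)})$ contains an irreducible component of maximal dimension precisely when some whole $X_{r, \mathcal{M}_\bullet}^{\square, w'}$ lies in it, which is equivalent to $\widehat{X}_{w',x} \subset \pi^{-1}(\widehat{T}_{w,(0,0)})$. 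The sufficient direction is then immediate: for $w \in \mathcal{S}(x)$, set $w' = w$; the map $\pi$ sends each irreducible component $X_{w}$ of $X$ into $T_w$, so $\pi(\widehat{X}_{w,x}) \subset \widehat{T}_{w,(0,0)}$ and $X_{r, \mathcal{M}_\bullet}^{\square, w}$ sits inside $\Theta^{-1}(\widehat{T}_{w,(0,0)})$.

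For the converse, the key geometric input is that the morphism $X_{w'} \to T_{w'}$ is surjective for every $w' \in \mathcal{S}$. This is standard Grothendieck-Springer theory: on the regular semisimple locus, $\widetilde{\mathfrak{g}}_n \to \mathfrak{g}_n$ is finite \'etale with Galois group $\mathcal{S}_n$, and after fixing the relative position $w'$, the preimage of $\mathfrak{t}^{\mathrm{rs}}$ in $X_{w'}$ is identified with $T_{w'}^{\mathrm{rs}}$, whose closure in $T$ is $T_{w'}$. Since $X_{w'}$ is irreducible with irreducible target $T_{w'}$, the surjectivity transfers to dominance of the formal completion map $\widehat{X}_{w',x} \to \widehat{T}_{w',(0,0)}$: its scheme-theoretic image equals the whole $\widehat{T}_{w',(0,0)}$.

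Suppose now $\widehat{X}_{w',x} \subset \pi^{-1}(\widehat{T}_{w,(0,0)})$ for some $w' \in \mathcal{S}(x)$. Then $\pi(\widehat{X}_{w',x}) \subset \widehat{T}_{w,(0,0)} \cap \widehat{T}_{w',(0,0)}$. Parametrizing $T_{w'} = \{(z, \mathrm{Ad}(w'^{-1})z) : z \in \mathfrak{t}\}$, the intersection $T_w \cap T_{w'}$ corresponds to the subspace $\{z \in \mathfrak{t} : \mathrm{Ad}(w'w^{-1})z = z\}$, which is the fixed locus of a nontrivial Weyl element whenever $w \neq w'$; it is a proper linear subspace of $\mathfrak{t}$. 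Hence $\widehat{T}_{w,(0,0)} \cap \widehat{T}_{w',(0,0)}$ has dimension strictly less than $n = \dim \widehat{T}_{w',(0,0)}$, contradicting the dominance established above, so $w = w' \in \mathcal{S}(x)$.

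The unframed version follows by the same analysis after descending along the formally smooth morphism $X_{r, \mathcal{M}_\bullet}^\square \to X_{r, \mathcal{M}_\bullet}$ of relative dimension $[K:\Q_p] n^2$ coming from the framing of $D_\pdr$: the irreducible components of $X_{r, \mathcal{M}_\bullet}$ are labelled by $\mathcal{S}(x)$ with common maximal dimension $[K:\Q_p]\tfrac{n(n+1)}{2} + n^2$, and the containment analysis is identical. The main subtle point is the transfer from global surjectivity of $X_{w'} \to T_{w'}$ to dominance of the formal completion map at the possibly non-generic point $x$; this uses the irreducibility of $X_{w'}$ together with a standard formal-scheme density argument, and is what keeps the dimension bookkeeping honest.
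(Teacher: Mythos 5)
Your overall architecture coincides with the paper's: via Corollary \ref{geometry} and Corollary \ref{irredcomp} the irreducible components of $X_{r,\mathcal{M}_\bullet}^\square$ are the $X_{r,\mathcal{M}_\bullet}^{\square,w'}$ with $w'\in\mathcal{S}(x)$, all of the maximal dimension, and the statement reduces to the equivalence ``$\widehat{X}_{w',x}\map\widehat{T}_{(0,0)}$ factors through $\widehat{T}_{w,(0,0)}$ if and only if $w'=w$''. The paper settles this by citing \cite[2.5.2]{BHS}; you instead try to reprove it. Your ``if'' half (the containment $\kappa(X_{w})\subset T_{w}$, obtained by closure from the regular semisimple locus) is fine, as is your treatment of the unframed case, which is the same descent along the formally smooth framing map used in the paper.

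The gap is in your ``only if'' half, exactly at the step you yourself flag as ``a standard formal-scheme density argument'': surjectivity of $X_{w'}\map T_{w'}$ together with irreducibility of $X_{w'}$ does \emph{not} by itself imply that the completed map $\widehat{X}_{w',x}\map\widehat{T}_{w',(0,0)}$ is dominant, i.e.\ that $\widehat{\mathcal{O}}_{T_{w'},(0,0)}\map\widehat{\mathcal{O}}_{X_{w'},x}$ is injective. Dominance gives injectivity on local rings, but completing at a special point can create a kernel (a nonzero formal function may pull back to zero while meeting the algebraic local ring trivially), and $x$ lies over the most degenerate point $(0,0)$, where the fibre of $\kappa$ is largest, so this is precisely where an extra input is needed. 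The standard repair is a fibre-dimension bound at $x$: the fibre of $X_{w'}\map T_{w'}$ over $(0,0)$ lies in the product (over $\tau$ and $l$) of Steinberg varieties $\widetilde{\mathcal{N}}\times_{\mathcal{N}}\widetilde{\mathcal{N}}$, whose dimension equals $\dim X_{w'}-\dim T_{w'}$; since every minimal prime $\mathfrak{q}$ of $\widehat{\mathcal{O}}_{X_{w'},x}$ has $\dim(\widehat{\mathcal{O}}_{X_{w'},x}/\mathfrak{q})=\dim X_{w'}$ (excellence), the inequality $\dim B\leq\dim A+\dim(B/\mathfrak{m}_AB)$ applied to $A=\widehat{\mathcal{O}}_{T_{w'},(0,0)}/\ker$ and $B=\widehat{\mathcal{O}}_{X_{w'},x}/\mathfrak{q}$ forces $\dim A=\dim T_{w'}$, and since $\widehat{\mathcal{O}}_{T_{w'},(0,0)}$ is a regular domain the kernel must vanish. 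With this supplement (or simply with the citation of \cite[2.5.2]{BHS}, as in the paper) your argument closes; note also that $\dim\widehat{T}_{w',(0,0)}$ is $n[K:\Q_p]$ (more precisely $\sum_{\tau,l}n_{\tau,l}$), not $n$, though this bookkeeping slip does not affect the contradiction you draw from $T_w\cap T_{w'}$ being a proper linear subspace when $w\neq w'$.
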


\begin{proof}
The formal scheme representing $X_{r, \mathcal{M}_\bullet}^\square$ has dimension $[K:\Q_p](n^2+\frac{n(n+1)}{2})+n^2$ by Corollary \ref{geometry} (the extra $n^2$ coming from the framing of $r$). Its irreducible components are given by $X_{r, \mathcal{M}_{\bullet}}^{\square, w'}$, for $w'\in \mathcal{S}(x)$, all having dimension $[K:\Q_p](n^2+\frac{n(n+1)}{2})+n^2$, by Corollary \ref{irredcomp}. For those irreducible components, by Corollary \ref{irredcomp} again, $X_{r, \mathcal{M}_{\bullet}}^{\square, w'}\cap \Theta^{-1}(\widehat{T}_{w, (0, 0)})$ is a proper Zariski-closed subset of $X_{r, \mathcal{M}_{\bullet}}^{\square, w'}$ if and only if the map $\widehat{X}_{w',x}\map \widehat{T}_{(0, 0)}$ does not factor through $\widehat{T}_{w, (0, 0)}$, and in this case $X_{r, \mathcal{M}_{\bullet}}^{\square, w'}\cap \Theta^{-1}(\widehat{T}_{w, (0, 0)})$ have dimension $<[K:\Q_p](n^2+\frac{n(n+1)}{2})+n^2$. Now we see that to have an irreducible component of maximal dimension $[K:\Q_p](n^2+\frac{n(n+1)}{2})+n^2$ is equivalent to having a $w'\in \mathcal{S}(x)$ such that the map $\widehat{X}_{w',x}\map \widehat{T}_{(0, 0)}$ factors through $\widehat{T}_{w, (0, 0)}$, which is equivalent to $w'=w$ by \cite[2.5.2]{BHS}. We conclude that the existence of a maximal dimension component is equivalent to $w=w'\in \mathcal{S}(x)$.
\end{proof}

\begin{remark}
In fact by the proof, when $w\in \mathcal{S}(x)$, there is a unique irreducible component of $ X_{r, \mathcal{M}_\bullet}^\square\cap \Theta^{-1}(\widehat{T}_{w, (0, 0)})$ having the maximal dimension $[K:\Q_p](n^2+\frac{n(n+1)}{2})+n^2$.
\end{remark}

Next we prove an analogue to \cite[3.7.8]{BHS}. For this, we need to introduce a permutation group element $w$ associated to a point $z=(r, \delta_1, \ldots, \delta_n)\in X_{\tri}(L)$. By  \cite[Proposition 2.9]{modular} (or \cite[6.2.12]{KPX}), the set of $\tau$-Hodge-Tate-Sen weights of $r$ is the same as $\{\wt_\tau(\delta_1), \ldots, \wt_\tau(\delta_n)\}$, for any $\tau\in \Sigma$. Assume $r$ is regular, this is equivalent to $(\delta_1, \ldots, \delta_n)$ being regular. Fixing a $\tau\in \Sigma$, for each  $l\in \{1, \ldots, k_\tau\}$, we can thus find a cardinality-$n_l$ subset of the $\tau$-Hodge-Tate-Sen weights of $r$ that consists precisely of those  $\wt_\tau(\delta_i)$ for all $i\in S_{\tau, l}$. We may order those $\tau$-Hodge-Tate-Sen weights under the partial order introduced in the beginning of Section \ref{formdefprob} on elements within the same integral difference class, so that one write it as
$$
 \left( h_{\tau, l, 1} >\ldots >h_{\tau, l, n_l}\right)
$$

Let $a_1<\ldots < a_{n_{\tau, l}}$ be a listing of the elements in $S_{\tau, l}$ in their usual order. Now since the $\tau$-Hodge-Tate-Sen weights of $r$ are regular, for each $\tau$ and $l$, there exists a unique $w_{\tau, l}\in \mathcal{S}_{\tau, l}=\mathrm{Aut}(S_{\tau, l})$ such that 
\[
\left( \wt_\tau(\delta_{w_{\tau, l}^{-1}(a_1)}), \ldots, \wt_\tau(\delta_{w_{\tau, l}^{-1}(a_{n_{\tau, l}})}) \right)_\tau=\left( h_{\tau, l, 1}, \ldots ,h_{\tau, l, n_{\tau, l}}\right)_\tau
\]
. In other words, the permutation $w_{\tau, l}$ brings the $\tau$-Hodge-Tate-Sen weights of the characters $\delta_{a_1}, \ldots, \delta_{a_{n_{\tau, l}}}$ into an decreasing sequence.

\begin{definition}\label{xtriassoc}
Given $z\in X_\tri(L)$.   By the discussion above, we have an permutation element $w:=(w_{\tau, l})_{\tau, l}\in \mathcal{S}$. We say $w$ is the  permutation element in $\mathcal{S}$ associated to $z$.     
\end{definition}

\begin{remark}
    While $w$ is associated with $z$, the definition only depends on the characters $\delta_1, \ldots, \delta_n$ and not  on the Galois representation $r$ by the last sentence before Definition \ref{xtriassoc}.
\end{remark}

Note that by \cite[3.7.1]{BHS} or the proof of \cite[Theorem 6.3.13]{KPX} , there exists a unique triangulation $\mathcal{M}_\bullet$ on $\mathcal{M}$, with parameters $\delta_1, \ldots, \delta_n$. The same proof as in \cite[3.7.2]{BHS} yields that in our situation, there is a morphism $\widehat{X_\tri(\overline{r})}_z\map X_{r, \mathcal{M}_\bullet}$ over $X_r$. Again let $x$ be the points in $X$ associated with $r$ and its unique triangulation on $\mathcal{M}$. We have the following analogue to \cite[3.7.8]{BHS}.

\begin{proposition}\label{nece}
Let $w$ be the permutation element associated to $z\in X_\tri(\overline{r})(L)$. Then $\widehat{X_\tri(\overline{r})}_z\map X_{r, \mathcal{M}_\bullet}$ induces an isomorphism $\widehat{X_\tri(\overline{r})}_z\map X_{r, \mathcal{M}_\bullet}^w$. In particular, $w\in \mathcal{S}(x)$.
\end{proposition}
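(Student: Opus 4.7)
The plan is to mimic \cite[3.7.8]{BHS}, with three ingredients: factorization through the right component, the closed-immersion property, and a dimension comparison.

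First, I would verify that the morphism $\widehat{X_\tri(\overline{r})}_z \map X_{r, \mathcal{M}_\bullet}$ factors through $X_{r, \mathcal{M}_\bullet}^w$, equivalently that $\Theta$ factors through $\widehat{T}_{w, (0,0)}\subset \widehat{T}_{(0,0)}$. On a given Artinian deformation $(r_A, \delta_{A, \bullet})$, the projection $\mathrm{pr}_1\circ \Theta$ records, on each class $S_{\tau, l}$, the differences $\wt_\tau(\delta_{A, i})-\wt_\tau(\delta_i)$ indexed in the natural order on $i\in S_{\tau, l}$, while $\mathrm{pr}_2\circ \Theta$ records the $\tau$-Hodge-Tate-Sen weights of $r_A$ shifted by those of $r$, listed in the fixed decreasing order chosen in Definition \ref{xtriassoc}. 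By \cite[Prop.~2.9]{modular} (or \cite[6.2.12]{KPX}), on any deformation within $X_\tri(\overline{r})$ the two multisets agree, and the permutation that rearranges one ordering into the other is exactly $w_{\tau, l}$ by construction; hence $(\mathrm{pr}_1\circ \Theta, \mathrm{pr}_2\circ \Theta)$ factors through $\prod_{\tau, l}\widehat{T}_{w_{\tau, l}, (0,0)}=\widehat{T}_{w, (0,0)}$.

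Next, I would show that the map is a closed immersion by verifying injectivity on objects and on tangent vectors. Given $(r_A, \delta_{A, \bullet})\in \widehat{X_\tri(\overline{r})}_z(A)$, Lemma \ref{uniquetri} supplies the unique triangulation $\mathcal{M}_{A, \bullet}$ of $D_\rig(r_A)\invertt$ with parameters $\delta_{A, \bullet}$; the hypothesis $\underline{\delta}\in \mathcal{T}^n_1(L)$ needed there can be arranged by twisting by a suitable locally algebraic character (a harmless change for the completion at $z$). One recovers $\delta_{A, i}$ from $\mathcal{M}_{A, \bullet}$ as the parameter of the rank-$1$ graded piece $\gr^i_{\mathcal{M}_{A, \bullet}}$, so the map is injective on functor-of-points values, and the corresponding map on completed local rings is surjective, i.e.\ a closed immersion.

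Finally, I would match dimensions and conclude. The variety $X_\tri(\overline{r})$ is equidimensional of dimension $n^2+[K:\Q_p]\frac{n(n+1)}{2}$, so $\widehat{X_\tri(\overline{r})}_z$ has pure dimension $n^2+[K:\Q_p]\frac{n(n+1)}{2}$. By Corollary \ref{irredcomp}(2) and Proposition \ref{existmaxdim}, $X_{r, \mathcal{M}_\bullet}^w$ is irreducible of exactly the same dimension (once known to be nonempty, which follows from the closed immersion of the first two steps). A closed immersion of Noetherian formal schemes of the same pure dimension whose target is irreducible and reduced is an isomorphism; reducedness on the target side comes from the formal smoothness of Corollary \ref{geometry}(2) combined with reducedness of $\widehat{X}_{w, x}$, a completion of an irreducible component of a Grothendieck--Steinberg-type variety. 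Nonemptiness of $X_{r, \mathcal{M}_\bullet}^w$ is by definition equivalent to $\widehat{X}_{w, x}\neq\emptyset$ and hence to $w\in \mathcal{S}(x)$, yielding the last assertion. The main obstacle will be Step~2: justifying closed-immersion-ness rather than mere injectivity on objects, and in particular ensuring that the genericity hypothesis of Lemma \ref{uniquetri} can really be arranged by a harmless twist without disrupting the Grothendieck--Springer data used to define $w$.
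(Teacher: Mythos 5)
Your steps (a) (factorization of $\Theta$ through $\widehat{T}_{w,(0,0)}$) and (b) (closed immersion) do follow the paper's route: the paper obtains (b) by quoting the argument of \cite[3.7.3]{BHS}, and spends most of its proof on (a), decomposing $W^+_{A,\tau}=\bigoplus_l W^+_{A,\tau,l}$ and applying the twisted \cite[3.7.5]{BHS} and \cite[3.7.6]{BHS} to show that $\mathrm{pr}_2\circ\Theta$ records the $\tau$-Sen weights of $r_A$ relative to those of $r$, with regularity forcing the matching permutation to be $w$ --- the content you assert in one sentence. The genuine gap is at the pivot of your argument: you declare that factoring through $X_{r,\mathcal{M}_\bullet}^{w}$ is ``equivalent'' to $\Theta$ factoring through $\widehat{T}_{w,(0,0)}$. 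Only one implication holds. $X_{r,\mathcal{M}_\bullet}^{w}=X_{r,\mathcal{M}_\bullet}\times_{\widehat{X}_x}\widehat{X}_{w,x}$ is an irreducible component, whereas $\Theta^{-1}(\widehat{T}_{w,(0,0)})$ is a closed subfunctor which also meets every other component $X_{r,\mathcal{M}_\bullet}^{w'}$, $w'\in\mathcal{S}(x)$, in a (smaller-dimensional) nonempty locus --- this is precisely what the proof of Proposition \ref{existmaxdim} analyzes. So landing in $\Theta^{-1}(\widehat{T}_{w,(0,0)})$ does not place the image inside $X_{r,\mathcal{M}_\bullet}^{w}$, and your parenthetical that nonemptiness of $X_{r,\mathcal{M}_\bullet}^{w}$ (i.e.\ $w\in\mathcal{S}(x)$, the ``in particular'' of the statement) ``follows from the closed immersion'' is circular: nothing you have proved excludes that the image sits inside $X_{r,\mathcal{M}_\bullet}^{w'}\cap\Theta^{-1}(\widehat{T}_{w,(0,0)})$ for some $w'\neq w$.

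The missing argument is the paper's endgame. Since $X_\tri(\overline{r})$ is equidimensional of dimension $[K:\Q_p]\frac{n(n+1)}{2}+n^2$, the closed immersion realizes $\widehat{X_\tri(\overline{r})}_z$ as a full-dimensional closed formal subscheme of $X_{r,\mathcal{M}_\bullet}$, hence (all components having this dimension, Corollary \ref{irredcomp}) as a union of irreducible components $X_{r,\mathcal{M}_\bullet}^{w'}$ with $w'\in\mathcal{S}(x)$; then \cite[2.5.2]{BHS} says that the only $w'$ for which $\Theta$ restricted to $X_{r,\mathcal{M}_\bullet}^{w'}$ factors through $\widehat{T}_{w,(0,0)}$ is $w'=w$, so the union is exactly $X_{r,\mathcal{M}_\bullet}^{w}$; nonemptiness of the source then gives $w\in\mathcal{S}(x)$. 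Once this containment-and-identification is in place, your final ``same dimension, irreducible reduced target'' step is no longer needed. Two smaller remarks: your appeal to \cite[Proposition 2.9]{modular} should be replaced, for Artinian points of the completion, by the argument via \cite[3.7.5]{BHS} and \cite[3.7.6]{BHS} (or their twisted analogues) as in the paper; and the closed-immersion step is quoted from \cite[3.7.3]{BHS}, so no twisting into $\mathcal{T}_1^n$ to invoke Lemma \ref{uniquetri} is required --- the worry you flag at the end is avoidable, but it is not the real obstacle.
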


\begin{proof}
Same proof as \cite[3.7.3]{BHS} shows that the map $\widehat{X_\tri(\overline{r})}_z\map X_{r, \mathcal{M}_\bullet}$ is a closed immersion.  Let $\Theta_x$ be the composite map 
\[
\begin{tikzcd}
    \widehat{X_\tri(\overline{r})}_x\arrow[r] &  X_{r, \mathcal{M}_\bullet} \arrow[r, "\Theta"] & \widehat{T}_{(0, 0)}
\end{tikzcd}
\]
We claim $\Theta_x$ factors through $\widehat{T}_{w, (0, 0)}\inj \widehat{T}_{(0, 0)}$.  We start with a point  $\widetilde{z}=(r_A, \delta_{A, 1}, \ldots, \delta_{A, n})\in X_\tri(\overline{r})(A)$. Now it is striaghtforward that $\mathrm{pr}_1( \Theta_x(\widetilde{z}))=(\wt_\tau(\delta_{A, i})-\wt_\tau(\delta_i))_{i, \tau}$. We claim  $\mathrm{pr}_2( \Theta_x(\widetilde{z}))=(\nu_{\tau, l, 1}, \ldots, \nu_{\tau, l, n_{\tau, l}})_{\tau, l}$ satisfy the property that 
\[
f(Y)= \prod_{l=1}^{k_\tau}\prod_{i=1}^{n_{\tau, l}}(Y-h_{\tau, l, i}-\nu_{\tau, l, i}) \label{senpoly}
\]
is the $\tau$-Hodge-Tate-Sen polynomial of $r_A$ (See the paragraphs before Definition \ref{xtriassoc} for notation). In fact, let $W_{A, \tau}^+:=r\otimes_{\tau, K} B^+_{\dR}$, then by the proof of  Lemma \ref{drplusaeq}, one can decompose $W_{A, \tau}^+=\bigoplus_{l=1}^{k_\tau}  W_{A, \tau, l}^+$, where each $W_{A, \tau, l}^+\in \mathrm{Rep}^+_{\pdr, A, \tau, \chi_l}$. Here we let $\chi_l$ be any character $\delta_i$ with $i\in S_{\tau, l}$. Thus $W_{A, \tau}^+(\chi_l^{-1})$ is almost de Rham, and by \cite[3.7.5]{BHS},  $W_{A, \tau}^+(\chi_l^{-1})/t_\tau$ can be written as a direct sum of rank $1$ free modules over $A\otimes_{\tau, K}C$ where the Sen operator acts by $h_{\tau, l, i}-\wt_\tau(\chi_l)+\nu_{\tau, l, i}$. Twisting back, we see the Sen polynomial of $W_{A, \tau}^+(\chi_l^{-1})/t$ is \ref{senpoly}. We know by \cite[3.7.6]{BHS} that for each $\tau$, 
\[
\{h_{\tau, l, j}+\nu_{\tau, l, j}\}_{l, i}=\{\wt_\tau(\delta_{A, i})\}_{l, i}
\]
as sets. After moding out the maximal ideal of $A$, one obtain 
\[
\{h_{\tau, l, j}\}_{l, i}=\{\wt_\tau(\delta_{i})\}_{l, i}
\] 
And it is immediate that both equality of sets holds with in the same integer difference class, i.e. for each fixed $l$. Now that $w_{\tau, l}$ brings  $\left(\wt_\tau(\delta_{a_1}), \ldots, \wt_\tau(\delta_{a_{n_{\tau, l}}})\right)$ into an decreasing order $h_{\tau, l,1}>\cdots > h_{\tau, l, n_{\tau, l}}$, the same $w_{\tau, l}$ brings \[\left(\wt_\tau(\delta_{A, a_1}), \ldots, \wt_\tau(\delta_{A, a_{n_{\tau, l}}})\right)\]
to \[\left(h_{\tau, l,1}+\nu_{\tau, l, 1}, \ldots,  h_{\tau, l, n_{\tau, l}}+\nu_{\tau, l, n_{\tau, j}}\right)\] 
And thus $w$ brings \[\left(\wt_\tau(\delta_{A, a_1})-\wt_\tau(\delta_{a_1}), \ldots, \wt_\tau(\delta_{A, a_{n_{\tau, l}}})-\wt_\tau(\delta_{a_{n_{\tau, l}}})\right)\]
to \[\left(\nu_{\tau, l, 1}, \ldots, \nu_{\tau, l, n_{\tau, j}}\right)\], proving our claim.

The rest follows the same way as in \cite[3.7.8]{BHS}: By a comparing dimension we see that $\widehat{X_\tri(\overline{r})}_x$ is a union of irreducible components $X_{r, \mathcal{M}_\bullet}^{w'}$ of $X_{r, \mathcal{M}_\bullet}$ under the embedding. The only $w'$ such that $\Theta: X_{r, \mathcal{M}_\bullet}^{w'}\map \widehat{T}_{(0, 0)}$ factors through $\widehat{T}_{w,(0, 0)}$ is $X_{r, \mathcal{M}_\bullet}^{w}$. This shows the first claim. Since $\widehat{X_\tri(\overline{r})}_x$ is nonempty, we immediately see by Definition \ref{sx} that $w\in \mathcal{S}(x)$.
\end{proof}

\section{The proof}\label{laproof}

From now on, we fix a point $z=(r, \delta_1, \ldots, \delta_n)\in M_c(L)$ where $(\delta_1, \ldots, \delta_n)\in \mathcal{T}_1^n(L)$ and is regular (Definition \ref{regular}). For each $\tau\in \Sigma$, we decomposed $\{1, \ldots, n\}$ into classes of integral weight differences $S_{\tau, 1}, \ldots, S_{\tau, k_\tau}$ of cardinality $n_{\tau, 1}, \ldots, n_{\tau, k_\tau}$ as in Section \ref{formdefprob}. We set $D:=D_{\rig}(r)$, $\mathcal{M}:=D_{\rig}(r)\invertt$ and $W$, $\mathcal{F}_\bullet$ etc. to be defined as in the paragraphs above Proposition 3.24 and Proposition 3.25 according to our $D$ and $\mathcal{M}$. Also we fix a trivialization $\alpha_{\tau, l}: L^{n_{\tau, l}}\cong D_{\mathrm{pdR}, \tau, l}(r)$ for each $\tau$ and $l$ as above.

We let $w_{z, \tau, l}\in \mathcal{S}_{\tau, l} $ be the relative position of the two flags $\alpha_{\tau, l}^{-1}(\mathcal{D}_{\tau, l, \bullet})$ and $\alpha_{\tau, l}^{-1}(\Fil_{W^+, \tau, l, \bullet})$ on $L^{n_{\tau, l}}$. Let $w_z:=(w_{z, \tau, l})_{\tau, l}\in \mathcal{S}$. The element $w_{z}$ does not depend on the choice of any $\alpha_{\tau, l}$.

%Recall that [\cite{BHS}, 3.6] defined a local deformation problem $X_{r, \mathcal{M}_\bullet}$ over $\mathcal{C}_L$: Fix an $n$-dimensional representation $r: G_K\map GL_n(L)$ and a triangulation $\M_\bullet$ of $\M:= D_\rig(r)\invertt$, then for any $A\in \mathcal{C}_L$,  $X_{r, \M_\bullet}(A)$ is defined as the isomorphism classes of the pair $(r_A, \mathcal{M}_{A, \bullet})$ where $r_A$ is a homomorphism $G_K\map GL_n(A)$ that lifts $r$, $\M_{A, \bullet}$ is a triangulation of  $\M_A:=D_\rig(r_A)\invertt$ such that the natural isomorphism $j_A: \M_A\otimes_A L\map \M$ restricts to isomorphisms $\M_{A, i}\otimes_A L\map \M_i$ for all $i$.

Let $\widehat{M}_{c, x}$ be the completion of $M_c$ at its $L$-point $x$, viewed as a groupoid (or a deformation problem) over $\mathcal{C}_L$.  We have a natural map 
\[
p: \widehat{M}_{c, z} \map X_{r, \mathcal{M}_{\bullet}}
\]
defined by sending $(r_A, \delta_{A, 1 }, \ldots, \delta_{A, n})$ to $r_A$ with the filtration on $D_\rig (r_A)[\frac{1}{t}]$ induced by maps $f_i[\frac{1}{t}]$ coming from the nonzero maps $f_i$ spanning 
\[
\homo_{\varphi, \Gamma_K}(D_\rig(\bigwedge^{n-i+1} r_A), t^{-c}\rob_{A, K} (\prod_{j=i}^n \delta_{A, j})) 
\]

\begin{lemma}\label{wtsame}
For any point $(r_A, \delta_{A, 1 }, \ldots, \delta_{A, n})\in \widehat{M}_{c, z}(A)$,  let $f(Y)$ be the  $\tau$-Sen polynomial of $r_A$, for any fixed $\tau\in \Sigma$. Then we have
\[
f(Y)=\prod_{i=1}^n (Y-\wt_\tau(\delta_{A, i}))
\]
. In particular, if $(r, \delta_{1 }, \ldots, \delta_{n})\in M_{c}(L)$ and the representation $r$ has $\tau$-Hodge-Tate-Sen weights $\{h_{\tau, 1}, \ldots, h_{\tau, n}\}$ then $\{h_{\tau, 1}, \ldots, h_{\tau, n}\}=\{\wt_\tau(\delta_{ 1}), \ldots, \wt_\tau(\delta_{n})\}$.
\end{lemma}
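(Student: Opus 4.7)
The plan is to invoke condition (4) of Definition~\ref{moduliprob} applied over $Y = \maxi(A)$: it furnishes an increasing filtration $\mathcal{F}_\bullet^\tau$ of $D_\rig(r_A)/t_\tau$ by $\phigamma$-submodules over $\rob_A/t_\tau$ that are local direct summands of one another, with rank-one graded pieces controlled by the line bundles $\mathcal{L}_i^\tau$ and the generators $f_i^\tau$. Since $A$ is local Artinian, every line bundle over $\maxi(A)$ is free, so after trivializing the $\mathcal{L}_i^\tau$ we may treat each $f_i^\tau$ as an honest $\phigamma$-equivariant map $\bigwedge^{n-i+1}(D_\rig(r_A)/t_\tau) \to \rob_A(\prod_{j=i}^n \delta_{A,j})/t_\tau$.

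First I would identify the graded pieces explicitly as $\mathcal{F}_i^\tau/\mathcal{F}_{i-1}^\tau \cong \rob_A(\delta_{A,i})/t_\tau$. This is the mod-$t_\tau$ analogue of the identification implicit in the inductive construction of Proposition~\ref{existfil}: the surjection $D_\rig(r_A)/t_\tau \twoheadrightarrow \rob_A(\delta_{A,n})/t_\tau$ is given by $f_n^\tau$ (surjective by Proposition~\ref{modtsurj}) with kernel $\mathcal{F}_{n-1}^\tau$, and the descending induction using the maps $g_{i,j}^\tau$ built from $f_i^\tau$ restricted to $\bigwedge^{j-i+1}\mathcal{F}_j^\tau$ pins down each successive graded piece as the character twist $\rob_A(\delta_{A,i})/t_\tau$.

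Next I would apply the functor $D_{\mathrm{Sen},\tau}$ to the filtration $\mathcal{F}_\bullet^\tau$. Because each inclusion $\mathcal{F}_{i-1}^\tau \hookrightarrow \mathcal{F}_i^\tau$ is a local direct summand, the short exact sequences $0 \to \mathcal{F}_{i-1}^\tau \to \mathcal{F}_i^\tau \to \rob_A(\delta_{A,i})/t_\tau \to 0$ remain exact after applying $D_{\mathrm{Sen},\tau}$, producing a filtration of $D_{\mathrm{Sen},\tau}(r_A)$ by $A\otimes_{\tau,K}K_\infty$-submodules stable under the Sen operator, with rank-one graded pieces $D_{\mathrm{Sen},\tau}(\rob_A(\delta_{A,i})/t_\tau)$. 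By the very definition of the $\tau$-weight of a continuous character, the Sen operator acts on each such graded piece by the scalar $\wt_\tau(\delta_{A,i})$. Consequently the Sen operator on $D_{\mathrm{Sen},\tau}(r_A)$ is upper triangular with diagonal entries $\wt_\tau(\delta_{A,1}),\ldots,\wt_\tau(\delta_{A,n})$, giving $f(Y) = \prod_{i=1}^n(Y - \wt_\tau(\delta_{A,i}))$. The statement about multi-sets for $L$-points is then immediate from the fact that the $\tau$-Hodge-Tate-Sen weights are precisely the roots of $f(Y)$.

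The main step requiring care is the identification of each graded piece $\mathcal{F}_i^\tau/\mathcal{F}_{i-1}^\tau$ with the prescribed twist $\rob_A(\delta_{A,i})/t_\tau$, since the construction only pins these down up to the (here trivial) line bundles $\mathcal{L}_i^\tau$; once the bookkeeping with the $f_i^\tau$ and $g_{i,j}^\tau$ is done, the rest is formal from the direct-summand property and the standard relation between mod-$t_\tau$ reductions and Sen theory.
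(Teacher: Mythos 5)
Your proposal is correct and follows essentially the same route as the paper: the paper also extracts from condition (4) of Definition \ref{moduliprob} (over $Y=\maxi(A)$) a Sen-stable filtration of $D_{\mathrm{Sen},\tau}(r_A)$ with rank-one free graded pieces on which the Sen operator acts by $\wt_\tau(\delta_{A,i})$, and reads off the Sen polynomial. The only cosmetic difference is that the paper compresses the passage from the mod-$t_\tau$ filtration to the Sen module into an inductive appeal to Lemma \ref{htwt}, whereas you apply $D_{\mathrm{Sen},\tau}$ directly and justify exactness via the local-direct-summand property.
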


\begin{proof}
By inductively using Lemma \ref{htwt}, $D_{\mathrm{Sen}, \tau}(r_A)$ has a filtration whose graded pieces are rank-$1$ free $A\otimes_{\tau, K}K_\infty$ modules where the Sen operator acts by the scalar $\wt_\tau(\delta_{A, i})$ for $i\in \{1, \ldots, n\}$. This immediately implies the form of the  $\tau$-Sen polynomial as stated.
\end{proof}

Given $z=(r, \delta_1, \ldots, \delta_n)\in M_c(L)$.  Note that by Lemma \ref{wtsame}, we are again in the situation that the the set of $\tau$-Hodge-Tate-Sen weights of $r$ is the same as $\{\wt_\tau(\delta_1), \ldots, \wt_\tau(\delta_n)\}$. So again $r$ is regular if and only if $(\delta_1, \ldots, \delta_n)$ is regular.  A similar procedure as in the paragraphs preceding Definition \ref{xtriassoc} produce an element $w:=(w_{\tau, l})_{\tau, l}\in \mathcal{S}$, that brings the the $\tau$-Hodge-Tate-Sen weights of the charaters within the same integral difference class to decreasing order.

\begin{definition}\label{assocwt}
Given $z\in M_c(L)$.   The $w$ obtained above is  said to be the  permutation element in $\mathcal{S}$ associated to $z$. 
\end{definition}

\begin{lemma}\label{facthrough}
The map $p$ factor through the inclusion 
\[
X_{r, \mathcal{M}_{\bullet}} \cap \Theta^{-1}(\widehat{T}_{w, (0, 0)}) \inj X_{r, \mathcal{M}_{\bullet}}
\]
\end{lemma}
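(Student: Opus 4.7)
The plan is to mimic almost verbatim the computation in the proof of Proposition \ref{nece}, but starting from an arbitrary test object $\widetilde{z} = (r_A, \delta_{A, 1}, \ldots, \delta_{A, n}) \in \widehat{M}_{c, z}(A)$ rather than from a point of $X_\tri(\overline{r})(A)$. By the definition of the map $p$ together with Lemma \ref{uniquetri} (applicable because $(\delta_1, \ldots, \delta_n) \in \mathcal{T}_1^n(L)$), the image $p(\widetilde{z})$ carries the canonical triangulation $\mathcal{M}_{A, \bullet}$ of $D_\rig(r_A)\invertt$ whose parameters are $\delta_{A, 1}, \ldots, \delta_{A, n}$, obtained from the distinguished generators $f_i$. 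I will then compute both projections of $\Theta(p(\widetilde{z})) \in \widehat{T}_{(0, 0)}(A)$ and verify that the pair lies in $\widehat{T}_{w, (0, 0)}(A)$.

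First I would read off $\mathrm{pr}_1 \circ \Theta(p(\widetilde{z}))$ from the commutative diagram of Corollary \ref{commdiag1}: it is the tuple $(\wt_\tau(\delta_{A, i}) - \wt_\tau(\delta_i))$, arranged into the integer difference classes $S_{\tau, l}$. For $\mathrm{pr}_2$, I would follow the argument of Proposition \ref{nece} and set $W_{A, \tau}^+ := r_A \otimes_{\tau, K} B^+_{\dR}$, decompose it as $\bigoplus_l W_{A, \tau, l}^+$ via Lemma \ref{drplusaeq}, and apply the twisted version of \cite[3.7.5]{BHS} to express the Sen operator eigenvalues on $W_{A, \tau, l}^+(\chi_l^{-1})/t_\tau$ as $h_{\tau, l, i} - \wt_\tau(\chi_l) + \nu_{\tau, l, i}$, where the $\nu_{\tau, l, i}$ are the coordinates of $\mathrm{pr}_2 \circ \Theta$ under the Grothendieck--Springer identification of Corollary \ref{xwplus}.

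The key comparison step then uses Lemma \ref{wtsame}, which forces the $\tau$-Sen polynomial of $r_A$ to equal $\prod_i (Y - \wt_\tau(\delta_{A, i}))$, while the computation above forces it to equal $\prod_{l, i}(Y - h_{\tau, l, i} - \nu_{\tau, l, i})$. Because $h_{\tau, l, i}$ and $h_{\tau, l', i'}$ lie in different $\Z$-cosets of $L$ whenever $l \neq l'$, matching factors class by class yields the equality of multisets $\{h_{\tau, l, i} + \nu_{\tau, l, i}\}_{i=1}^{n_{\tau, l}} = \{\wt_\tau(\delta_{A, a})\}_{a \in S_{\tau, l}}$ in $A$ for each pair $(\tau, l)$. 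Reducing modulo $\mathfrak{m}_A$ together with regularity of $r$ identifies $w_{\tau, l}$ from Definition \ref{assocwt} as the unique permutation sorting $(\wt_\tau(\delta_{a_j}))_j$ into the decreasing sequence $(h_{\tau, l, i})_i$; a Hensel-type uniqueness shows the same $w_{\tau, l}$ sorts $(\wt_\tau(\delta_{A, a_j}))_j$ into $(h_{\tau, l, i} + \nu_{\tau, l, i})_i$, and consequently sends $(\wt_\tau(\delta_{A, a_j}) - \wt_\tau(\delta_{a_j}))_j$ to $(\nu_{\tau, l, i})_i$. This is exactly the condition that the pair $(\mathrm{pr}_1, \mathrm{pr}_2)$ lies in $T_{w_{\tau, l}}(A)$ in every factor, giving the required factorization of $p$ through $\widehat{T}_{w, (0, 0)}$.

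The main subtlety, also present in Proposition \ref{nece}, is verifying that $r_A \otimes_{\tau, K} B^+_{\dR}$ lies in $\mathrm{Rep}^+_{\pdr, A, \tau, \chi_1, \ldots, \chi_k}$ so that Lemma \ref{drplusaeq} applies; this follows from Lemma \ref{imageright} applied to $\mathcal{M}_A$, combined with Definition \ref{def313} (which defines $\mathrm{Rep}^+_{\pdr, A, \tau, \chi_1, \ldots, \chi_k}$ as the preimage of $\mathrm{Rep}_{\pdr, A, \tau, \chi_1, \ldots, \chi_k}$ under inverting $t$). Apart from this check, the proof is essentially a mechanical repeat of the Proposition \ref{nece} argument; the statement is slightly cleaner here because Lemma \ref{wtsame} supplies the $\tau$-Sen polynomial of $r_A$ directly from the characters $\delta_{A, i}$, bypassing the appeal to \cite[3.7.6]{BHS} needed in the trianguline-variety setting.
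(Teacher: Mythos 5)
Your proposal is correct and follows essentially the same route as the paper: compute $\mathrm{pr}_1\circ\Theta\circ p$ from the parameters of the triangulation, compute $\mathrm{pr}_2\circ\Theta\circ p$ via the decomposition of $r_A\otimes_{\tau,K}B^+_{\dR}$ and the twisted version of \cite[3.7.5]{BHS} as in Proposition \ref{nece}, and compare the two resulting expressions for the $\tau$-Sen polynomial, with Lemma \ref{wtsame} supplying the character-side expression (exactly the point the paper highlights in its remark as the difference from Proposition \ref{nece}). The class-by-class multiset matching and the sorting argument identifying $w_{\tau,l}$ are also the paper's steps, so no gap remains.
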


\begin{proof}

For any $A\in \mathcal{C}_L$, a point $\widetilde{z}=(r_A, \delta_{A, 1 }, \ldots, \delta_{A, n})\in \widehat{M}_{c, z}(A)$ and an $l\in \{1, \ldots, k\}$, consider the two complete flags $\mathcal{D}_{A, \tau, l, \bullet}$ and $\Fil_{W^+_A, \tau, l, \bullet}$ on $D_{\pdr, \tau, l}(r_A)$. Let $(\nu_{\tau, l, 1}, \ldots ,\nu_{\tau, l, n_{\tau, l}})_{\tau, l}$ be the image of $\widetilde{z}$ under the composite map
\[
\begin{tikzcd}
\widehat{M}_{c, z} \arrow[r, "p"] & X_{r, \mathcal{M}_\bullet}\arrow[r, "\mathrm{pr}_2\circ\Theta"] &  \widehat{\mathfrak{t}} 
\end{tikzcd}
\]
by definition we see that the nilpotent operator $N_{W_A, \tau, l}$ action on the graded pieces of $\Fil_{W_A^+, \tau, l, \bullet}$, ordered in increasing order of indices, are given by $(\nu_{\tau, l, 1}, \ldots ,\nu_{\tau, l, n_{\tau, l}})$ for each $\tau\in \Sigma$ and $l\in \{1, \ldots, k_\tau\}$. Thus, by applying a twisted version of \cite[3.7.5]{BHS} to each $W_{A, l}^+$ (In the notation of Lemma 3.15), as in the proof of Proposition \ref{nece}, we see the $\tau$-Hodge-Tate-Sen polynomial of $r_A$ is given by 

\[
f(Y)= \prod_{l=1}^{k_\tau}\prod_{i=1}^{n_{\tau, l}}(Y-h_{\tau, l, i}-\nu_{\tau, l, i})
\]

On the other hand, each graded piece of $\mathcal{D}_{A, \tau, l, \bullet}$ is of the form $D_\pdr(\rob_{L, K}(\delta_{A, a_s})[\frac{1}{t}])$, for some $a_s\in S_{\tau, l}$. If $\mathrm{pr}_1\circ\Theta\circ p(\widetilde{z})=(\mu_{\tau, l, a_1}, \ldots ,\mu_{\tau, l, a_{n_{\tau, l}}})_{\tau, l}$, then by definition $\wt_\tau(\delta_{A, a_i})=\wt_\tau(\delta_{a_i})+\mu_{\tau, l, a_i}$ for any $i\in \{1, \ldots, n_{\tau, l}\}$. Thus by Lemma \ref{wtsame}, we see the $\tau$-Hodge-Tate-Sen polynomial of $r_A$ is given by

%the Hodge-Tate-Sen weights of each graded pieces must be in $(\wt_\tau(\delta_{a_s})+\mu_{\tau, l, a_s}+\Z)$. However, condition (2) of the definition of our  moduli problem $\widehat{M}_{c, x}$ and the regularity of the weights guarantees that the Hodge-Tate-Sen weights of $r_A$, given by the triangulation, is precisely 
\[
f(Y)=\prod_{l=1}^{k_\tau}\prod_{i=1}^{n_{\tau, l}}(Y-\wt_\tau(\delta_{ a_i}))
\]

Since for each $\tau$, the $\tau$-Hodge-Tate-Sen weights only depends on $r_A$ and not on the filtration, we see for each $\tau$, 
\[
\{h_{\tau, l, i}+\nu_{\tau, l, i}\}_{l, i}=\{\wt_\tau(\delta_{a_i})+\mu_{\tau, l, a_i}\}_{l, i}
\]
as sets. (Note that all $\wt_\tau(\delta_{a_i})$ are different).

%By looking at the integral difference clasees modulo $\mathfrak{m}_A$, we further see, for each fixed $l\in \{1, \ldots, k_\tau\}$,
%\[
%\{h_{\tau, l, i}+\nu_{\tau, l, i}\}_{i=1}^{n_{\tau, l}}=\{\wt_\tau(\delta_{a_i})+\mu_{\tau, l, a_i}\}_{i=1}^{n_{\tau, l}}
%\]

%Since we already have 
%\[w_{\tau, l}^{-1}(\wt_\tau(\delta_{a_1}), \ldots, \wt_\tau(\delta_{a_{n_{\tau, l}}}))=(h_{\tau, l, 1}, \ldots, h_{\tau, l,   n_{\tau, l}})
%\]
%as ordered tuples, and the weights are different, 

By an argument similar to the proof of Proposition \ref{nece}, we see that the same $w_{\tau, l}$ brings $(\mu_{\tau, l, a_1}, \ldots, \mu_{\tau, l, a_{n_{\tau, l}}} )$ to $(\nu_{\tau, l, 1}, \ldots, \nu_{\tau, l, n_{\tau, l}})$. Thus proving the claim that 
\[ w \left(\mathrm{pr}_1\circ\Theta\circ p(\widetilde{z})\right) = \mathrm{pr}_2\circ\Theta\circ p(\widetilde{z})
\]
\end{proof}

\begin{remark}
The difference of the above proof with the proof of Proposition \ref{nece} is that we relate Hodge-Tate weights of the characters to Hodge-Tate weights of $r$ by the definition of the moduli problem $M_c$, while Proposition \ref{nece} relates them from the fact that $X_\tri(\overline{r})$ is the closure of a set of points that satisfy the relation on the weights.
\end{remark}

The following two propositions are the key steps to showing that there exists an irreducible component of $\widehat{M}_{c, z}$ with an abundant amount of points coming from regular trianguline points.
\begin{proposition}\label{descpmc}
Let $w\in \mathcal{S}$ be given by $z$ as before. The map 
\[
p: \widehat{M}_{c, z}\map X_{r, \mathcal{M}_{\bullet}} \cap \Theta^{-1}(\widehat{T}_{w, (0, 0)})
\]
is an isomorphism if $c> \sum_{j\in S_1}h_{\tau, j}-\sum_{j\in S_2}h_{\tau, j}$ for any $\tau\in \Sigma$ and $S_1, S_2\subset\{1, \ldots, n\}$ of the same cardinality such that $\sum_{j\in S_1}h_{\tau, j}-\sum_{j\in S_2}h_{\tau, j}\in \Z$.
\end{proposition}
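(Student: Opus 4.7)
The approach is to build an explicit inverse to $p$ and verify it works. By Lemma~\ref{facthrough}, $p$ already lands in the asserted target, so what remains is to show $p$ induces a bijection on $A$-valued points for each $A\in\mathcal{C}_L$. Injectivity is immediate: from a filtered deformation $(r_A,\mathcal{F}_\bullet)$, the characters $\delta_{A,i}$ are recovered from the graded pieces $\mathcal{F}_i/\mathcal{F}_{i-1}\cong \rob_{A,K}(\delta_{A,i})\invertt$, because a rank-one $(\varphi,\Gamma_K)$-module over $\rob_{A,K}\invertt$ uniquely determines its character.

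For essential surjectivity, take $(r_A,\mathcal{M}_{A,\bullet})\in X_{r,\mathcal{M}_\bullet}(A)\cap\Theta^{-1}(\widehat{T}_{w,(0,0)})$; let $\delta_{A,i}$ be the parameter of the $i$-th graded piece of $\mathcal{M}_{A,\bullet}$ and set $\mathcal{F}_\bullet:=\mathcal{M}_{A,\bullet}$. One must then verify the four conditions of Definition~\ref{moduliprob}. For (1) and (3), the quotient $\mathcal{M}_A\twoheadrightarrow \mathcal{M}_A/\mathcal{M}_{A,i-1}$ and its top exterior power give a natural determinantal map $\bigwedge^{n-i+1}\mathcal{M}_A\to \rob_{A,K}(\prod_{j=i}^n\delta_{A,j})\invertt$; restricted to $\bigwedge^{n-i+1}D_\rig(r_A)$, the explicit bound on $c$ (chosen to dominate every realisable $\tau$-weight imbalance $\sum_{S_1}h_{\tau,j}-\sum_{S_2}h_{\tau,j}$) ensures the image lies inside $t^{-c}\rob_{A,K}(\prod_{j=i}^n\delta_{A,j})$, producing the required section $f_i\in\mathcal{L}_i$. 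At the special point, Lemma~\ref{uniquetri} combined with the $\mathcal{T}_1$-regularity of $(\delta_1,\ldots,\delta_n)$ and Lemma~\ref{h0prop}(1) gives $\dim_L(\mathcal{L}_i\otimes_A L)=1$; Corollary~\ref{homlocclosed} together with Nakayama then upgrades this to rank-one freeness of $\mathcal{L}_i$ over $A$, and the compatibility of $\mathcal{F}_\bullet$ with the $f_\bullet$ is tautological from the construction.

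The main obstacle is verifying conditions (2) and (4), the mod-$t_\tau$ analogues, which is precisely where the hypothesis $\Theta\in\widehat{T}_{w,(0,0)}$ enters. Unpacking it exactly as in the proof of Lemma~\ref{facthrough}, within each integral class $S_{\tau,l}$ the permutation $w_{\tau,l}$ sends $(\wt_\tau(\delta_{A,a_1}),\ldots,\wt_\tau(\delta_{A,a_{n_{\tau,l}}}))$ to the decreasing list of $\tau$-Hodge--Tate--Sen eigenvalues of $r_A$. By Lemma~\ref{wtsame} and regularity, $D_{\mathrm{Sen},\tau}(r_A)$ has $n$ pairwise distinct Sen eigenvalues $\{\wt_\tau(\delta_{A,i})\}$ (their differences lift to units of $A$), hence decomposes as a direct sum of rank-one eigenspaces. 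Extracting them in reverse index order $n,n-1,\ldots,1$ and applying Lemma~\ref{htwt} to each successive kernel produces a chain of surjections $D_\rig(r_A)/t_\tau\twoheadrightarrow \rob_{A,K}(\delta_{A,i})/t_\tau$, which patch into the filtration $\mathcal{F}_\bullet^\tau$. The hardest technical point, and the one I expect to be the main obstacle, is to show that these extracted quotients realise the specific characters $\delta_{A,i}$ rather than merely characters of the correct $\tau$-weight: this is exactly what the relative-position hypothesis enforces, by matching the triangulation-induced graded pieces of $W_{\dR,\tau}^+(r_A)/t$ with the Hodge--Tate slots of $D_{\mathrm{Sen},\tau}(r_A)$ via the permutation $w_{\tau,l}$. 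Rank-one freeness of each $\mathcal{L}_i^\tau$ then follows from a mod-$t_\tau$ analogue of Lemma~\ref{h0prop}, reducing to the case $K=\Q_p$ in the manner of Lemma~\ref{surjmodt}.
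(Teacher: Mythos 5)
Your overall architecture matches the paper's: construct the inverse on $A$-points by reading off the parameters of the triangulation, then verify the four conditions of Definition \ref{moduliprob}, using the bound on $c$ together with Lemma \ref{h0prop}/Lemma \ref{uniquetri} for conditions (1) and (3), and Sen theory plus Lemma \ref{htwt} for conditions (2) and (4). However, there is a genuine gap at the crux of the surjectivity step: your appeal to Lemma \ref{wtsame} to assert that $D_{\mathrm{Sen},\tau}(r_A)$ has Sen eigenvalues $\{\wt_\tau(\delta_{A,i})\}$ is circular. Lemma \ref{wtsame} applies only to points of $\widehat{M}_{c,z}$, i.e.\ it presupposes exactly the conditions (2) and (4) you are in the middle of verifying. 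The identification of the Sen eigenvalues of $r_A$ with the weights of the $\delta_{A,i}$ is precisely what the hypothesis $\Theta(\widetilde{y})\in\widehat{T}_{w,(0,0)}$ must produce, and it is not automatic: infinitesimally the weights $\mu$ of the characters and the Sen parameters $\nu$ of $r_A$ move independently unless tied together by $w$. The correct route (the paper's) is to compute the $\tau$-Sen polynomial of $r_A$ from $\mathrm{pr}_2\circ\Theta(\widetilde{y})=(\nu_{\tau,l,i})$, namely $f(Y)=\prod_{l,i}(Y-h_{\tau,l,i}-\nu_{\tau,l,i})$ via a twisted \cite[3.7.5]{BHS} applied to the decomposition $W^+_{A,\tau}=\bigoplus_l W^+_{A,\tau,l}$, and then combine $w_{\tau,l}^{-1}(\mu_{\tau,l,\bullet})=(\nu_{\tau,l,\bullet})$ with $w_{\tau,l}^{-1}(\wt_\tau(\delta_{a_\bullet}))=(h_{\tau,l,\bullet})$ (the definition of $w$) to get $\{\wt_\tau(\delta_{A,a_i})\}_i=\{h_{\tau,l,i}+\nu_{\tau,l,i}\}_i$ within each class; only after this does your eigenspace decomposition and the application of Lemma \ref{htwt} go through. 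Relatedly, the point you single out as hardest --- that the extracted quotients realize the specific characters $\delta_{A,i}$ rather than characters of the right $\tau$-weight --- is not where the hypothesis is needed: the criterion of Lemma \ref{htwt} depends only on the $\tau$-weight, so that step is automatic once the eigenvalue identification is in hand.

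Two smaller repairs. First, ``Corollary \ref{homlocclosed} together with Nakayama'' does not give rank-one freeness of $\mathcal{L}_i$ over $A$: Nakayama only gives generation by one element, and Corollary \ref{homlocclosed} says nothing about a given Artinian point unless one already knows it factors through the relevant locus, which is what is being proved. Instead apply Lemma \ref{uniquetri} directly over $A$ (it is stated for $A\in\mathcal{C}_L$ with reduction in $\mathcal{T}_1^n(L)$) to get that $\homo_{\varphi,\gamma_K}(\bigwedge^{n-i+1}D_\rig(r_A)\invertt,\rob_{A,K}(\prod_{j\geq i}\delta_{A,j})\invertt)$ is free of rank one, and then identify it with the $t^{-c}$-bounded Hom space by showing $H^0$ of $(\rob_{A,K}\invertt/t^{-c}\rob_{A,K})\otimes(\cdots)$ vanishes, by devissage and Lemma \ref{h0prop}(2) using the stated bound on $c$; this is the precise form of your ``image lies in $t^{-c}\rob_{A,K}$'' claim. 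Second, in the injectivity step, a rank-one $\phigamma$-module over $\rob_{A,K}\invertt$ determines its character only up to an algebraic twist; what you need (and what suffices here) is uniqueness among characters lifting the fixed $\delta_i$, as in \cite[3.3.4]{BHS}.
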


\begin{proof}
We construct an inverse map $q$. Let $\widetilde{y}=(r_A, \mathcal{M}_{A, \bullet})$ be an $A$-point of the right hand side. There exists unique characters $\delta_{A, 1}, \ldots, \delta_{A, n}$ lifting $\delta_1, \ldots, \delta_n$ such that the  triangulation $\mathcal{M}_{A, \bullet}$  has parameters $\delta_{A, 1}, \ldots, \delta_{A, n}$ (by \cite[3.3.4]{BHS}). We set $q(\widetilde{y}):=(r_A, \delta_{A, 1}, \ldots, \delta_{A,n})\in \left(\spf (R^{\square}_{\overline{r}})^{\ad}_\eta \times \mathcal{T}^n\right)(A)$. Next we verify $q(\widetilde{y})\in M_c(A)$. By the condition that $(\delta_1, \ldots, \delta_n)\in \mathcal{T}_1^n(L)$, we apply Lemma \ref{uniquetri}  to see
\[
\homo_{\varphi, \gamma_K}(D_\rig(\bigwedge^{n-i+1} r_A)\invertt, \rob_{A, K} (\prod_{j=i}^n \delta_{A, j})\invertt) 
\]
is a free $A$-module of rank $1$. Moreover, we have a left exact sequence
\[
0 \map  H^0_{\hphigamma}(t^{-c}D_\rig(\bigwedge^{n-i+1} r_A)^\vee (\prod_{j=i}^n \delta_{A, j}))\map H^0_{\hphigamma}(D_\rig(\bigwedge^{n-i+1} r_A)^\vee (\prod_{j=i}^n \delta_{A, j})\invertt) \map
\]
\[
H^0_{\hphigamma}\left(\left(\rob_{A, K}\invertt/t^{-c}\rob_{A, K}\right)\otimes_{\rob_{A, K}}D_\rig(\bigwedge^{n-i+1} r_A)^\vee (\prod_{j=i}^n \delta_{A, j})\right) 
\]
We claim the last term is $0$. By a devissage argument, it suffices to show that $H^0_{\hphigamma}(\rob_{A, K}\invertt/t^{-c}\rob_{A, K}\otimes_{\rob_{A, K}}\rob_{A, K} (\prod_{l\in S}\delta_{A, l}^{-1}\prod_{j=i}^n \delta_{A, j}))=0$ for any $S\subset \{1, \ldots, n\}$ of cardinality $n-i+1$. Since $-c-\sum_{l\in S}\wt_\tau(\delta_l)+\sum_{j=i}^n \wt_\tau(\delta_j)$ is either not integer or $<0$, for any $\tau\in \Sigma$ and $S$ as above by the condition on $c$, we see the vanishing result from Lemma \ref{h0prop} (2). Thus 
\[\homo_{\varphi, \gamma_K}(D_\rig(\bigwedge^{n-i+1} r_A), t^{-c}\rob_{A, K} (\prod_{j=i}^n \delta_{A, j})) \cong \homo_{\varphi, \gamma_K}(D_\rig(\bigwedge^{n-i+1} r_A), \rob_{A, K} (\prod_{j=i}^n \delta_{A, j})\invertt) )
\]
is a free $A$-module of rank $1$.  Condition (1) and (3) of Definition \ref{moduliprob} are verified.

Now we verify condition (2) and (4) of Definition \ref{moduliprob}. We write
\[
\mathrm{pr}_1\circ\Theta( \widetilde{y})=(\mu_{\tau, l, a_1}, \ldots ,\mu_{\tau, l, a_{n_{\tau, l}}})_{\tau, l}
\]
and 
\[
\mathrm{pr}_2\circ\Theta( \widetilde{y})=(\nu_{\tau, l, 1}, \ldots ,\nu_{\tau, l, n_{\tau, l}})_{\tau, l}
\]
Just like in the proof of Lemma \ref{facthrough}, we immediately see from definition that  $\wt_\tau(\delta_{A, a_i})=\wt_\tau(\delta_{a_i})+\mu_{\tau, l, a_i}$ for any $i\in \{1, \ldots, n_{\tau, l}\}$, and the $\tau$-Hodge-Tate-Sen polynomial of $r_A$ is equal to
\[
f(Y)= \prod_{l=1}^{k_\tau}\prod_{i=1}^{n_{\tau, l}}(Y-h_{\tau, l, i}-\nu_{\tau, l, i})
\]
. The condition that $\widetilde{y}\in \Theta^{-1}(\widehat{T}_{w, (0, 0)})$ gives 
\[w_{\tau, l}^{-1}(\mu_{\tau, l, a_1}, \ldots, \mu_{\tau, l, a_{n_{\tau, l}}} )=(\nu_{\tau, l, 1}, \ldots, \nu_{\tau, l, n_{\tau, l}})
\]
. But the same $w_{\tau, l}$ also satisfy 
\[w_{\tau, l}^{-1}(\wt_\tau(\delta_{a_1}), \ldots, \wt_\tau(\delta_{a_{n_{\tau, l}}}))=(h_{\tau, l, 1}, \ldots, h_{\tau, l,   n_{\tau, l}})
\]
from the definition of $w$. Adding up the above two equation gives 
\[
\{\wt_\tau(\delta_{A, a_i})\}_{i=1}^{n_{\tau, l}}=\{h_{\tau, l, i}+\nu_{\tau, l, i}\}_{i=1}^{n_{\tau, l}}
\]
Thus, $D_{\mathrm{pHT}, \tau, l}(r_A)$ has a decomposition by finite free  rank-$1$ $A$-modules, where the Sen operators acts by precisely   $\wt_\tau(\delta_{A, a_1})-\wt_\tau(\chi_l), \ldots, \wt_\tau(\delta_{A, a_{n_{\tau, l}}})-\wt_\tau(\chi_l)$. Twisting back for each $l$, we see from the shape of $D_{\mathrm{pHT}, \tau, l}(r_A)$ that $D_{\mathrm{Sen}, \tau}(r_A)$ admits a filtration whose graded pieces are rank-$1$ free $A\otimes_{\tau, K}K_\infty$ modules where the Sen operator acts by the scalar $\wt_\tau(\delta_{A, i})$ for $i\in \{1, \ldots, n\}$. Thus by Lemma \ref{htwt}, condition (2) and (4) of Definition \ref{moduliprob} is verified for the point $q(\widetilde{y})$.

It is straightforward to check that the maps $p$ and $q$ are mutual inverse to each other. Thus concludes the proof.
\end{proof}

\begin{definition}
For each $w=(w_\tau)_{\tau\in \Sigma}\in \mathcal{S}$ and any $L'/L$, we set $X_{\tri, \mathrm{sp}}^{\mathrm{reg}, w}(L')$ to be the set of points
$(r, \delta_1, \ldots, \delta_n)\in U^{\mathrm{reg}}_{\tri}(L')$, such that $\wt_{\tau}(\delta_{w_{\tau}(i)})-\wt_{\tau}(\delta_i)\in \Z$ for any $\tau$ and $i\in \{1, \ldots, n\}$. We will show in the next Lemma that for each $w\neq 1$,  $X_{\tri, \mathrm{sp}}^{\mathrm{reg}, w}(L')$ is the $L'$ points of a countable collection of  Zariski-locally-closed rigid analytic subspace of  $\spf (R^{\square}_{\overline{r}})^{\ad}_\eta \times \mathcal{T}^n_{\mathrm{reg}}$ of smaller dimension than $[K:\Q_p]\frac{n(n+1)}{2}+n^2$. Thus defining $X_{\tri, \mathrm{sp}}^{\mathrm{reg}, w}$ as a countable union of rigid analytic space. For $w=1$, one simply recovers the usual $X_{\tri, \mathrm{sp}}^{\mathrm{reg}, 1}=U_{\tri}^{\mathrm{reg}}$ as in \cite[3.7]{BHS}, which is of dimension  $[K:\Q_p]\frac{n(n+1)}{2}+n^2$.

\end{definition}

\begin{lemma}\label{smallerd}
Let $w\in \mathcal{S}$ be a nontrivial element. Then $X_{\tri, \mathrm{sp}}^{\mathrm{reg}, w}$ is a countable union of (irreducible) Zariski locally closed rigid analytic subspace of $\spf (R^{\square}_{\overline{r}})^{\ad}_\eta \times \mathcal{T}^n_{\mathrm{reg}}$ of  dimension smaller than $[K:\Q_p]\frac{n(n+1)}{2}+n^2$.
\end{lemma}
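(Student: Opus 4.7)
The plan is to realize $X_{\tri,\mathrm{sp}}^{\mathrm{reg},w}$ as a countable union of pullbacks, along the weight projection $p_2\colon U_\tri^{\mathrm{reg}}\to \mathcal{T}^n$, of Zariski closed subspaces of $\mathcal{T}^n$ of codimension $\geq 1$, and then run a dimension count based on the equidimensionality of $U_\tri^{\mathrm{reg}}$ together with the dominance of $p_2$ onto each connected component of $\mathcal{T}^n_{\mathrm{reg}}$ that it meets.

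For every integer tuple $c=(c_{\tau,i})_{\tau\in\Sigma,\,1\leq i\leq n}$ that satisfies the natural cocycle condition (the integers around every cycle of each $w_\tau$ must sum to zero), let $Z_{w,c}\subset \mathcal{T}^n$ be the Zariski closed subspace cut out by the equations $\wt_\tau(\delta_{w_\tau(i)})-\wt_\tau(\delta_i)=c_{\tau,i}$. Since the global weight map $\mathcal{T}^n\to L^{n|\Sigma|}$ is smooth with $n$-dimensional (unramified-twist) fibers on each connected component, $Z_{w,c}$ has pure codimension $\sum_\tau\sum_{\text{nontrivial cycles of }w_\tau}(\ell-1)$ in $\mathcal{T}^n$, where $\ell$ denotes the length of the cycle; this is $\geq 1$ precisely because $w\neq 1$. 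By construction, a point $(r,\delta_\bullet)\in U_\tri^{\mathrm{reg}}(L')$ belongs to $X_{\tri,\mathrm{sp}}^{\mathrm{reg},w}(L')$ if and only if $p_2(r,\delta_\bullet)\in Z_{w,c}(L')$ for some admissible $c$, yielding
\[X_{\tri,\mathrm{sp}}^{\mathrm{reg},w}=\bigcup_c \left(p_2^{-1}(Z_{w,c})\cap U_\tri^{\mathrm{reg}}\right),\]
a countable union of Zariski locally closed subspaces of $\spf(R^\square_{\overline r})^{\ad}_\eta\times \mathcal{T}^n$; refining each $Z_{w,c}$ by its connected components produces a countable union of irreducible pieces, as required by the statement.

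It suffices to show each such piece has dimension strictly less than $[K:\Q_p]\frac{n(n+1)}{2}+n^2=\dim U_\tri^{\mathrm{reg}}$. The key inputs I would invoke are: $U_\tri^{\mathrm{reg}}$ is equidimensional of the stated dimension, and every one of its irreducible components maps dominantly onto $\mathcal{T}^n_{\mathrm{reg}}$ via $p_2$. Both are standard and follow from the explicit local construction of the universal trianguline $(\varphi,\Gamma_K)$-module at regular parameters (cf.\ \cite[\S 6]{KPX} and \cite[\S 2]{modular}). Given these, for each $c$ the pullback $p_2^{-1}(Z_{w,c})$ cannot contain any whole irreducible component of $U_\tri^{\mathrm{reg}}$, so its intersection with each component is a proper Zariski closed subset, and hence of strictly smaller dimension.

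The main obstacle is precisely the dominance statement: without it, some component $Y\subset U_\tri^{\mathrm{reg}}$ could happen to have $p_2(Y)\subseteq Z_{w,c}$ for some $c$, in which case $Y\subseteq p_2^{-1}(Z_{w,c})$ and the codimension argument degenerates on $Y$. Fortunately this dominance is a direct consequence of the pointwise construction of trianguline lifts with prescribed regular parameters, which guarantees that every regular tuple in $\mathcal{T}^n_{\mathrm{reg}}$ lies in $p_2(U_\tri^{\mathrm{reg}})$.
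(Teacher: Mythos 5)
Your decomposition is essentially the one the paper uses: fixing the integer values $c_{\tau,i}$ of $\wt_\tau(\delta_{w_\tau(i)})-\wt_\tau(\delta_i)$ and pulling back along $p_2$ is exactly what the paper does via the map $s_w\colon \mathcal{T}^n_\regu\map\mathcal{T}^J_\regu$, Lemma \ref{smchardim}, and the refinement into constant-weight-difference pieces in Lemma \ref{closedembed}. The problem is your dimension count. Its key input, that $p_2$ is surjective (or that every irreducible component of $U_\tri^\regu$ dominates $\mathcal{T}^n_\regu$), is false: $U_\tri^\regu$ sits over a fixed residual representation $\overline{r}$, and a tuple $(\delta_1,\ldots,\delta_n)$ is in the image of $p_2$ only if there is an \emph{\'etale} trianguline module with these parameters whose reduction is $\overline{r}$. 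Already for $n=1$ the image consists of the characters $\delta$ lifting $\overline{r}$ and satisfying the slope-zero condition on $v(\delta(\pi_K))$, so whole connected components of $\mathcal{T}_\regu$ are missed; for general $n$ the Harder--Narasimhan/slope constraints and the residual constraint again confine the image to certain connected components. Moreover, even a pointwise surjectivity statement would not give what you need, namely that \emph{each irreducible component} $Y$ of $U_\tri^\regu$ has $p_2(Y)\not\subset Z_{w,c}$: producing some point of $U_\tri^\regu$ over a given parameter says nothing about which component it lies on. The correct salvage of your route is the statement that $p_2$ restricted to $U_\tri^\regu$ is smooth (hence open) onto its image, so that each component (open in the smooth space $U_\tri^\regu$) has open, hence Zariski-dense, image in its connected component of $\mathcal{T}^n_\regu$, while $Z_{w,c}$ is nowhere dense on every component because the weight functions are nonconstant analytic there. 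But that smoothness/fibration statement is precisely the nontrivial structural input, and the justification you offer ("every regular tuple lies in the image") does not supply it.

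Note how the paper avoids any dominance claim altogether: it only needs an \emph{upper bound} on fibers, namely that $p_2^{-1}(T)$ is a successive vector bundle of relative dimension $[K:\Q_p]\frac{n(n-1)}{2}+n^2-n$ over a Zariski-closed $T\subset\mathcal{T}^n_\regu$ (the argument of [modular, \S 2]), so that $\dim p_2^{-1}(T)\leq \dim T+[K:\Q_p]\frac{n(n-1)}{2}+n^2-n$, together with $\dim T<\dim\mathcal{T}^n_\regu=n([K:\Q_p]+1)$, which it gets by projecting $s_w$ to a single factor $(\tau,i)\in J$ and using the group structure of $\mathcal{T}^n_\regu$. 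This bound is valid even when the image of a component of $U_\tri^\regu$ is small or the preimage is empty, which is exactly the situation your argument cannot handle as written. So either import the smoothness/openness of $p_2$ on $U_\tri^\regu$ with a precise citation, or replace the dominance step by the fiber-dimension bound; as it stands the proof has a genuine gap at that step.
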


\begin{proof}
Let $p_2$ be the projection $U_{\tri}^{\mathrm{reg}}\map \mathcal{T}^n_{\mathrm{reg}}$. Let $J$ be the set of pairs $(\tau, i)$, with $\tau\in \Sigma$ and $i\in \{1, \ldots, n\}$ such that $w_\tau(i)\neq i$. We consider the set $I$ given by the $\# J$-fold product of the set of  characters $K^\times \map (L')^\times $ over $J$, whose elements we write as $(\chi_{\tau, i})_{J}$. Let $I^{\mathrm{alg}}:=\{(\chi_{\tau, i})_{J}| \wt_\tau(\chi_{\tau, i})\in \Z, \forall (\tau, i)\in J\}$. By Lemma \ref{smchardim} below, this set is the $L'$ points of a countable union of Zariski-closed subanalytic space $\cup_{j\in \N}T_j$ of $\mathcal{T}_{\mathrm{reg}}^J$, of smaller dimension. In fact, it is the product over each factor $\mathcal{T}_{\mathrm{reg}}$ of a such union. We define a map of rigid analytic space $s_w: \mathcal{T}_\regu^n\map \mathcal{T}_\regu^J$ by the formula
\[
(\delta_i)_{i=1}^n \mapsto (\delta_{w_\tau(i)}\delta_i^{-1})_{(i, \tau)\in J}
\]
Now $X_{\tri, \mathrm{sp}}^{\mathrm{reg}, w}=p_2^{-1}s_w^{-1}(\cup_{j\in \N}T_j)$
. Hence $X_{\tri, \mathrm{sp}}^{\mathrm{reg}, w}$ is a countable union of Zariski-closde subanalytic spaces of $U_\tri^\regu$. 

To see the claim on dimension, we need only consider the projection of the map $s_w\circ p_2$ to one of the $\mathcal{T}_\regu$ corresponding to a $(\tau, i)\in J$. Composing $s_w$ with this projection gives a surjective homomorphism $p_w: \mathcal{T}_\regu^n\map \mathcal{T}_\regu$, which decomposes the group rigid analytic space $\mathcal{T}_\regu^n\cong \mathcal{T}_\regu^{n-1}\times \mathcal{T}_\regu$. The preimage under $p_w$ of any Zariski-closed rigid analytic subspace of dimension smaller than $\dim \mathcal{T}_\regu$ is thus of dimension $< \dim \mathcal{T}_\regu^n=n([K:\Q_p]+1)$. Now an argument similar to [20, 3,3] shows that the preimage under $p_2$ of any Zariski-closed rigid analytic subspace $T\subset \mathcal{T}_\regu^n$ is a successive vector bundle over $T$, which is of dimension $\dim T+[K:\Q_p]\frac{n(n-1)}{2}+n^2-n<[K:\Q_p]\frac{n(n+1)}{2}+n^2$.
\end{proof}

\begin{lemma}\label{smchardim}
Fix a $\tau\in \Sigma$. The set of continuous characters $\chi: K^\times\map (L')^\times$ in $\mathcal{T}_\regu$  whose $\tau$-weight is in $\Z$ is given by the $L'$-points of a countable union of Zariski-closed subanalytic space of $\mathcal{T}_\regu$ of dimension $<[K:\Q_p]+1$.
\end{lemma}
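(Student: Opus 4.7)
The plan is to exhibit the locus as the zero fibers of a single rigid analytic function, and to bound the dimension of those fibers using the explicit description of the character variety. More precisely, the $\tau$-weight is a rigid analytic function $\wt_\tau : \mathcal{T} \map \mathbb{A}^1_L$, so the set in question is
\[
\bigcup_{n \in \Z}\bigl(\wt_\tau^{-1}(n)\cap \mathcal{T}_\regu\bigr),
\]
a countable union of Zariski-closed analytic subspaces of $\mathcal{T}_\regu$. It remains to check that each fiber $\wt_\tau^{-1}(n)$ has dimension $[K:\Q_p]$, so that after intersecting with the Zariski open $\mathcal{T}_\regu$ we obtain subspaces of dimension $\leq [K:\Q_p] < [K:\Q_p]+1$.

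To verify the dimension statement, I would fix a uniformizer $\pi \in K$ and use the decomposition $K^\times \cong \pi^\Z \times \mu \times \Of_K^{\times,(1)}$, where $\mu$ is the finite torsion subgroup of $\Of_K^\times$ and $\Of_K^{\times,(1)}$ is its pro-$p$ part. This yields an isomorphism of $\mathcal{T}$ with a finite disjoint union of copies of $\mathbb{G}_m^{\mathrm{an}} \times \widehat{\Of_K^{\times,(1)}}$, where the second factor is identified via the $p$-adic logarithm with an open polydisc of dimension $[K:\Q_p]$ inside $\liealg \Of_K^\times \otimes_{\Q_p} L \cong \bigoplus_{\sigma \in \Sigma} L$. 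Under this identification, the $\tau$-weight map becomes the composition of the projection to the second factor with the coordinate projection to the $\tau$-summand, i.e.\ a surjective submersion onto $\mathbb{A}^1_L$. The fiber over each $n \in \Z$ is therefore a Zariski-closed analytic subspace of $\mathcal{T}$ of dimension exactly $[K:\Q_p]$.

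Combining the two steps gives the desired countable union. The only mild obstacle is notational bookkeeping around the decomposition of $K^\times$ (in particular tracking the finite factor $\mu$), but this does not affect the dimension count since $\mu$ contributes only finitely many connected components. Intersecting each $\wt_\tau^{-1}(n)$ with $\mathcal{T}_\regu$ is a Zariski open restriction and hence cannot raise the dimension, which completes the proof.
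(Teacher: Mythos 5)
Your overall strategy coincides with the paper's: exhibit the locus as the countable union over $n\in\Z$ of the Zariski-closed fibres $\wt_\tau^{-1}(n)\cap\mathcal{T}_\regu$ of the analytic weight function, and then show each fibre has codimension at least one. The paper does the same thing, after twisting by $x_\tau^{n}$ to reduce to $n=0$, reducing to the character variety of $1+p^N\Of_K$ (a $[K:\Q_p]$-dimensional product of open unit discs, over which the weight space is finite), and observing that $\wt_\tau$ is there a nonzero analytic function, namely $\log(1+T_\tau)$, so its vanishing locus has smaller dimension.

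The step you use to justify the dimension count, however, is false as stated. The character variety of $\Of_K^{\times,(1)}\cong\Z_p^{[K:\Q_p]}$ is \emph{not} identified via the $p$-adic logarithm with an open polydisc inside $\liealg\,\Of_K^\times\otimes_{\Q_p}L$, and $\wt_\tau$ is not a coordinate projection under any such identification: the weight map kills every finite-order (locally constant) character, so it is not injective, let alone an open immersion, and the character variety does not sit inside the Lie algebra. The correct identification of the character variety with an open polydisc is $\chi\mapsto(\chi(e_i)-1)_i$ for a $\Z_p$-basis $(e_i)$ of $1+p^N\Of_K$, under which $\wt_\tau(\chi)=\sum_i\tau(e_i)\log(1+T_i)$ (equivalently $\log(1+T_\tau)$ in the paper's adapted coordinates) --- a transcendental function of the coordinates, not a linear projection, so "submersion because it is a coordinate projection" is not available. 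The conclusion you want is nevertheless true and easily recovered: this function is non-constant on every closed polydisc (its differential $\sum_i\tau(e_i)\,dT_i/(1+T_i)$ is nowhere zero), so each fibre $\wt_\tau^{-1}(n)$ is a proper Zariski-closed subspace of the weight space, of dimension at most $[K:\Q_p]-1$, hence of dimension at most $[K:\Q_p]<[K:\Q_p]+1$ in $\mathcal{T}_\regu$ after adding the one-dimensional unramified direction; this is exactly the paper's argument. A smaller point: in your splitting $K^\times\cong\pi^\Z\times\mu\times\Of_K^{\times,(1)}$ you should take $\mu$ to be the full torsion subgroup $\mu(K)$, including $p$-power roots of unity, since the pro-$p$ part $1+\mathfrak{m}_K$ need not be torsion-free; alternatively, restrict to $1+p^N\Of_K$ and use finiteness of the weight space over its character variety, as the paper does.
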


\begin{proof}
Immediately reduce to the weight space (with the dimension reduce by $1$). Furthermore, twisiting over all characters $x_\tau^n$, with $n$ ranging in $\Z$, reduce us to prove the claim for the set of characters of $\Of_K^\times$ with $\tau$-Hodge-Tate-Sen weight $=0$. Now the space of continuous characters of $\Of_K^\times$ is finite over the space of continuous characters of $1+p^n\Of_K$, for some fixed $n>2$ and this reduce us to the space of continuous characters of $1+p^n\Of_K$, which is represented by 
\[
\spf(\Of_L[[p^n\Of_K]])^{\ad}_\eta
\]
This is simply the $[K:\Q_p]$-fold product of open unit disc. There exists a system of coordinates $\{T_\tau\}_{\tau\in \Sigma}$, such that taking $\tau$-Hodge-Tate-Sen weight of a character $\chi$ is equivalent to evaluating the element $\log (1+T_\tau)$ on the $L'$-point corresponding to $\chi$. Thus the set of characters with $\tau$-Hodge-Tate-Sen weight $=0$ is given by the vanishing a locus of $\log (1+T_\tau)$, a nonzero elements over any of the closed polydisc contained in the $[K:\Q_p]$-fold product of open unit disc.
\end{proof}

We set $i_w: X_{\tri, \mathrm{sp}}^{\mathrm{reg}, w}\map \spf (R^{\square}_{\overline{r}})^{\ad}_\eta \times \mathcal{T}^n_{\mathrm{reg}}$ be the map defined by 
\[
(r, \delta_1, \ldots, \delta_n)\mapsto (r, \delta_1\prod_{\tau\in \Sigma}x_\tau^{\wt_\tau(\delta_{w_\tau(1)})-\wt_\tau(\delta_1)}, \ldots,  \delta_n\prod_{\tau\in \Sigma}x_\tau^{\wt_\tau(\delta_{w_\tau(n)})-\wt_\tau(\delta_n)} )
\]

\begin{lemma}\label{closedembed}
$X_{\tri, \mathrm{sp}}^{\mathrm{reg}, w}$ can be written as a countable union of  Zariski locally-closed subanalytic spaces of $\spf (R^{\square}_{\overline{r}})^{\ad}_\eta \times \mathcal{T}^n_{\mathrm{reg}}$, such that over each of the member the map $i_w$ is a locally closed embedding  of rigid analytic spaces into $\spf (R^{\square}_{\overline{r}})^{\ad}_\eta \times \mathcal{T}^n_{\mathrm{reg}}$.
\end{lemma}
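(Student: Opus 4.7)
The plan is to use the stratification of $X_{\tri, \mathrm{sp}}^{\mathrm{reg}, w}$ already constructed in the proof of Lemma \ref{smallerd} and observe that on each stratum the map $i_w$ becomes translation by a fixed algebraic character, which is a locally closed embedding of rigid analytic spaces.

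Recall that in the proof of Lemma \ref{smallerd} one writes $X_{\tri, \mathrm{sp}}^{\mathrm{reg}, w} = \bigcup_{j \in \N} Y_j$, where $Y_j := p_2^{-1}(s_w^{-1}(T_j))$ is Zariski locally closed in $\spf (R^{\square}_{\overline{r}})^{\ad}_\eta \times \mathcal{T}^n_\regu$. By the proof of Lemma \ref{smchardim}, each $T_j \subset \mathcal{T}^J_\regu$ is precisely the Zariski-closed locus where, for each $(\tau, i) \in J$, the $(\tau, i)$-component character has $\tau$-Hodge-Tate-Sen weight equal to a specified integer $m_{j, \tau, i}$: the integer-weight locus produced there is stratified by the integer value via the shift $\chi \mapsto \chi \cdot x_\tau^m$. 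Setting $m_{j, \tau, i} := 0$ when $(\tau, i) \notin J$, on $Y_j$ we then have the identity $\wt_\tau(\delta_{w_\tau(i)}) - \wt_\tau(\delta_i) \equiv m_{j, \tau, i}$, and the defining formula for $i_w$ simplifies on $Y_j$ to
\[
i_w|_{Y_j}(r, \delta_1, \ldots, \delta_n) = \left(r, \delta_1 \chi_{j, 1}, \ldots, \delta_n \chi_{j, n}\right), \qquad \chi_{j, i} := \prod_{\tau \in \Sigma} x_\tau^{m_{j, \tau, i}}.
\]

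Each $\chi_{j, i}$ is a fixed algebraic character of $K^\times$, so translation by the $n$-tuple $(\chi_{j, i})_i$ is an automorphism of the rigid analytic group $\mathcal{T}^n$; extending by the identity on the first factor gives an automorphism $T_{\mathbf{m}_j}$ of $\spf (R^{\square}_{\overline{r}})^{\ad}_\eta \times \mathcal{T}^n$. Thus $i_w|_{Y_j}$ factors as the locally closed inclusion $Y_j \hookrightarrow \spf (R^{\square}_{\overline{r}})^{\ad}_\eta \times \mathcal{T}^n_\regu$ supplied by Lemma \ref{smallerd}, followed by the restriction of $T_{\mathbf{m}_j}$, and is therefore a locally closed embedding of rigid analytic spaces. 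The only point I expect to need additional care is verifying that the image actually lands inside $\mathcal{T}^n_\regu$ rather than the larger $\mathcal{T}^n$, i.e.\ that the shifted differences $(\delta_i \chi_{j, i})(\delta_j \chi_{j, j})^{-1}$ avoid the discrete forbidden set $\{z^{-\mathbf{k}}, \epsilon z^{\mathbf{k}}\}_{\mathbf{k} \geq 0}$; whenever this fails for some $(i, j)$, one refines $Y_j$ further by removing the corresponding Zariski-closed loci (finitely many conditions per $Y_j$), still ending up with a countable union of Zariski locally closed members of the required form.
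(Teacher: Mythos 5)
Your proposal is correct and takes essentially the same route as the paper: refine the strata of Lemma \ref{smallerd} using Lemma \ref{smchardim} so that the weight differences are constant integers on each member, and note that on each such member $i_w$ is the restriction of translation by a fixed tuple of algebraic characters, hence a locally closed embedding with Zariski locally closed image. Your extra care about the image landing in $\mathcal{T}^n_{\regu}$ is a reasonable point the paper glosses over (it simply treats the translation as an isomorphism of $\spf (R^{\square}_{\overline{r}})^{\ad}_\eta \times \mathcal{T}^n_{\regu}$), and either your refinement or intersecting the translated image with the Zariski open regular locus handles it without affecting how the lemma is used.
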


\begin{proof}
We use the notation as in the proof of Lemma \ref{smallerd}. Invoking Lemma \ref{smchardim}, we see that then countable union $\cup_{j\in \N}T_j$ can be made such that over each $T_j$, all its point $(\chi_{\tau, i})_J$ satisfy $\wt_\tau(\chi_{\tau, i})=a_{\tau, i}$ for some fixed tuple of integers $a_{\tau, i}\in \Z^J$. Then $X_{\tri, \mathrm{sp}}^{\mathrm{reg}, w}$ can be written as a countable union of Zariski-closed  $\cup_{j\in \N} p_2^{-1}s_w^{-1}(T_j)$ such that for any points $(r, \delta_1, \ldots, \delta_n)$ lying in the same member of the union, we have $\wt_{\tau}(\delta_{w_\tau (i)})-\wt_\tau(\delta_i)=a_{\tau, i}$ is fixed. Thus, over each $p_2^{-1}s_w^{-1}(T_j)$, the map $i_w$ is the same as the restriction of an isomorphism of the rigid analytic spaces $\spf (R^{\square}_{\overline{r}})^{\ad}_\eta \times \mathcal{T}^n_{\mathrm{reg}}$ given by the formula
\[
(r, \delta_1, \ldots, \delta_n)\mapsto (r, \delta_1\prod_{\tau\in \Sigma}x_\tau^{a_{\tau, 1}}, \ldots, \delta_n\prod_{\tau\in \Sigma}x_\tau^{a_{\tau, n}})
\]
since all $a_{\tau, i}$ are constants. Because each $p_2^{-1}s_w^{-1}(T_j)$ are themselves Zariski-locally-closed in $\spf (R^{\square}_{\overline{r}})^{\ad}_\eta \times \mathcal{T}^n_{\mathrm{reg}}$, composing with the above isomorphism gives that the images $i_w(p_2^{-1}s_w^{-1}(T_j))$ are again Zariski-locally-closed in $\spf (R^{\square}_{\overline{r}})^{\ad}_\eta \times \mathcal{T}^n_{\mathrm{reg}}$.
\end{proof}

\begin{definition}
Let $M_c^\regu$ be the intersection of $M_c$ and the inverse image of $\mathcal{T}^n_\regu$.
\end{definition}

\begin{proposition}\label{covering}
$M_c^{\mathrm{reg}}$ is covered by the union of $i_w(X_{\tri, \mathrm{sp}}^{\mathrm{reg}, w})$, ranging over $w\in \mathcal{S}$. 
\end{proposition}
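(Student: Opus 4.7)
The plan is to show that every $L'$-point $z=(r,\delta_1,\ldots,\delta_n)$ of $M_c^{\mathrm{reg}}$ lies in $i_w(X_{\tri,\mathrm{sp}}^{\mathrm{reg},w}(L'))$ for some $w\in\mathcal{S}$. The strategy is to descend the triangulation from $D_\rig(r)\invertt$ to $D_\rig(r)$, read off parameters $(\delta'_i)$ that differ from $(\delta_i)$ by algebraic characters $\prod_\tau x_\tau^{a_{\tau,i}}$, and then extract $w$ from the resulting weight bookkeeping.

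For the descent, I would take the filtration $\mathcal{F}_\bullet$ on $D_\rig(r)\invertt$ provided by condition~(3) of Definition~\ref{moduliprob} and set $D'_i := \mathcal{F}_i\cap D_\rig(r)$. Since each $\mathcal{F}_i$ is an $\rob_{L'}\invertt$-submodule (hence trivially saturated under multiplication by $t$), the $D'_i$ form an increasing filtration of $D_\rig(r)$ by saturated sub-$\phigamma$-modules with $D'_i\invertt=\mathcal{F}_i$; consequently $D'_i/D'_{i-1}$ is a rank-$1$ $\phigamma$-module over $\rob_{L'}$ embedded in $\mathcal{F}_i/\mathcal{F}_{i-1}\cong\rob_{L'}(\delta_i)\invertt$. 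The classical classification of rank-$1$ sub-$\phigamma$-modules of $\rob_{L'}(\delta)\invertt$ then gives $D'_i/D'_{i-1}\cong\rob_{L'}(\delta'_i)$ with $\delta'_i=\delta_i\cdot\prod_\tau x_\tau^{a_{\tau,i}}$ for integers $a_{\tau,i}\in\Z$, so $D_\rig(r)$ is trianguline with parameters $(\delta'_i)$.

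By Lemma~\ref{wtsame}, the multiset $\{\wt_\tau(\delta_i)\}_i$ equals the $\tau$-Hodge-Tate-Sen spectrum of $r$; the same holds for $\{\wt_\tau(\delta'_i)\}_i$ since $r$ is now exhibited as trianguline with parameters $(\delta'_i)$. Regularity yields a unique $\sigma_\tau\in\mathcal{S}_n$ with $\wt_\tau(\delta'_i)=\wt_\tau(\delta_{\sigma_\tau(i)})$, whence $a_{\tau,i}=\wt_\tau(\delta_{\sigma_\tau(i)})-\wt_\tau(\delta_i)$. Setting $w_\tau:=\sigma_\tau^{-1}$ gives $\wt_\tau(\delta'_{w_\tau(i)})-\wt_\tau(\delta'_i)=-a_{\tau,i}\in\Z$ and the $i$-th coordinate of $i_w(r,\delta'_1,\ldots,\delta'_n)$ becomes $\delta'_i\prod_\tau x_\tau^{-a_{\tau,i}}=\delta_i$, recovering $z$. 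To finish one checks $(r,\delta'_1,\ldots,\delta'_n)\in U_\tri^{\mathrm{reg}}(L')$: the triangulation was built above, and the ratios $\delta'_i/\delta'_j,\epsilon\delta'_i/\delta'_j$ differ from $\delta_i/\delta_j,\epsilon\delta_i/\delta_j$ by algebraic characters of the form $\prod_\tau x_\tau^{n_\tau}$, and non-algebraicity of the $\delta_\bullet$-ratios (ensured by the $\mathcal{T}_1^n$-hypothesis from the setup of Section~\ref{laproof}) prevents them from being shifted into the bad locus $\{z^{-\mathbf{k}}\}\cup\{\epsilon z^{\mathbf{k}}\}$.

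The main obstacle is the descent step, namely ensuring that $D'_i/D'_{i-1}$ is a rank-$1$ projective $\phigamma$-module of the prescribed form. Over the field $L'$ this rests on two standard inputs: saturation of $D'_i$ inside $D_\rig(r)$, which is automatic because $\mathcal{F}_i$ is an $\rob_{L'}\invertt$-submodule; and the structure theorem identifying rank-$1$ sub-$\phigamma$-modules of $\rob_{L'}(\delta)\invertt$ over $\rob_{L'}$ as precisely the modules $\rob_{L'}(\delta\cdot\prod_\tau x_\tau^{n_\tau})$ for integer tuples $(n_\tau)\in\Z^\Sigma$.
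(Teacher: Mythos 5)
Your argument is essentially the paper's own proof: intersect the triangulation of $D_\rig(r)\invertt$ furnished by Definition \ref{moduliprob} with $D_\rig(r)$ (the Bezout/saturation argument) to obtain a triangulation of $D_\rig(r)$ with parameters $\delta_i'=\delta_i\prod_{\tau}x_\tau^{a_{\tau,i}}$, then compare the two descriptions of the $\tau$-Hodge-Tate-Sen weights of $r$ (Lemma \ref{wtsame} on one hand, the graded pieces of the descended filtration on the other) to extract $w\in\mathcal{S}$ with $z=i_w(r,\delta_1',\ldots,\delta_n')$. Two harmless caveats: regularity of the weights is not assumed for points of $M_c^{\mathrm{reg}}$, so your $\sigma_\tau$ need not be unique (any permutation matching the weight multisets suffices, which is all the paper uses), and the $\mathcal{T}_1^n$ hypothesis you cite at the end is imposed only on the fixed point of Section \ref{laproof}, not on arbitrary points of $M_c^{\mathrm{reg}}$ — though the paper's own proof does not verify that $\mathcal{T}_\regu$-ratio condition either.
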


\begin{proof}
We show this on points. For any $z=(r, \delta_1, \ldots, \delta_n)\in M_c^{\mathrm{reg}}(L)$, the definition induces a complete flag $\mathcal{M}_\bullet$ of $\phigamma$-module  over $\rob_{L, K}\invertt$ on $\mathcal{M}:=D_{\rig}(r)\invertt$. Taking intersection with $D:=D_{\rig}(r)$, we see that there exists a filtration $\Fil_\bullet$ on $D$, whose graded pieces are rank-$1$ $\phigamma$-module over $\rob_{L, K}$, by the Bezout property of $\rob_{L, K}$. Since $\gr^i \mathcal{M}=\rob_{L, K}(\delta_i)\invertt$, we see that each $\gr^i D$ must be of the form $\rob_{L, K}(\delta_i')$, where $\delta_i'=\delta_i\prod_{\tau\in \Sigma}x_\tau^{a_{\tau, i}}$ where all $a_{\tau, i}\in \Z$ and $x_\tau$ denotes the algebraic character $K^\times \map L^\times$ given by the embedding $\tau$. If $\delta'_i=\delta_i$ for all $i\in \{1, \ldots, n\}$, then the point $z\in U_{\tri}^{\mathrm{reg}}(L)= X_{\tri, \mathrm{sp}}^{\mathrm{reg}, 1}(L)$. If  there exists a $\tau$ and  $i$, such that  and $a_{\tau, i}\neq 0$ for some $\tau$. By looking at the graded pieces of $\Fil_\bullet$ on $D$, we see that the $\tau$-Hodge-Tate-Sen weights of $r$ is given by 
\[\{\wt_{\tau}(\delta_1'), \ldots, \wt_{\tau}(\delta_n')\}=\{\wt_\tau(\delta_1)+a_{\tau, 1}, \ldots, \wt_{\tau}(\delta_n)+a_{\tau, n}\}
\]
On the other hand, by the condition (2) and (4) of Definition \ref{moduliprob} and by by Lemma \ref{htwt}, we know that the $\tau$-Hodge-Tate-Sen weights of $r$ is also
\[
\{\wt_{\tau}(\delta_1), \ldots, \wt_{\tau}(\delta_n)\}
\]
Thus for the $i$ where $a_{\tau, i}\neq 0$, we see that $\wt_\tau(\delta_i)+a_{\tau, i}=\wt_\tau(\delta_j)$ for some $j\neq i$, i.e. $\wt_{\tau}(\delta_i')=\wt_\tau(\delta_j')-a_{\tau, j}$. Now this immediately implies that there exists a nontrivial $w=(w_\tau)_{\tau\in \Sigma}\in \mathcal{S}$, such that 
\[
(\wt_{\tau}(\delta_{w_{\tau}(1)}'), \ldots, \wt_{\tau}(\delta_{w_{\tau}(n)}'))=(\wt_{\tau}(\delta_1), \ldots, \wt_{\tau}(\delta_n))
\]
as ordered tuples, and $\wt_{\tau}(\delta_{w_{\tau}(i)}')-\wt_{\tau}(\delta_i')\in \Z$ for any $\tau$ and $i\in \{1, \ldots, n\}$. This gives that the point $z'=(r, \delta_1', \ldots, \delta_n')\in X_{\tri, \mathrm{sp}}^{\mathrm{reg}, w}(L)$ and $z=i_w(z')$.

\end{proof}

\begin{lemma}\label{nocover}
Let $X$ be a rigid analytic space of dimension $n$ over $L$. Then $X$ cannot be covered by a countable union of Zariski-closed subanalytic spaces $\cup_{j\in \N}Y_j$ of dimension smaller than $n$.
\end{lemma}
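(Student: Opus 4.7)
The plan is to reduce, via Noether normalization, to the case where $X$ is the closed unit polydisc, and then apply the Baire category theorem over the completion of an algebraic closure of $L$ to produce a point not contained in the union.

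First, I would reduce to the case that $X$ is an irreducible affinoid of dimension $n$: any admissible affinoid cover contains an affinoid of dimension $n$, and one may pass to an irreducible component of top dimension; the hypothetical covering $\bigcup_j Y_j$ restricts to this affinoid. Next, Noether normalization for affinoid algebras gives a finite injective homomorphism $L\langle T_1, \ldots, T_n\rangle \hookrightarrow \Of(X)$, corresponding to a finite surjective morphism $\pi : X \to \mathbb{D}^n$ onto the closed unit polydisc. Since finite morphisms of rigid spaces are proper and preserve the dimension of Zariski-closed subspaces, each image $Z_j := \pi(Y_j)$ is a Zariski-closed analytic subspace of $\mathbb{D}^n$ with $\dim Z_j = \dim Y_j < n$. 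A hypothetical covering $X = \bigcup_j Y_j$ would therefore force $\mathbb{D}^n = \bigcup_j Z_j$, and it suffices to rule out this latter covering.

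For the polydisc, let $C$ be the completion of an algebraic closure of $L$, so that $\mathbb{D}^n(C) = \Of_C^n$ is a complete ultrametric space. Each proper $Z_j \subsetneq \mathbb{D}^n$ is contained in the vanishing locus $V(f_j)$ of some nonzero $f_j \in L\langle T_1, \ldots, T_n\rangle$, hence $Z_j(C) \subseteq V(f_j)(C)$ inside $\Of_C^n$. The set $V(f_j)(C)$ is closed, and I claim it has empty interior: if it contained an open ball, then $f_j$ would vanish at all $C$-points of the corresponding open polydisc, and the rigid analytic Nullstellensatz combined with the identity principle on the connected space $\mathbb{D}^n$ would force $f_j \equiv 0$, contradicting $f_j \neq 0$. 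Applying the Baire category theorem to the complete metric space $\Of_C^n$, the countable union $\bigcup_j V(f_j)(C)$ has empty interior and in particular is a proper subset; any $C$-point outside it furnishes the desired contradiction.

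The main load-bearing step is the Noether normalization reduction: one needs that finite morphisms of affinoid spaces are proper and preserve dimensions of Zariski-closed subspaces, so that the proper-dimensional covering descends to a proper-dimensional covering of $\mathbb{D}^n$. Both facts are standard consequences of the theory of affinoid algebras, but they carry the whole argument; once they are granted, the Baire-category portion is essentially immediate.
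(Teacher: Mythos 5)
Your proof is correct, and in substance it follows the same strategy as the paper: reduce to the closed unit polydisc, replace each $Y_j$ by the hypersurface $V(f_j)$ of a single nonzero $f_j\in L\langle T_1,\ldots,T_n\rangle$, and then use completeness to produce a point of $\Of_C^n$ avoiding every $V(f_j)$. The differences are in how the two halves are carried out. You make the reduction step explicit (Noether normalization, finiteness and surjectivity of $\pi$, preservation of Zariski-closedness and dimension of images), which the paper simply asserts; that is a genuine improvement in completeness, and the facts you invoke are standard. For the main step, you appeal to the Baire category theorem on the complete ultrametric space $\Of_C^n$, after checking that each $V(f_j)(C)$ is closed with empty interior, whereas the paper hand-rolls the same argument: it inductively constructs nested closed polydiscs of radius $p^{-j}$ centered at classical points $x_j$ defined over increasing finite extensions $L_j$, using reduction to the residue field to find a center where $f_j$ becomes a unit, and takes a limit point in $\Of_C^n$. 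Your route is shorter and cleaner; the paper's explicit induction buys only the cosmetic feature that all intermediate centers are classical points, but its final witness is likewise just a point of $\Of_C^n$, so your Baire-category conclusion is exactly as strong as what the paper proves. (Both arguments, yours and the paper's, conclude with a $C$-point rather than a classical point, so you are not introducing any weakening relative to the original.) One tiny remark: in the empty-interior step you do not need the Nullstellensatz or connectedness; vanishing of $f_j$ on an open polydisc around $a\in\Of_C^n$ kills all coefficients of the translated power series $f_j(a+S)$, and translation is an automorphism of $C\langle T_1,\ldots,T_n\rangle$, so $f_j=0$ directly.
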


\begin{proof}
We  may reduce the proof to the case $X$ is a closed unit polydisc of dimension $n$, given by the $L$-Banach algebra $L\langle T_1, \ldots, T_n\rangle$.  We may further assume without loss of generality that each $Y_j$ is given by the vanishing locus of a single nonzero element $f_j\in L\langle T_1, \ldots, T_n\rangle$. We prove by induction that there exists for each $j\in \N$, a finite extension $L_j$ of $L$, and a point $x_j=(a_{j, 1}, \ldots, a_{j, n})\in X(L_j)$, such that $f_l$ is a unit over the $L_j$-Banach algebra $L_j\langle\frac{T_1-a_{j, 1}}{p^j}, \ldots, \frac{T_n-a_{j,n}}{p^j}\rangle$ for any $l\leq j$ and that $x_{j+1}$ is a point of $\maxi(L_j\langle\frac{T_1-a_{j, 1}}{p^j}, \ldots, \frac{T_n-a_{j,n}}{p^j}\rangle)$. In other words, we have a sequence of shrinking rigid analytic closed polydiscs of radius $p^{-j}$ around $x_j$ where $f_j$ has no zeroes over. Assuming the result for $j$, and rename $S_{j, k}:=\frac{T_k-a_{j,k}}{p^j}$ for any $k\in \{1, \ldots, n\}$. By restriction now $f_j$ gives an element in $L_j\langle S_{j, 1}, \ldots, S_{j, n}  \rangle$. By scaling we may assume $f_j\in \Of_{L_j}\langle S_{j, 1}, \ldots, S_{j, n}  \rangle$ and $f_j\notin \pi_{L_j}\Of_{L_j}\langle S_{j, 1}, \ldots, S_{j, n}  \rangle$ where  $\Of_{L_j}$ has a uniformizer $\pi_{L_j}$ and residue field $k_{L_j}$. Then by reduction $\overline{f_j}$ gives a nonzero element in $k_{L_j}[S_{j, 1}, \ldots, S_{j, n}]$ and it is clear there exist an extension $k'/k_{L_j}$ and a $k'$-point given by $(S_{j,1}-\overline{b}_1, \ldots, S_{j,n}-\overline{b}_n)$ where $\overline{f_j}$ evaluates to a nonzero element in $k'$. Now choose any extension $L_{j+1}$ whose residue field contains $k'$ and a point $(b_1, \ldots, b_n)$, given in the coordinates $S_{j, 1}, \ldots, S_{j, n}$, that lifts $(\overline{b}_1, \ldots, \overline{b}_n)$. We see that in the coordinates $S_{j, 1}-b_1, \ldots, S_{j, n}-b_n$, $f_j$ evaluates to a power series whose constant coefficient is a unit and all other coefficients $\in \Of_{L_{j+1}}$, thus it becomes a unit in $\Of_{L_{j+1}}\langle \frac{S_{j, 1}-b_1}{p}, \ldots, \frac{S_{j, n}-b_n}{p} \rangle$. Setting $a_{j+1, k}:=a_{j, k}+p^jb_j$ for any $k\in \{1, \ldots, n\}$ finishes the induction step.

Now there exists a point $x$ in the intersection
\[
\bigcap_{j\in \N} \left(x_j+(p^j\Of_{C})^n \right) \subset \Of_C^n
\]
and it is immediately from our properties of $x_j$ that all $f_j$ is not zero at $x$.
\end{proof}

Now we can prove the main theorem. We first recall the setting and fix some notation from the constructions scattered in Section \ref{formdefprob} and \ref{laproof}:

Given a point $z=(r, \delta_1, \ldots, \delta_n)\in \spf (R^{\square}_{\overline{r}})^{\ad}_\eta(L) \times \mathcal{T}^n_1(L)$ such that $(\delta_1, \ldots, \delta_n)$ is regular. Decompose the $\tau$-Hodge-Tate-Sen weights of $\delta_1, \ldots, \delta_n$ into $k_\tau$ equivalence classes under the integral difference equivalence relations as in Definition \ref{intwteq}. We denote each equivalence classes by $S_{\tau, l}=\{a_{\tau,l, 1}, \ldots, a_{\tau, l, n_{\tau, l}}\}$ so that there exists an inverse bijection denoted by $a^{-1}: S_{\tau, l}\map \{1, \ldots, n_{\tau, l}\}$. If $z\in M_c(L)$ or $z\in X_\tri(L)$, the set of $\tau$-Hodge-Tate-Sen weights of $r$ is the same as $\{\wt_\tau(\delta_1), \ldots, \wt_\tau(\delta_n)\}$.  We have an associated permutation $w\in \mathcal{S}$ to $z$ as in Definition \ref{xtriassoc} and Definition \ref{assocwt}. We ordered the $\tau$-Hodge-Tate-Sen weights of $r$ that is of integral difference  with $\wt_\tau(\delta_i)$ for some $i\in S_{\tau, l}$, as  $h_{\tau,  l, 1}>\cdots > h_{\tau, l, n_{\tau, l}}$ for any fixed $l\in \{1, \ldots, k_\tau\}$. If $z\in M_c(L)$ or $r$ is trianguline with a triangulation $\Fil_\bullet$ on $D_\rig(r)$, the definition of $M_c$ or $\Fil_\bullet\invertt$ gives a triangulation on $D_{\rig}(r)\invertt$, and Definition \ref{assoc} gives a point $x\in X(L)$ associated to $r$ and the triangulatioin. We will freely state $x$ being associated with $z$ or $r$ (when there is a triangulation on $D_\rig(r)\invertt$) in the theorem below.

\begin{theorem}\label{maintheorem}
Given a point $z=(r, \delta_1, \ldots, \delta_n)\in \spf (R^{\square}_{\overline{r}})^{\ad}_\eta(L) \times \mathcal{T}^n_{1}(L)$ with $r$  regular. The following conditions are equivalent:
\begin{enumerate}
    \item $z\in X_{\tri}(\overline{r})(L)$.
    \item $z\in M_c(L)$ for some $c\geq 0$, and the asscoiated permutation $w\in \mathcal{S}$ satisfy $w\in \mathcal{S}(x)$, where $x$ is the associated point of $z$ in $X(L)$.
    \item $r$ is trianguline, having a triangulation with parameters $\delta_1', \ldots, \delta_n'$, such that  there exists a permutation $w\in \mathcal{S}(x)$, where $x$ is the associated point in $X(L)$ of $r$, such that 
    \[
    \delta_i=\delta_i'\prod_{\tau\in \Sigma}x_\tau^{h_{\tau, l, a^{-1}(w_{\tau}(i))}-\wt_\tau(\delta_i')}
    \]
    for any $i\in \{1, \ldots, n\}$.  Here $l$ is the unique one such that $i\in S_{\tau, l}$.
\end{enumerate}
\end{theorem}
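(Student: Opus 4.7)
The plan is to establish $(1) \Rightarrow (2) \Rightarrow (3) \Rightarrow (1)$, with the last implication being the substantive part.

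For $(1) \Rightarrow (2)$, any point $z \in X_\tri(\overline{r})(L)$ admits, by \cite[3.7.1]{BHS} applied to $(\delta_1,\ldots,\delta_n) \in \mathcal{T}_1^n(L)$, a unique triangulation of $\mathcal{M} = D_\rig(r)\invertt$ with parameters $\delta_1,\ldots,\delta_n$; the induced filtrations on $\mathcal{M}$ and on each $D_\rig(r)/t_\tau$ verify conditions (1)--(4) of Definition \ref{moduliprob} for all $c$ large enough (using Lemma \ref{uniquetri} and Lemma \ref{h0prop} to check the Hom-space conditions), so $z \in M_c(L)$, and Proposition \ref{nece} gives $w \in \mathcal{S}(x)$.

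For $(2) \Rightarrow (3)$, the triangulation $\mathcal{F}_\bullet$ of $\mathcal{M}$ supplied by condition (3) of Definition \ref{moduliprob} intersects $D_\rig(r)$ (using the B\'ezout property of $\rob_{L,K}$, as in the proof of Proposition \ref{covering}) to yield a triangulation of $D_\rig(r)$ with parameters $\delta_i' = \delta_i \prod_\tau x_\tau^{-a_{\tau,i}}$ for uniquely determined integers $a_{\tau,i}$. The multiset of $\tau$-Hodge--Tate--Sen weights of $r$ is simultaneously $\{\wt_\tau(\delta_i)\}_i$ (by Lemma \ref{wtsame}) and $\{\wt_\tau(\delta_i')\}_i$ (from the new triangulation), and combining these with the defining property of the associated permutation $w$ forces $\wt_\tau(\delta_i') = h_{\tau,l,a^{-1}(w_\tau(i))}$ whenever $i \in S_{\tau,l}$, yielding precisely the formula in (3).

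For $(3) \Rightarrow (1)$, observe first that (3) implies (2): the triangulation of $D_\rig(r)$ with parameters $\delta_i'$, extended by the algebraic twist, verifies all conditions of Definition \ref{moduliprob} for sufficiently large $c$, and matching of Hodge--Tate--Sen weights recovers the prescribed $w \in \mathcal{S}(x)$ as the associated permutation. Proposition \ref{descpmc} then identifies $\widehat{M}_{c,z}$ with $X_{r,\mathcal{M}_\bullet} \cap \Theta^{-1}(\widehat{T}_{w,(0,0)})$, and Proposition \ref{existmaxdim} produces inside it an irreducible component of the maximal possible dimension $d := [K:\Q_p]\frac{n(n+1)}{2} + n^2 = \dim X_\tri(\overline{r})$. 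Consequently there is a global irreducible component $Y$ of $M_c$ through $z$ with $\dim Y = d$.

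The hard part is to conclude $Y \subseteq \overline{U_\tri^\regu} = X_\tri(\overline{r})$, for which it suffices to prove $Y \cap U_\tri^\regu$ is Zariski dense in $Y$. Assume otherwise: the Zariski closure of $Y \cap U_\tri^\regu$ in $Y$ is then a proper closed subspace of dimension $<d$. By Proposition \ref{covering}, Lemma \ref{closedembed}, and Lemma \ref{smallerd}, the remainder $M_c^\regu \setminus U_\tri^\regu$ is covered by countably many Zariski locally closed subspaces of $M_c^\regu$, each of dimension strictly less than $d$. Intersecting this covering with $Y$, taking closures inside $Y$, and adjoining the closure of $Y \cap U_\tri^\regu$, we exhibit $Y$ as a countable union of Zariski closed subspaces of dimension $<d$, contradicting Lemma \ref{nocover}. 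The main obstacle lies in this final dimension balance: it requires both the local model coming from the Grothendieck--Springer resolution to furnish a component of exactly the right maximal dimension, and a uniform control of the ``spurious'' locus $M_c^\regu \setminus U_\tri^\regu$ as a thin countable union, so that Lemma \ref{nocover} can force $Y$ into the trianguline closure.
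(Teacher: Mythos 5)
Your proposal is correct and follows essentially the same route as the paper: the cycle $(1)\Rightarrow(2)\Rightarrow(3)\Rightarrow(1)$, with Proposition \ref{descpmc} and Proposition \ref{existmaxdim} producing a maximal-dimensional component of $\widehat{M}_{c,z}$, and the countable-cover contradiction via Proposition \ref{covering}, Lemma \ref{closedembed}, Lemma \ref{smallerd} and Lemma \ref{nocover} forcing $Y\subset X_\tri(\overline{r})$. The only cosmetic difference is that the paper works with an irreducible component of $M_c\cap U$ for an affinoid neighborhood $U$ of $z$ chosen so that $M_c\cap U\subset M_c^{\regu}$ (rather than a global component of $M_c$), which is exactly how one ensures the covering proposition applies.
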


\begin{proof}[Proof of Theorem \ref{maintheorem}]
We prove $(1)\Rightarrow (2) \Rightarrow (3)\Rightarrow (1)$. The hard part is $(3)\Rightarrow (1)$.

$(1)\Rightarrow (2)$: By \cite[3.7.1]{BHS}, or rather the proof of \cite[Theorem 6.3.13]{KPX}, we have that  the  $\phigamma$-module $\mathcal{M}:=D_\rig(r)\invertt$ over $\rob_{L, K}\invertt$ has a uniques triangulation with parameter $\delta_1, \ldots, \delta_n$. By Lemma \ref{uniquetri}, we see that for any $i\in \{1, \ldots, n\}$,
\[
\homo_{\varphi, \gamma_K}\left(\bigwedge^{n-i+1}D_\rig(r)\invertt, \rob_{L, K} (\prod_{j=i}^n \delta_j)\invertt\right)
\]
is a $1$-dimensional space over $L$. The above Hom space is also the increasing union of 
\[
\homo_{\varphi, \gamma_K}\left(\bigwedge^{n-i+1}D_\rig(r), t^{-c}\rob_{L, K} (\prod_{j=i}^n \delta_j)\right)
\]
over $c\map +\infty$. Thus we may find a $c$ sufficiently large such that 
\[
\homo_{\varphi, \gamma_K}\left(\bigwedge^{n-i+1}D_\rig(r), t^{-c}\rob_{L, K} (\prod_{j=i}^n \delta_j)\right)
\]
is $1$-dimensional over $L$ for any $i\in \{1, \ldots, n\}$. It clearly induces the unique triangulation on $\mathcal{M}$ with the given parameter. So condition (1) and (3) of Definition \ref{moduliprob} are satisfied for the point $z$. It follows from \cite[Proposition 2.9]{modular} (or \cite[6.2.12]{KPX}) that the $\tau$-Hodge-Tate-Sen weights of $r$ is the same as $\{\wt_\tau(\delta_1), \ldots, \wt_\tau(\delta_n)\}$. Thus, (2) and (4) of Definition \ref{moduliprob} is satisfied for the point $z$ by Lemma \ref{htwt}. We conclude that $z\in M_c(L)$.

Proposition \ref{nece} shows that $w\in \mathcal{S}(x)$ if $x\in X_\tri(L)$.

$(2)\Rightarrow (3)$: Since $z\in M_c(L)$, we have by definition a triangulation $\mathcal{M}_\bullet$ on $\mathcal{M}$ with parameter $\delta_1, \ldots, \delta_n$. This induces a triangulation $\Fil_\bullet:=\mathcal{M}_\bullet \cap D_{\rig}(r) $ of $D_\rig(r)$. Let $\delta_1', \ldots, \delta_n'$ be the parameters of this triangulation $\Fil$. Then as $\gr^i_{\Fil}\invertt=\gr^i_{\mathcal{M}_\bullet}$, we see that $\delta_i^{-1}\delta_i'$ is an algebraic character of $K^\times$ for all $i\in \{1, \ldots, n\}$. i.e.
\[
\delta_i=\delta_i'\prod_{\tau\in \Sigma}x_\tau^{d_{\tau, i}}
\]
for some integers $d_{\tau, i}$ and we solve these. In fact, by the definition of associated Weyl group element $w$ in the paragraphs preceding Definition \ref{assocwt}, we see that $\wt_\tau(\delta_{w_{\tau, l}^{-1}(a_s)})=h_{\tau, l, s}$ for any $\tau\in \Sigma$, $l\in \{1, \ldots, k_\tau\}$ and $s\in \{1, \ldots, n_{\tau, l}\}$, where $\{a_s\}$ is a listing of elements in $S_{\tau, l}$. In other words, renaming $i=w_{\tau, l}^{-1}(a_s)$, we see that $\wt_\tau(\delta_i)=h_{\tau, l, a^{-1}(w_{\tau, l}(i))}$. So $h_{\tau, l, a^{-1}(w_{\tau, l}(i))}=\wt_{\tau}(\delta_i)=\wt_\tau(\delta_i')+d_{\tau, i}$. Thus follows the formula for $d_{\tau, i}$.

$(3)\Rightarrow (1)$: Choose $c>n(h_{\tau, l, a^{-1}(w_{\tau}(i))}-\wt_\tau(\delta_i'))$ for any $\tau$ and $i$, we claim $z\in M_c(L)$: Using the given triangulation on $D_\rig(r)$,  we see as in the proof of Proposition \ref{descpmc}  that $\homo_{\varphi, \Gamma_K}\left(\bigwedge^{n-i+1}D_\rig(r), t^{-c}\rob_{Y} (\prod_{j=i}^n \delta_j)\right)$  must be of dimension $1$ over $L$ by Lemma \ref{uniquetri}. It is clear that those morphism spaces give the triangulation on $D_{\rig}(r)\invertt$ induced by the given one on  $D_{\rig}(r)$. Thus (1) and (3) of Definition \ref{moduliprob} are satisfied. Also the Sen operator acts on $D_{\mathrm{Sen}, \tau}(r)$ semisimply whose eigenvalues are $\bigcup_{l=1}^{k_\tau}\{h_{\tau, l, 1}, \ldots, h_{\tau, l, n_{\tau, l}}\}$, which is precisely the set $\{\wt_{\tau}(\delta_1), \ldots, \wt_{\tau}(\delta_n)\}$ by the definition of the characters $\delta_i$. (2) and (4) of Definition \ref{moduliprob} are satisfied  by Lemma \ref{htwt}.

Fix an affinoid neighborhood $U$ of $z$. Since $(\delta_1, \ldots, \delta_n)\in \mathcal{T}_1^n(L)$, we may assume $M_c\cap U \subset M_c^\regu$. We consider the set of irreducible components of $M_c\cap U$ passing through $z$. Combining Proposition  \ref{descpmc} and Proposition \ref{existmaxdim}, we see that $\widehat{M}_{c,z}$ has an irreducible component of maximal dimension  $[K:\Q_p]\frac{n(n+1)}{2}+n^2$ since $w\in \mathcal{S}(x)$.  We deduce that there exists an irreducible component $Y$ of $M_c\cap U$ passing through $z$ of dimension $[K:\Q_p]\frac{n(n+1)}{2}+n^2$. We claim that $Y\cap U^\regu_\tri$ is Zariski-dense in $Y$: Otherwise, $Y\cap U^\regu_\tri$ is contained in a Zariski-closed subanalytic space $Y_1$ of $Y$, necessarily of smaller dimension. Proposition \ref{covering} tells us that $Y$  is covered by the union of  $Y_1\supset Y\cap U^\regu_\tri=Y \cap X_{\tri, \mathrm{sp}}^{\mathrm{reg}, 1}$ and all $Y\cap i_w(X_{\tri, \mathrm{sp}}^{\mathrm{reg}, w})$ with $w$ ranging through all nontrivial elements of $\mathcal{S}$. Thus, by Lemma \ref{closedembed} and Lemma \ref{smallerd}, we see that $Y$ is covered by a countable union $\bigcup_{j\in \N}Y_j$ of Zariski-closed subanalytic spaces of smaller dimension than $[K:\Q_p]\frac{n(n+1)}{2}+n^2$. By Lemma \ref{nocover}, we arrive at a contradiction. Thus $Y\cap U^\regu_\tri$ is Zariski-dense in $Y$ and so $Y\subset X_\tri(\overline{r})$, the closure of $U^\regu_\tri$. In particular, $z\in X_\tri(\overline{r})(L)$.

\end{proof}

\bibliographystyle{amsalpha}
\bibliography{reference}

\end{document}